\documentclass[a4paper,11pt]{article} 

\usepackage{etoolbox}
\newtoggle{arxiv} 
\toggletrue{arxiv} 

\makeatletter

\iftoggle{arxiv}{
 \usepackage[margin=25mm]{geometry}
}{
\renewcommand\normalsize{%
    \@setfontsize\normalsize{11.7}{14pt plus .3pt minus .3pt}%
    \abovedisplayskip 10\p@ \@plus4\p@ \@minus4\p@
    \abovedisplayshortskip 6\p@ \@plus2\p@
    \belowdisplayshortskip 6\p@ \@plus2\p@
    \belowdisplayskip \abovedisplayskip}
\renewcommand\small{%
    \@setfontsize\small{9.5}{12\p@ plus .2\p@ minus .2\p@}%
    \abovedisplayskip 8.5\p@ \@plus4\p@ \@minus1\p@
    \belowdisplayskip \abovedisplayskip
    \abovedisplayshortskip \abovedisplayskip
    \belowdisplayshortskip \abovedisplayskip}
\renewcommand\footnotesize{%
    \@setfontsize\footnotesize{8.5}{9.25\p@ plus .1pt minus .1pt}
    \abovedisplayskip 6\p@ \@plus4\p@ \@minus1\p@
    \belowdisplayskip \abovedisplayskip
    \abovedisplayshortskip \abovedisplayskip
    \belowdisplayshortskip \abovedisplayskip}
\setlength\parindent    {30\p@}
\setlength\textwidth    {412\p@}
\setlength\textheight   {570\p@}
\paperwidth=210mm
\paperheight=260mm
\ifdefined\pdfpagewidth
\setlength{\pdfpagewidth}{\paperwidth}
\setlength{\pdfpageheight}{\paperheight}
\else
\setlength{\pagewidth}{\paperwidth}
\setlength{\pageheight}{\paperheight}
\fi
\calclayout
}

\makeatother

\usepackage[dvipsnames]{xcolor} 
\usepackage{amsmath,amsthm,amssymb}
\usepackage{enumitem}
\usepackage{tikz}
\usepackage{hyperref}
\hypersetup{colorlinks, linkcolor=blue}
\usepackage{amsfonts,marvosym,amscd}
\usepackage{mathtools}

\usepackage{mathscinet}
\usepackage{wasysym}
\usepackage{stmaryrd}
\usepackage{graphicx}
\usepackage{psfrag}
\usepackage{datetime}
\usepackage{tikz-cd} 
\usepackage{adjustbox}
\usepackage{subcaption}
\captionsetup[figure]{position=t}

\usepackage[mathcal]{euscript} 
\usepackage{mathrsfs} 
\usepackage{ytableau}
\ytableausetup{boxsize=.5em}
\ytableausetup{centertableaux}
\usepackage{verbatim}
\usepackage[normalem]{ulem}

\usepackage{tabstackengine}
\setstackgap{L}{.75\baselineskip}

\input xy
\xyoption{all}
\usepackage[all,knot]{xy}

\newcommand{\ca}{{\mathcal A}}
\newcommand{\cA}{{\mathcal A}}
\newcommand{\cb}{{\mathcal B}}
\newcommand{\cc}{{\mathcal C}}

\newcommand{\cd}{{\mathcal D}}
\newcommand{\cD}{{\mathcal D}}
\newcommand{\ce}{{\mathcal E}}

\newcommand{\cf}{{\mathcal F}}

\newcommand{\cg}{{\mathcal G}}
\newcommand{\ch}{{\mathcal H}}
\newcommand{\cH}{{\mathcal H}}

\newcommand{\cK}{{\mathcal K}}

\newcommand{\cm}{{\mathcal M}}

\newcommand{\cn}{{\mathcal N}}

\newcommand{\cp}{{\mathcal P}}

\newcommand{\cR}{{\mathcal R}}

\newcommand{\cS}{{\mathcal S}}
\newcommand{\ct}{{\mathcal T}}
\newcommand{\cT}{{\mathcal T}}

\newcommand{\pvd}{\operatorname{pvd}\nolimits}

\newcommand{\Gr}{\operatorname{Gr}\nolimits}

\newcommand{\Hom}{\operatorname{Hom}\nolimits}

\newcommand{\Imm}{\operatorname{Im}\nolimits}
\newcommand{\Iso}{\operatorname{Iso}\nolimits}

\newcommand{\Ker}{\operatorname{Ker}\nolimits}
\newcommand{\Kerr}{\operatorname{Ker}\nolimits}

\newcommand{\Ext}{\operatorname{Ext}\nolimits}

\renewcommand{\mod}{\operatorname{mod}\nolimits}

\newcommand{\perf}{\operatorname{perf}\nolimits}
\newcommand{\proj}{\operatorname{proj}\nolimits}

\newcommand{\Z}{\operatorname{\mathbb{Z}}\nolimits}
\newcommand{\QQ}{\operatorname{\mathbb{Q}}\nolimits}
\newcommand{\C}{\operatorname{\mathbb{C}}\nolimits}

\newcommand{\R}{\operatorname{\mathbb{R}}\nolimits}
\newcommand{\F}{\operatorname{\mathbb{F}}\nolimits}

\newcommand{\aut}{\operatorname{Aut}\nolimits}

\newcommand{\rank}{\operatorname{rk}\nolimits}

\newcommand{\mt}{\operatorname{\bf{mt}}}

\newcommand{\hdf}{\mathscr{H}_1} 
\newcommand{\hdff}{\mathscr{H}_2} 
\newcommand{\hdfx}{\mathscr{L}_1}
\newcommand{\hdffx}{\mathscr{L}_2} 
\newcommand{\hallcy}{\ch_{n\mathrm{CY}}} 
\newcommand{\halluncy}{\ch_{1\mathrm{CY}}} 
\newcommand{\hallminusuncy}{\ch_{(-1)\mathrm{CY}}} 
\newcommand{\rulings}{\mathcal{R}} 
\newcommand{\rpoly}{P} 
\newcommand{\rpolyfun}{\mathscr{P}} 
\newcommand{\ltanglecat}{\mathsf{LTangle}}
\newcommand{\spancat}{\mathsf{Span}} 
\newcommand{\vectcat}{\mathsf{Vect}}
\newcommand{\typeacat}[1]{\mathscr A_{#1}} 
\newcommand{\degfin}{degreewise finite} 

\newcommand{\defword}[1]{\textbf{#1}} 

\usetikzlibrary{decorations.pathmorphing}
\usetikzlibrary{decorations.pathreplacing}
\usetikzlibrary{positioning}
\usetikzlibrary{calc}

\makeatletter
\tikzset{
    dot diameter/.store in=\dot@diameter,
    dot diameter=3pt,
    dot spacing/.store in=\dot@spacing,
    dot spacing=10pt,
    dots/.style={
        line width=\dot@diameter,
        line cap=round,
        dash pattern=on 0pt off \dot@spacing
    }
}
\makeatother

\newtheorem{theorem}{Theorem}[section]
\newtheorem{thmdef}{Theorem/Definition}[section]
\newtheorem{lemma}[theorem]{Lemma}
\newtheorem{proposition}[theorem]{Proposition}
\newtheorem{corollary}[theorem]{Corollary}

\theoremstyle{definition}
\newtheorem{definition}[theorem]{Definition}

\newtheorem{example}[theorem]{Example}

\theoremstyle{remark}
\newtheorem{remark}[theorem]{Remark}

\sloppy

\begin{document}

\iftoggle{arxiv}{
\title{Counting in Calabi--Yau categories,\\
with applications to Hall algebras and knot polynomials}
}{
\title[Counting in Calabi--Yau categories]{Counting in Calabi--Yau categories,\\
with applications to Hall algebras and knot polynomials}
}

\author{Mikhail Gorsky and Fabian Haiden}

\date{}

\newcommand{\Addresses}{{
  \setlength{\parindent}{0pt}
  \bigskip
  \footnotesize
    MG: \textsc{Faculty of Mathematics, University of Vienna, Oskar-Morgenstern-Platz 1, 1090 Vienna, Austria} \par\nopagebreak
    \textit{E-mail:} \texttt{mikhail.gorskii@univie.ac.at} 
	\medskip
    
	FH: \textsc{Centre for Quantum Mathematics, Department of Mathematics and Computer Science, University of Southern Denmark, Campusvej 55, 5230 Odense, Denmark} \par\nopagebreak
	\textit{E-mail:} \texttt{fab@sdu.dk} 
}}

\maketitle

\begin{abstract}
We show that homotopy cardinality --- a priori ill-defined for many dg-categories, including all periodic ones --- has a reasonable definition for even-dimensional Calabi--Yau (evenCY) categories and their relative generalizations (under appropriate finiteness conditions).

As a first application we solve the problem of defining an intrinsic Hall algebra for {\degfin} pre-triangulated dg-categories in the case of oddCY categories. 
We compare this definition with To\"en's derived Hall algebras (in case they are well-defined) and with other approaches based on extended Hall algebras and central reduction\iftoggle{arxiv}{, including a construction of Hall algebras associated with Calabi--Yau triples of triangulated categories}. For a category equivalent to the root category of a 1CY abelian category $\ca$, the algebra is shown to be isomorphic to the Drinfeld double of the twisted Ringel--Hall algebra of $\ca$, thus resolving in the Calabi--Yau case the long-standing problem of realizing the latter as a Hall algebra intrinsically defined for such a triangulated category.

Our second application is the proof of a conjecture of Ng--Rutherford--Shende--Sivek, which provides an intrinsic formula for the ruling polynomial of a Legendrian knot $L$, and its generalization to Legendrian tangles, in terms of the augmentation category of $L$.
\end{abstract}

\setcounter{tocdepth}{2} 
\tableofcontents

\section{Introduction}

Calabi--Yau categories and functors are abundant in algebraic geometry, topology, representation theory, symplectic and contact geometry. 
We propose a formalism for counting objects in {\degfin} dg-categories satisfying Calabi--Yau-type conditions. 
It provides a new approach to existing invariants and constructions, such as:
\begin{itemize}
    \item Homotopy cardinality of evenCY categories,
    \item Drinfeld doubles of Hall algebras of 1CY categories,
    \item (Semi-)derived Hall algebras of oddCY categories,
    \item $\Z/2m$-graded ruling polynomials of Legendrian knots,
    \item The leading part of the HOMFLY-PT polynomial of a topological knot,
\end{itemize}
but can also be applied in far more general settings, as we hope to demonstrate. 

The theory can be developed at various categorical levels, each containing the previous one as a special case.

\subsection{Level 0: Homotopy cardinality and evenCY categories}
\label{subsec: level 0}

A natural way of defining the cardinality of a (finite) groupoid, $\cg$, is as the number of isomorphism classes, $x$, counted with weight $|\Hom(x,x)|^{-1}$.
This generalizes to $\infty$-groupoids/spaces, $X$, whose \textit{homotopy cardinality} is defined as
\begin{equation}\label{hcard}
    \sum_{x\in\pi_0(X)}\prod_{i=1}^\infty|\pi_i(X,x)|^{(-1)^i}    \in\QQ_{\geq 0}
\end{equation}
assuming that $\pi_i(X)$ is trivial for large $i$ and finite for all $i$.
This notion was considered by Kontsevich~\cite{kontsevich_3TQFT} (under the name \textit{homotopy Euler characteristic}), Quinn~\cite{quinn95} (under the name \textit{homotopy order}) and Baez--Dolan~\cite{BaezDolan01}.

As a special case, suppose that $\ca$ is a dg-category  with finitely many isomorphism classes of objects over a finite field $\F_q$, then we can apply~\eqref{hcard} to its space of objects, $\ca^\sim$, i.e. the maximal Kan complex in the dg-nerve of $\ca$. 
Then $\pi_1(\ca^\sim,x)=\mathrm{Aut}(x)\subset\Ext^{0}(x,x)$ and $\pi_i(\ca^\sim,x)=\Ext^{1-i}(x,x)$ for $i\geq 2$, thus~\eqref{hcard} becomes
\begin{equation}\label{hcard_dg}
    \sum_{x\in\pi_0(\ca^\sim)}\frac{q^{-\langle x,x\rangle_{< 0}}}{|\mathrm{Aut}(x)|}, 
\end{equation}
where
\begin{equation}\label{truncatedEuler}
    \langle x,y\rangle_{< 0}\coloneqq\sum_{i<0}(-1)^i\dim_{\F_q}\Ext^{i}(x,y)
\end{equation}
is the \textit{truncated Euler pairing}.

For~\eqref{truncatedEuler} and thus~\eqref{hcard_dg} to be well-defined, one needs $\Ext^{i}(x,y)$ to be trivial for $i\ll 0$.
In particular, this assumption fails
for \textit{periodic} dg-categories, i.e. those with $\Ext^i(x,y)\cong\Ext^{i+N}(x,y)$ for some fixed period $N$.

Our basic observation is that there is a good replacement of~\eqref{hcard_dg} for those dg-categories over $\F_q$ which are 
\begin{enumerate}
    \item \textit{\degfin}: $\dim \Ext^i(x,y)<\infty$ for all $x, y$ and for all $i\in\Z$ (but possibly $\Ext^i(x,y)\neq 0$ for infinitely many $i$),
    \item \textit{$n$-Calabi--Yau}: $\Ext^i(x,y)\cong\left(\Ext^{n-i}(y,x)\right)^\vee$ for all $x, y$, for some \textit{even} $n$.
\end{enumerate}
Note that the second condition implies the first.

To find this replacement, suppose that $\ca$ satisfies the above conditions for some $n \geq 0$ but also $\Ext^i(x,y)=0$ for $|i|\gg 0$, then
\[
\langle x,x\rangle_{<0}=\langle x,x\rangle -\langle x,x\rangle_{\geq 0}=\langle x,x\rangle -\langle x,x\rangle_{0,\ldots,n}-\langle x,x\rangle_{<0},
\]
hence
\begin{equation}\label{truncatedEulerCY}
    \langle x,x\rangle_{<0}=\frac{\langle x,x\rangle-\langle x,x\rangle_{0,\ldots,n}}{2},   
\end{equation}
where we use the notation $\langle x,y\rangle_I=\sum_{i\in I}(-1)^i\dim_{\F_q}\Ext^{i}(x,y)$ for $I\subset\Z$, generalizing~\eqref{truncatedEuler}, and $\langle x,y\rangle=\langle x,y\rangle_{\Z}$ is the Euler pairing.
The term $\frac{\langle x,x\rangle}{2}$ is still ill-defined in the general {\degfin} case, however it turns out we can just drop it and work with a kind of ``twisted homotopy cardinality''. This twisting by (one half of) the Euler pairing is familiar from constructions of Hall algebras.
Our replacement for \eqref{hcard_dg} is then
\[
\sum_{x\in\pi_0(\ca^\sim)}\frac{q^{\frac{1}{2}\langle x,x\rangle_{0,\ldots,n}}}{|\mathrm{Aut}(x)|}.
\]
In a similar way, for $n < 0$, we replace \eqref{hcard_dg} by 
\[
\sum_{x\in\pi_0(\ca^\sim)}\frac{q^{-\frac{1}{2}\langle x,x\rangle_{n+1,\ldots,-1}}}{|\mathrm{Aut}(x)|}.
\]

Going beyond the case of finite fields, we expect that one could still make sense of $\eqref{hcard_dg}$, not as a rational number, but as an element in a (localized) ring of motives as in~\cite{KS08}, where $q$ is replaced by the motive of the affine line. This requires the structure of a constructible stack on the set of objects of $\ca$. We do not pursue this direction here.

\subsection{Level 1: Calabi--Yau functors and Hall algebras}

To get more mileage out of the above idea, one considers even-dimensional (weak right) \textit{relative}  CY structures, whose definition was sketched by To\"en in~\cite[Section 5.3]{ToenDAG} and whose theory was further developed by Brav--Dyckerhoff~\cite{BravDyckerhoffRelCY}. 
An $n$CY structure on a functor $f:\cb\to\ca$ of $\mathbf k$-linear dg-categories includes an $(n-1)$CY structure on $\ca$ and ensures the existence of a long exact sequence
\[
\begin{tikzcd}
\cdots \arrow[r] & 
\Ext^i_{\cb}(x,y) \arrow[r]
& \Ext^i_{\ca}(f(x),f(y)) \arrow[r]
\arrow[d, phantom, ""{coordinate, name=Z}]
& \left(\Ext^{n-i-1}_{\cb}(y,x)\right)^{\vee} \arrow[dll,
rounded corners,
to path={ -- ([xshift=2ex]\tikztostart.east)
|- (Z) [near end]\tikztonodes
-| ([xshift=-2ex]\tikztotarget.west)
-- (\tikztotarget)}] \\
& \Ext^{i+1}_{\cb}(x,y) \arrow{r} \arrow[r]
& \cdots 
\end{tikzcd}
\]
reminiscent of the long exact sequence in cohomology of an oriented compact manifold with boundary. (See Definition~\ref{def:CYfunctor} for the precise version of the definition that we will use.)
Assuming $n$ is even, $\mathbf k=\F_q$, and suitable finiteness conditions, we define the ``integral over $\cb$'' for a function $\alpha$ on $\pi_0(\ca^\sim)$ as
\begin{equation}\label{IntRelCY}
    \hdfx (f)(\alpha)\coloneqq\sum_{x\in\pi_0(\cb^\sim)}\frac{|\mathrm{Aut}(f(x))|^{\frac{1}{2}}}{|\mathrm{Aut}(x)|} q^{\frac{1}{2}\gamma(f,x)}\alpha(f(x)),
\end{equation}
where 
\begin{equation}
    \gamma(f,x)\coloneqq \begin{cases} \langle x,x\rangle_{0,\ldots,n-1}+\mathrm{rk}\left(\Ext^0(x,x)^\vee\to\Ext^n(x,x)\right), & \mbox{for} \, n \geq 0;\\
    -\langle x,x\rangle_{n,\ldots,-1}+\mathrm{rk}\left(\Ext^0(x,x)^\vee\to\Ext^n(x,x)\right), & \mbox{for} \, n < 0;
    \end{cases}
\end{equation}
and the map $\Ext^0(x,x)^\vee\to\Ext^n(x,x)$ comes from the long exact sequence above.
In the above, $\alpha$ should be thought of as ``half-density'' on $\ca^\sim$. (See Subsection~\ref{subsec_hcpercy} for some justification for this terminology.) 
Our main result about~\eqref{IntRelCY} is a functoriality property for \textit{CY spans}, i.e. CY functors of the form $f=(f_1,f_2):\cb\to\ca_1\times\ca_2$.
For such a span, satisfying finiteness conditions, we get a linear map
\[
\hdfx(f):\hdfx(\ca_1)\to \hdfx(\ca_2)
\]
where $\hdfx(\ca_i)\coloneqq\C\pi_0(\ca_i^\sim)$ is the space of ``half-densities'' and $\hdfx(\ca_i)\hookrightarrow\hdfx(\ca_i)^\vee$ via the standard inner product.

Given a pair of CY spans $(f_1,f_2):\cb_1\to \ca_1\times\ca_2$ and $(g_1,g_2):\cb_2\to \ca_2\times\ca_3$, the \textit{composed span} $\cb\to\ca_1\times\ca_3$ obtained from the diagram
\begin{equation*}
    \begin{tikzcd}
           &       & \cb   \arrow[dl]\arrow[dr]&       & \\
           & \cb_1 \arrow[dl]\arrow[dr]&       & \cb_2 \arrow[dl]\arrow[dr]& \\
     \ca_1 &       & \ca_2 &       & \ca_3, 
    \end{tikzcd}
\end{equation*}
where the square is a homotopy cartesian square of dg-categories, has a natural CY structure. 
Thus, we may consider a category whose morphisms are CY spans.
For fixed $n$ and $q$ denote by $\spancat^{n\mathrm{CY}}$ the category with objects the {\degfin} dg-categories over $\F_q$ with $(n-1)$CY structure and morphisms the spans $(f_1,f_2)$, up to equivalence, admitting an $n$CY structure and so that $\pi_0(f_1^\sim)$ has finite fibers.

\begin{theorem}\label{intro_FunctorialityThm}
Let $n$ be even, then $\hdfx$ is a functor from $\spancat^{n\mathrm{CY}}$ to the category of vector spaces and linear maps over $\C$.
\end{theorem}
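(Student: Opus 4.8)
The plan is to evaluate $\hdfx$ on spans as explicit matrices and verify functoriality entry by entry: the identity span must go to the identity matrix, and a composite of CY spans must go to the matrix product. For a span $f=(f_1,f_2)\colon\cb\to\ca_1\times\ca_2$ carrying an $n$CY structure with $\pi_0(f_1^\sim)$ of finite fibers, I read off from \eqref{IntRelCY} the matrix
\[
\hdfx(f)_{a_2,a_1}=\sum_{\substack{x\in\pi_0(\cb^\sim)\\ f_1 x\cong a_1,\ f_2 x\cong a_2}}\frac{|\aut_{\ca_1}(f_1 x)|^{1/2}\,|\aut_{\ca_2}(f_2 x)|^{1/2}}{|\aut_\cb(x)|}\,q^{\frac12\gamma(f,x)},
\]
using $\aut_{\ca_1\times\ca_2}(fx)=\aut_{\ca_1}(f_1 x)\times\aut_{\ca_2}(f_2 x)$; the finite-fiber hypothesis makes every column finitely supported, so this is a well-defined linear map $\hdfx(\ca_1)\to\hdfx(\ca_2)$. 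The first point to check is independence of the chosen $n$CY structure: in the long exact sequence of Definition~\ref{def:CYfunctor} the boundary map $\partial\colon\Ext^0_\cb(x,x)^\vee\to\Ext^n_\cb(x,x)$ is followed by the functor-induced map $\Ext^n_\cb(x,x)\to\Ext^n_{\ca_1\times\ca_2}(fx,fx)$, so $\operatorname{im}\partial=\ker(f_*)$ and hence $\operatorname{rk}\partial$ — the only part of $\gamma(f,x)$ referring to the structure — depends only on $f$.

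\textbf{Identity.} The unit of $\spancat^{n\mathrm{CY}}$ at an $(n-1)$CY category $\ca$ is the diagonal $\Delta=(\mathrm{id},\mathrm{id})\colon\ca\to\ca\times\ca$. Its matrix is supported where $a_1\cong a_2=:a$, where the single contributing object $a$ has weight $q^{\frac12\gamma(\Delta,a)}$, so it suffices to show $\gamma(\Delta,a)=0$. For $n\geq 0$ the $(n-1)$CY duality $\dim\Ext^i_\ca(a,a)=\dim\Ext^{n-1-i}_\ca(a,a)$ makes $\langle a,a\rangle_{0,\ldots,n-1}$ — an alternating sum over a range symmetric under $i\mapsto n-1-i$ — equal to $(-1)^{n-1}$ times itself, hence $0$ since $n$ is even; and $\operatorname{rk}\partial=0$ because $\partial$ is followed by the split injection $\Ext^n_\ca(a,a)\hookrightarrow\Ext^n_\ca(a,a)^{\oplus 2}$ induced by $\Delta$. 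For $n<0$ the term $-\langle a,a\rangle_{n,\ldots,-1}$ vanishes by the same duality. Hence $\hdfx(\Delta)$ is the identity matrix.

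\textbf{Composition, reduction.} Given composable CY spans $f\colon\cb_1\to\ca_1\times\ca_2$ and $g\colon\cb_2\to\ca_2\times\ca_3$, let $\cb$ be the homotopy pullback of $\cb_1\xrightarrow{f_2}\ca_2\xleftarrow{g_1}\cb_2$, with projections $p_1,p_2$, and let $h=(f_1 p_1,g_2 p_2)\colon\cb\to\ca_1\times\ca_3$ be the composed span with its natural $n$CY structure recalled above. Writing out the product $\hdfx(g)\circ\hdfx(f)$ of column-finite matrices and using $|\aut_{\ca_2}(f_2 x)|^{1/2}|\aut_{\ca_2}(g_1 y)|^{1/2}=|\aut_{\ca_2}(c)|$ whenever $c:=f_2 x\cong g_1 y$, the identity $\hdfx(h)=\hdfx(g)\circ\hdfx(f)$ reduces, after organising the objects $z\in\pi_0(\cb^\sim)$ by the triple $([p_1 z],[p_2 z],[c])$, to a local identity for each such triple. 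The objects $z$ above a fixed $([x],[y])$ are the $(\aut_{\cb_1}(x)\times\aut_{\cb_2}(y))$-orbits on $\Iso_{\ca_2}(f_2 x,g_1 y)$; from the path-space fibration $\cb^\sim\to\cb_1^\sim\times\cb_2^\sim$ over $\ca_2^\sim$ one sees that each $\aut_\cb(z)$ is an extension of a stabilizer inside $\aut_{\cb_1}(x)\times\aut_{\cb_2}(y)$ by a quotient of $\pi_2(\ca_2^\sim)=\Ext^{-1}_{\ca_2}(c,c)$, so orbit counting expresses the groupoid sum $\sum_z|\aut_\cb(z)|^{-1}$ through $|\aut_{\ca_2}(c)|$, $|\aut_{\cb_1}(x)|$, $|\aut_{\cb_2}(y)|$ and the ranks of the low-degree Mayer--Vietoris maps.

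\textbf{Composition, the core identity.} What remains — and this is the step I expect to be the main obstacle — is to match this groupoid count, weighted by $q^{\frac12\gamma(h,z)}$, against $|\aut_{\ca_2}(c)|\,|\aut_{\cb_1}(x)|^{-1}\,|\aut_{\cb_2}(y)|^{-1}\,q^{\frac12(\gamma(f,x)+\gamma(g,y))}$ for each triple. The inputs are: (i) the Mayer--Vietoris long exact sequence
\[
\cdots\to\Ext^i_\cb(z,z)\to\Ext^i_{\cb_1}(x,x)\oplus\Ext^i_{\cb_2}(y,y)\xrightarrow{\ \mu_i\ }\Ext^i_{\ca_2}(c,c)\to\Ext^{i+1}_\cb(z,z)\to\cdots
\]
of the homotopy pullback, rewriting $\langle z,z\rangle_{0,\ldots,n-1}$ in terms of $\langle x,x\rangle_{0,\ldots,n-1}$, $\langle y,y\rangle_{0,\ldots,n-1}$, the ranks of the $\mu_i$, and a few explicit $\ca_2$-dimensions (the $(n-1)$CY duality on $\ca_2$ collapsing the truncated pairings over $\ca_2$); and (ii) the compatibility of the glued $n$CY structure on $h$ with those of $f$, $g$ and with the $(n-1)$CY structure on $\ca_2$, from the gluing construction recalled above, which presents the boundary map $\Ext^0_\cb(z,z)^\vee\to\Ext^n_\cb(z,z)$ as an assembly of the boundary maps for $f$ and $g$ and the connecting maps $\mu_i$. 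Combining (i) and (ii), the required exponent identity becomes a rank identity in the resulting commutative diagram — an octahedral-type diagram chase, pairing up via the $n$CY and $(n-1)$CY dualities the connecting maps lying outside the range $\{0,\ldots,n-1\}$ — which in particular forces $\gamma(h,z)$ to be constant along each fiber. Granting it, substituting back proves $\hdfx(h)=\hdfx(g)\circ\hdfx(f)$, and together with the identity computation this shows that $\hdfx\colon\spancat^{n\mathrm{CY}}\to\vectcat_{\C}$ is a functor. The case $n<0$ is analogous, using the range $\{n,\ldots,-1\}$ in place of $\{0,\ldots,n-1\}$ throughout.
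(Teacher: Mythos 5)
Your setup is sound: the matrix description of $\hdfx$, the independence of $\gamma(f,x)$ from the chosen CY structure, and the computation $\gamma(\Delta,a)=0$ for the identity span (using that $\langle a,a\rangle_{0,\ldots,n-1}=0$ because the range is symmetric under Serre duality in odd CY dimension $n-1$, and that the connecting map dies against the injective diagonal) are all correct, and your reduction of composition to a fiberwise identity over each triple $([x],[y],[c])$ is in the same spirit as the paper, which instead compares the space-level pullback with the pullback of $1$-groupoids via Lemma~\ref{lem:truncPullDefect} and then invokes Proposition~\ref{prop:PushPull}. But the proof has a genuine gap exactly where you flag it and then write ``Granting it'': the exponent identity is never established, and it is the entire mathematical content of the theorem. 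The statement you need is
\[
\gamma(h,z)\;=\;\gamma(f,x)+\gamma(g,y)+2\,\mathrm{rk}\bigl(\Ext^{-1}_{\ca_2}(c,c)\to\Ext^0_{\cb}(z,z)\bigr),
\]
and note that this shows your parenthetical claim that the identity ``forces $\gamma(h,z)$ to be constant along each fiber'' is false: $\gamma(h,z)$ genuinely varies with $z$ over a fixed $([x],[y],[c])$ (already visible in the Hall-algebra special case, where the ranks $r_1(\delta)+r_n(\delta)$ depend on the extension class $\delta$), and it is precisely the varying term $2\delta(z)$ that cancels the discrepancy in automorphism counts between the space-level and truncated pullbacks. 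An ``octahedral-type diagram chase'' is not a proof here, because the identity is not a formal consequence of the Mayer--Vietoris sequence alone; it uses the specific structure of the glued CY square.

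The paper closes this gap by a concrete bookkeeping device: the additivity of the \emph{defect} of bicartesian squares under pasting (Lemma~\ref{lem:DefectAdditivity}), applied to the $3\times 3$ window diagram of $\cb$-bimodules \eqref{diag:window} produced by the composition of CY spans, evaluated at $(z,z)$. One names the defects $\delta_{11},\delta_{12},\delta_{21},\delta_{22}$ of the four small squares, identifies $\delta_{11}$ with $\mathrm{rk}\bigl(\Ext^{-1}_{\ca_2}(c,c)\to\Ext^0_{\cb}(z,z)\bigr)$, expresses $\delta_{21},\delta_{12}$ and the defect of the big square in terms of $\gamma(f,x)$, $\gamma(g,y)$, $\gamma(h,z)$ and truncated pairings, relates $\delta_{22}$ to $\delta_{11}$ through the long exact sequence of the upper-left square using $\langle c,c\rangle_{0,\ldots,n-1}=0$ (the only place the oddness of $n-1$ enters), and sums. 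To complete your argument you would have to carry out this computation (or an equivalent one); without it, and without a precise version of your orbit-counting step (which is also only sketched — the paper avoids it by working with $\tau_{\leq 1}$ of the cores and correcting with Lemma~\ref{lem:truncPullDefect}), the proposal does not yet prove functoriality.
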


This is Theorem~\ref{thm:functoriality} in the main text.

\subsubsection{Analogies}

Before discussing applications, we will briefly highlight some connections and analogies with geometry and mathematical physics. These will not be used in the rest of the paper, but served as inspiration for us and perhaps as a useful source of intuition for the reader.

Following Kontsevich--Soibelman~\cite{KS_Ainfty} and Pantev--To\"en--Vaqui\'e--Vezzosi~\cite{PTVV13}, one can think of $n$CY categories as non-commutative spaces with $(2-n)$-shifted symplectic structure.
Here, we interpret $\Hom^\bullet(x,x)[1]$ as the ``tangent space'' to the object $x$.
Furthermore, motivated by the main results of~\cite{BravDyckerhoffRelCYII} we can think of CY functors as inclusions of Lagrangian subspaces.
In particular, CY spans are analogous to Lagrangian correspondences, the morphisms in Weinstein's symplectic ``category''~\cite{weinstein_sympcat}.
Thus, $\spancat^{n\mathrm{CY}}$ is a non-commutative analog of the symplectic category and our functor $\hdfx$ is a kind of prequantization procedure.

A fundamental observation in supergeometry, made by Khudaverdian in~\cite{khudaverdian04}, is that while supermanifolds, $M$, with \textit{even} symplectic structure have a canonical volume form (Liouville's theorem), the story is different in the odd case. 
What makes sense instead is the integral
\begin{equation}\label{semidensityInt}
\int_Ls    
\end{equation}
where $s$ is a half-density on $M$ and $L\subset M$ is a Lagrangian submanifold.
These structures play a prominent role in the Batalin--Vilkovisky formalism in mathematical physics.
We suggest that the above integral is analogous to~\eqref{IntRelCY}, where $L\hookrightarrow M$ corresponds to the evenCY functor $f:\cb\to\ca$ and $s$ to $\alpha$.
Vaguely speaking (we do not know how to make this more precise), both \eqref{IntRelCY} and \eqref{semidensityInt} are well-defined for the same formal reason.

\subsubsection{Application: Hall algebras for oddCY categories}

The most elementary version of the Hall algebra construction takes as input an essentially small abelian category, $\ca$, for which both $\Hom(x,y)$ and $\Ext^1(x,y)$ are finite for any $x,y\in\ca$.
The \textit{Hall algebra} is then the $\Z$-algebra (ring) with basis the set of isomorphism classes of objects in $\ca$ and product
\begin{equation}
    z\cdot x\coloneqq\sum_y\left|\left\{x'\subseteq y\mid x'\cong x, y/x'\cong z\right\}\right|y
\end{equation}
which is associative and unital. If the Euler pairing
\[
\langle x, y \rangle = \sum_{i \geq 0} (-1)^i \dim_{\F_q} \Ext^i(x, y)
\]
on the category $\ca$ is well-defined, then the natural object to study is the \emph{twisted} Hall algebra $(\ch(\ca)_{\scriptscriptstyle{\mathrm{tw}}}, \ast)$, which is the same vector space as the Hall algebra (after extension of scalars to $\QQ(q^{\frac{1}{2}})$) but with the twisted multiplication
\[
z \ast x = q^{\frac{1}{2} \langle z, x \rangle} z \cdot x.
\]

Examples of suitable abelian categories $\ca$ are categories of representations of quivers over finite fields.
By a celebrated result of Ringel~\cite{ringel_hall}, the twisted Hall algebra $\ch(\mod \F_q Q)_{\scriptscriptstyle{\mathrm{tw}}}$
recovers the nilpotent part of the quantum group associated with $Q$ when the quiver $Q$ is of Dynkin type. This construction was instrumental in Lusztig's discovery of (dual) canonical bases \cite{Lusztig}, and plays an important role in representation theory to this day. Hall algebras of other categories have been related to automorphic forms \cite{Kapranov1997}, Donaldson--Thomas invariants \cite{KS08}, skein algebras \cite{HaidenSkeinHall}, etc.

Ringel's result and the follow-up work in the 1990s \cite{Green, Kapranov1997, Kapranov, PengXiao1997, Xiao} suggested that the full quantum group (of type ADE) should be somehow obtained from the $\Z/2$-graded derived category $\cd_2(\mod \F_q Q)$ of the quiver $Q$, also known as its \emph{root category}. Explicitly, the quantum group may be obtained from its Borel part by the \emph{reduced Drinfeld double} construction, and so it was expected that one should be able to define some Hall algebra of the root category which would recover the Drinfeld double of $\ch(\mod \F_q Q)_{\scriptscriptstyle{\mathrm{tw}}}$.
The root category is triangulated, and it thus became an important natural problem to define Hall algebras of triangulated categories more generally.

The problem was partially solved by To\"{e}n in~\cite{ToenDerivedHall}.
The input of the construction is a pre-triangulated dg-category, $\cc$, satisfying suitable finiteness conditions.
One of the key ingredients in To\"{e}n's definition is the notion of homotopy cardinality~\eqref{hcard}, applied to classifying spaces of exact triangles in $\cc$.
Concretely, the \emph{derived Hall algebra} $\cd\ch(\cc)$ of $\cc$ is then the $\mathbb Q$-algebra with basis $\{m_x\}$ parameterized by the isomorphism classes of objects in the homotopy category of $\cc$ and product
\begin{equation}\label{KShallprod}
m_z\cdot m_x=\left(\prod_{i=0}^\infty|\Ext^{-i}(z,x)|^{(-1)^{i+1}}\right)\sum_{\delta\in\Ext^1(z,x)}m_{C(\delta)},
\end{equation} 
where $C(\delta) \coloneqq \mathrm{Cone}\left(\delta:z[-1]\to x\right)$.
The algebra $\cd\ch(\cc)$ is well-defined assuming all $\Ext^i(x,y)$ are finite and trivial for $i$ sufficiently small.
(In fact, the formula \eqref{KShallprod} is taken from~\cite{KS08} and is obtained from To\"en's by rescaling the basis.) 
Assuming  all $\Ext^i(x,y)$ are finite and trivial for $|i| \gg 0$, the Euler pairing on $\cc$ is well-defined, and so one can  define a natural twisted version of $\cd\ch(\cc)$ just as in the abelian case. The definition applies for the bounded derived category of representations of a Dynkin quiver, and the resulting twisted derived Hall algebras were proved to be isomorphic to twisted Grothendieck rings of certain monoidal categories of representations of quantum affine algebras and related to graded quiver varieties \cite{HernandezLeclerc}. They also contain $\ch(\mod \F_q Q)_{\scriptscriptstyle{\mathrm{tw}}}$ as subalgebras. This suggests that twisted derived Hall algebras are the correct object to study. 

Unfortunately, \eqref{KShallprod} is ill-defined for periodic derived categories since the finiteness conditions clearly fail in this generality. Thus, a different approach was required to define (twisted) Hall algebras of root categories and recover Drinfeld doubles of twisted Hall algebras of abelian categories.
The first partial remedy was found by Bridgeland in his seminal paper \cite{bridgeland_quantum}. In this work and in further papers developing its ideas \cite{Gorsky2013, Gorsky2018, Gorsky_thesis, LuPeng, Yanagida}, an extended version  of a derived Hall algebra of a triangulated category $\cT$ has been defined under different finiteness conditions. Such extended versions are associative and make sense in some situations where the formula \eqref{KShallprod} is not well-defined. The original paper \cite{bridgeland_quantum} uses this approach to realize entire quantum groups as a version of extended Hall algebras of root categories defined via $2$-periodic complexes with projective components, and the subsequent works generalize and modify Bridgeland's construction in different directions always using certain Quillen exact categories, e.g. categories of complexes, as the starting point. In these extended Hall algebras of root categories, acyclic complexes play the role of the Cartan part of the quantum group; in particular, they form a commutative subalgebra.

As noted in \cite{BurbanSchiffmannEllipticHall}, if $\cA$ is $1$-Calabi--Yau as an abelian category --- the main example being the category of coherent sheaves on an elliptic curve --- one can avoid considering an extended version, i.e. one expects the non-extended twisted Hall algebra of the root category of $\cA$ to be well-defined, ideally  using only the structure of the triangulated root category and not any extra information depending on its abelian subcategory $\ca$. We start from this observation and extend this to a much more general class of triangulated and dg-categories.

Our main contribution is to show that when a {\degfin} pre-triangulated dg-category is assumed to be Calabi--Yau of \textit{odd} dimension, then it has a naturally defined Hall algebra. 

\begin{theorem}
\label{thm:hallintro}
Let $\cc$ be an essentially small pre-triangulated dg-category over a finite field $\F_q$ which is {\degfin} and $n$-Calabi--Yau for some odd $n$.
Then the 
$\C$-vector space with basis $\{u_x\}$ parameterized by  isomorphism classes of objects $x\in\cc$ has an associative unital algebra structure $\hallcy(\cc)$ with product
\begin{equation}\label{hallprodintro}
u_z\cdot u_x=\begin{cases} \sum_{\delta\in\Ext^1(z,x)}q^{\frac{1}{2}(\langle z,x\rangle_{1,\ldots, n-1}-r_1(\delta)-r_n(\delta))} u_{C(\delta)} & \text{for }n\geq 1 \\ 
\sum_{\delta\in\Ext^1(z,x)}q^{\frac{1}{2}(-\langle z,x\rangle_{n,\ldots, 0}-r_1(\delta)-r_n(\delta))} u_{C(\delta)} & \text{for }n\leq -1
\end{cases}
\end{equation}
where
\[
r_i(\delta)\coloneqq \rank\left(\Ext^{i-1}(x,x)\oplus\Ext^{i-1}(z,z)\xrightarrow{(a,b)\mapsto a\delta + (-1)^{i-1} \delta b
}\Ext^i(z,x)\right).
\]
\end{theorem}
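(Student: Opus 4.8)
The plan is to exhibit the product \eqref{hallprodintro} as the image, under the functor $\hdfx$ of Theorem~\ref{intro_FunctorialityThm}, of a canonical ``extension span'' attached to $\cc$, and then to deduce associativity and unitality for free from the functoriality of $\hdfx$ under composition of CY spans. Since $\cc$ is $n$CY with $n$ odd, it is an object of $\spancat^{(n+1)\mathrm{CY}}$ (whose objects carry an $n$CY structure), and $n+1$ is even, so the conclusions of Theorem~\ref{intro_FunctorialityThm} are available.

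First I would set up the span. As $\cc$ is pre-triangulated, let $\cc^{(2)}$ denote the dg-category of exact triangles (equivalently, cofiber sequences) in $\cc$, with the three ``vertex'' functors $s,b,q:\cc^{(2)}\to\cc$ sending a triangle $x\to y\to z\to x[1]$ to $x$, $y$, $z$ respectively. I regard this as a span $f=(q,s\,;\,b):\cc^{(2)}\to(\cc\times\cc)\times\cc$ and check that it is a morphism in $\spancat^{(n+1)\mathrm{CY}}$: the source $\cc\times\cc$ and target $\cc$ are $n$CY; the map $\pi_0\bigl((\cc^{(2)})^\sim\bigr)\to\pi_0\bigl((\cc\times\cc)^\sim\bigr)$ has finite fibers, since for fixed $x,z$ the exact triangles with sub $\cong x$ and quotient $\cong z$ are classified by $\Ext^1_\cc(z,x)$ modulo the action of $\aut(x)\times\aut(z)$, which is finite by degreewise finiteness; and --- the one genuinely structural point --- $f$ carries a canonical weak right $(n+1)$CY structure relative to the $n$CY structures on source and target. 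I expect this last input to follow from the general theory developed earlier in the paper: the category of cofiber sequences in an $n$CY category is $(n+1)$CY relative to its three boundary evaluations, in the spirit of Brav--Dyckerhoff; informally, $\cc^{(2)}$ behaves like a ``cobordism'' with boundary $\cc\sqcup\cc$ and $\cc$. Establishing this, together with an explicit description of the CY long exact sequence attached to $f$, is where I expect the real work to lie.

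Granting this, Theorem~\ref{intro_FunctorialityThm} produces a linear map $\hdfx(f):\hdfx(\cc\times\cc)\to\hdfx(\cc)$; since $\pi_0\bigl((\cc\times\cc)^\sim\bigr)=\pi_0(\cc^\sim)\times\pi_0(\cc^\sim)$ we have $\hdfx(\cc\times\cc)=\hdfx(\cc)\otimes\hdfx(\cc)$, so $\hdfx(f)$ is a bilinear operation on $\hdfx(\cc)=\C\{u_x\}$, and I set $u_z\cdot u_x\coloneqq\hdfx(f)(u_z\otimes u_x)$. For associativity, both threefold composites of $f$ with itself are --- by the homotopy-cartesian composition rule for spans recalled before Theorem~\ref{intro_FunctorialityThm} --- equivalent as $(n+1)$CY spans to the span $\cc^{(3)}\to\cc^{\times 3}\times\cc$ given by the dg-category $\cc^{(3)}$ of length-$3$ filtrations, equipped with its successive-quotient evaluations and its total-object evaluation; the comparison is furnished by the two natural equivalences of the form $\cc^{(3)}\simeq\cc^{(2)}\times_\cc\cc^{(2)}$. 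Since $\hdfx$ is a functor on $\spancat^{(n+1)\mathrm{CY}}$ it sends equivalent spans to the same linear map and converts composition of spans into composition of linear maps, so associativity follows. Unitality is the analogous and easier statement: $u_0$ is the image of $\C$ under the span determined by the zero object, and the relevant composites, which involve triangles with a zero sub (respectively zero quotient), collapse to identities.

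It remains to identify $\hdfx(f)$ with the explicit formula \eqref{hallprodintro}. Pairing against $u_y$ and unwinding \eqref{IntRelCY}, $\langle\,\hdfx(f)(u_z\otimes u_x),\,u_y\,\rangle$ is a sum over the groupoid of exact triangles $x\to y\to z\to x[1]$ of $\frac{|\aut(x)\times\aut(y)\times\aut(z)|^{1/2}}{|\aut(T)|}\,q^{\frac{1}{2}\gamma(f,T)}$. I would then: (i) replace the groupoid of such triangles with middle $\cong y$ by the fibre of the mapping space $\mathrm{Map}_\cc(z,x[1])$ --- whose homotopy groups based at $\delta$ are the $\Ext^{1-i}_\cc(z,x)$ for $i\ge 1$ --- over the $\aut(x)\times\aut(z)$-orbit of those $\delta\in\Ext^1_\cc(z,x)$ with $C(\delta)\cong y$, which simultaneously produces the sum $\sum_{\delta\in\Ext^1(z,x)}$ and the infinite alternating product $\prod_{i\le 0}|\Ext^i_\cc(z,x)|^{\pm1}$ familiar from To\"en's formula \eqref{KShallprod}; (ii) compute $\gamma(f,T)$ by expressing $\Ext^\bullet_{\cc^{(2)}}(T,T)$, and the connecting map $\Ext^0\to\Ext^n$ in its CY long exact sequence, in terms of the $\Ext^\bullet_\cc$ among $x,y,z$ together with the ``multiplication by $\delta$'' operators $(a,b)\mapsto a\delta+(-1)^{i-1}\delta b$, whose ranks are responsible for the correction terms $r_1(\delta)$ and $r_n(\delta)$; and (iii) apply the $n$CY duality $\Ext^{1-i}_\cc(z,x)\cong\bigl(\Ext^{n-1+i}_\cc(x,z)\bigr)^\vee$ to fold the infinite product against the $q^{\frac{1}{2}\gamma(f,T)}$ factor, leaving the finite exponent $\tfrac{1}{2}\langle z,x\rangle_{1,\ldots,n-1}$ when $n\ge 1$ (respectively $-\tfrac{1}{2}\langle z,x\rangle_{n,\ldots,0}$ when $n\le -1$). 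With \eqref{hallprodintro} in hand the algebra axioms are formal consequences of Theorem~\ref{intro_FunctorialityThm}; the construction of the relative CY structure on $\cc^{(2)}$ in step two, and the bookkeeping in steps (ii)--(iii), are the parts requiring genuine care.
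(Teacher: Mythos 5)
Your proposal follows essentially the same route as the paper: the product is defined as $\hdfx$ applied to the span $\cc\times\cc\leftarrow\cc^{\typeacat{2}}\to\cc$, whose $(n+1)$CY structure comes from tensoring the $1$CY functor $\typeacat{2}\to\typeacat{1}^{\times 3}$ of Example~\ref{ex:an1cy} with the $n$CY category $\cc$ (Proposition~\ref{propHallSpanCY}); associativity and unitality follow from Theorem~\ref{thm:functoriality} applied to the two compositions, both equivalent to $\cc^{\times 3}\leftarrow\cc^{\typeacat{3}}\to\cc$; and the structure constants are extracted by computing $\gamma$ for a triangle via the two-step twisted complex $z\xrightarrow{\delta}x$, whose "multiplication by $\delta$" operators produce exactly the ranks $r_i(\delta)$, with the $n$CY duality identifying $r_{n-i+1}$ with the rank of the dual map. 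One detail of your identification step is off: in step (i) you invoke the homotopy fibre of the mapping space and the infinite alternating product $\prod_{i\le 0}|\Ext^i(z,x)|^{\pm 1}$ from To\"en's formula, which is ill-defined in the {\degfin} (e.g.\ periodic) setting and cannot be "folded" against $q^{\frac{1}{2}\gamma}$ in step (iii); no such product ever arises, because $\hdfx$ only uses $\pi_0$, $\aut$, and $\gamma$, so the counting of triangles with fixed ends $(x,z)$ uses only the tail of the long exact sequence of the fibration $(\cc^{\typeacat{2}})^\sim\to(\cc\times\cc)^\sim$ (from $\Ext^{-1}$ of the ends onward), yielding the finite factor $q^{-\langle z,x\rangle_0+r_0(\delta)}\,|\aut(t)|/(|\aut(x)||\aut(z)|)$; combining this with the $\gamma$-computation and the observation that the CY connecting map vanishes for this span (inherited from Example~\ref{ex:an1cy}) gives \eqref{hallprodintro} directly, exactly as in the paper.
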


Xiao--Xu \cite{XiaoXu08} proved that the product formulas for derived Hall algebras make sense and define associative algebras for arbitrary triangulated categories $\ct$ satisfying the finiteness conditions, without assuming the existence of enhancements. We note that the formulas \eqref{hallprodintro} also make sense for triangulated categories. We have not tried to prove the associativity of the Hall algebra $\hallcy(\ct)$ for an arbitrary $n$CY triangulated category $\ct$ with odd $n$, although various comparisons with other constructions defined in the triangulated setting suggest that it should hold either with our formulas or with some strict generalizations of those. We note, however, that the associativity certainly holds if we assume that $\ct$ is algebraic, i.e. admits a dg-enhancement, but the $n$CY property is required only on the triangulated level (which is a priori weaker than the assumption of Theorem \ref{thm:hallintro}). This can be proved using 3-step twisted complexes in an enhancement of $\ct$ and the associated spectral sequences (we omit the proof). 

In case of positive $n$, natural examples are given by root (or bounded derived) categories of categories of coherent sheaves \cite{BravDyckerhoffRelCY}, categories of matrix factorizations and more general singularity categories, higher generalized cluster categories,  and certain Fukaya categories. For negative $n$, the prototypical example is given by the stable category of a finite-dimensional symmetric algebra, which is (-1)-Calabi--Yau. Further examples come from dg-stable categories of non-positively graded finite-dimensional symmetric algebras, including negative cluster categories of hereditary algebras recently gaining interest in the context of positive non-crossing partitions \cite{Brightbill2020, Brightbill2021, CSPP, IyamaJin}.

The analogy with the discussion in Section~\ref{subsec: level 0} concerning twisted homotopy cardinality goes as follows. Assuming that $\Ext^i(x,y)=0$ for $|i|\gg 0$ in an oddCY {\degfin} category $\cc$, 
both the derived Hall algebra and the algebra in Theorem \ref{thm:hallintro} are well-defined, and so is the Euler pairing $\langle x, y \rangle$. By suitably rescaling the basis of the former in order to work with half-densities, we observe that these two algebras are in fact related by the twist  
by $q^{\frac{1}{2} \langle z, x \rangle}$.
We can thus indeed consider the algebra in Theorem \ref{thm:hallintro} as \emph{the} replacement of the twisted To\"en's derived Hall algebra of $\cc$ for oddCY categories which are not necessarily locally homologically finite. As discussed above, we believe the twisted derived Hall algebra --- or our replacement --- to be a more fundamental object than its untwisted counterpart.

The formula~\eqref{hallprodintro} for the Hall algebra product simplifies further if $n=1$, see Section~\ref{subsec:hallOddCY}. In this case, the product reads simply
\begin{equation}
u_z\cdot u_x=\sum_{\delta\in\Ext^1(z,x)}q^{-r_1(\delta)}u_{C(\delta)}.
\end{equation}
This includes all  $\Z/2m$-graded categories with $2m \mid n-1$, in particular all $\Z/2$-graded categories, since they can equivalently be seen as $1$CY categories.

As a special case, we consider root categories $\cd_2(\ca)$ of $n$-Calabi--Yau abelian categories. We show that our intrinsically defined algebra of the root category of $\ca$ matches a central reduction of the algebra defined in \cite{bridgeland_quantum, Gorsky2013, Gorsky_thesis, LuPeng}. Thanks to results of \cite{LuPeng, Yanagida}, this implies the following. 

\begin{theorem}
[= Corollary \ref{cor:DD}]
\label{thm:DD_intro}
Let $\ct$ be a triangulated category equivalent to the root category $\cd_2(\ca)$ of a $1$CY abelian category $\ca$. 
Then $\halluncy(\cd_2(\ca))$ is isomorphic to the Drinfeld double of the twisted Hall algebra $\ch(\ca)_{\scriptscriptstyle{\mathrm{tw}}}$ of $\ca$. 
\end{theorem}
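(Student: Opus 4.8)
The proof assembles two isomorphisms. Since $\ca$ is $1$-Calabi--Yau as an abelian category it has global dimension at most $1$, hence the root category $\cd_2(\ca)$ is essentially small, pre-triangulated (with its canonical dg-enhancement), degreewise finite, and $1$CY as a triangulated category, so Theorem~\ref{thm:hallintro} applies with $n=1$ and the product on $\halluncy(\cd_2(\ca))$ is $u_z\cdot u_x=\sum_{\delta\in\Ext^1(z,x)}q^{-r_1(\delta)}u_{C(\delta)}$. The strategy is: (1) identify $\halluncy(\cd_2(\ca))$ with a central reduction of the semi-derived Ringel--Hall algebra of the category $\cc_{\Z/2}(\ca)$ of $\Z/2$-graded complexes over $\ca$ --- this is the $n=1$ case of the general comparison with the algebras of \cite{bridgeland_quantum,Gorsky2013,Gorsky_thesis,LuPeng} indicated in the introduction; (2) quote the theorem of \cite{LuPeng,Yanagida} that this central reduction is isomorphic, as a unital algebra, to the Drinfeld double of $\ch(\ca)_{\scriptscriptstyle\mathrm{tw}}$. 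Since $\halluncy$ depends only on $\cd_2(\ca)$ as an algebraic triangulated category, the conclusion then transports to every $\ct$ triangle-equivalent to it.

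For Step~(1) I would first pin down the linear isomorphism on underlying spaces. By heredity every object of $\cd_2(\ca)$ is isomorphic to $M_0\oplus M_1[1]$ with $M_0,M_1\in\ca$, and sending $u_{M_0\oplus M_1[1]}$ to the suitably renormalised class of a $\Z/2$-graded complex with even homology $M_0$ and odd homology $M_1$ is a bijection of bases precisely after passing to the central reduction, which is the step that removes the ambiguity in the acyclic summand of the complex. To promote this to an algebra map I would compare structure constants. On the root-category side, using heredity, for $x=M_0\oplus M_1[1]$ and $z=N_0\oplus N_1[1]$ one has
\[ \Ext^1_{\cd_2(\ca)}(z,x)\cong\Ext^1_\ca(N_0,M_0)\oplus\Ext^1_\ca(N_1,M_1)\oplus\Hom_\ca(N_1,M_0)\oplus\Hom_\ca(N_0,M_1), \]
and $C(\delta)$ decomposes accordingly: the $\Ext^1_\ca$-components contribute honest extensions in $\ca$ (the ``positive'' and ``negative'' parts of the double), while the $\Hom_\ca$-components contribute cancellation of common summands in $C(\delta)$, which on the complex side is exactly multiplication by the Cartan generators. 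Translating ``number of classes $\delta$ with a prescribed cone'' into ``number of admissible subcomplexes'' is a Riedtmann--Peng--Xiao--type count, and the weight $q^{-r_1(\delta)}$, together with the automorphism factors inherited from the half-density normalisation in~\eqref{IntRelCY}, should exactly account for the discrepancy between the two counts and reproduce both the $q^{\frac{1}{2}\langle\cdot,\cdot\rangle}$ twist built into $\ch(\ca)_{\scriptscriptstyle\mathrm{tw}}$ and the Green pairing appearing in the double.

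For Step~(2) there is nothing to prove beyond citing \cite{LuPeng,Yanagida}; composing with Step~(1) gives $\halluncy(\cd_2(\ca))\cong$ Drinfeld double of $\ch(\ca)_{\scriptscriptstyle\mathrm{tw}}$, and the intrinsic, equivalence-invariant nature of $\halluncy$ upgrades this to the statement as phrased for arbitrary $\ct$. As an alternative to routing through complexes, one could instead check directly that the elements $u_M$ and $u_{M[1]}$ ($M\in\ca$) generate $\halluncy(\cd_2(\ca))$, that $M\mapsto u_M$ and $M\mapsto u_{M[1]}$ embed two copies of $\ch(\ca)_{\scriptscriptstyle\mathrm{tw}}$, and that the cross-products $u_{M[1]}\cdot u_N$ obey the defining Drinfeld-double relations; this trades the complex-theoretic input for a presentation check but needs the same quadratic-form computations.

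The step I expect to be the main obstacle is the bookkeeping of the $q$-powers: matching the intrinsic weight $q^{-r_1(\delta)}$ --- and the $|\mathrm{Aut}|$ and $|\mathrm{Aut}(f(x))|^{1/2}$ factors carried over from the half-density formalism of~\eqref{IntRelCY} --- with the Euler-form twist of $\ch(\ca)_{\scriptscriptstyle\mathrm{tw}}$ and the skew pairing of the double, uniformly across the change of basis and the central reduction. Concretely this means expressing $r_1(\delta)$, via the long exact sequence attached to the $1$CY structure on $\cd_2(\ca)$, in terms of dimensions of $\Hom$- and $\Ext^1$-spaces in $\ca$ among the components of $z$ and $x$, and verifying that, after summing over $\mathrm{Aut}$-orbits of $\delta$, the result is the relevant Hall number times the expected power of $q$. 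This is a finite and essentially mechanical computation, but it is where every normalisation convention must be reconciled.
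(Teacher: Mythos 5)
Your proposal follows essentially the same route as the paper: Theorem~\ref{thm:DD_intro} is obtained there by identifying $\halluncy(\cd_2(\ca))$ with the central reduction (at the classes of acyclic complexes) of the twisted semi-derived Hall algebra $\cS\cd\ch_{2,\mathrm{tw}}(\ca)$ of $\Z/2$-graded complexes (Theorem~\ref{thm:root_cat_comparison}) and then invoking \cite{bridgeland_quantum,Gorsky2013,LuPeng,Yanagida} for the Drinfeld-double identification --- precisely your Steps (1) and (2). The only divergence is internal to Step (1): the paper proves that comparison uniformly for all odd $n$, without the hereditary decomposition $x\cong M_0\oplus M_1[1]$ you sketch, by showing the relevant twist form descends to $K_0$ (checked on stalk complexes) and matching weights through the rank $d_{\cd_2,1}(\delta)$, but this is the same bookkeeping you anticipate.
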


This recovers the case of the categories of coherent sheaves on elliptic curves, where Hall algebras and their Drinfeld doubles were shown to be related to many interesting topics including double affine Hecke algebras, symmetric functions, link invariants, Fukaya  categories, etc. Theorem \ref{thm:DD_intro} establishes that $\halluncy(\cd_2(\ca))$ provides a ``global'' construction, expected since the mid-1990s, of the Drinfeld double $\ch(\ca)_{\scriptscriptstyle{\mathrm{tw}}}$ as a Hall algebra of the triangulated root category: its definition does not refer to the abelian category $\ca$ in any way, unlike the algebra of \cite{bridgeland_quantum, Gorsky2013, Gorsky_thesis, LuPeng} or the constructions in more recent works \cite{chen2023derived, zhang2022hall}. Thanks to our definition depending only on the root category, we recover (in the $1$CY case) the derived invariance of the 
Drinfeld double of $\ch(\ca)_{\scriptscriptstyle{\mathrm{tw}}}$, first proved by Cramer \cite{Cramer} by fairly \emph{ad hoc} methods.

To\"en \cite{ToenDerivedHall} proved that the Hall algebra of the heart of a $t$-structure on $\cc$ embeds as a subalgebra into the derived Hall algebra $\cd\ch(\cc)$, provided both algebras are well-defined. As our algebras play the role of twisted derived Hall algebras in the CY setting, we have a counterpart of this result for the twisted Hall algebra of the heart $\ca \subseteq H^0(\cc)$. In fact, 
a bit more holds for $\hallcy(\cc)$: The (appropriately twisted) Hall algebra of every proper abelian subcategory in the sense of \cite{Jorgensen} or, more generally, of a suitably embedded Quillen exact subcategory $\ce \subseteq H^0(\cc)$ naturally embeds as a subalgebra into $\hallcy(\cc)$.

\iftoggle{arxiv}{
Finally, we compare our intrinsically defined Hall algebras $\hallcy(\cc)$ of pretriangulated dg-categories or their homotopy categories with other constructions: a variation of derived Hall algebras valid under a certain ``odd-periodicity'' condition \cite{XuChen}; another generalization of Bridgeland's algebra, defined for Frobenius exact categories \cite{Gorsky2018}; and its variation from \cite{Gorsky2024} using  Verdier quotients of triangulated categories as an input. In all the cases, we show that in the algebraic $n$CY case, the algebra $\hallcy(\ct)$
is isomorphic either to a suitably twisted algebra in question, or to its certain central reduction. 

As an aside, we note that the construction using Verdier quotients allows one to associate an extended Hall algebra with any $n$CY triangulated category $\ct$ arising from an $(n+1)$-CY triple in the sense of \cite{IyamaYang, IyamaYang2, jin2019reductions}. For $n$ odd, one can perform a twist followed by a central reduction and recover, as mentioned above, the algebra 
$\hallcy(\ct)$. For $n$ even, we do not see any twist making such a central reduction possible, but the extended version is still interesting and admits functorial properties. As a special case, this construction associates an algebra, which can be thought of as an extended Hall algebra of the cluster category, with any Jacobi-finite quiver with potential, and this algebra is invariant under mutations and admits a braid group action via spherical twists. We expect it to be interesting to study such algebras in more detail. Further, the general construction associated with CY triples suggests that our definition involving spans could allow for a functorial generalization yielding intrinsic ``extended'' Hall algebras from relative Calabi--Yau functors. We hope to address this in future work.
} 

The previous work~\cite{HaidenSkeinHall} of the second author suggested that the intrinsic Hall algebra of the $\Z/2m$-graded Fukaya category of a surface should be well-defined precisely when that category is 1CY, which eventually led us to the above definition.
Using the formalism developed here, $\Z/2m$-graded variants of the main results of~\cite{HaidenSkeinHall,HaidenFlagsTangles} can be formulated.
Details will appear in future work.

\subsection{Level 2: 2-spans and invariants of Legendrian tangles}

At this level, (1-)spans are replaced by 2-spans, i.e. coherent diagrams of dg-categories of the following form:
\begin{equation}\label{diag:2span}
\begin{tikzcd}
     & \cb_1 \arrow[dl,"f_{11}"']\arrow[dr,"f_{21}"] & \\
    \ca_1 & \cc \arrow[u,"g_1"]\arrow[d,"g_2"] & \ca_2 \\
     & \cb_2 \arrow[ul,"f_{21}"]\arrow[ur,"f_{22}"'] &
\end{tikzcd}
\end{equation}
Such a diagram can be folded along the vertical axis to a coherent square, and Christ--Dyckerhoff--Walde~\cite{CDW_complexes} have defined CY structures on squares (more generally cubical diagrams) of dg-categories. This notion is also contained, as a special case, in earlier work of Shende--Takeda~\cite{ShendeTakeda}.
In Section~\ref{subsec:cy2spans} we discuss a variant of their definition adapted for our purposes.
We then show that the operations of vertical composition and whiskering can be performed on CY 2-spans.

As a categorification of the functor $\hdfx$ taking CY 1-spans to linear maps we define a 2-functor $\hdffx$ which linearizes 2-spans (but not 1-spans) by sending a 2-span of the form~\eqref{diag:2span} to
\[
\hdfx\left(*\leftarrow \cc\to \cb_1{\times}_{\ca_1{\times} \ca_2} \cb_2\right)(1)\in\hdfx(\cb_1{\times}_{\ca_1{\times} \ca_2} \cb_2).
\]
where the vector space on the  right-hand side is by definition the set of 2-morphisms in the linearized category.

The formalism described above is applied in the setting of Legendrian tangles and their augmentation categories.

\subsubsection{Application: Invariants of Legendrian knots and tangles}

An immersed curve in $\R^3=J^1(\R)$ is \textit{Legendrian} if it is everywhere tangent to the standard contact distribution $\xi=\mathrm{Ker}(dz-ydx)$.
A \textit{Legendrian link}, $L$, is an embedding of a disjoint union of $S^1$'s as Legendrian curves.
These are commonly studied via their \textit{front}, which is their projection to the $xz$-plane.
For generic $L$, the front looks locally like the graph of a function $z(x)$ except for two types of singularities: crossings ($\times$) and cusps (left $\prec$ and right $\succ$).
Conversely, from any diagram in the plane locally modelled on functions $z(x)$, crossings, and cusps one can recover the corresponding Legendrian link by $y=dz/dx$.
For our purposes, we can think of Legendrian links as such planar diagrams.

Invariants of Legendrian links typically depend on a choice of \textit{grading structure} on $L$, which can be defined as follows.
Let $N\geq 0$, then a $\Z/N$-grading on $L$ is a function $\mu$ from the set of \textit{strands} of $L$ (segments between cusps) to $\Z/N$ such that at any cusp, one has
\[
\mu(\text{lower strand})=\mu(\text{upper strand})+1.
\]
In particular, a $\Z/2$-grading is essentially a choice of orientation of $L$. 
For $N>2$, the obstruction for finding a $\Z/N$-grading comes from the rotation numbers of the components of $L$.

For a $\Z/N$-graded Legendrian link $L$, Chekanov and Pushkar~\cite{cp_4conj} define the \textit{$\Z/N$-graded ruling polynomial}, $P_L(z)$, which is a Laurent polynomial with non-negative integer coefficients.
We recall the definition of $P_L(z)$ in Subsection~\ref{subsec:ltangles}.

A more sophisticated invariant of a $\Z/2m$-graded Legendrian link ($N=2m$ is now even) is its Chekanov--Eliashberg dg-algebra. 
The set of augmentations of this dg-algebra is the set of objects of a naturally defined $A_\infty$-category, the \textit{augmentation category} of $L$~\cite{bourgeois_chantraine,NRSSZ}.
Following~\cite{NRSSZ}, we will recall an alternative definition of this category in terms of acyclic complexes with complete flags and denote it by $\cc_1(L)$. 
This is a $\Z/2m$-graded dg-category over the chosen base field $\mathbf k$, which we will take to be a finite field.

Important for our considerations is that the functor of $\mathbf k$-linear dg-categories
\begin{equation}\label{leglinkcyfunctor}
    \cc_1(L)\to\mathrm{Loc}_1(L)
\end{equation}
to the category $\mathrm{Loc}_1(L)$ of rank 1 local systems on $L$ carries a 2CY structure. 
This is essentially \textit{Sabloff duality} in the Legendrian knot theory literature~\cite{SabloffDuality,EES09,NRSSZ}, established in terms of CY structures on functors in upcoming work of Ma--Sabloff~\cite{MaSabloff} and (partial results) in upcoming work of Chen~\cite{ZChen}.
We give an independent proof of the existence of such a structure using gluing arguments.

\begin{theorem}\label{thm:introLinvariants}
Let $L\subset \R^3$ be a $\Z/2m$-graded Legendrian link with ruling polynomial $\rpoly_L(z)$ and augmentation category $\cc_1(L)$ over $\F_q$.
Then
\[
\rpoly_L\left(q^{\frac{1}{2}}-q^{-\frac{1}{2}}\right)=\sum_{x\in\pi_0(\cc_1(L)^\sim)}\frac{q^{\frac{1}{2}\gamma(x)}}{|\mathrm{Aut}(x)|}
\]
where
\[
\gamma(x)\coloneqq\gamma(\cc_1(L)\to\mathrm{Loc}_1(L),x) = \langle x,x\rangle_{0,1}+\mathrm{rk}\left(\Ext^0(x,x)^\vee\to\Ext^2(x,x)\right).
\]
In the case when $L$ has a single component, we have
\[
\gamma(x)=2\dim\Ext^0(x,x)-\dim\Ext^1(x,x)-1
\]
as follows from inspection of the long exact sequence of~\eqref{leglinkcyfunctor}.
\end{theorem}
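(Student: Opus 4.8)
The plan is to deduce this theorem as a special case of the functoriality machinery for evenCY spans developed at Level~1, applied to the $2$CY functor $f: \cc_1(L) \to \mathrm{Loc}_1(L)$ from \eqref{leglinkcyfunctor}, together with a separate, essentially combinatorial identification of the ruling polynomial. Concretely, note that the right-hand side of the claimed identity is exactly $\hdfx(* \leftarrow \cc_1(L) \to \mathrm{Loc}_1(L))$ evaluated against the constant function, i.e. the ``integral over $\cc_1(L)$'' in the sense of \eqref{IntRelCY} with $n = 2$, $\alpha \equiv 1$, and $\ca = \mathrm{Loc}_1(L)$ — once we check that $\gamma(f, x)$ for this functor reduces to the stated $\gamma(x) = \langle x,x\rangle_{0,1} + \mathrm{rk}(\Ext^0(x,x)^\vee \to \Ext^2(x,x))$, which is immediate from substituting $n = 2$ into the definition of $\gamma(f, x)$. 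So the content of the theorem is the equality of this homotopy-cardinality-type sum with $\rpoly_L(q^{1/2} - q^{-1/2})$.

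First I would recall the combinatorial description of $P_L(z)$ via normal rulings: $P_L(z) = \sum_\rho z^{j(\rho)}$ where $\rho$ ranges over $\Z/2m$-graded normal rulings of the front and $j(\rho)$ is the number of switches minus the number of right cusps (or the appropriate normalization used in Subsection~\ref{subsec:ltangles}). Then I would recall, from the $\cc_1(L) = \cc_1$ description in terms of acyclic complexes with complete flags, the known dictionary (due to \cite{NRSSZ} and the ruling-counting results there) between isomorphism classes of objects of $\cc_1(L)$ and combinatorial data refining rulings — more precisely, the stratification of $\pi_0(\cc_1(L)^\sim)$ by which normal ruling an augmentation induces, with the fibers being affine spaces whose dimensions are controlled by the front diagram. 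The strategy is to decompose the sum $\sum_x q^{\gamma(x)/2}/|\mathrm{Aut}(x)|$ according to this stratification and show that each stratum's contribution equals $q^{j(\rho)/2}$ or assembles, after summing a geometric-series-type identity over the stratum, into the monomial in $(q^{1/2} - q^{-1/2})^{j(\rho)}$ that the ruling $\rho$ contributes to $P_L(q^{1/2} - q^{-1/2})$.

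The cleanest route, rather than a direct stratification count, is to use the tangle formalism: cut $L$ into elementary Legendrian tangles (crossings, cusps, trivial strands), observe that $\cc_1$ and $\mathrm{Loc}_1$ are both compatible with gluing of tangles (as is built into the $\hdffx$ 2-functor of Level~2), and that the right-hand side is therefore computed by composing the linear maps $\hdfx$ assigns to the elementary pieces. In parallel, $\rpoly_L(z)$ satisfies a gluing/composition formula for Legendrian tangles (this is the generalization to tangles mentioned in the abstract and recalled in Subsection~\ref{subsec:ltangles}). So it suffices to check the identity for each elementary tangle — a finite, small list of local computations — and then invoke functoriality of $\hdfx$ (Theorem~\ref{intro_FunctorialityThm}) on one side and the analogous composition law for ruling polynomials on the other. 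This reduces the global statement to a handful of explicit matrix computations over $\F_q$, each involving at most a $2 \times 2$-strand local model.

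The main obstacle, I expect, is setting up the bookkeeping so that the two composition laws genuinely match: the $\hdfx$-functoriality produces linear maps between the ``half-density'' spaces $\hdfx(\mathrm{Loc}_1(\text{endpoints}))$, whereas the classical ruling-polynomial recursion is phrased in terms of a transfer matrix on a basis indexed by fixed-point-free involutions (pairings of strand endpoints); one must match these bases, track the $q^{1/2}$-powers coming from $\gamma$ and from the $|\mathrm{Aut}|^{1/2}$ and $q^{\gamma/2}$ normalizations in \eqref{IntRelCY} against the switch/cusp exponents in $j(\rho)$, and verify that the normalization conventions (e.g. shifts by numbers of cusps, the $-1$ appearing in the single-component formula for $\gamma(x)$) are consistent end-to-end. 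Once the dictionary between objects of $\cc_1$ of an elementary tangle and local ruling data is pinned down, the per-tangle verifications should be routine; getting that dictionary exactly right, including the grading shifts and the rank term $\mathrm{rk}(\Ext^0(x,x)^\vee \to \Ext^2(x,x))$ for non-connected tangles, is the delicate part.
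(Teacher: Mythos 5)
Your proposal follows essentially the same route as the paper: there the right-hand side is likewise read as $\hdfx$ of the $2$CY functor $\cc_1(L)\to\mathrm{Loc}_1(L)$, the link is cut into basic tangles, a tangle-level functor $\mathscr Z$ built from the $2$CY $2$-span formalism is shown to be a functor and then identified with the ruling-polynomial functor $\rpolyfun$ at $z=q^{\frac{1}{2}}-q^{-\frac{1}{2}}$ by local checks at cusps and crossings (Theorem~\ref{thm:tangleFunctorIso}). The normalization and basis-matching issues you flag are precisely what Propositions~\ref{prop:rulingsClass}, \ref{prop:CRrulingsClass} and~\ref{prop:Zfunctor} resolve, so your plan is correct and coincides with the paper's proof strategy.
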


This proves a conjecture of Ng--Rutherford--Shende--Sivek~\cite[Corollary 20 of Conjecture 19]{NRSS17}, who prove the above theorem in the case $m=0$.
The case $m=1$ is of particular interest, since the $\Z/2$-graded ruling polynomial is part of the HOMFLY-PT polynomial of the underlying topological knot~\cite{rutherford06}.
We will formulate and prove a generalization of Theorem~\ref{thm:introLinvariants} for Legendrian tangles, Theorem~\ref{thm:tangleFunctorIso} in the main text.
As we will show, augmentation categories of Legendrian tangles are naturally part of 2CY 2-spans. 
The strategy of the proof is then to check the equality for basic tangles (those with a single cusp or crossing) and that both sides are compatible with composition of tangles --- which uses composition of 2-spans.

The identity between the $\Z$-graded ruling polynomial and the homotopy cardinality of the augmentation category, the main result of~\cite{NRSS17}, is applied in the works~\cite{pan17,CSLMMPT} to derive obstructions to the existence of exact Maslov-0 Lagrangian cobordisms between Legendrian knots and links. 
(We thank Dan Rutherford for bringing this to our attention.) Similarly, we expect that obstructions to the existence of exact $\Z/2m$-graded Lagrangian cobordisms can be deduced from Theorem~\ref{thm:introLinvariants}.

Murray~\cite{murray23} generalizes the identity of~\cite{NRSS17} to colored ruling polynomials, which count higher rank augmentations. 
Using the ideas in this paper one can presumably formulate and prove $\Z/2m$-graded variants of this.
In another direction, one could generalize everything from knots/tangles to graphs, c.f.~\cite{ABS22}.

\subsection*{Conventions}

We use Kan complexes as a model for $\infty$-groupoids/homotopy types/spaces.
(Co)limits are by default the homotopical ones. 
In particular \textit{cartesian square} (pullback diagram) usually means \textit{homotopy cartesian square}.
The ground field is usually denoted by $\mathbf k$ and the dual vector space is denoted by $V^\vee$.
For a dg-category $\cc$ we write $\Hom_{\cc}(x,y)$ for the chain complex (with differential of degree $+1$) of morphisms from $x$ to $y$ and $\Ext^i_\cc(x,y)$ for its $i$-th cohomology space. All the Calabi-Yau structures are by default the weak ones.

\subsection*{Acknowledgements}

We thank Bernhard Keller and Vivek Shende for inspiring discussions. We are especially grateful to Maxim Kontsevich for explaining to each of us \iftoggle{arxiv}{Example \ref {ex:Kontsevich}}{his extended Hall algebra construction} during our separate stays at IHES in 2018--2019, which eventually led to this collaboration. We thank Merlin Christ and Dan Rutherford for useful comments on a preliminary draft of the paper.
FH is supported by the VILLUM FONDEN, VILLUM Investigator grant 37814 and Sapere Aude grant 3120-00076B from the
Independent Research Fund Denmark (DFF). MG was supported by the French ANR grant CHARMS (ANR-19-CE40-0017). This paper is partly a result of the ERC-SyG project Recursive and Exact New Quantum Theory (ReNewQuantum) which received funding from the European Research Council (ERC) under the European Union's Horizon 2020 research and innovation programme under grant agreement No 810573, and of the project Refined invariants in combinatorics, low-dimensional topology and geometry of moduli spaces (REFINV) that has received funding from ERC under the European Union’s Horizon 2020 research and innovation programme under grant agreement No.\ 101001159.

\section{Homotopy cardinality and Calabi--Yau structures}
\label{sec_hcardcy}

The main goal of this section is to construct the functor $\hdfx$ from the category of CY-spans in {\degfin} dg-categories over $\F_q$ to the category of vector spaces and linear maps, see Section~\ref{subsec_hcpercy}.
In Sections~\ref{subsec_htpycard} and~\ref{subsec_relcy} we review definitions and results around homotopy cardinality and CY structures on functors, respectively.

\subsection{Linearization of functors between \texorpdfstring{$\infty$}{infinity}-groupoids}
\label{subsec_htpycard}

The homotopy cardinality assigns a rational number to any sufficiently finite homotopy type. 
Going one categorical level up, we would like to assign a vector space over $\mathbb Q$ to each homotopy type and a linear map to each map (more generally: span) of homotopy types, at least under suitable finiteness hypotheses.
This was achieved by To\"en in~\cite[Section 2]{ToenDerivedHall}, see also~\cite[Section 3]{Dyckerhoff2018}, and in the first part of this subsection we recall how this is done.
In the second part of this subsection we make several observations with an eye towards a variant for dg-categories with Calabi--Yau structure.

\begin{definition}
A homotopy type, $X$, is \defword{locally finite} if for any $x\in X_0$, $\pi_i(X,x)$ is finite for $i\geq 1$ and trivial for $i\gg 0$.
Furthermore, $X$ is \defword{finite} if it is locally finite and $\pi_0(X)$ is finite.
A map $f:X\to Y$ of locally finite homotopy types is \defword{proper} if the fibers of the map $\pi_0(f):\pi_0(X)\to\pi_0(Y)$ are finite. 
\end{definition}
If $f:X\to Y$ is a map of spaces, then the long exact sequence of homotopy groups shows that the homotopy fibers of $f$ are again locally finite, and moreover finite if $f$ is proper.

Given a space, $X$, we write $\QQ^{\pi_0(X)}$ for the algebra of $\QQ$-valued functions on $\pi_0(X)$ and $\QQ\pi_0(X)$ for the algebra of \textit{finitely supported} $\QQ$-valued functions on $\pi_0(X)$.
Given a map $f:X\to Y$ of locally finite homotopy types there is a fiberwise summation map $f_!:\QQ\pi_0(X)\to\QQ\pi_0(Y)$ defined by
\begin{align*}
f_!(\alpha)(y)&\coloneqq\sum_{x\in\pi_0(X_y)}\alpha(\iota(x))\prod_{i=1}^\infty\left|\pi_i(X_y,x)\right|^{(-1)^i} 
=\sum_{\substack{x\in\pi_0(X) \\ f(x)=y}}\alpha(x)\prod_{i=1}^\infty\left(\frac{|\pi_i(X,x)|}{|\pi_i(Y,y)|}\right)^{(-1)^i}
\end{align*} 
where $X_y$ is the homotopy fiber of $f$ over $y\in Y$ and $\iota:\pi_0(X_y)\to\pi_0(X)$.
The equality follows from the long exact sequence of homotopy groups (see~\cite[Lemma 2.3]{ToenDerivedHall}). 
From the second formula it is clear that $(f\circ g)_!=f_!\circ g_!$.
There is also the map $f^*:\QQ^{\pi_0(Y)}\to\QQ^{\pi_0(X)}$ given by pullback along $\pi_0(f)$, and this restricts to a map $f^*:\QQ{\pi_0(Y)}\to\QQ{\pi_0(X)}$ if $f$ is proper. We have $(f\circ g)^* = g^* \circ f^*$.

\begin{proposition}[{\cite[Lemma 2.6]{ToenDerivedHall}}]
\label{prop:PushPull}
Suppose
\[
\begin{tikzcd}
    X \arrow[d,"f"']\arrow[r,"g"] & Y \arrow[d,"q"] \\
    Z \arrow[r,"p"'] & W 
\end{tikzcd}
\]
is a cartesian square of locally finite homotopy types with $p$ proper.
Then $g$ is proper and
\[
f_!\circ g^*=p^*\circ q_!.
\]
\end{proposition}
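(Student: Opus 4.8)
The plan is to reduce everything to a fiberwise statement and then invoke the known compatibility of $f_!$ with the long exact sequence of homotopy groups. First I would observe that since the square is homotopy cartesian, for each point $w \in W$ the homotopy fiber of $f$ over a point $z \in Z$ with $p(z) = w$ is equivalent to the homotopy fiber of $q$ over the corresponding point in $Y$; more precisely, for a fixed $y \in Y$ with $q(y) = w =: p(z)$, the homotopy fiber $X_y$ of $g$ over $y$ is equivalent to the homotopy fiber $Z_w$ of $p$ over $w$, compatibly with the maps to $Z$ and to $Y$. This is just the statement that homotopy pullback squares restrict to equivalences on homotopy fibers in either direction. In particular, since $p$ is proper, $Z_w$ has finite $\pi_0$, hence so does $X_y$, which gives that $g$ is proper (its fibers over $\pi_0(Y)$ are, up to the identification above, the fibers of $p$ over $\pi_0(W)$).

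Next I would unwind both sides of $f_! \circ g^* = p^* \circ q_!$ evaluated at a function $\beta \in \QQ\pi_0(Y)$ and a point $z \in \pi_0(Z)$, setting $w := p(z)$. On the one hand,
\[
(p^* \circ q_!)(\beta)(z) = q_!(\beta)(w) = \sum_{\substack{y \in \pi_0(Y) \\ q(y) = w}} \beta(y) \prod_{i=1}^\infty \left(\frac{|\pi_i(Y,y)|}{|\pi_i(W,w)|}\right)^{(-1)^i}.
\]
On the other hand,
\[
(f_! \circ g^*)(\beta)(z) = \sum_{\substack{x \in \pi_0(X) \\ f(x) = z}} \beta(g(x)) \prod_{i=1}^\infty \left(\frac{|\pi_i(X,x)|}{|\pi_i(Z,z)|}\right)^{(-1)^i}.
\]
The key point is that the set $\{x \in \pi_0(X) : f(x) = z\}$ is exactly $\pi_0(X_z)$ where $X_z$ is the homotopy fiber of $f$ over $z$, and each such $x$ maps under $g$ to some $y \in \pi_0(Y)$ with $q(y) = w$. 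So I would reorganize the sum over $x$ by first summing over $y \in \pi_0(Y)$ with $q(y) = w$ and then over $x$ in the homotopy fiber of $g$ over $y$ lying above $z$ in $Z$ — equivalently, over $\pi_0$ of the homotopy fiber of the induced map $X_z \to Y_w$ over $y$, where $Y_w := q^{-1}(w)$ up to homotopy. Using the cartesian identification $X_y \simeq Z_w$ compatibly over $Z$, the fiber of $X_z \to Y_w$ over $y$ is identified with a point's worth of data (it is $\{z\} \times_{Z_w} Z_w \simeq *$ in the appropriate sense), so actually each $y$ contributes exactly one $x$; more carefully, the map $\pi_0(X_z) \to \{y : q(y) = w\}$ induced by $g$ is a bijection, again by the homotopy pullback property applied to fibers.

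Finally, with that bijection in hand, it remains to match the weight factors: for the unique $x$ over a given $y$, I need
\[
\prod_{i=1}^\infty \left(\frac{|\pi_i(X,x)|}{|\pi_i(Z,z)|}\right)^{(-1)^i} = \prod_{i=1}^\infty \left(\frac{|\pi_i(Y,y)|}{|\pi_i(W,w)|}\right)^{(-1)^i}.
\]
This follows from the long exact sequences of the two fibrations $X_z \to X \to Z$ and $Y_w \to Y \to W$ — by To\"en's computation (the identity used just above in defining $f_!$, cf.\ \cite[Lemma 2.3]{ToenDerivedHall}) the left side equals $\prod_i |\pi_i(X_z, x)|^{(-1)^i}$ and the right side equals $\prod_i |\pi_i(Y_w, y)|^{(-1)^i}$, and these agree because the homotopy cartesian square gives a homotopy equivalence between the relevant fibers: the fiber of $X_z \to Y_w$ over $y$ is contractible, so $\pi_i(X_z, x) \cong \pi_i(Y_w, y)$ for all $i$. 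Comparing the two reorganized sums term by term then yields the identity. I expect the main obstacle to be purely bookkeeping: being careful about basepoints and about which space is a fiber of which map, and making the ``homotopy cartesian squares induce equivalences on homotopy fibers'' step precise enough that the $\pi_0$-bijection and the $\pi_i$-isomorphisms both drop out cleanly; none of it is deep, but it is easy to garble the indices. (This is of course exactly \cite[Lemma 2.6]{ToenDerivedHall}, so I would also simply cite that, but the above is how I would reconstruct it.)
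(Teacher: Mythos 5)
The paper does not actually prove this statement --- it is quoted directly from To\"en \cite[Lemma 2.6]{ToenDerivedHall} --- so the only question is whether your reconstruction is sound, and the middle step is not. You claim (i) that $\{x\in\pi_0(X):f(x)=z\}$ coincides with $\pi_0(X_z)$, (ii) that $g$ induces a bijection from this set onto $\{y\in\pi_0(Y):q(y)=w\}$, and (iii) that the ratio-weights then match term by term. All of these break as soon as $\pi_1(W)$ is nontrivial, which is exactly the situation this machinery is designed for. Concretely, take $Z=Y=\ast$ and $W=\mathrm{B}G$ for a finite group $G$, so that $X\simeq\Omega\mathrm{B}G\simeq G$ is a finite discrete space; everything is locally finite and $p$ is proper. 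Here $\{x:f(x)=z\}=\pi_0(X)$ has $|G|$ elements, each with weight $\prod_i\bigl(|\pi_i(X,x)|/|\pi_i(Z,z)|\bigr)^{(-1)^i}=1$, whereas $\{y:q(y)=w\}$ is a single point with weight $\bigl(|\pi_1(Y,y)|/|\pi_1(W,w)|\bigr)^{-1}=|G|$. The two sums agree ($|G|\cdot 1=1\cdot|G|$), but there is no weight-preserving bijection of index sets. The underlying issue is that $\pi_0$ of a homotopy fiber only \emph{surjects} onto the corresponding fiber of $\pi_0$: the fiber of $\pi_0(f)$ over $z$ is the quotient of $\pi_0(X_z)$ by the $\pi_1(Z,z)$-action, and likewise $\pi_0(X_z)\cong\pi_0(Y_w)$ surjects onto, but is generally not in bijection with, $\{y:q(y)=w\}$ because of the $\pi_1(W,w)$-action. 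The contractibility of the fibers of $X_z\to Y_w$ gives you the equivalence $X_z\simeq Y_w$, not the $\pi_0$-level bijection you assert.

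The repair is to avoid the ratio formula for the comparison altogether: evaluate both sides using the first (homotopy-fiber) formula for the pushforward, $f_!(\alpha)(z)=\sum_{\tilde x\in\pi_0(X_z)}\alpha(\iota(\tilde x))\prod_i|\pi_i(X_z,\tilde x)|^{(-1)^i}$, whose agreement with the ratio formula is precisely the \cite[Lemma 2.3]{ToenDerivedHall} identity you invoke. Then $(f_!\circ g^\ast)(\beta)(z)$ is a sum over $\pi_0(X_z)$ weighted by the homotopy groups of $X_z$, while $(p^\ast\circ q_!)(\beta)(z)=q_!(\beta)(p(z))$ is the same kind of sum over $\pi_0(Y_{p(z)})$, and the cartesian square supplies an equivalence $X_z\simeq Y_{p(z)}$ compatible with the maps to $Y$, identifying the two sums on the nose; if you insist on keeping your reorganization instead, the inner sum over all $x$ with $f(x)=z$, $g(x)=y$ must be handled by the same kernel/cokernel bookkeeping, not by ``one term with matching weight.'' Your properness argument is fine in substance (homotopy fibers of $g$ are equivalent to those of $p$, whose $\pi_0$ is finite by properness of $p$ together with local finiteness of $W$), though note that the fibers of $\pi_0(g)$ are quotients of, not equal to, the fibers of $\pi_0(p)$.
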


We make two remarks.
First, the constructions $f_!$ and $f^*$ can be combined into a single functor whose compatibility with composition becomes a reformulation of the above proposition. 
Let $\spancat$ be the category whose objects are locally finite homotopy types and morphisms from $X$ to $Z$ are diagrams $X\xleftarrow{f} Y\xrightarrow{g} Z$ where $Y$ is locally finite and $f$ is proper, up to equivalence, i.e. up to isomorphism of $Y$ in the homotopy category compatible with $f$ and $g$.
Composition in this category is the usual composition of spans (defined via pullback).
Proposition~\ref{prop:PushPull} then ensures that we have a functor from $\spancat$ to the category of vector spaces which sends $X$ to $\QQ\pi_0(X)$ and $X\xleftarrow{f} Y\xrightarrow{g} Z$ to the linear map $g_!\circ f^*$. 
In particular, this functor sends the span $*\leftarrow X\to *$ to the map $\QQ\to\QQ$ given by multiplication with the homotopy cardinality of $X$.

Second, for any locally finite homotopy type $X$ there is a canonical \textit{weighted counting measure} $\mu_X\in\QQ^{\pi_0(X)}$ given by
\[
\mu_X(x)\coloneqq\prod_{i=1}^\infty|\pi_i(X,x)|^{(-1)^i}
\]
and its corresponding inner product on $\QQ\pi_0(X)$ defined by
\[
\langle \alpha,\beta\rangle\coloneqq \sum_{x\in\pi_0(X)}\alpha(x)\beta(x)\mu_X(x).
\]
We define $\hdf(X)\coloneqq \C\pi_0(X)$ and think of its standard basis vectors as the normalizations of those in $\QQ\pi_0(X)$ considered previously, i.e. differing by a factor of $\sqrt{\mu_X(x)}$.
Elements of $\hdf(X)$ can be thought of as ``half-densities'' and the natural inner product on $\hdf(X)$ is the standard one without the factor $\mu_X(x)$. (Here, one can replace $\C$ by any field containing the quadratic extensions of $\QQ$.)
In this new basis, the induced maps $f_!:\hdf(X)\to\hdf(Y)$ and $f^*:\hdf(Y)\to\hdf(X)$ take the form
\begin{align*}
    f_!(\alpha)(y)\sqrt{\mu_Y(y)}&=\sum_{\substack{x\in\pi_0(X) \\ f(x)=y}}\alpha(x)\sqrt{\mu_X(x)} \\
    \frac{f^*(\alpha)(x)}{\sqrt{\mu_X(x)}}&=\frac{\alpha(f(x))}{\sqrt{\mu_Y(f(x))}}.
\end{align*}
\begin{definition}\label{def_lingroupoidspan}
    For a span $X\xleftarrow{f} Y\xrightarrow{g} Z$ of locally finite homotopy types where $f$ is proper, define the linear map
    \begin{equation*}
    \hdf(f,g)\coloneqq g_!\circ f^*:\hdf(X)\to\hdf(Y),
    \end{equation*}
    explicitly:
    \[
    \hdf(f,g)(\alpha)(z)=\sum_{\substack{y\in\pi_0(Y) \\ g(y)=z}}\frac{\alpha(f(y))\mu_Y(y)}{\sqrt{\mu_X(f(y))\mu_Z(z)}}.
    \]
\end{definition}
It turns out that in the setting of even dimensional Calabi--Yau spans of dg-categories, the map $\hdf(f,g)$ is still well-defined, and the normalization with square-roots is crucial, see Section~\ref{subsec_hcpercy}.

\subsection{Calabi--Yau structures on dg-functors}
\label{subsec_relcy}

In this subsection we review \textit{(relative) Calabi--Yau structures} on functors between {\degfin} dg-categories as in~\cite{keller2021introduction}. This is a slight generalization of the locally proper case, as developed by Brav--Dyckerhoff~\cite{BravDyckerhoffRelCY}. On the other hand, the cyclic invariance property plays no role in our considerations and we will therefore not impose it, which also simplifies the discussion: we work with \emph{weak right}  Calabi-Yau structures.

We fix throughout a base field $\mathbf k$.
A dg-category $\ca$ over $\mathbf k$ is \defword{\degfin} if $\Ext^i_{\ca}(x,y)$ is finite dimensional for any $x,y\in\ca$, $i\in\Z$. 
The diagonal ${\ca}^{\mathrm{op}}\otimes\ca$-module, given on objects by $(x,y)\mapsto\Hom_{\ca}(x,y)$, is denoted by $\ca$.
The $\mathbf k$-dual of the diagonal bimodule, $\ca^\vee$, is the $\left(\ca^{\mathrm{op}}\otimes \ca\right)^{\mathrm{op}}=\ca^{\mathrm{op}}\otimes \ca$-module given on objects by $(x,y)\mapsto\mathrm{Hom}_{\ca}(y,x)^\vee$.

\begin{definition}
    For $n\in\Z$, a \defword{(weak right) $n$-Calabi--Yau ($n$CY) structure}  on a {\degfin} dg-category $\ca$ is an equivalence (in the derived category of bimodules) 
    \[
    \phi:\ca^\vee\longrightarrow\ca[n].
    \]
    Thus, $\phi$ is a map of bimodules inducing isomorphisms
    \[
    \left(\Ext^{n-i}_{\ca}(y,x)\right)^\vee\longrightarrow\Ext^i_{\ca}(x,y)
    \]
    for all $x,y\in\ca$, $i\in\Z$.
\end{definition}

\begin{definition}\label{def:CYfunctor}
Suppose $f:\cb\to\ca$ is a dg-functor between {\degfin} dg-categories, and $\phi:\ca^\vee\to\ca[n-1]$ is an $(n-1)$CY structure on $\ca$ with chosen inverse up to homotopy $\phi^{-1}$.
A \defword{(weak relative right) $n$-Calabi--Yau ($n$CY) structure} on $f$ is given by a bicartesian square of $\cb^{\mathrm{op}}\otimes \cb$-modules of the form
\[
\begin{tikzcd}
    \cb \arrow[r,"f"]\arrow[d] & f^*\ca \arrow[d] \\
    0 \arrow[r] & \cb^\vee[1-n]
\end{tikzcd}
\]
where the right vertical map is the composition
\[
\begin{tikzcd}
f^*\ca \arrow[r,"f^*\phi^{-1}"] & f^*(\ca^\vee[1-n])
\arrow[r,"\sim"] & (f^*\ca)^\vee[1-n]
\arrow[r,"f^\vee"] & \cb^\vee[1-n],
\end{tikzcd}
\]
the second arrow being the canonical equivalence.
\end{definition}

Note that (bi)cartesian squares are by definition not required to be commutative, but only coherent, i.e. commute up to homotopy which is part of the data of the diagram.
For the above diagram this is the data of chain map whose boundary is the composition of the right vertical and top horizontal arrows.
If the square is bicartesian, then we get a long exact sequence
\begin{equation}
\label{relcyLES}
\begin{tikzcd}
\cdots \arrow[r] & 
\Ext^i_{\cb}(x,y) \arrow[r]
& \Ext^i_{\ca}(f(x),f(y)) \arrow[r]
\arrow[d, phantom, ""{coordinate, name=Z}]
& \left(\Ext^{n-i-1}_{\cb}(y,x)\right)^{\vee} \arrow[dll,
rounded corners,
to path={ -- ([xshift=2ex]\tikztostart.east)
|- (Z) [near end]\tikztonodes
-| ([xshift=-2ex]\tikztotarget.west)
-- (\tikztotarget)}] \\
& \Ext^{i+1}_{\cb}(x,y) \arrow{r} \arrow[r]
& \cdots 
\end{tikzcd}
\end{equation}
as in the introduction.

\begin{example}\label{ex:an1cy}
An important example for our purposes is the functor $\typeacat{n}\to \typeacat{1}^{n+1}$.
Here, $\typeacat{n}$ denotes the bounded derived category of any $A_n$-type quiver.
More concretely, let $A$ be the algebra of $n\times n$-matrices and let $B\subseteq A$ be the subalgebra of upper-triangular matrices.
Thus, $B$ is the path algebra of an $A_n$-type quiver with all arrows pointing in the same direction.
Then we have a short exact sequence of $B$-bimodules
\[
0\longrightarrow B \xrightarrow{\begin{pmatrix}\iota & \pi \end{pmatrix}} A\oplus \mathbf k^n\cong A^\vee\oplus (\mathbf k^\vee)^n\xrightarrow{\begin{pmatrix}\iota^\vee \\ \pi^\vee\end{pmatrix}} B^\vee\longrightarrow 0
\]
where $\iota:B\to A$ is the inclusion, $\pi:B\to\mathbf k^n$ is the projection to diagonal matrices, the identification $A\cong A^\vee$ comes from the pairing $(a,b)\mapsto -\mathrm{tr}(ab)$, and the identification $\mathbf k\cong\mathbf k^\vee$ is the canonical one.
This defines a 1CY structure on $(\iota,\pi)$, thought of as a dg-functor between dg-categories with one object. (The required homotopy is the zero map.)
Passing to bounded derived categories, we get a 1CY structure on the functor $\typeacat{n}\to\typeacat{1}^{n+1}$.
We observe that the connecting map in the long exact sequence~\eqref{relcyLES} vanishes in these example, since the bimodules appearing in the above short exact sequence are all concentrated in a single degree.
\end{example}

In what follows, we will extensively use the standard pasting laws, which we will now state explicitly for reference. 

\begin{lemma} \label{lem:pasting-law}
Suppose we a given a coherent diagram in an $\infty$-category $\mathcal{C}$ of the following shape:
\[
\begin{tikzcd}
A \arrow[r]\arrow[d] & B \arrow[r]\arrow[d] & C \arrow[d] \\
D \arrow[r] & E \arrow[r] & F
\end{tikzcd}
\]
\begin{enumerate}
\item If the left square is cocartesian in $\mathcal{C}$, then the right square is cocartesian if and only if the outer rectangle is a cocartesian.
\item If the right square is cartesian in $\mathcal{C}$, then the left square is cartesian if and only if the outer rectangle is cartesian.
\item Assume that $\mathcal{C}$ is stable. Then if one of the squares is bicartesian, then the other one is bicartesian if and only if so is the outer rectangle.
\end{enumerate}
\end{lemma}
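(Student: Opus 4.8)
The plan is to reduce the whole statement to part (2), the pasting law for cartesian (pullback) squares, and one elementary fact about stable $\infty$-categories. Parts (1) and (2) are formally dual: applying part (2) in the opposite $\infty$-category $\mathcal{C}^{\mathrm{op}}$ — in which cartesian squares become cocartesian, the roles of the two rows of the given $2\times 3$ diagram are interchanged, and "outer rectangle" is preserved — yields part (1), and conversely. So it is enough to establish part (2) and then part (3).

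For part (2) I would pass to mapping spaces. A commutative square in $\mathcal{C}$ is cartesian if and only if its image under $\operatorname{Map}_{\mathcal{C}}(Z,-)$ is a cartesian square of spaces for every object $Z\in\mathcal{C}$. Since these functors are covariant, applying $\operatorname{Map}_{\mathcal{C}}(Z,-)$ to the given diagram preserves the pasting configuration (left square, right square, outer rectangle) and turns part (2) into the analogous statement for pullback squares of Kan complexes, with the right square already known to be cartesian. There the verification is routine: if the right square exhibits $B$ as the homotopy fibre product $C\times_F E$, then there are natural equivalences $B\times_E D \simeq (C\times_F E)\times_E D \simeq C\times_F D$ (replace the factor $E$ by $g(D)$ along the chosen homotopy), and these are compatible with the comparison maps from $A$; hence $A\to B\times_E D$ is an equivalence if and only if $A\to C\times_F D$ is. Tracing this back through $\operatorname{Map}_{\mathcal{C}}(Z,-)$ for all $Z$ gives part (2). (Alternatively, one may simply cite the standard pasting lemma, e.g.\ Lurie's \emph{Higher Topos Theory}, Lemma 4.4.2.1, which is exactly parts (1) and (2); I would likely do this in the final text.)

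Part (3) then follows from (1) and (2) together with the basic fact that in a stable $\infty$-category a square is cartesian if and only if it is cocartesian, so that "bicartesian" is equivalent to either condition alone. If the left square is bicartesian it is in particular cocartesian, and part (1) says the right square is cocartesian if and only if the outer rectangle is; by the stability fact each of these is then automatically bicartesian, which is the claim. The case where the right square is assumed bicartesian is symmetric, applying part (2) to its cartesianness.

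The step that requires the most care — and the reason one ordinarily just invokes the reference rather than writing it out — is ensuring that the various limit cones and the comparison equivalences assemble coherently at the level of $\infty$-categories, not merely up to homotopy in a homotopy category. The mapping-space reformulation above (or the cited lemma) is precisely what makes this coherence precise, so I do not expect a genuine obstacle here, only bookkeeping.
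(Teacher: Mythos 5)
Your proposal is correct and follows essentially the same route as the paper: cite Lurie's HTT Lemma 4.4.2.1 for the cocartesian pasting law, obtain the cartesian version by duality, and deduce part (3) from the fact that in a stable $\infty$-category cartesian, cocartesian, and bicartesian squares coincide. The added mapping-space sketch is just an unwinding of the cited lemma and changes nothing essential.
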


\begin{proof}
Part (1) is \cite[Lemma 4.4.2.1]{Lurie_HTT} and part (2) is its dual. Part (3) follows from (1) and (2) combined with the fact that in a stable $\infty$-category (homotopy) cartesian and cocartesian squares coincide with each other and so with bicartesian squares.
\end{proof}

A \defword{CY span} is a functor of the form $f=(f_1,f_2):\cb\to\ca_1\times\ca_2$ equipped with a CY structure.
Thus, both $\ca_1$ and $\ca_2$ have an $(n-1)$CY structure and the defining bicartesian square can equivalently be written as
\[
\begin{tikzcd}
    \cb \arrow[r,"f_1"]\arrow[d,"f_2"'] & f^*\ca_1 \arrow[d] \\
    f^*\ca_2 \arrow[r] & \cb^\vee[1-n]
\end{tikzcd}
\]
where the CY structure on $\ca_2$ is now the opposite one (to make the square commute instead of anti-commute).

CY spans can be composed in the following way.

\begin{proposition}\label{prop:CYSpanComp}
Suppose
\begin{equation}\label{diag:SpanComposition}
    \begin{tikzcd}
           &       & \cb   \arrow[dl,"p_1"']\arrow[dr,"p_2"]&       & \\
           & \cb_1 \arrow[dl,"f_1"']\arrow[dr,"f_2"]&       & \cb_2 \arrow[dl,"f_3"']\arrow[dr,"f_4"]& \\
     \ca_1 &       & \ca_2 &       & \ca_3 
    \end{tikzcd}
\end{equation}
is a diagram of dg-categories and functors such that
\begin{enumerate}
    \item $\ca_1,\ca_2,\ca_3$ are equipped with $(n-1)$CY structures,
    \item the spans $\cb_1\to\ca_1\times\ca_2$ and $\cb_2\to\ca_2\times\ca_3$ are $n$CY,
    \item the square is cartesian,
\end{enumerate}
then the span $\cb\to\ca_1\times\ca_3$ is naturally $n$CY.
\end{proposition}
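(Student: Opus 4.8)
The plan is to build the $n$CY structure on the composed span $\cb \to \ca_1 \times \ca_3$ by pasting together the two bicartesian squares witnessing the $n$CY structures on $\cb_1 \to \ca_1 \times \ca_2$ and $\cb_2 \to \ca_2 \times \ca_3$, using the cartesianness of the upper square and the pasting laws of Lemma~\ref{lem:pasting-law}. Concretely, write $f = (f_1 p_1, f_4 p_2) : \cb \to \ca_1 \times \ca_3$ and let $g_1 = f_1 p_1$, $g_3 = f_4 p_2$ denote the two legs. The two given CY structures provide bicartesian squares of $\cb_1^{\mathrm{op}} \otimes \cb_1$- and $\cb_2^{\mathrm{op}} \otimes \cb_2$-modules
\[
\begin{tikzcd}
\cb_1 \arrow[r]\arrow[d] & (f_1, f_2)^*(\ca_1 \times \ca_2) \arrow[d] \\
0 \arrow[r] & \cb_1^\vee[1-n]
\end{tikzcd}
\qquad\qquad
\begin{tikzcd}
\cb_2 \arrow[r]\arrow[d] & (f_3, f_4)^*(\ca_2 \times \ca_3) \arrow[d] \\
0 \arrow[r] & \cb_2^\vee[1-n].
\end{tikzcd}
\]
First I would pull both squares back along the functors $p_1$ and $p_2$ to the category $\cb$, i.e. apply $(p_1)^*$ (restriction of bimodules along $p_1^{\mathrm{op}} \otimes p_1$) and similarly for $p_2$. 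Since restriction of bimodules is exact and preserves (co)limits, the resulting squares of $\cb^{\mathrm{op}} \otimes \cb$-modules are still bicartesian. The key geometric input is that, because the square in \eqref{diag:SpanComposition} is cartesian, we have a canonical equivalence $p_1^* \cb_1 \simeq \cb \simeq p_2^* \cb_2$ of diagonal-type bimodules over $\cb$ — more precisely, the diagonal $\cb$-bimodule sits in a bicartesian square with $p_1^* \cb_1$, $p_2^* \cb_2$, and $f_{\cb}^* \ca_2$ (where $f_\cb$ is the common composite $\cb \to \ca_2$), expressing $\Hom_\cb$ as a homotopy pullback of $\Hom_{\cb_1}$ and $\Hom_{\cb_2}$ over $\Hom_{\ca_2}$. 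This is the module-level shadow of the cartesian square of categories.

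Second I would assemble the amalgamated diagram. Place the pulled-back first square on the left and the pulled-back second square on the right, glued along the common column involving $f_{\cb}^* \ca_2$, and then fold in the $\ca_2$-duality: the $(n-1)$CY structure on $\ca_2$ identifies $f_\cb^* \ca_2^\vee[2-n]$ with $f_\cb^* \ca_2$, and the two squares are set up (with the opposite CY structure on $\ca_2$ on one side, exactly as in the displayed reformulation of a CY span just before Proposition~\ref{prop:CYSpanComp}) so that these identifications are compatible. One then obtains a large coherent diagram in the stable $\infty$-category of $\cb$-bimodules in which the left and right squares are bicartesian; by Lemma~\ref{lem:pasting-law}(3) the outer rectangle is bicartesian, and I would check that the outer rectangle is precisely the square
\[
\begin{tikzcd}
\cb \arrow[r,"{(g_1,g_3)}"]\arrow[d] & f^*(\ca_1 \times \ca_3) \arrow[d] \\
0 \arrow[r] & \cb^\vee[1-n]
\end{tikzcd}
\]
defining an $n$CY structure on $f$, with right vertical map the prescribed composite built from the CY structures on $\ca_1$ and $\ca_3$ and the dual of $f$. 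The homotopy filling this rectangle is produced by pasting the homotopies filling the two constituent squares together with the coherence data of the cartesian square.

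The main obstacle I anticipate is bookkeeping the duality identifications and signs so that the folded diagram really is a single coherent diagram of bimodules rather than two loosely related squares: one must match $(p_i^* \cb_i)^\vee$ with $p_i^*(\cb_i^\vee)$, track the shift $[1-n]$ through all pullbacks, and — crucially — verify that the map $f_\cb^* \ca_2 \to f_\cb^* \ca_2^\vee[1-n]$ appearing as the "inner" vertical map of the left pulled-back square agrees, after the $\ca_2$-duality and the cartesian-square identifications, with the corresponding inner map of the right square with its sign/orientation reversed. Making this precise is exactly where the convention "opposite CY structure on $\ca_2$" in the reformulation of a CY span is used, and where one should be careful. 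A secondary (more routine) point is justifying that restriction along $p_i^{\mathrm{op}} \otimes p_i$ commutes with $\mathbf k$-linear duality of bimodules up to the canonical equivalence $(p_i^* M)^\vee \simeq p_i^*(M^\vee)$ when the relevant Hom-complexes are degreewise finite — this follows from degreewise finiteness, which holds by hypothesis. Once the coherent diagram is in place, the conclusion is immediate from the pasting law, and the induced long exact sequence \eqref{relcyLES} for the composed span is automatically the Mayer–Vietoris-type sequence obtained by splicing those of the two factors along $\ca_2$.
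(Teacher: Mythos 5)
Your proposal is correct and follows essentially the same route as the paper: pull the two given bicartesian CY squares back along $p_1$ and $p_2$, add the bicartesian square of diagonal bimodules expressing $\cb$ as the homotopy pullback of $p_1^*\cb_1$ and $p_2^*\cb_2$ over the pullback of $\ca_2$ (the paper justifies this by replacing $f_2,f_3$ by fibrations so the diagonal bimodules form a strict short exact sequence), identify its shifted $\mathbf k$-dual square via $\ca_2\cong\ca_2^\vee[1-n]$, and conclude by the pasting law of Lemma~\ref{lem:pasting-law} that the outer square is bicartesian — exactly the paper's ``window diagram'' argument. Only note that your first phrasing ``$p_1^*\cb_1\simeq\cb\simeq p_2^*\cb_2$'' is not what you want; the subsequent ``more precisely'' formulation (the diagonal bimodule as a pullback over $f_\cb^*\ca_2$) is the correct and needed statement.
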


\begin{proof}
The proof is essentially the same as in the locally proper case, see \cite[Theorem 3.15]{christ2023relative}. 
We sketch the proof here, since we will need the diagram below later.

By assumption, we get four bicartesian squares fitting into a coherent ``window diagram'' of $\cb$-bimodules
\begin{equation}\label{diag:window}
\begin{tikzcd}
\cb \arrow[r]\arrow[d]& p_1^*\cb_1 \arrow[r]\arrow[d]& p_1^*f_1^*\ca_1 \arrow[d]\\
p_2^*\cb_2 \arrow[r]\arrow[d]& p_1^*f_2^*\ca_2 \arrow[r]\arrow[d]& p_1^*\cb_1^\vee[1-n] \arrow[d] \\
p_2^*f_4^*\ca_3 \arrow[r] & p_2^*\cb_2^\vee[1-n] \arrow[r] & \cb^\vee[1-n]
\end{tikzcd}
\end{equation}
where the lower right square is equivalent to the shifted $\mathbf k$-dual of the top left square since $\ca_2\cong \ca_2^\vee[1-n]$.
To see that the upper left square is bicartesian, we can argue as follows: After possibly replacing $\cb_1$ and $\cb_2$ by quasi-equivalent categories, we may assume that $f_1$ and $f_2$ are fibrations (with respect to the usual model structure on dg-categories) and $\cb$ is the 1-categorical pullback. Then the diagonal bimodules form a short exact sequence of complexes $0\to\cb\to p_1^*\cb_1\oplus p_2^*\cb_2\to p_1^*f_2^*\ca_2\to 0$, hence the square is bicartesian.
By Lemma \ref{lem:pasting-law}, we can conclude that the outer square is bicartesian and thus provides the CY structure on the span $\cb\to\ca_1\times\ca_3$.
\end{proof}

As a special case where $\ca_1$ and $\ca_2$ are the final dg-category we obtain:

\begin{corollary}\label{cor:CYspancomp}
Suppose $\ca$ is $(n-1)$CY and $f_i:\cb_i\to\ca$ are $n$CY functors, $i=1,2$, then $\cb_1{\times}_{\ca}\cb_2$ carries an induced $n$CY structure.
\end{corollary}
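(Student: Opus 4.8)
The plan is to deduce Corollary~\ref{cor:CYspancomp} directly from Proposition~\ref{prop:CYSpanComp} by specializing to the case where the outer categories $\ca_1$ and $\ca_3$ are both the final (zero) dg-category $0$. Recall that a dg-functor $\cb \to 0$ to the final category carries a (unique, trivial) $n$CY structure precisely when $\cb^\vee[1-n] \simeq \cb$, i.e. when $\cb$ itself is $n$CY; more to the point, the defining bicartesian square of Definition~\ref{def:CYfunctor} for $f:\cb\to\ca$ over an $(n-1)$CY base $\ca$ becomes, when $\ca = 0$, simply the square with $0$'s in the two off-diagonal corners, so the condition is that $\cb \to 0 \to \cb^\vee[1-n]$ is bicartesian, which is automatic and carries no data. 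Thus I would reformulate the hypotheses: take the span diagram~\eqref{diag:SpanComposition} with $\ca_1 = \ca_3 = 0$ and $\ca_2 = \ca$, so that $\cb_1 \to \ca_1 \times \ca_2 = \ca$ is just the functor $f_1$ and likewise $\cb_2 \to \ca$ is $f_2$, while the span $\cb \to \ca_1 \times \ca_3 = 0$ is the unique functor.

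First I would check that the three hypotheses of Proposition~\ref{prop:CYSpanComp} are satisfied in this situation. Hypothesis (1) holds because $\ca$ is assumed $(n-1)$CY and $0$ is trivially $(n-1)$CY. Hypothesis (2): the span $\cb_1 \to \ca_1 \times \ca_2 \simeq \ca$ is exactly the $n$CY functor $f_1$ (the factor through $\ca_1 = 0$ being trivial, it contributes nothing to the bicartesian square), and similarly for $\cb_2 \to \ca_2 \times \ca_3 \simeq \ca$ and $f_2$; here one uses that the defining square of an $n$CY structure on a functor to $\ca \times 0$ is canonically identified with the defining square of an $n$CY structure on the corresponding functor to $\ca$. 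Hypothesis (3): the square with corners $\cb$, $\cb_1$, $\cb_2$, $\ca$ in~\eqref{diag:SpanComposition} is the homotopy pullback $\cb = \cb_1 \times_\ca \cb_2$, which is cartesian by construction. Then Proposition~\ref{prop:CYSpanComp} delivers a natural $n$CY structure on the span $\cb \to \ca_1 \times \ca_3 = 0$, which is precisely a (weak right) $n$CY structure on the dg-category $\cb_1 \times_\ca \cb_2$ in the sense of the first definition.

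The only genuinely non-routine point is verifying the identification in hypothesis (2) above: that an $n$CY structure on $f_i : \cb_i \to \ca$ in the sense of Definition~\ref{def:CYfunctor} is the same datum as an $n$CY structure on the functor $(f_i, c) : \cb_i \to \ca \times 0$ where $c$ is the constant functor, and likewise that the conclusion of Proposition~\ref{prop:CYSpanComp} for the span to $0 \times 0 = 0$ unwinds to the bimodule equivalence $\cb^\vee \simeq \cb[n]$ demanded by the definition of an $n$CY dg-category. This is a matter of chasing through the relevant window diagram~\eqref{diag:window}: with $\ca_1 = \ca_3 = 0$ the top-right and bottom-left corners of~\eqref{diag:window} vanish, and the diagram collapses to the statement that $\cb \to p_1^*\cb_1 \oplus p_2^*\cb_2 \to p_1^* f_2^* \ca$ exhibits $\cb^\vee[1-n]$ as the cofiber, i.e. $\cb^\vee[1-n] \simeq \cb[1]$, equivalently $\cb^\vee \simeq \cb[n]$, which is exactly an $n$CY structure on $\cb$. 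I expect this bookkeeping to be short. One should also remark that $\cb_1 \times_\ca \cb_2$ is automatically \degfin{}: this follows from the long exact sequence~\eqref{relcyLES} (or directly from the Mayer--Vietoris sequence associated with the homotopy pullback) together with the \degfin{} hypothesis on $\cb_1$, $\cb_2$, and $\ca$, so that the resulting structure genuinely lands in the class of categories to which the definition applies.
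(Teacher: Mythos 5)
Your proof is correct and is essentially the paper's own argument: the corollary is obtained there by specializing Proposition~\ref{prop:CYSpanComp} to the case where the two outer categories of the span diagram are the final dg-category, exactly as you do, with the paper leaving the unwinding of the window diagram~\eqref{diag:window} implicit. The only blemish is the shift slip in your opening sentence — the condition for the functor $\cb\to 0$ should read $\cb^\vee[1-n]\simeq\cb[1]$, i.e. $\cb^\vee\simeq\cb[n]$, which is what you in fact state correctly in the later unwinding.
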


\subsection{Homotopy cardinality for evenCY dg-functors} 
\label{subsec_hcpercy}

The following construction passes from dg-categories to Kan complexes (spaces).
\begin{definition}
    Let $\cc$ be a dg-category, then we denote by $\cc^\sim$ its \textit{core}, the $\infty$-groupoid obtained by discarding all non-equivalences from $\cc$, viewed as an $(\infty,1)$-category. 
    More concretely, the Kan complex $\cc^\sim$ is obtained by taking the dg-nerve of $\cc$ (see~\cite[Section 1.3.1]{HA}) and passing to the sub-complex of equivalences.
\end{definition}
The homotopy groups of $\cc^\sim$ are given by 
\begin{equation*}
    \pi_i(\cc^\sim,x)=\begin{cases}
        \mathrm{Aut}_\cc(x) & i=1 \\
        \mathrm{Ext}^{-i+1}_\cc(x,x) & i\geq 2
    \end{cases}
\end{equation*}
which follows from~\cite[Remark 1.3.1.12]{HA}.

In the remainder of this subsection all dg-categories are linear over $\mathbf k=\F_q$ and {\degfin}. 
Note that since we are not requiring $\cc$ to be locally proper, $\cc^\sim$ is generally not locally finite, but its truncation to a 1-groupoid, $\tau_{\leq 1}\cc^\sim$, is. 

For the moment, we stay in the world of locally proper categories.
Suppose $f:\cb\to\ca$ is a functor of locally proper dg-categories such that the induced functor $f^\sim: \cb^\sim \to\ca^\sim$ is proper, i.e. the map on equivalence classes of objects, $\pi_0(\cb^\sim)\to\pi_0(\ca^\sim)$, has finite fibers. 
We apply Definition~\ref{def_lingroupoidspan} to the span $\ca^\sim\leftarrow\cb^\sim\to*$, which gives a map $\hdf(f^\sim,*):\hdf(\ca^\sim)\to\hdf(*)=\C$,
\begin{equation}\label{LagrIntZ}
\hdf(f^\sim,*)(y)=\sum_{\substack{x\in\pi_0(\cb^\sim) \\ f(x)=y}}\frac{|\mathrm{Aut}(y)|^{\frac{1}{2}}}{|\mathrm{Aut}(x)|}q^{\frac{1}{2}\langle y,y\rangle_{<0}-\langle x,x\rangle_{<0}}.
\end{equation}
The above formula does not make sense beyond the left locally homologically finite case.
Suppose however that $f$ admits a compatible $n$CY structure for some \textit{even} $n$, then the long exact sequence~\eqref{relcyLES} implies
\[
\langle f(x),f(x)\rangle_{<0}=\langle x,x\rangle_{<0}-\langle x,x\rangle_{\geq n}+\mathrm{rk}\left(\Ext^{n}(x,x)^\vee\to\Ext^{0}(x,x)\right).
\]
For $n \geq 0$, we also have
\[
-\langle x,x\rangle_{\geq n}=-\langle x,x\rangle+\langle x,x\rangle_{0,\ldots,n-1}+\langle x,x\rangle_{<0},
\]
and thus
\begin{equation}\label{rewriteForPeriodic}
\begin{split}
    \langle f(x),f(x)\rangle_{<0} -2\langle x,x\rangle_{<0}=&-\langle x,x\rangle+\langle x,x\rangle_{0,\ldots,n-1} \\
    &+\mathrm{rk}\left(\Ext^{n}(x,x)^\vee\to\Ext^{0}(x,x)\right);
\end{split}
\end{equation}
we can also rewrite the formula in a similar way for $n < 0$.
If we drop the term $\langle x,x\rangle$, as in the introduction, we can then re-write~\eqref{LagrIntZ} in a way which makes sense for {\degfin} categories.

\begin{definition}\label{def:LagrInt}
\begin{enumerate}
    \item 
    If $\cb$ is a {\degfin} dg-category which is $(n-1)$CY, define $\hdfx(\cb)\coloneqq\hdf(\cb^\sim)=\C\pi_0(\cb^\sim)$. 
    
    \item
    Suppose $f:\cb\to\ca$ is a functor of {\degfin} dg-categories which is $n$CY for some even $n$ and with $f^\sim$ proper.
    Define a functional $\hdfx(f,*):\hdfx(\ca)\to\C$
    \begin{equation}\label{LagrIntPer}
    \hdfx(f,*)(y)\coloneqq\sum_{\substack{x\in\pi_0(\cb^\sim) \\ f(x)=y}}\frac{|\mathrm{Aut}(y)|^{\frac{1}{2}}}{|\mathrm{Aut}(x)|}q^{\frac{1}{2}\gamma(f,x)} 
    \end{equation}
    where
    \begin{equation*}
        \gamma(f,x)\coloneqq \begin{cases} \langle x,x\rangle_{0,\ldots,n-1}+\mathrm{rk}\left(\Ext^n(x,x)^\vee\to\Ext^0(x,x)\right), & \mbox{for} \, n \geq 0;\\
    -\langle x,x\rangle_{n,\ldots,-1}+\mathrm{rk}\left(\Ext^n(x,x)^\vee\to\Ext^0(x,x)\right), & \mbox{for} \, n < 0
    \end{cases}
    \end{equation*}

    \item 
    More generally, suppose that $(f_1,f_2):\cb\to\ca_1\times\ca_2$ is an $n$CY span of {\degfin} dg-categories for some even $n$ and with $f_1^\sim$ proper. 
    Define a linear map $\hdfx(f_1,f_2):\hdfx(\ca_1)\to\hdfx(\ca_2)$ by
    \begin{equation}\label{CYSpanLinearization}
        \hdfx(f_1,f_2)(y_1)\coloneqq \sum_{\substack{x\in\pi_0(\cb^\sim) \\ f_1(x)=y_1}}\frac{|\mathrm{Aut}(y_1)|^{\frac{1}{2}}|\mathrm{Aut}(f_2(x))|^{\frac{1}{2}}}{|\mathrm{Aut}(x)|}q^{\frac{1}{2}\gamma((f_1,f_2),x)} f_2(x).
    \end{equation}
\end{enumerate}
\end{definition}

\begin{remark}
The long exact sequence~\eqref{relcyLES} shows that
\begin{align*}
&\mathrm{rk}\left(\Ext^n(x,x)^{\vee}  {\to}\Ext^0(x,x)\right)= \\
&\qquad\qquad\dim\Ext^0(x,x)-\mathrm{rk}\left(\Ext^0(x,x)\to\Ext^0(f(x),f(x))\right). 
\end{align*}
This means that $\gamma(f,x)$ and, therefore, $\hdfx(f,*)$ depend only on the functor $f$ and not on the choice of CY structure on $f$.
One could even define $\hdfx(f,*)$ without the assumption on the existence of a CY structure, but we do not expect this to be a good definition in general, in the sense that, for example, Theorem~\ref{thm:functoriality} fails without this assumption.
\end{remark}

\begin{remark}
The map $\hdfx(f_1,f_2)$ is a replacement for $\hdf(f,g)=g_!\circ f^*$ discussed in Section~\ref{subsec_htpycard}.
However, we do not have individual replacements for $g_!$ and $f^*$, only their composition.
In particular, we do not construct a theory with transfer in the sense of~\cite{DK_highersegal}.
\end{remark}

Let $\spancat^{n\mathrm{CY}}$ be the category whose objects are {\degfin} $(n-1)$CY categories and whose morphisms from $\ca_1$ to $\ca_2$ are spans $(f_1,f_2):\cb\to\ca_1\times\ca_2$, up to equivalence of $\cb$, admitting an $n$CY structure extending the one on $\ca_1\times\ca_2$ and such that $f_1^\sim$ is proper.
Proposition~\ref{prop:CYSpanComp} ensures that composition is well-defined in this category.

\begin{theorem}\label{thm:functoriality}
    For even $n$, $\hdfx$ is a functor from $\spancat^{n\mathrm{CY}}$ to the category of vector spaces over $\C$.
\end{theorem}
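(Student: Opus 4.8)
The plan is to verify the two defining properties of a functor: that identity spans go to identity maps, and that composition is respected. For the identity on an $(n-1)$CY category $\ca$, the relevant span is $\ca \xleftarrow{\mathrm{id}} \ca \xrightarrow{\mathrm{id}} \ca$ with its canonical $n$CY structure; here $\cb = \ca$, $f_1 = f_2 = \mathrm{id}$, the map $\Ext^0(x,x)^\vee \to \Ext^n(x,x)$ in the long exact sequence is an isomorphism (since $f$ is an equivalence, all the relative $\Ext_\cb$ groups in \eqref{relcyLES} vanish), so one computes $\gamma(\mathrm{id},x)$ directly and checks that the prefactor $\frac{|\mathrm{Aut}(x)|^{1/2}|\mathrm{Aut}(x)|^{1/2}}{|\mathrm{Aut}(x)|}q^{\frac12\gamma} = 1$. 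This is a short calculation using $\langle x,x\rangle_{0,\ldots,n-1} + \mathrm{rk} = 0$ which follows because in the $n$CY case with vanishing relative $\Ext$, the long exact sequence forces $\Ext^i_\ca(x,x) \cong (\Ext^{n-1-i}_\ca(x,x))^\vee$ appropriately; I would check the bookkeeping of the degrees carefully but expect it to come out to $1$.

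The substance is compatibility with composition. Given CY spans $(f_1,f_2)\colon\cb_1\to\ca_1\times\ca_2$ and $(g_1,g_2)\colon\cb_2\to\ca_2\times\ca_3$ with the homotopy pullback $\cb = \cb_1\times_{\ca_2}\cb_2$ carrying the composed $n$CY structure from Proposition~\ref{prop:CYSpanComp}, I must show $\hdfx(g_1,g_2)\circ\hdfx(f_1,f_2) = \hdfx(\text{composed span})$. The strategy is to reduce to the well-understood groupoid case. First I would compare with the ``naive'' answer: if one forgets the CY-twist and just uses the span $\cb^\sim \to \ca_1^\sim\times\ca_3^\sim$ of $\infty$-groupoids, Proposition~\ref{prop:PushPull} (base change for $f_!$, $f^*$) together with the fact that $\cb^\sim = \cb_1^\sim\times_{\ca_2^\sim}\cb_2^\sim$ is a homotopy pullback of spaces gives functoriality of the \emph{untwisted} assignment $\hdf$. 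The point is then to show that the two $q$-power twists differ exactly by a factor that telescopes under composition — concretely, I would prove the cocycle-type identity
\[
\gamma((h_1,h_3),w) \;=\; \gamma((f_1,f_2),p_1(w)) + \gamma((g_1,g_2),p_2(w)) - \langle \pi_2(w),\pi_2(w)\rangle_{\ca_2\text{-correction}}
\]
for $w\in\pi_0(\cb^\sim)$ mapping to $p_i(w)\in\pi_0(\cb_i^\sim)$ and with common image in $\pi_0(\ca_2^\sim)$, where the correction term is exactly what is absorbed by the pushforward/pullback through $\ca_2^\sim$ in the span composition (recall $\hdfx(\ca_2)\hookrightarrow\hdfx(\ca_2)^\vee$ via the standard inner product, so summing over the middle involves $|\mathrm{Aut}|$ factors on $\ca_2$). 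The identity for the $\mathrm{rk}$-terms and the $\langle-,-\rangle_{0,\ldots,n-1}$ terms should follow from the window diagram \eqref{diag:window}: taking $\mathbf k$-dimensions along the rows and columns of that $3\times3$ bicartesian array of bimodules evaluated at $(w,w)$ gives additivity of truncated Euler characteristics, and the connecting/rank maps fit into the pasting of long exact sequences.

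Concretely I would: (1) fix $y_1\in\pi_0(\ca_1^\sim)$ and $y_3\in\pi_0(\ca_3^\sim)$ and expand both sides as sums over the appropriate fibers; the left side is a double sum over $x_1\in\pi_0(\cb_1^\sim)$ with $f_1(x_1)=y_1$ and $x_2\in\pi_0(\cb_2^\sim)$ with $g_1(x_2)=f_2(x_1)$ and $g_2(x_2)=y_3$; the right side is a single sum over $w\in\pi_0(\cb^\sim)$; (2) use that $\cb^\sim$ is the homotopy pullback to identify $\pi_0(\cb^\sim)$ with pairs $(x_1,x_2)$ together with an orbit of $\mathrm{Aut}(f_2(x_1))$ acting — this is where the factors $|\mathrm{Aut}(f_2(x_1))| = |\mathrm{Aut}(g_1(x_2))|$ enter, matching the $|\mathrm{Aut}(f_2(x))|^{1/2}$ and $|\mathrm{Aut}(g_1(x))|^{1/2}$ in the two applications of \eqref{CYSpanLinearization}; (3) match the $q$-powers via the Euler-characteristic bookkeeping from \eqref{diag:window}. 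The main obstacle I expect is step (3): carefully extracting from the coherent window diagram the precise identity relating $\gamma$ of the composite to the sum of the $\gamma$'s plus the middle-term correction, including the rank terms for the connecting maps, and checking the two sign/degree conventions ($n\geq0$ vs $n<0$) in the definition of $\gamma$ are consistent throughout. The automorphism-counting in step (2) is standard groupoid cardinality arithmetic (essentially Proposition~\ref{prop:PushPull} applied to $\cb^\sim$) but needs to be interleaved with the twist so that the square-root normalizations of the half-density bases on $\ca_1,\ca_2,\ca_3$ cancel correctly — this is exactly the point flagged after Definition~\ref{def_lingroupoidspan} that the square-root normalization is what makes evenCY spans composable.
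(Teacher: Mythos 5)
Your overall skeleton is the same as the paper's: expand both sides over $\pi_0$, use Proposition~\ref{prop:PushPull} at the (truncated) groupoid level, and prove an additivity ("cocycle") identity for $\gamma$ via the window diagram~\eqref{diag:window}. But the heart of your step (3) has a genuine gap: the correction term is mis-identified. First, you cannot simply invoke "functoriality of the untwisted $\hdf$" for the cores $\cb^\sim$, since these are not locally finite homotopy types (that is the whole reason $\hdfx$ exists); one must work with the $1$-truncations $\tau_{\leq 1}(-)^\sim$, and truncation does \emph{not} commute with the homotopy pullback. Concretely (Lemma~\ref{lem:truncPullDefect}), for $w\in\cb=\cb_1{\times}_{\ca_2}\cb_2$ the group $\mathrm{Aut}_{\cb}(w)$ exceeds the automorphism group computed in the pullback of $1$-groupoids by a factor $q^{\delta(w)}$, where $\delta(w)=\mathrm{rk}\left(\Ext^{-1}_{\ca_2}(x_2,x_2)\to\Ext^{0}_{\cb}(w,w)\right)$ and $x_2$ is the image of $w$ in $\ca_2$. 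Your step (2) orbit bookkeeping with $\mathrm{Aut}(f_2(x_1))$ exactly absorbs the two half-powers $|\mathrm{Aut}|^{\frac12}$ over $\ca_2$ (orbit--stabilizer), so no $q$-power correction is needed for those; what must be compensated is precisely the factor $q^{-\delta(w)}$ coming from this $\pi_2(\ca_2^\sim)$-contribution to $\mathrm{Aut}_\cb(w)$. Hence the identity to prove is $\gamma(w)=\gamma((f_1,f_2),p_1(w))+\gamma((g_1,g_2),p_2(w))+2\delta(w)$, whereas your proposed correction $-\langle \pi_2(w),\pi_2(w)\rangle_{\ca_2}$ cannot work: any Euler-type term depending only on the image in $\ca_2$ vanishes in the relevant degree range because $\ca_2$ is $(n-1)$CY with $n-1$ odd (the paper uses $\langle x_2,x_2\rangle_{0,\ldots,n-1}=0$), while $\delta(w)$ genuinely depends on $w$ and is generally nonzero. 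The paper establishes the correct identity by applying defect additivity (Lemma~\ref{lem:DefectAdditivity}) to the four bicartesian squares of~\eqref{diag:window} evaluated at $(w,w)$; your remark about "connecting/rank maps in the pasting of long exact sequences" points at the right tool, but without identifying $\delta(w)$ as the quantity being tracked the two sides of your expansion will not match.

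A smaller point: your identity check is right in its conclusion but wrong in its justification. The identity $1$-morphism on $\ca$ is the diagonal $\ca\to\ca\times\ca$, not an equivalence onto its target; in its long exact sequence~\eqref{relcyLES} the map $\Ext^i_{\cb}\to\Ext^i_{\ca\times\ca}$ is the (injective) diagonal, so the connecting map $\Ext^{n}(x,x)^\vee\to\Ext^{0}(x,x)$ is zero, not an isomorphism, and the rank term vanishes; the prefactor is $1$ because in addition $\langle x,x\rangle_{0,\ldots,n-1}=0$ (resp.\ $\langle x,x\rangle_{n,\ldots,-1}=0$) by the oddCY symmetry of $\ca$, since $i$ and $n-1-i$ have opposite parity for even $n$.
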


The proof of the theorem will be based on the following basic lemma.

\begin{lemma}\label{lem:DefectAdditivity}
Let $H:\cc\to\ca$ be a cohomological functor from a stable $\infty$-category $\cc$ to an abelian category $\ca$.
Suppose
\[
\begin{tikzcd}
A \arrow[r]\arrow[d] & B \arrow[r]\arrow[d] & C \arrow[d] \\
D \arrow[r] & E \arrow[r] & F
\end{tikzcd}
\]
is a coherent diagram in $\cc$ such that both squares are bicartesian (and thus so is the big rectangle) and consider the connecting maps
\[
\delta:H(F[-1])\to H(A),\qquad \delta':H(E[-1])\to H(A),\qquad \delta'':H(F[-1])\to H(B)
\]
then the sequence
\[
0\longrightarrow\Imm(\delta')\longrightarrow\Imm(\delta)\longrightarrow\Imm(\delta'')\longrightarrow 0
\]
is exact.
\end{lemma}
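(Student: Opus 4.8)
The plan is to run all three connecting maps of the lemma through the big ``window diagram'' of long exact sequences that a cohomological functor $H$ produces, and to identify each image with a cohomology-type subquotient that is manifestly additive along the composite $A \to B \to C$. First I would set up notation: write the bicartesian squares as cofiber sequences, so that $D \simeq \mathrm{cofib}(A\to B)$ is false in general, but rather $A\to B\to E[-1]$-type relations hold; more precisely, each bicartesian square $\begin{smallmatrix} X & \to & Y \\ \downarrow & & \downarrow \\ Z & \to & W \end{smallmatrix}$ in a stable $\infty$-category gives, after applying $H$, a long exact sequence $\cdots \to H(X) \to H(Y)\oplus H(Z) \to H(W) \to H(X[1]) \to \cdots$, equivalently the Mayer--Vietoris/Puppe sequence, and the connecting map $H(W[-1]) \to H(X)$ is the relevant $\delta$. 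I would record the three such sequences coming from the left square (giving $\delta'$), the right square, and the outer rectangle (giving $\delta$), together with the one for the right square shifted appropriately (giving $\delta''$, which is the connecting map for the right square landing in $H(B)$ rather than in $H(A)$).

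The key algebraic step is then a diagram chase. I would use the pasting law (Lemma~\ref{lem:pasting-law}(3)) to know all three squares and the rectangle are bicartesian, hence all four Puppe long exact sequences are available and fit into a commuting braid. Concretely: the map $H(F[-1]) \to H(A)$ factors as $H(F[-1]) \xrightarrow{\delta''} H(B) \to H(A[1])$? — no; rather I would observe that the composite $H(F[-1]) \xrightarrow{\delta} H(A)$ and the map $H(E[-1]) \xrightarrow{\delta'} H(A)$ are compatible via the map $H(E[-1]) \to H(F[-1])$ induced by $E \to F$ (coming from the bottom row), giving $\delta \circ (E\to F)_* = \delta'$ up to sign. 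This immediately yields the inclusion $\Imm(\delta') \subseteq \Imm(\delta)$. For the quotient, I would show the cokernel of $\Imm(\delta')\hookrightarrow \Imm(\delta)$ is computed by the failure of $H(E[-1])\to H(F[-1])$ to be surjective onto the part of $H(F[-1])$ that survives into $H(A)$, and then identify this with $\Imm(\delta'')$ using the long exact sequence of the right square: an element of $H(F[-1])$ maps to $0$ in $H(C[-1])$-type terms appropriately, its image in $H(A)$ depends only on its image in $H(B)$ under $\delta''$, modulo the image of things coming from $H(E[-1])$. This is precisely the short exact sequence claimed.

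I expect the main obstacle to be purely bookkeeping: keeping the signs and the identifications of the various connecting maps straight across the braid of long exact sequences, and making sure the map $\Imm(\delta)\to\Imm(\delta'')$ is well-defined (independent of the lift of an element of $\Imm(\delta)$ to $H(F[-1])$) — this requires that two lifts differ by something in the image of $H(E[-1])\to H(F[-1])$, which is exactly where bicartesianness of the \emph{left} square (equivalently the rectangle, via pasting) enters. A clean way to avoid sign headaches is to phrase everything in terms of the stable $\infty$-category: replace $H$ by the universal example, namely note it suffices to prove the statement for $H = \pi_0$ (or $H^0$) applied after translating, and then the bicartesian squares become honest pullback squares of spectra whose homotopy groups assemble into the braid lemma; alternatively, one can reduce to the abelian-category case by choosing an injective resolution and arguing with genuine chain complexes, where the three images become literally $\mathrm{coker}$, $\ker$, $\mathrm{coker}$ of explicit maps and the exactness is the snake lemma applied to the $3\times 3$ diagram. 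I would present the argument in the latter, most elementary form, flagging that $\cc$ stable plus $\ca$ abelian is exactly what makes the snake-lemma reduction legitimate.
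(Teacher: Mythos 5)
Your main line of attack -- the Puppe/Mayer--Vietoris long exact sequences of the left square, the right square and the outer rectangle, together with the identity $\delta\circ H(E[-1]\to F[-1])=\delta'$ -- is exactly the paper's argument, and the inclusion $\Imm(\delta')\subseteq\Imm(\delta)$ is fine. But two steps as written do not hold up. First, your well-definedness argument for $\Imm(\delta)\to\Imm(\delta'')$ is based on a false claim: two lifts in $H(F[-1])$ of the same element of $\Imm(\delta)$ differ by an element of $\Ker(\delta)$, which by the rectangle's long exact sequence is the image of $H(C[-1])\oplus H(D[-1])\to H(F[-1])$, not the image of $H(E[-1])\to H(F[-1])$. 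The missing ingredient -- and the second half of the paper's ``both $\delta'$ and $\delta''$ factor through $\delta$'' -- is the dual factorization $\delta''=H(A\to B)\circ\delta$, coming from the map of exact triangles $(A\to C\oplus D\to F)\to(B\to C\oplus E\to F)$. With it, the map $\Imm(\delta)\to\Imm(\delta'')$ is simply the restriction of $H(A\to B)$, so well-definedness and surjectivity are automatic, and exactness in the middle is a two-line chase: if $\delta''(x)=0$ then $x$ lies in the image of $H(C[-1])\oplus H(E[-1])$ by the right square's sequence, the $H(C[-1])$-part is killed by $\delta$ by the rectangle's sequence, and the $H(E[-1])$-part lands in $\Imm(\delta')$; conversely $\delta''\circ H(E[-1]\to F[-1])=0$. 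Your sketch gestures at this but never isolates the step where the $H(C[-1])$-contribution dies, which is where the argument actually closes.

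Second, the ``most elementary form'' you say you would actually write up -- reducing to genuine chain complexes in $\ca$ via injective resolutions and invoking the snake lemma, or reducing to $H=\pi_0$ of spectra -- is not available in this generality: $\cc$ is an abstract stable $\infty$-category (the remark after the lemma even extends it to triangulated categories) and $H$ is an arbitrary cohomological functor to an arbitrary abelian category, so there are no resolutions to choose and no reason $H$ is represented by spectra or by complexes in $\ca$. No reduction is needed: the direct chase through the three long exact sequences, using both factorizations of the connecting maps, is already complete and sign-insensitive since only images are involved.
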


\begin{proof}
Follows immediately from the long exact sequences of the three bicartesian squares and the fact that both $\delta'$ and $\delta''$ factor through $\delta$.
\end{proof}

\begin{remark}
    The above lemma also makes sense and is true if $\cc$ is a triangulated category. The required pasting lemma for homotopy cartesian squares in triangulated categories is proven in~\cite{ChristensenFrankland}.
\end{remark}

Given $\cc,\ca,H$ as in Lemma~\ref{lem:DefectAdditivity} and a bicartesian square
\begin{equation}\label{bicartSquare}
\begin{tikzcd}
A \arrow[r]\arrow[d] & B \arrow[d]  \\
C \arrow[r] & D 
\end{tikzcd}
\end{equation}
we call the class of $\Imm\left(\delta:H(D[-1])\to H(A)\right)$ in $K_0(\ca)$ the \defword{defect} of the square, denoted $\delta(B{\times}_DC)$.
As a corollary of the lemma, the defect is additive with respect to pasting of homotopy cartesian squares.
In the sequel, $\ca$ will be the category $\vectcat^{\mathrm{fd}}(\mathbf k)$ of finite-dimensional vector spaces over a field $\mathbf k$, $\cc=D(\ca)$ its derived category, and $H=H^0$.
In particular, the defect is then essentially a non-negative integer after the identification $K_0(\vectcat^{\mathrm{fd}}(\mathbf k)) \overset\sim\to \mathbb{Z}$, the rank of the connecting map.

There is another --- closely related --- kind of defect of a pullback square of pointed spaces $(A,a)$, $(B,b)$, $(C,c)$, $(D,d)$, fitting into a square as above, which is given by $\left|\Imm(\pi_2(D,d)\to\pi_1(A,a)\right|$, the size of the image of the connecting map (assuming this is finite).
This is multiplicative with respect to pasting of pullback squares as in Lemma~\ref{lem:DefectAdditivity}, as can be seen by inspecting the long exact sequences. It gives a measure for the failure of pullback to commute with the truncation to 1-groupoids, $\tau_{\leq 1}$, as is made precise in the following.

\begin{lemma}\label{lem:truncPullDefect}
Given a pullback square of spaces of the form~\eqref{bicartSquare}, there is a short exact sequence of groups
\[
0\to \Imm\left(\pi_2(D,d)\to\pi_1(A,a)\right)\to\pi_1(A,a)\to\pi_1\left(A_1,a\right)\to 0
\]
where
\[
A_1\coloneqq(\tau_{\leq 1}B)\times_{\tau_{\leq 1}D}(\tau_{\leq 1}C)
\]
and we write $a$ also for the image of $a$ under the functor $A\to A_1$ coming from the universal property of the pullback.
\end{lemma}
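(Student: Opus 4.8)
The statement is fundamentally about the five-term exact sequence coming from comparing a homotopy pullback with its truncation, so the plan is to realize $A_1$ as a pullback of $0$- and $1$-truncated pieces and run the long exact sequence of homotopy groups. First I would use that truncation $\tau_{\leq 1}$ is a left adjoint (or rather that it is compatible with the relevant limits up to the predictable failure) — more precisely, I would note that for spaces $X$ the natural map $\tau_{\leq 1}(Y\times_D Z)\to (\tau_{\leq 1}Y)\times_{\tau_{\leq 1}D}(\tau_{\leq 1}Z)$ need not be an equivalence, and the whole point of the lemma is to measure this failure on $\pi_1$. So I would instead work directly with the explicit homotopy fiber sequences.

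\textbf{Key steps.} The plan is:
\begin{enumerate}
\item Observe that $\pi_1(A_1,a)$ fits, by definition of a pullback of $1$-groupoids (equivalently of the homotopy pullback of the $1$-truncations, since the $\tau_{\leq 1}$'s are already $1$-truncated), into the tail of the Mayer--Vietoris/long exact sequence
\[
\pi_2(\tau_{\leq 1}D,d)\to \pi_1(A_1,a)\to \pi_1(\tau_{\leq 1}B,b)\times\pi_1(\tau_{\leq 1}C,c)\to \pi_1(\tau_{\leq 1}D,d),
\]
and since $\pi_2(\tau_{\leq 1}D)=0$, the left-most map is zero; hence $\pi_1(A_1,a)=\ker\bigl(\pi_1 B\times\pi_1 C\to\pi_1 D\bigr)$, using $\pi_1(\tau_{\leq 1}X)=\pi_1(X)$ throughout.
\item Do the same for the genuine homotopy pullback $A=B\times_D C$: its long exact sequence reads
\[
\pi_2(B,b)\times\pi_2(C,c)\to\pi_2(D,d)\xrightarrow{\ \partial\ }\pi_1(A,a)\to\pi_1(B,b)\times\pi_1(C,c)\to\pi_1(D,d),
\]
so that $\pi_1(A,a)/\operatorname{Im}(\partial)\cong\ker\bigl(\pi_1 B\times\pi_1 C\to\pi_1 D\bigr)$.
\item Identify the natural map $A\to A_1$ on $\pi_1$ with this quotient map, i.e. check that the composite $\pi_1(A,a)\to\pi_1(A_1,a)\hookrightarrow\pi_1 B\times\pi_1 C$ agrees with the canonical map $\pi_1(A)\to\pi_1(B)\times\pi_1(C)$, which holds by functoriality of $\pi_1$ applied to the commuting square defining $A\to A_1$ over $B,C,D$.
\item Finally, identify $\operatorname{Im}(\partial)$ with $\operatorname{Im}\bigl(\pi_2(D,d)\to\pi_1(A,a)\bigr)$ (this is literally the connecting map, so there is nothing to do) and conclude that $0\to\operatorname{Im}\bigl(\pi_2(D)\to\pi_1(A)\bigr)\to\pi_1(A)\to\pi_1(A_1)\to 0$ is exact.
\end{enumerate}

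\textbf{Main obstacle.} The only genuinely delicate point is Step 1 together with Step 3: one must be careful that a homotopy pullback of $1$-truncated spaces really is again $1$-truncated (true, since $\tau_{\leq 1}$-spaces are closed under limits), and that $A_1$ as defined — the pullback of $\tau_{\leq 1}$'s — coincides with $\tau_{\leq 1}$ applied termwise rather than with $\tau_{\leq 1}(A)$; the lemma is precisely quantifying that these differ. Concretely, the subtlety is that the map $\pi_1(A)\to\pi_1(A_1)$ is a priori only defined via the universal property, and one needs that it is \emph{surjective} — this comes out of Step 2 because $\ker(\pi_1 B\times\pi_1 C\to\pi_1 D)$ is visibly in the image of $\pi_1(A)$ by the long exact sequence, since exactness at $\pi_1(B)\times\pi_1(C)$ says $\ker=\operatorname{Im}(\pi_1 A\to \pi_1 B\times\pi_1 C)$. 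So surjectivity is automatic, and the whole proof reduces to bookkeeping two long exact sequences and matching the connecting maps; I expect it to be short.
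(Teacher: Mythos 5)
Your proof is correct and takes essentially the same route as the paper: the paper writes down the two long exact sequences of homotopy groups for the genuine pullback and for the pullback of $1$-truncations (where $\pi_2(\tau_{\leq 1}D)=0$ makes the left-hand term vanish), compares them via the identity maps on $\pi_1(B\times C)$ and $\pi_1(D)$, and concludes by the four lemma. Your Steps 1--4 carry out exactly this comparison by hand, identifying both $\pi_1(A_1)$ and $\pi_1(A)/\operatorname{Im}(\partial)$ with the kernel of the map to $\pi_1(D)$, so the argument is the same diagram chase in expanded form.
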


\begin{proof}
The long exact sequences of the two pullback squares give a diagram
\[
\begin{tikzcd}
\pi_2(D,d) \arrow[r]\arrow[d] & \pi_1(A,a) \arrow[r]\arrow[d] & \pi_1(B\times C,(b,c)) \arrow[d,equal]\arrow[r] & \pi_1(D,d) \arrow[d,equal] \\
0 \arrow[r] & \pi_1(A_1,a) \arrow[r] & \pi_1(B\times C,(b,c)) \arrow[r] & \pi_1(D,d) 
\end{tikzcd}
\]
and the claim follows from the four lemma.
\end{proof}

\begin{proof}[Proof of Theorem~\ref{thm:functoriality}]
We are given a diagram of dg-categories as in~\eqref{diag:SpanComposition} and want to show that
\begin{equation}
    \hdfx(f_1\circ p_1,f_4\circ p_2)=\hdfx(f_3,f_4)\hdfx(f_1,f_2).
\end{equation}
The idea of the proof is to consider the corresponding diagram of 1-groupoids obtained by replacing a dg-category $\cc$ by the groupoid of isomorphisms in its homotopy category, $\tau_{\leq 1}\cc^\sim$.
If the square in the diagram of 1-groupoids were cartesian, and thus also a cartesian square of (truncated) spaces, then Proposition~\ref{prop:PushPull} would tell us that 
\[
\hdf(\tau_{\leq 1}(f_1\circ p_1)^\sim,\tau_{\leq 1}(f_4\circ p_2)^\sim)=\hdf(\tau_{\leq 1}f_3^\sim,\tau_{\leq 1}f_4^\sim)\hdf(\tau_{\leq 1}f_1^\sim,\tau_{\leq 1}f_2^\sim).
\]
However, by Lemma~\ref{lem:truncPullDefect}, the pullback of spaces differs from the pullback of 1-groupoids in that the latter's $\pi_1$'s are larger by the factor whose base $q$ logarithm is the defect of the bicartesian square of complexes
\[
\begin{tikzcd}
\Hom_{\cb}(y,y) \arrow[d]\arrow[r] & \Hom_{\cb_1}(y_1,y_1) \arrow[d] \\
\Hom_{\cb_2}(y_2,y_2) \arrow[r] & \Hom_{\ca_2}(x_2,x_2).
\end{tikzcd}
\]
which is 
\[
\delta(y)\coloneqq \mathrm{rk}\left(\Ext^{-1}_{\ca_2}(x_2,x_2)\to\Ext^0_{\cb}(y,y)\right)
\]
where
\[
y\in\cb,\qquad y_1=p_1(y)\in\cb_1,\qquad y_2=p_2(y)\in\cb_2,\qquad x_2=f_2(y_1)\in\ca_2.
\]
To complete the proof it thus suffices to show that $\delta(y)$ is precisely cancelled by the extra power of $q^{\frac{1}{2}}$ which appears in $\hdfx$ but not $\hdf\circ\tau_{\leq 1}$, i.e. that
\[
\gamma(y)\coloneqq\gamma((f_1\circ p_1,f_2\circ p_2),y)=\gamma((f_1,f_2),y_1)+\gamma((f_3,f_4),y_2)+2\delta(y).
\]

To show the above identity of ranks we apply Lemma~\ref{lem:DefectAdditivity} to the diagram~\eqref{diag:window}, evaluated at $(y,y)\in\cb^{\mathrm{op}}\otimes \cb$.
Let $\delta_{11},\delta_{12},\delta_{21},\delta_{22}$ be the defects of the upper-left, upper-right, lower-left, and lower-right squares respectively.
Then $\delta_{11}=\delta(y)$. Assume that $n \geq 0$, 
then the long exact sequence of the upper left square shows that
\[
\delta_{22}=\delta_{11}-\langle y,y\rangle_{0,\ldots,n-1}+\langle y_1,y_1\rangle_{0,\ldots,n-1}+\langle y_2,y_2\rangle_{0,\ldots,n-1}
\]
using the fact that $\ca_2$ is oddCY and thus $\langle x_2,x_2\rangle_{0,\ldots,n-1}=0$.
Furthermore, the defects of the upper-right, lower-left, and big squares are
$\delta_{21}=\gamma((f_1,f_2),y_1)-\langle y_1,y_1\rangle_{0,\ldots,n-1}$, $\delta_{12}=\gamma((f_3,f_4),y_2)-\langle y_2,y_2\rangle_{0,\ldots,n-1}$, and $\gamma(y)-\langle y,y\rangle_{0,\ldots,n-1}$ respectively.
Putting this all together we get
\begin{align*}
    \gamma(y) &= \langle y,y\rangle_{0,\ldots,n-1}+\delta_{11}+\delta_{12}+\delta_{21}+\delta_{22} \\
    &= 2\delta_{11}+\delta_{12}+\langle y_1,y_1\rangle_{0,\ldots,n-1}+\delta_{21}+\langle y_2,y_2\rangle_{0,\ldots,n-1} \\
    &= 2\delta(y)+\gamma((f_1,f_2),y_1)+\gamma((f_3,f_4),y_2).
\end{align*}
For $n < 0$, the proof is completed by a similar computation.
\end{proof}

\section{Hall algebras}
\label{sec:hall}

Throughout this section, all the abelian, exact, and triangulated categories are assumed to be $\Hom$-finite, essentially small, idempotent complete, and linear over $\mathbf k = \mathbb{F}_q$. This assumption implies, in particular, the Krull--Schmidt property. 

For a triangulated category $\ct$ we write $\Ext^i(x, y) := \Hom(x[-i], y)$ for $x, y \in \ct, i \in \mathbb{Z}$. We write $C(\delta)$ for the isomorphism class of the cone of a morphism $\delta$ in a triangulated or in a dg-category. 

\subsection{Reminder on Hall algebras}

Let $\ca$ be a  $\mathbf k$-linear abelian category with finite $\Hom$- and $\Ext^1$-spaces. 
The Hall algebra $\ch(\ca)$ is a  $\QQ$-vector space with a basis $\{m_x \mid x \in \Iso(\ca)\}$ parameterized by isomorphism classes of objects in $x \in \ca$ endowed with the product
\begin{align*}
m_z \cdot m_x = \sum_{y \in \Iso(\ca)} \frac{|\Ext^1(z, x)_y|}{|\Hom(z, x)|} m_y, 
\end{align*}
where $\Ext^1(z, x)_y \subseteq \Ext^1(z, x)$ is a subset of equivalence classes of short exact sequences $0 \to x \to y' \to z \to 0$ with $y' \cong y$. Denote the isomorphism class of the middle term of any short exact sequence representing an extension $\delta  \in \Ext^1(z, x)$ by $\mt(\delta)$. The multiplication can be rewritten as
\begin{align*}
m_z \cdot m_x = \frac{1}{|\Hom(z, x)|} \sum_{\delta  \in \Ext^1(z, x)} m_{\mt(\delta)}. 
\end{align*}

Hall algebras were introduced by Ringel~\cite{ringel_hall}, who also proved that $\ch(\ca)$ is an associative unital algebra, with unit given by $m_0$.

Ringel worked in a different basis, of the form $\{\frac{1}{|\aut(x)|} \cdot m_x\}$. One could interpret $m_x$'s as measures and elements of the rescaled basis as finitely supported functions on $\Iso(\ca)$. We choose to follow the conventions used in \cite{bridgeland_quantum}. 

The algebra $\ch(\ca)$ is naturally graded by the Grothendieck group $K_0(\ca)$. In fact, it is graded by the monoid version of $K_0(\ca)$ called the \emph{Grothendieck monoid} which we denote $M_0(\ca)$ \cite{BerensteinGreenstein}. Any 2-cocycle (i.e. bilinear form) on the grading group $K_0(\ca)$ with values in $\QQ$ defines a \emph{twist} of the Hall algebra. Similarly, we can extend the scalars and work over $\C$ or some intermediate field extensions of $\QQ$ and consider 2-cocycles with values in these fields. The most important example of a twist comes from the Euler pairing on the category $\ca$. Assume 
that for any $x, y \in \ca$ we have $\Ext^i(x, y) = 0$ for $i \gg 0$.  (We recall that higher extension groups in essentially small abelian categories can be defined either as $\Ext^i$-groups in the bounded derived category $\cd^b(\ca)$ where $x, y$ are considered as objects in the heart of a natural $t$-structure, or equivalently and similarly to $\Ext^1$ as equivalence classes of exact sequences of length $(i+2)$, so-called \emph{Yoneda extensions}). Then we have a pairing 
$\langle x, y \rangle \coloneqq \sum_{i=0}^{\infty} (-1)^i \dim \Ext^i(x, y)$
which descends to a $\Z$-valued bilinear form on $K_0(\ca)$. The \emph{twisted Hall algebra}  $(\ch(\ca)_{\scriptscriptstyle{\mathrm{tw}}}, \ast)$ is then defined to be the same $\C$-vector space as  $(\ch(\ca) \otimes_{\QQ} \C, \cdot)$ endowed with the multiplication 
\begin{align*}
m_z \ast m_x \coloneqq q^{\frac{1}{2} \langle z, x \rangle} m_z \cdot m_x = \frac{q^{\frac{1}{2} \langle z, x \rangle}}{|\Hom(z, x)|} \sum_{\delta  \in \Ext^1(z, x)} m_{\mt(\delta)}. 
\end{align*}
One may also consider $(\ch(\ca)_{\scriptscriptstyle{\mathrm{tw}}}, \ast)$ over $\QQ(q^{\frac{1}{2}})$. The distinction will not be important for us.

All the same definitions and properties remain valid if we work with Quillen exact categories instead of abelian categories \cite{Hubery}. These are axiomatizations of extension closed subcategories of abelian categories. 

Given a triangulated category $\ct$, the same structure constants as in the abelian case fail to produce an associative algebra. Under the following (fairly restrictive) finiteness condition, a modification is given by derived Hall algebras.

We say that a triangulated category $\ct$ is \textit{left} 
\textit{locally homologically finite} if for each pair of objects $x, y \in \ct$,
we have $\Ext^{-i}(x, y) = 0$ 
for $i \gg 0$. 
This condition
ensures the existence of an associative unital algebra associated with $\ct$ and remembering some basic homological information. Recall that whenever we write $\delta \in \Ext^1(z,x)$, we interpret it as a morphism $z[-1] \to x$.

\begin{definition}
Suppose $\ct$ is left locally homologically finite. The \textit{derived Hall algebra} $\cd\ch(\ct)$ is a  $\QQ$-vector space with a basis $\{m_x \mid x \in \Iso(\ct)\}$ parameterized by isomorphism classes of objects in $x \in \ct$, endowed with the product
\begin{align*}
m_z \cdot m_x &= \underbrace{\left(\prod_{i = 0}^{\infty} | \Ext^{-i}(z, x)|^{(-1)^{i+1}}\right)}_{q^{-\langle z, x \rangle_{\leq 0}}}
\sum_{\delta \in \Ext^1(z, x)} m_{C(\delta)}.
 \end{align*}
\end{definition}

Derived Hall algebras are associative and unital whenever they are well-defined. This is due to To\"en \cite{ToenDerivedHall} in the case of algebraic triangulated categories (i.e. those admitting a dg enhancement), and to Xiao--Xu \cite{XiaoXu08} in the general case. The structure constants we wrote are obtained from those in these papers by rescaling and were first considered by Kontsevich--Soibelman \cite{KS08}. Just as in the abelian case, the rescaling can be interpreted as an isomorphism between algebra structures on the space of finitely supported functions and on the space of measures on $\Iso(\ct)$. 
We chose the latter convention, with $m_x$'s standing for measures.

Just as Hall algebras of abelian categories, the derived Hall algebra $\cd\ch(\ct)$ is graded by the Grothendieck group $K_0(\ct)$ and can be twisted (possibly after taking extension of scalars) by any bilinear form on this grading group.

\subsection{Intrinsic Hall algebras of oddCY categories}
\label{subsec:hallOddCY}

\subsubsection{Definition and structure constants}

The following observation goes back to T\"oen~\cite{ToenDAG}.

\begin{proposition}\label{propHallSpanCY}
Suppose $\cc$ is a {\degfin} $n$CY pre-triangulated dg-category over $\mathbf k$.
Then the functor $\cc^{\typeacat{2}}\to \cc^{\times 3}$ sending a triangle $A\to B\to C\to $ to the triple $(A,B,C)$ has a natural $(n+1)$CY structure extending the $n$CY structure on $\cc^{\times 3}$.
\end{proposition}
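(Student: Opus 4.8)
The plan is to realize the functor $\cc^{\typeacat{2}}\to\cc^{\times 3}$ as a composite of CY spans built from the basic $1$CY functor $\typeacat{2}\to\typeacat{1}^{3}$ of Example~\ref{ex:an1cy}, using the fact that relative CY structures are stable under the span-composition of Proposition~\ref{prop:CYSpanComp} together with the natural change along tensoring with an $n$CY category. First I would recall that for a {\degfin} $n$CY pre-triangulated dg-category $\cc$ and a {\degfin} dg-category $\cd$ which is smooth and proper (here $\cd=\typeacat{2}$, $\typeacat{1}$), the dg-category of functors (modules) $\operatorname{Rep}(\cd,\cc)$ inherits an $n$CY structure, and a relative $(n+1)$CY structure on a functor $\cd'\to\cd$ of smooth proper categories induces a relative $(n+1)$CY structure on $\operatorname{Rep}(\cd,\cc)\to\operatorname{Rep}(\cd',\cc)$; the point is that $\operatorname{RHom}$-bimodules over the functor categories are computed by the corresponding bimodule tensor products, and the bicartesian square defining the relative CY structure is preserved under $(-)\otimes_{\mathbf k}$-type constructions with $\cc$. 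Applying this with the $1$CY functor $(\iota,\pi):\typeacat{2}\to\typeacat{1}^{3}$ yields, after identifying $\operatorname{Rep}(\typeacat{1},\cc)\simeq\cc$ (since $\typeacat{1}=\mathbf k$) and $\operatorname{Rep}(\typeacat{2},\cc)\simeq\cc^{\typeacat{2}}$ (the dg-category of exact triangles in $\cc$, by pre-triangulatedness), exactly the claimed relative $(n+1)$CY structure on $\cc^{\typeacat{2}}\to\cc^{\times 3}$.

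Concretely, the key steps in order are: (i) verify $\typeacat{1}$ and $\typeacat{2}$ are smooth and proper, so that the tensor/base-change machinery applies and $\operatorname{Rep}(\typeacat{n},\cc)$ is again {\degfin}; (ii) identify $\operatorname{Rep}(\typeacat{2},\cc)$ with the dg-category of exact triangles $A\to B\to C\to$ in $\cc$ — this is where pre-triangulatedness of $\cc$ enters, ensuring that a representation of the $A_2$-quiver (with the cone relation) is the same as a triangle — and $\operatorname{Rep}(\typeacat{1},\cc)\simeq\cc$, so that the evaluation functor $\operatorname{Rep}(\typeacat{2},\cc)\to\operatorname{Rep}(\typeacat{1}^{3},\cc)\simeq\cc^{\times 3}$ becomes $(A\to B\to C\to)\mapsto(A,B,C)$; (iii) push forward the short exact sequence of $B$-bimodules from Example~\ref{ex:an1cy} along $-\otimes_{\mathbf k}\cc$ (more precisely, the relevant bimodule inflation) to obtain the bicartesian square of $\cc^{\typeacat{2}}$-bimodules required by Definition~\ref{def:CYfunctor}, using that the diagonal bimodule of a functor category is the tensor of diagonal bimodules and that $\mathbf k$-duality commutes with these operations for smooth proper source categories; (iv) check that the induced $n$CY structure on $\cc^{\times 3}$ agrees with the given product $n$CY structure (i.e. the direct sum of the three copies), which follows because the identification $\typeacat{1}^{3}\cong(\typeacat{1})^{\sqcup 3}$ splits $\operatorname{Rep}$ as a product. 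Alternatively, one can run the argument purely at the level of bimodules: write down the fiber sequence $\cc^{\typeacat{2}}\to f^*\cc^{\times 3}\to(\cc^{\typeacat{2}})^\vee[-n]$ directly by combining, at each pair of triangles, the long exact sequences relating $\operatorname{Ext}^\bullet$ in $\cc^{\typeacat{2}}$ (computed by a mapping-cone/two-step filtration) with those in $\cc$, and observe it is exactly the $n$-shifted dual of the defining sequence because of the $A_2$-self-duality in Example~\ref{ex:an1cy} twisted by the $n$CY duality on $\cc$.

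I expect the main obstacle to be step (iii): making precise and checking the compatibility of the four relevant operations — forming functor/representation dg-categories, taking diagonal bimodules, $\mathbf k$-linear duality, and the shift — so that the bicartesian square from Example~\ref{ex:an1cy} genuinely transports to a bicartesian square of $\cc^{\typeacat{2}}$-bimodules with the correct shift $1-(n+1)=-n$. The subtlety is that the CY duality on $\cc$ is only a weak equivalence $\cc^\vee\simeq\cc[n]$ in the derived category of bimodules (not on the nose), so one must carry the coherence data through the base-change functor; the cleanest route is to invoke the general functoriality of relative CY structures under $\operatorname{Rep}(-,\cc)$ for smooth proper source categories, which is exactly the content cited in \cite{BravDyckerhoffRelCY} (and in the locally proper form used throughout Section~\ref{subsec_relcy}), and then all that remains is the bookkeeping identification of the functor categories in step (ii) and the matching of CY structures on $\cc^{\times 3}$ in step (iv). A secondary point worth flagging is that pre-triangulatedness (rather than just {\degfin}-ness) is genuinely used in step (ii), and that the connecting map in the resulting long exact sequence~\eqref{relcyLES} need not vanish for $\cc^{\typeacat{2}}\to\cc^{\times 3}$ even though it does for the toy model $\typeacat{2}\to\typeacat{1}^{3}$ — this is consistent with, and in fact the source of, the rank-correction terms $r_i(\delta)$ appearing in the Hall product~\eqref{hallprodintro}.
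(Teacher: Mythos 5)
Your proposal follows essentially the same route as the paper: since $\typeacat{2}$ is smooth and proper, $\cc^{\typeacat{2}}$ is Morita equivalent to $\cc\otimes\typeacat{2}$, and the $(n+1)$CY structure is produced by tensoring the defining bicartesian square of the $1$CY structure on $\typeacat{2}\to\typeacat{1}^{\times 3}$ from Example~\ref{ex:an1cy} with the {\degfin} $n$CY category $\cc$, invoking the tensor-product statement of Brav--Dyckerhoff/Christ adapted to this setting. One correction to your closing remark: the paper asserts (and uses, right after Theorem/Definition~\ref{thm:algebra_assoc_main}) that the connecting map in~\eqref{relcyLES} \emph{does} vanish for $\cc^{\typeacat{2}}\to\cc^{\times 3}$, because it is induced from the toy model's connecting map by tensoring with the identity of $\cc$ and the former vanishes on cohomology for degree reasons; the rank terms $r_i(\delta)$ in the product~\eqref{hallprodintro} arise not from a nonvanishing connecting map but from expressing $\langle t,t\rangle_{0,\ldots,n}$ in terms of $(x,z,\delta)$ via the spectral sequence of the two-step twisted complex.
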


\begin{proof}
Note that $\typeacat{2}$ is smooth and proper, thus $\cc^{\typeacat{2}}$ is Morita equivalent to $\cc\otimes\typeacat{2}$.
By the well-known fact that $\typeacat{2}\to\typeacat{1}^{\times 3}$ is $1$CY, see Example~\ref{ex:an1cy}, we are thus looking for the statement that the tensor product of an $m$CY functor of proper categories with a {\degfin} $n$CY category is an $(m+n)$CY functor.
For smooth categories and their left CY structures, such a statement is proven in~\cite[Proposition 6.4]{BravDyckerhoffRelCY}, and for either smooth or proper categories in~\cite[Proposition 3.8]{christ2023relative}.
The
same proof extends to our setting: Take the defining bicartesian square of the $m$CY structure on the functor and tensor it with the fixed $n$CY category. The resulting square is bicartesian and provides the desired $n$CY structure. 
\end{proof}

\begin{thmdef}
\label{thm:algebra_assoc_main}
Let $\cc$ be a pre-triangulated {\degfin} dg-category over $\mathbf k$ which is $n$CY for some odd $n$.
If $\mu$ is the span $\cc\times\cc\leftarrow\cc^{\typeacat{2}}\to \cc$ then $\hdfx(\mu)$ defines an associative and unital product on $\hdfx(\cc)$.
We denote the resulting algebra by $\hallcy(\cc)$. 
\end{thmdef}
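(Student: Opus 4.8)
The plan is to deduce associativity and unitality of $\hallcy(\cc)$ from the functoriality of $\hdfx$ established in Theorem~\ref{thm:functoriality}, applied to the category $\spancat^{(n+1)\mathrm{CY}}$. The key point is that the multiplication span $\mu:\cc\times\cc\leftarrow\cc^{\typeacat{2}}\to\cc$ is an $(n+1)$CY span by Proposition~\ref{propHallSpanCY} (combined with the observation that $\cc\times\cc$ and $\cc$ carry $n$CY structures, and $n+1$ is even since $n$ is odd), and that the map $\cc^{\typeacat{2}\,\sim}\to(\cc\times\cc)^\sim$ sending a triangle to its pair of outer terms has finite fibers --- this is because a triangle $A\to B\to C\to$ is determined up to isomorphism by $A$, $C$, and the connecting class in $\Ext^1(C,A)$, which is a finite set. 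Hence $\mu$ is genuinely a morphism in $\spancat^{(n+1)\mathrm{CY}}$.

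First I would verify the associativity axiom. The standard argument is that the two ways of bracketing a triple product both compute $\hdfx$ of the same $(n+1)$CY span, namely the span built from the category $\cc^{\typeacat{3}}$ of objects $A_1\to A_2\to A_3$ (equivalently, of filtrations with three subquotients) to $\cc\times\cc\times\cc$ on one side and $\cc$ on the other. Concretely, one checks that the span $(\mu\times\mathrm{id})$ followed by $\mu$, and the span $(\mathrm{id}\times\mu)$ followed by $\mu$, are both equivalent --- as $(n+1)$CY spans, hence as morphisms in $\spancat^{(n+1)\mathrm{CY}}$ --- to the span $\cc^3\leftarrow\cc^{\typeacat{3}}\to\cc$. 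This requires identifying the relevant homotopy pullbacks: $\cc^{\typeacat{2}}\times_{\cc}\cc^{\typeacat{2}}$ (pulling back along the appropriate projections) is equivalent to $\cc^{\typeacat{3}}$, which follows from the fact that a pair of composable triangles glues to an object of $\cc^{\typeacat{3}}$, together with the octahedral/filtration axiom at the dg level. Once this equivalence of spans is in place, Theorem~\ref{thm:functoriality} immediately gives $\hdfx(\mu)\circ(\hdfx(\mu)\otimes\mathrm{id})=\hdfx(\mu)\circ(\mathrm{id}\otimes\hdfx(\mu))$, which is associativity.

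For unitality, I would take the unit to be the class $u_0\in\hdfx(\cc)$ of the zero object, and check that the span realizing multiplication by $u_0$ reduces (as a morphism in $\spancat^{(n+1)\mathrm{CY}}$) to the identity span $\cc\xleftarrow{=}\cc\xrightarrow{=}\cc$: this comes from the fact that the subcategory of $\cc^{\typeacat{2}}$ consisting of triangles $0\to B\xrightarrow{\sim}C\to$ (or $A\xrightarrow{\sim}B\to 0\to$) is equivalent to $\cc$ itself, with the outer-terms map restricting to an isomorphism, and with trivial CY defect and $\gamma$-contribution so that the normalizing powers of $q$ and $|\mathrm{Aut}|$ all cancel. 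Then again Theorem~\ref{thm:functoriality} finishes the job.

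The main obstacle I anticipate is the careful bookkeeping of CY structures and properness hypotheses needed to make the above span-level equivalences legitimate morphisms in $\spancat^{(n+1)\mathrm{CY}}$ --- in particular, establishing that $\cc^{\typeacat{2}}\times_{\cc}\cc^{\typeacat{2}}\simeq\cc^{\typeacat{3}}$ as dg-categories compatibly with the CY structures coming from Proposition~\ref{propHallSpanCY} and Proposition~\ref{prop:CYSpanComp}, and that all the relevant structure maps $f_1^\sim$ have finite fibers. The fiber-finiteness is routine (each step adds an extension by a finite group), but the coherence of the CY data under these identifications is where the real work lies; it may be cleanest to invoke the smoothness and properness of the $A_n$-quiver algebras together with the Morita-invariance used in the proof of Proposition~\ref{propHallSpanCY}, reducing everything to the well-known composition of the elementary $1$CY spans $\typeacat{2}\to\typeacat{1}^{\times 3}$. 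An explicit computation of the structure constants, matching the formula~\eqref{hallprodintro}, can then be carried out separately by unwinding Definition~\ref{def:LagrInt}(3) and the long exact sequence~\eqref{relcyLES} for the span $\mu$.
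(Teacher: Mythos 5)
Your proposal is correct and follows essentially the same route as the paper: the paper likewise identifies both bracketings with the span $\cc^{\times 3}\leftarrow\cc^{\typeacat{3}}\to\cc$ (via a $3\times 3$ diagram whose upper-right and lower-left squares are cartesian, which is exactly your identification $\cc^{\typeacat{2}}\times_{\cc}\cc^{\typeacat{2}}\simeq\cc^{\typeacat{3}}$), invokes Proposition~\ref{propHallSpanCY} together with Theorem~\ref{thm:functoriality}, and takes the class of the zero object as the unit. Your closing worry about coherence of the CY data is largely moot: morphisms of $\spancat^{(n+1)\mathrm{CY}}$ only require the \emph{existence} of a CY structure on the span (and $\hdfx$ of a span does not depend on that choice), so all that is needed is existence --- from Proposition~\ref{propHallSpanCY} and its $\typeacat{3}$ analogue, or via Proposition~\ref{prop:CYSpanComp} --- plus the properness you already checked.
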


\begin{remark}
Instead of requiring $\cc$ to be pre-triangulated, it suffices for $\cc$ to be closed under extensions, but not necessarily shifts, in a pre-triangulated {\degfin} n$CY$ category $\cc'$. $\hallcy(\cc)$ is then defined as the subalgebra of the Hall algebra $\hallcy(\cc')$ of $\cc'$ supported on $\hdfx(\cc)$.
This generality includes dg categories whose pre-triangulated hulls are 
{\degfin} n$CY$ for some odd $n$, as well as
those exact dg categories in the sense of \cite{Chen1, Chen2} whose bounded derived categories, as introduced in \cite{Chen3}, are 
{\degfin} n$CY$ for some odd $n$. In the latter case, we set $\cc'$ to be the bounded derived category, and the easiest way to define $\hallcy(\cc)$ is indeed by setting it to be a subalgebra of the algebra defined via the span $\cc'\times\cc'\leftarrow(\cc')^{\typeacat{2}}\to \cc'$.
\end{remark}

\begin{proof}
Consider the diagram of dg-categories
\[
\begin{tikzcd}
\cc\times\cc\times\cc & \cc\times\cc^{\typeacat{2}} \arrow[l]\arrow[r] & \cc\times\cc \\
\cc^{\typeacat{2}}\times\cc \arrow[u]\arrow[d] & \cc^{\typeacat{3}} \arrow[l]\arrow[r]\arrow[u]\arrow[d] & \cc^{\typeacat{2}} \arrow[u]\arrow[d] \\
\cc\times\cc & \cc^{\typeacat{2}} \arrow[l]\arrow[r] & \cc
\end{tikzcd}
\]
where all squares are coherent and the upper right and lower left squares are cartesian.
We conclude that both compositions of spans $\mu\circ(\cc\times\mu)$ and $\mu\circ (\mu\times\cc)$ are equivalent to the span $\cc^{\times 3}\leftarrow\cc^{\typeacat{3}}\to \cc$, and all spans are $(n+1)$CY by Proposition \ref{propHallSpanCY}, so associativity follows from Theorem~\ref{thm:functoriality}.
The unit is given by the class of the zero object in $\cc$, considered as an element of $\hdfx(\cc)=\C\pi_0(\cc^\sim)$.
\end{proof}

Our next goal is to find a more explicit formula for the Hall algebra product. It will be useful to write the formulas in different bases. 

\begin{itemize}
 \item We denote the half-densities forming our usual basis of $\hdfx(\cc)$ by $h_x$, for $x \in \pi_0(\cc^\sim)$. 
 \item We introduce a basis $\left\{a_x \mid x \in \pi_0(\cc^\sim) \right\}$ by rescaling half-densities as follows:
\[
a_x :=  \sqrt{|\aut(x)|} \cdot h_x.
\]
 \item We introduce a basis $\left\{u_x \mid x \in \pi_0(\cc^\sim) \right\}$ by rescaling half-densities as follows:
\[
u_x :=  \sqrt{|\aut(x)|} q^{-\frac{1}{2}\langle x, x \rangle_0} \cdot h_x.
\]
\item If $\cc$ is (left) locally homologically finite, following discussion in Section~\ref{subsec_htpycard}, we have a basis of finitely supported functions (either $\C$- or $\QQ$-valued) of the form 
\[
\left\{\frac{1}{\sqrt{|\aut(x)|}} \cdot q^{-\frac{1}{2}\langle x, x \rangle_{< 0}} \cdot h_x\, \middle|\, x \in \pi_0(\cc^\sim)\right\}.
\]
\item Finally, if $\cc$ is (left) locally homologically finite, the basis of measures $\{m_x\}$ is obtained from the basis of finitely supported functions by multiplying its elements by the canonical weighted counting measure $|\aut(x)| \cdot q^{\langle x, x \rangle_{< 0}} $. Its relation with the basis of the half-densities is thus as follows:
\[
m_x :=  \sqrt{|\aut(x)|} q^{\frac{1}{2}\langle x, x \rangle_{< 0}} \cdot h_x.
\]
\end{itemize}

The formulas will look the nicest in the bases $\{a_x\}$ and $\{u_x\}$, which we will now establish.
Assume first that $n > 0$.
Note that the connecting map of the long exact sequence~\eqref{relcyLES} for the CY structure on $\cc^{\typeacat{2}}\to\cc^{\times 3}$ vanishes, since the same is true for  $\typeacat{2}\to\typeacat{1}^{\times 3}$.
Thus, the product  $\hdfx(\mu)$ is given by
\[
h_z\cdot h_x=\sum_{\substack{t=(x\to y\to z \to) \\ \in\pi_0((\cc^{\typeacat{2}})^\sim)}}\frac{\sqrt{|\aut(x)||\aut(y)||\aut(z)|}}{|\aut(t)|}q^{\frac{1}{2}\langle t, t\rangle_{0,\ldots,n}} h_y.
\]

In the basis $\{a_x\}$,
the above becomes
\begin{equation}\label{newHallProd_rescaled}
a_z\cdot a_x=\sum_{\substack{t=(x\to y\to z \to )\\ \in\pi_0((\cc^{\typeacat{2}})^\sim)}}\frac{|\aut(x)||\aut(z)|}{|\aut(t)|}q^{\frac{1}{2}\langle t, t\rangle_{0,\ldots,n}} 
a_y.    
\end{equation}
We will consider the ranks
\[
r_i(\delta)\coloneqq \rank\left(\Ext^{i-1}(z,z)\oplus\Ext^{i-1}(x,x)\xrightarrow{\begin{pmatrix}L_{\delta} & (-1)^{i-1} R_{\delta} 
\end{pmatrix}
}\Ext^{i}(z,x)\right)
\]
where $L_\delta$, $R_\delta$ are left and right multiplication by $\delta\in\Hom^1(z,x)$, respectively.

The long exact sequence of the fiber sequence of the map of Kan complexes $(\cc^{\typeacat{2}})^\sim\to (\cc\times\cc)^\sim$, $t\mapsto (x,z)$ which ends with
\[
\begin{tikzcd}
& \cdots \arrow[r] \arrow[d, phantom, ""{coordinate, name=Z1}] & \Ext^{-1}(x,x)\oplus\Ext^{-1}(z,z) \arrow[dll,
rounded corners,
to path={ -- ([xshift=2ex]\tikztostart.east)
|- (Z1) [near end]\tikztonodes
-| ([xshift=-2ex]\tikztotarget.west)
-- (\tikztotarget)}] \\
\Ext^0(z,x) \arrow[r] & \aut(t) \arrow[r] \arrow[d, phantom, ""{coordinate, name=Z2}] & \aut(x)\times\aut(z) \arrow[dll,
rounded corners,
to path={ -- ([xshift=2ex]\tikztostart.east)
|- (Z2) [near end]\tikztonodes
-| ([xshift=-2ex]\tikztotarget.west)
-- (\tikztotarget)}] \\
\Ext^1(z,x) \arrow{r} & \pi_0((\cc^{\typeacat{2}})^\sim) \arrow[r] & \pi_0((\cc\times\cc)^\sim)
\end{tikzcd}
\]
shows that
\[
\left|\left\{t=(x\to y\to z \to ) \in\pi_0((\cc^{\typeacat{2}})^\sim)\right\}\right|=\sum_{\delta\in\Ext^1(z,x)_y}\frac{|\aut(t)|}{|\aut(x)||\aut(z)|}q^{-\langle z,x\rangle_0+r_0(\delta)},
\]
where $x,y,z$ are fixed, $t=(x\to y\to z\xrightarrow{\delta} )$, and $\Ext^1(z,x)_y\subseteq \Ext^1(z,x)$ is the subset of those extension classes giving $y$.

To compute $\Ext^i(t,t)$ and $\Ext^i(y,y)$ we will model $t$ and $y$ by the 2-step (twisted) complex $z\xrightarrow{\delta}x$. 
Then $\Hom^\bullet(y,y)$ has underlying graded vector space
\[
\Hom^\bullet(z,x)\oplus\Hom^\bullet(z,z)\oplus\Hom^\bullet(x,x)\oplus\Hom^\bullet(x,z)
\]
with differential $\Hom^{i-1}(y,y)\to\Hom^i(y,y)$ given by
\[
\left(\begin{matrix}  
d & L_{\delta} & (-1)^{i-1} R_{\delta} & 0 \\
0 & d & 0 & (-1)^{i-1} R_{\delta} \\ 
0 & 0 & d & L_{\delta} \\
0 & 0 & 0 & d \end{matrix}  
\right)
\]
where $d$ is the differential in $\cc$ and $L_\delta$, $R_\delta$ and left and right multiplication by $\delta\in\Hom^1(z,x)$, respectively.

Note that the differential is compatible with the 3-step filtration
\[
\Hom^\bullet(z,x)\subset \Hom^\bullet(z,x)\oplus\Hom^\bullet(z,z)\oplus\Hom^\bullet(x,x)\subset \Hom^\bullet(y,y).
\]
Consider the spectral sequence $E_\bullet$ associated with this filtered complex. 
Its $E_1$ page is given by
\[
\Ext^\bullet(z,x)\oplus\Ext^\bullet(z,z)\oplus\Ext^\bullet(x,x)\oplus\Ext^\bullet(x,z)
\]
with differential
\[
D^i\coloneqq\left(\begin{matrix}  
0 & L_{\delta} & (-1)^{i-1} R_{\delta} & 0 \\
0 & 0 & 0 & (-1)^{i-1} R_{\delta} \\ 
0 & 0 & 0 & L_{\delta} \\
0 & 0 & 0 & 0 \end{matrix}  
\right)
\]
where now $L_{\delta}$ and $R_{\delta}$ are the induced maps on $\Ext$-groups.
Since the upper-right corner of this matrix vanishes, the spectral sequence converges at the $E_2=E_\infty$ page which is $\Ext^\bullet(y,y)$.
Thus, if we let $d_i(\delta)\coloneqq\mathrm{rk}D^i$, then 
\begin{equation}
\label{dim_ext_and_4x4matrix}
\dim\Ext^i(x \oplus z,  x \oplus z)  = \dim\Ext^i(y, y)  + d_i(\delta) + d_{i+1}(\delta).
\end{equation}

Since $\cc$ is $n$CY by assumption, 
the map
\[
\Ext^{i-1}(x,z) \xrightarrow{\begin{pmatrix}(-1)^{i-1} R_{\delta} \\
L_{\delta}
\end{pmatrix}
}
\Ext^{i}(z,z) 
\oplus 
\Ext^i(x,x)
\]
is dual to the map 
 \[
 \Ext^{n-i}(z,z)\oplus\Ext^{n-i}(x,x)\xrightarrow{\begin{pmatrix}L_{\delta} & (-1)^{n-i} R_{\delta} 
\end{pmatrix}
}\Ext^{n-i+1}(z,x),
\]
and so it has the same rank $r_{n-i+1}$.
We then have
\[
d_i(\delta) = r_i(\delta) + r_{n-i+1}(\delta).
\]

To compute $\Ext^\bullet(t,t)$ we start instead with the subcomplex
\[
\Hom^\bullet(z,x)\oplus\Hom^\bullet(z,z)\oplus\Hom^\bullet(x,x)
\]
and a similar argument then shows that 
\[
\dim\Ext^i(x,x)+\dim\Ext^i(z,z)+\dim\Ext^i(z,x)=\dim\Ext^i(t,t)+r_i(\delta)+r_{i+1}(\delta).
\]
In particular, since $n$ is odd and thus $\langle x,x\rangle_{0,\ldots,n}=\langle z,z\rangle_{0,\ldots,n}=0$, we get
\[
\langle t,t\rangle_{0,\ldots,n} = \langle z,x\rangle_{0,\ldots,n}-r_0(\delta)+r_{n+1}(\delta).
\]

Plugging all this into \eqref{newHallProd_rescaled}, we get

\begin{align*}
a_z\cdot a_x &= \sum_{\delta\in\Ext^1(z,x)}q^{\frac{1}{2}\langle z,x\rangle_{0,\ldots,n}-\langle z,x\rangle_0+\frac{1}{2}r_0(\delta)+\frac{1}{2}r_{n+1}(\delta)} a_{C(\delta)}\\
&= \sum_{\delta\in\Ext^1(z,x)}q^{\frac{1}{2}\langle z,x\rangle_{0,\ldots,n}-\langle z,x\rangle_0+\frac{1}{2}d_0(\delta)} a_{C(\delta)}.
\end{align*}

To summarize, we have proven the following.

\begin{proposition}
\label{prop:prod_intrinsic_positive}
Let $\cc$ be a pre-triangulated {\degfin} dg-category over $\mathbf k$ which is $n$CY for some odd $n > 0$. In the basis $\left\{a_x \mid x \in \pi_0(\cc^\sim) \right\}$ 
the product  
in $\hallcy(\cc)$
is given by the formula
\begin{align*}
a_z\cdot a_x = \sum_{\delta\in\Ext^1(z,x)}q^{\frac{1}{2}(\langle z,x\rangle_{0,\ldots, n}+r_0(\delta)+r_{n+1}(\delta)) - \langle z,x\rangle_{0}}a_{C(\delta)}.
\end{align*}
\end{proposition}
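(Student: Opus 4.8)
The plan is to unwind the definition of $\hdfx(\mu)$ for the span $\mu: \cc\times\cc\leftarrow\cc^{\typeacat{2}}\to\cc$ of Theorem/Definition~\ref{thm:algebra_assoc_main} and re-express it as a sum over extension classes. By Proposition~\ref{propHallSpanCY} this span is $(n+1)$CY, and since the connecting map of the long exact sequence~\eqref{relcyLES} vanishes for the functor $\typeacat{2}\to\typeacat{1}^{\times 3}$ (Example~\ref{ex:an1cy}), it also vanishes for $\mu$; hence the rank correction in $\gamma$ disappears and Definition~\ref{def:LagrInt}(3) gives, in the half-density basis,
\[
h_z\cdot h_x=\sum_{t}\frac{\sqrt{|\aut(x)|\,|\aut(y)|\,|\aut(z)|}}{|\aut(t)|}\,q^{\frac12\langle t,t\rangle_{0,\ldots,n}}\,h_y,
\]
the sum running over all triangles $t=(x\to y\to z\to)$ in $\pi_0((\cc^{\typeacat{2}})^\sim)$. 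Passing to the basis $a_x=\sqrt{|\aut(x)|}\,h_x$ clears the square roots, and I would then re-index by $\delta\in\Ext^1(z,x)$: the long exact sequence of the fibration $(\cc^{\typeacat{2}})^\sim\to(\cc\times\cc)^\sim$, $t\mapsto(x,z)$, shows that for fixed $x,y,z$ the number of triangles $t$ inducing $y$ equals $\sum_{\delta\in\Ext^1(z,x)_y}\frac{|\aut(t)|}{|\aut(x)|\,|\aut(z)|}\,q^{-\langle z,x\rangle_0+r_0(\delta)}$, where $t=(x\to y\to z\xrightarrow{\delta})$.

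The technical core is the evaluation of $\langle t,t\rangle_{0,\ldots,n}$. For this I would model both $t$ and $y=C(\delta)$ by the two-term twisted complex $z\xrightarrow{\delta}x$ in $\cc$, write $\Hom^\bullet(y,y)=\Hom^\bullet(z,x)\oplus\Hom^\bullet(z,z)\oplus\Hom^\bullet(x,x)\oplus\Hom^\bullet(x,z)$ equipped with its explicit $4\times 4$ differential, and run the spectral sequence of the evident three-step filtration. Since the upper-right block of the $E_1$-differential $D$ vanishes, one gets $E_2=E_\infty=\Ext^\bullet(y,y)$ and therefore $\dim\Ext^i(x\oplus z,\,x\oplus z)=\dim\Ext^i(y,y)+d_i(\delta)+d_{i+1}(\delta)$ with $d_i(\delta)=\rank D^i$; the $n$CY structure on $\cc$ identifies the lower block of $D^i$ with the $\mathbf k$-dual of the map defining $r_{n-i+1}(\delta)$, so that $d_i(\delta)=r_i(\delta)+r_{n-i+1}(\delta)$. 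Applying the same argument to the subcomplex $\Hom^\bullet(z,x)\oplus\Hom^\bullet(z,z)\oplus\Hom^\bullet(x,x)$ computes $\Ext^\bullet(t,t)$ and yields $\dim\Ext^i(x,x)+\dim\Ext^i(z,z)+\dim\Ext^i(z,x)=\dim\Ext^i(t,t)+r_i(\delta)+r_{i+1}(\delta)$. Taking the alternating sum over $i=0,\ldots,n$ and using that $n$ is odd — so that $\langle x,x\rangle_{0,\ldots,n}=\langle z,z\rangle_{0,\ldots,n}=0$ — gives $\langle t,t\rangle_{0,\ldots,n}=\langle z,x\rangle_{0,\ldots,n}-r_0(\delta)+r_{n+1}(\delta)$.

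Finally I would substitute this identity together with the fibre count into the rescaled product formula for $a_z\cdot a_x$; the $\aut$-factors cancel, the exponent of $q$ collapses to $\tfrac12\langle z,x\rangle_{0,\ldots,n}-\langle z,x\rangle_0+\tfrac12 r_0(\delta)+\tfrac12 r_{n+1}(\delta)$, and collecting the contributions of the various $\delta\in\Ext^1(z,x)$ produces exactly the asserted formula. I expect the main obstacle to be the bookkeeping in the two spectral sequences above, and in particular the careful use of Calabi--Yau duality to match $\rank D^i$ with $r_i(\delta)+r_{n-i+1}(\delta)$; once those rank identities are established, the rest is a routine assembly of the pieces.
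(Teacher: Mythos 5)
Your proposal is correct and follows essentially the same route as the paper: vanishing of the connecting map for the triangle span, the fibration count over $(\cc\times\cc)^\sim$, the two-step twisted complex model with its filtration spectral sequence, and the Calabi--Yau duality giving $d_i(\delta)=r_i(\delta)+r_{n-i+1}(\delta)$, leading to $\langle t,t\rangle_{0,\ldots,n}=\langle z,x\rangle_{0,\ldots,n}-r_0(\delta)+r_{n+1}(\delta)$ and the stated exponent. Nothing essential is missing.
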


In the basis $\{u_x\}$, the multiplication formula is a little bit simpler. Indeed, we have
\begin{align*}
u_z\cdot u_x &= \sum_{\delta\in\Ext^1(z,x)}q^{\frac{1}{2}(\langle z,x\rangle_{0,\ldots, n}+r_0(\delta)+r_{n+1}(\delta)) - \langle z,x\rangle_{0} - \frac{1}{2}(\langle x,x\rangle_0+\langle z,z\rangle_0-\langle y,y\rangle_0)}u_{C(\delta)}\\
&= \sum_{\delta\in\Ext^1(z,x)}q^{\frac{1}{2}\langle z,x\rangle_{1,\ldots,n-1}+\frac{1}{2}d_0(\delta) - \frac{1}{2}\langle x \oplus z, x \oplus z\rangle_0 + \frac{1}{2} \langle y,y\rangle_0}u_{C(\delta)}\\
&= \sum_{\delta\in\Ext^1(z,x)}q^{\frac{1}{2}\langle z,x\rangle_{1,\ldots,n-1}-\frac{1}{2}d_1(\delta)}u_{C(\delta)}\\
&=\sum_{\delta\in\Ext^1(z,x)}q^{\frac{1}{2}(\langle z,x\rangle_{1,\ldots, n-1}-r_1(\delta)-r_n(\delta))}u_{C(\delta)},
\end{align*}
where we used that $-\langle x,z\rangle_{0} = \langle z,x\rangle_{n}$  
and the formula \eqref{dim_ext_and_4x4matrix}.
Here $\langle z,x\rangle_{1,\ldots, n-1}$ is by definition identically zero if $n = 1$. We proved the following.

\begin{proposition}
\label{prop:prod_intrinsic_positive_2}
Let $\cc$ be a pre-triangulated {\degfin} dg-category over $\mathbf k$ which is $n$CY for some odd $n > 0$. In the basis $\left\{u_x \mid x \in \pi_0(\cc^\sim) \right\}$ 
the product 
in $\hallcy(\cc)$
is given by the formula
\begin{align*}
u_z\cdot u_x = \sum_{\delta\in\Ext^1(z,x)}q^{\frac{1}{2}(\langle z,x\rangle_{1,\ldots, n-1}-r_1(\delta)-r_{n}(\delta))}u_{C(\delta)}.
\end{align*}
\end{proposition}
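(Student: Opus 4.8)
The plan is to obtain this as a direct corollary of Proposition~\ref{prop:prod_intrinsic_positive} by a change of basis, reusing the spectral sequence identities already established in the discussion preceding it. Comparing the definitions $a_x = \sqrt{|\aut(x)|}\,h_x$ and $u_x = \sqrt{|\aut(x)|}\,q^{-\frac{1}{2}\langle x,x\rangle_0}h_x$, one has $u_x = q^{-\frac{1}{2}\langle x,x\rangle_0}a_x$, so that substituting into the formula of Proposition~\ref{prop:prod_intrinsic_positive} and writing $y = C(\delta)$ each summand acquires an extra factor $q^{-\frac{1}{2}(\langle x,x\rangle_0 + \langle z,z\rangle_0 - \langle y,y\rangle_0)}$. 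This yields
\[
u_z\cdot u_x = \sum_{\delta\in\Ext^1(z,x)} q^{\frac{1}{2}(\langle z,x\rangle_{0,\ldots,n} + r_0(\delta) + r_{n+1}(\delta)) - \langle z,x\rangle_0 - \frac{1}{2}(\langle x,x\rangle_0 + \langle z,z\rangle_0 - \langle y,y\rangle_0)}\, u_{C(\delta)},
\]
and the task is to simplify the exponent to $\tfrac12(\langle z,x\rangle_{1,\ldots,n-1} - r_1(\delta) - r_n(\delta))$.

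First I would massage the purely numerical part of the exponent. Since $n$ is odd, the $n$CY property forces $\langle x,x\rangle_{0,\ldots,n} = \langle z,z\rangle_{0,\ldots,n} = 0$, whence $\langle z,x\rangle_{0,\ldots,n} = \langle z,x\rangle_0 + \langle z,x\rangle_{1,\ldots,n-1} + \langle z,x\rangle_n$; and the duality $\Ext^0(x,z)^\vee \cong \Ext^n(z,x)$ together with $(-1)^n = -1$ gives $-\langle x,z\rangle_0 = \langle z,x\rangle_n$. Combining these with the identity $r_0(\delta) + r_{n+1}(\delta) = d_0(\delta)$ already recorded, the exponent rewrites as
\[
\tfrac12\langle z,x\rangle_{1,\ldots,n-1} + \tfrac12 d_0(\delta) - \tfrac12\langle x\oplus z,\, x\oplus z\rangle_0 + \tfrac12\langle y,y\rangle_0 .
\]
Next I would invoke the spectral sequence identity~\eqref{dim_ext_and_4x4matrix} in degree $0$, namely $\dim\Ext^0(x\oplus z, x\oplus z) = \dim\Ext^0(y,y) + d_0(\delta) + d_1(\delta)$, which turns $-\tfrac12\langle x\oplus z, x\oplus z\rangle_0 + \tfrac12\langle y,y\rangle_0$ into $-\tfrac12(d_0(\delta) + d_1(\delta))$; the $d_0(\delta)$ terms cancel and the exponent collapses to $\tfrac12\langle z,x\rangle_{1,\ldots,n-1} - \tfrac12 d_1(\delta)$. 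Finally, $d_1(\delta) = r_1(\delta) + r_{n-1+1}(\delta) = r_1(\delta) + r_n(\delta)$ gives the asserted formula.

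The argument is entirely bookkeeping of exponents, so there is no serious obstacle; all the nontrivial input—the $a$-basis formula of Proposition~\ref{prop:prod_intrinsic_positive} and the spectral sequence relations \eqref{dim_ext_and_4x4matrix} and $d_i(\delta) = r_i(\delta) + r_{n-i+1}(\delta)$—has already been established. The only point meriting a brief remark is the edge case $n = 1$: there the index range $1,\ldots,n-1$ is empty and $\langle z,x\rangle_{1,\ldots,n-1}$ is read as $0$, and one checks that the two decompositions $\langle z,x\rangle_{0,\ldots,n} = \langle z,x\rangle_0 + \langle z,x\rangle_n$ and $d_1(\delta) = r_1(\delta) + r_n(\delta)$ remain valid, since both were derived from the $n$CY property with no constraint on $n$. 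This recovers the simplified $n=1$ product $u_z\cdot u_x = \sum_{\delta} q^{-r_1(\delta)} u_{C(\delta)}$ mentioned in the introduction.
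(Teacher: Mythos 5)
Your proposal is correct and follows essentially the same route as the paper: rescale the $a$-basis formula of Proposition~\ref{prop:prod_intrinsic_positive} by $u_x=q^{-\frac{1}{2}\langle x,x\rangle_0}a_x$, use $-\langle x,z\rangle_0=\langle z,x\rangle_n$ to assemble $-\tfrac12\langle x\oplus z,x\oplus z\rangle_0+\tfrac12\langle y,y\rangle_0$, then apply \eqref{dim_ext_and_4x4matrix} in degree $0$ and $d_1(\delta)=r_1(\delta)+r_n(\delta)$. Your intermediate exponent is word-for-word the one in the paper's computation, and your remark on the $n=1$ edge case is a harmless addition.
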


For $n < 0$, similar calculations show that the multiplication can be rewritten as follows.
\begin{proposition}
\label{prop:prod_intrinsic_negative}
Let $\cc$ be a pre-triangulated {\degfin} dg-category over $\mathbf k$ which is $n$CY for some odd $n < 0$. 
In the bases $\left\{[a_x] \mid x \in \pi_0(\cc^\sim) \right\}$ and $\left\{[u_x] \mid x \in \pi_0(\cc^\sim) \right\}$,
the product 
in $\hallcy(\cc)$
is given by the formulas
\begin{align*}
a_z\cdot a_x &= \sum_{\delta\in\Ext^1(z,x)}q^{\frac{1}{2}(-\langle z,x\rangle_{n+1,\ldots, -1}+r_0(\delta)+r_{n+1}(\delta)) - \langle z,x\rangle_{0}}a_{C(\delta)};\\
u_z\cdot u_x &= \sum_{\delta\in\Ext^1(z,x)}q^{\frac{1}{2}(-\langle z,x\rangle_{n,\ldots, 0}-r_1(\delta)-r_{n}(\delta))}u_{C(\delta)}.
\end{align*}
\end{proposition}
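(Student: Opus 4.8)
The plan is to mirror, \emph{mutatis mutandis}, the computation that established Propositions~\ref{prop:prod_intrinsic_positive} and~\ref{prop:prod_intrinsic_positive_2}, carrying along the sign changes forced by the negative-dimensional branch of $\gamma$ in Definition~\ref{def:LagrInt}. As before, one works with the span $\mu$ of Theorem/Definition~\ref{thm:algebra_assoc_main}: the functor $\cc^{\typeacat{2}}\to\cc^{\times 3}$ is $(n+1)$CY by Proposition~\ref{propHallSpanCY}, and since $n$ is odd and negative, $n+1\leq 0$ is even. The connecting map of~\eqref{relcyLES} for this functor still vanishes, being inherited from $\typeacat{2}\to\typeacat{1}^{\times 3}$ whose defining bimodules live in a single degree, so $\gamma(\mu,t)=-\langle t,t\rangle_{n+1,\ldots,-1}$ (an empty sum, hence $0$, when $n=-1$). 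Spelling out~\eqref{CYSpanLinearization} in the basis $\{a_x\}$ then gives the analogue of~\eqref{newHallProd_rescaled}:
\[
a_z\cdot a_x=\sum_{\substack{t=(x\to y\to z\to)\\ \in\pi_0((\cc^{\typeacat{2}})^\sim)}}\frac{|\aut(x)||\aut(z)|}{|\aut(t)|}\,q^{-\frac12\langle t,t\rangle_{n+1,\ldots,-1}}\,a_y.
\]

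Next I would reuse verbatim the three ingredients of the positive-dimensional proof that are insensitive to the sign of $n$: (i) the long exact sequence of the fibration $(\cc^{\typeacat{2}})^\sim\to(\cc\times\cc)^\sim$, $t\mapsto(x,z)$, which expresses the weighted number of triangles with fixed ends and middle term $\cong y$ as $\sum_{\delta\in\Ext^1(z,x)_y}\frac{|\aut(t)|}{|\aut(x)||\aut(z)|}q^{-\langle z,x\rangle_0+r_0(\delta)}$; (ii) the two-step twisted-complex model $z\xrightarrow{\delta}x$ for both $t$ and $y=C(\delta)$, together with the filtration spectral sequences yielding $\dim\Ext^i(x\oplus z,x\oplus z)=\dim\Ext^i(y,y)+d_i(\delta)+d_{i+1}(\delta)$ and $\dim\Ext^i(x,x)+\dim\Ext^i(z,z)+\dim\Ext^i(z,x)=\dim\Ext^i(t,t)+r_i(\delta)+r_{i+1}(\delta)$; and (iii) the duality $d_i(\delta)=r_i(\delta)+r_{n-i+1}(\delta)$, which uses only the $n$CY property of $\cc$ and holds for every $n$. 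None of these requires $n>0$.

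The new input is the degree bookkeeping. First, since $n$ is odd the involution $i\mapsto n-i$ is fixed-point-free and preserves the interval $\{n+1,\ldots,-1\}$, pairing $i$ with $n-i$; combined with $\dim\Ext^i(x,x)=\dim\Ext^{n-i}(x,x)$ and $(-1)^i+(-1)^{n-i}=0$, this gives $\langle x,x\rangle_{n+1,\ldots,-1}=\langle z,z\rangle_{n+1,\ldots,-1}=0$. Summing the second identity of (ii) over $i\in\{n+1,\ldots,-1\}$ with signs, the rank terms $r_i(\delta)$ telescope so that only the extreme indices survive (with signs $(-1)^{n+1}=1$ and $-(-1)^0=-1$), giving $\langle t,t\rangle_{n+1,\ldots,-1}=\langle z,x\rangle_{n+1,\ldots,-1}+r_0(\delta)-r_{n+1}(\delta)$. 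Substituting this together with the triangle count into the display above, the automorphism factors cancel and one reads off the first asserted formula $a_z\cdot a_x=\sum_{\delta\in\Ext^1(z,x)}q^{\frac12(-\langle z,x\rangle_{n+1,\ldots,-1}+r_0(\delta)+r_{n+1}(\delta))-\langle z,x\rangle_0}a_{C(\delta)}$. For the $\{u_x\}$ formula I would then substitute $a_x=q^{\frac12\langle x,x\rangle_0}u_x$, use the $i=0$ case of (ii) to rewrite $\tfrac12\langle y,y\rangle_0-\tfrac12\langle x,x\rangle_0-\tfrac12\langle z,z\rangle_0$ in terms of $d_0(\delta),d_1(\delta),\langle z,x\rangle_0,\langle x,z\rangle_0$, cancel $\tfrac12 d_0(\delta)$ against $\tfrac12(r_0(\delta)+r_{n+1}(\delta))$, and finally use the $n$CY identity $\langle x,z\rangle_0=\dim\Ext^0(x,z)=\dim\Ext^n(z,x)=-\langle z,x\rangle_n$ to merge $-\tfrac12\langle z,x\rangle_{n+1,\ldots,-1}-\tfrac12\langle z,x\rangle_0-\tfrac12\langle z,x\rangle_n$ into $-\tfrac12\langle z,x\rangle_{n,\ldots,0}$, and $d_1(\delta)=r_1(\delta)+r_n(\delta)$, to reach the exponent $\tfrac12(-\langle z,x\rangle_{n,\ldots,0}-r_1(\delta)-r_n(\delta))$.

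I expect the main obstacle to be purely the combinatorics of the index ranges and signs: keeping straight which half-open interval appears at each step, verifying that the telescoping collapses exactly to the two boundary terms $r_0(\delta)$ and $r_{n+1}(\delta)$, and checking the degenerate case $n=-1$, where $\langle z,x\rangle_{n+1,\ldots,-1}$ becomes the empty sum and $r_{n+1}(\delta)=r_0(\delta)$, $r_n(\delta)=r_{-1}(\delta)$. There is no genuinely new homological content beyond what was used for $n>0$; the one conceptual point worth isolating is the vanishing $\langle x,x\rangle_{n+1,\ldots,-1}=0$, which is the negative-dimensional counterpart of the identity $\langle x,x\rangle_{0,\ldots,n}=0$ exploited in the positive case.
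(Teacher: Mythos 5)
Your proposal is correct and is essentially the paper's own argument: the paper dismisses the negative case with ``similar calculations,'' and what you write out is exactly that adaptation of the $n>0$ derivation (Propositions~\ref{prop:prod_intrinsic_positive} and~\ref{prop:prod_intrinsic_positive_2}), using the negative branch of $\gamma$ in Definition~\ref{def:LagrInt}, the vanishing $\langle x,x\rangle_{n+1,\ldots,-1}=0$ in place of $\langle x,x\rangle_{0,\ldots,n}=0$, the telescoped identity $\langle t,t\rangle_{n+1,\ldots,-1}=\langle z,x\rangle_{n+1,\ldots,-1}+r_0(\delta)-r_{n+1}(\delta)$, and the relations $d_0=r_0+r_{n+1}$, $d_1=r_1+r_n$, $\langle z,x\rangle_n=-\langle x,z\rangle_0$ for the passage to the $\{u_x\}$ basis. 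I verified the sign and index bookkeeping (including the degenerate case $n=-1$) and it reproduces both displayed formulas.
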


\begin{remark}
\label{rem:choice_of_dim_in_periodic_case}
We note that if the category $\cc$ is $\Z/2m$-graded, we can change the perspective slightly and consider it not as an $n$CY category, but as an $(n \pm 2m)$CY category, or as an $(n \pm 4m)$CY, etc. The multiplication formula depends on such a choice of the CY dimension, but the formulas for all the different choices are related by twists by $q^{\frac{k}{2}\langle z,x\rangle_{\Z/2m}}$, for $k \in \Z$.
\end{remark}

The general formulas have further simplifications in various cases.
\begin{itemize}
\item For $n=1$, the formula in the basis $\{u_x\}$ simplifies as follows:
\begin{align*}
u_z\cdot u_x  =\sum_{\delta\in\Ext^1(z,x)}q^{-\frac{1}{2} d_1(\delta)} u_{C(\delta)}=\sum_{\delta\in\Ext^1(z,x)}q^{- r_1(\delta)}u_{C(\delta)}.
\end{align*}
\item In the case of a $\Z/2$-graded category with $n=1$, the general formula in the basis $\{a_x\}$ simplifies to
\[
a_z\cdot a_x =\sum_{\delta\in\Ext^1(z,x)}q^{\frac{1}{2}\langle z,x\rangle_{0,1}-\langle z,x\rangle_0 +r_0(\delta)}u_{C(\delta)}.
\]
\item In the case of a $\Z/2m$-graded category with $2m \mid n-1$ and $n > 0$, the general formula simplifies to
\[
u_z\cdot u_x =\sum_{\delta\in\Ext^1(z,x)}q^{\frac{n-1}{4m}\langle z,x\rangle_{\Z/2m}-r_1(\delta)}u_{C(\delta)}.
\]
This generality includes all $\Z/2$-graded $n$CY pre-triangulated {\degfin} dg-categories with $n >0$.

Moreover, the form $\langle z,x\rangle_{\Z/2m} = \langle z,x\rangle_{0,\ldots, 2m-1}$ descends onto the Grothendieck group. Thus, we can twist the algebra by setting 
\[
u_z \ast u_x = q^{-\frac{n-1}{4m}\langle z,x\rangle_{\Z/2m}} u_z \cdot u_x.
\]
In the twisted algebra we have 
\begin{align*}
u_z \ast u_x  =\sum_{\delta\in\Ext^1(z,x)}q^{-r_1(\delta)}u_{C(\delta)}.
\end{align*}  
In fact, we can re-interpret such a twisted algebra as $\halluncy(\cc)$, since $\cc$ can alternatively be seen as a $1$CY category, as discussed in Remark \ref{rem:choice_of_dim_in_periodic_case}.

\item When $\cc$ is $\Z/2m$-graded with $2m \mid n + 1$ and $n < 0$, the same simplification and twist as in case of  $2m \mid n -1$ and $n > 0$ apply. In particular, in the algebra $\hallminusuncy(\cc)$ obtained from $\hallcy(\cc)$ by a natural twist, we have 
\begin{align*}
u_z \ast u_x  =\sum_{\delta\in\Ext^1(z,x)}q^{-r_1(\delta)}u_{C(\delta)}.
\end{align*}
\end{itemize}

\subsubsection{Functoriality}
The multiplication formulas in both Propositions~\ref{prop:prod_intrinsic_positive} and \ref{prop:prod_intrinsic_negative} depend only on the homotopy category $H^0(\cc)$. In other words, the algebra $\hallcy(\cc)$ is an invariant of the triangulated category $\ct := H^0(\cc)$ and does not depend of the choice of its $n$CY enhancement, though our proof of associativity relies on the existence of such an enhancement. To emphasize this, we will also write  $\hallcy(\ct)$ meaning the same algebra $\hallcy(\cc)$. In particular, we have the following immediate corollary of the multiplication formulas.

\begin{corollary}
A fully-faithful functor $F$ of 
triangulated categories $\ct$, $\ct'$ admitting enhancements by pre-triangulated {\degfin} dg-categories $\cc$, $\cc'$ over $\mathbf k$ of the same odd CY dimension $n$ induces an injective algebra homomorphism $\hallcy(\ct) \hookrightarrow \hallcy(\ct')$. If $F$ is an equivalence, this homomorphism is moreover an isomorphism.
In particular, a quasi-fully faithful functor (resp. a quasi-equivalence) of pre-triangulated {\degfin} dg-categories $\cc \to  \cc'$ over $\mathbf k$ of the same odd CY dimension $n$ induces an injective algebra homomorphism (resp. an algebra isomorphism) $\hallcy(\cc) \hookrightarrow \hallcy(\cc')$.
\end{corollary}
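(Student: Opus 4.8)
The plan is to construct the homomorphism directly from the intrinsic multiplication formula, \emph{without} lifting $F$ to the dg-level. Throughout, ``functor of triangulated categories'' is understood as an exact (triangle) functor, so $F$ commutes with shifts and preserves distinguished triangles; in particular $F(C(\delta)) \cong C(F(\delta))$ for every morphism $\delta$. Since $\ct,\ct'$ admit pre-triangulated {\degfin} enhancements $\cc,\cc'$ of the same odd dimension $n$, the algebras $\hallcy(\ct) = \hallcy(\cc)$ and $\hallcy(\ct') = \hallcy(\cc')$ are defined and associative by Theorem/Definition~\ref{thm:algebra_assoc_main}, with bases $\{u_x\}_{x\in\Iso(\ct)}$ and $\{u_{x'}\}_{x'\in\Iso(\ct')}$.

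First I would recall, from Propositions~\ref{prop:prod_intrinsic_positive_2} and~\ref{prop:prod_intrinsic_negative}, that in the basis $\{u_x\}$ the product is, for both signs of $n$, a sum over $\delta\in\Ext^1(z,x)$ of $u_{C(\delta)}$ weighted by a power of $q^{1/2}$ whose exponent is an integer linear combination of the dimensions $\dim_{\mathbf k}\Ext^i(z,x)$ (for $i$ in a range depending only on $n$) together with the ranks $r_1(\delta), r_n(\delta)$. All of these data are determined by the triangulated category $H^0(\cc)$ with its composition law alone; the enhancement is used only to guarantee associativity. I would then define $\Phi\colon\hallcy(\ct)\to\hallcy(\ct')$ on basis vectors by $\Phi(u_x) := u_{F(x)}$: this is a well-defined linear map because $F$ sends isomorphic objects to isomorphic objects, it is injective because, $F$ being fully faithful, $F(x)\cong F(y)$ implies $x\cong y$, and it is bijective when $F$ is moreover essentially surjective.

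It then remains to verify that $\Phi$ is an algebra map. Being fully faithful and commuting with shifts, $F$ induces isomorphisms $\Ext^i_\ct(a,b)\xrightarrow{\sim}\Ext^i_{\ct'}(F(a),F(b))$ for all $i$ and all $a,b$; hence every dimension $\dim_{\mathbf k}\Ext^i$ occurring in an exponent agrees on the two sides, and $\delta\mapsto F(\delta)$ is a bijection $\Ext^1_\ct(z,x)\xrightarrow{\sim}\Ext^1_{\ct'}(F(z),F(x))$. As a functor, $F$ intertwines the left and right multiplication maps $L_\delta,R_\delta$ with $L_{F(\delta)},R_{F(\delta)}$, so $r_i(\delta) = r_i(F(\delta))$ under these isomorphisms, and exactness gives $F(C(\delta))\cong C(F(\delta))$. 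Matching the formula of Proposition~\ref{prop:prod_intrinsic_positive_2} (resp.~\ref{prop:prod_intrinsic_negative}) term by term along $\delta\leftrightarrow F(\delta)$ then yields $\Phi(u_z\cdot u_x) = \Phi(u_z)\cdot\Phi(u_x)$, and $\Phi$ preserves the unit since $F(0)\cong 0$. The ``in particular'' is the special case $\ct = H^0(\cc)$, $\ct' = H^0(\cc')$: a quasi-fully faithful (resp.\ quasi-equivalence) dg-functor $\cc\to\cc'$ descends to a fully faithful exact functor (resp.\ an exact equivalence) on homotopy categories, and $\cc,\cc'$ themselves serve as the required enhancements.

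I anticipate no real obstacle: the statement is a bookkeeping consequence of the intrinsic nature of the structure constants. The one point requiring a little care is the functoriality of the rank data $r_i(\delta)$ --- that $F$ transports the maps whose ranks these are --- which reduces to $F$ preserving compositions of morphisms and commuting with shifts; and, at the level of conventions, fixing that ``functor of triangulated categories'' means exact, so that the cones $C(\delta)$ are carried along.
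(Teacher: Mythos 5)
Your proposal is correct and follows essentially the same route as the paper: the corollary is stated there as an immediate consequence of the fact that the structure constants in Propositions~\ref{prop:prod_intrinsic_positive_2} and~\ref{prop:prod_intrinsic_negative} (Ext-dimensions, the ranks $r_i(\delta)$, and cones) depend only on the triangulated category $H^0(\cc)$, so a fully faithful exact functor transports them term by term exactly as you describe, with the enhancements needed only for associativity. Your write-up merely makes explicit the bookkeeping the paper leaves implicit.
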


\subsubsection{Comparison with derived Hall algebras}

We now compare the algebra $\hallcy(\cc)$ with the derived Hall algebra of $\cc$, assuming they both are well-defined. This happens for example in the case when $\cc$ is generated by a spherical object. In this case, the derived Hall algebra $\cd\ch(\cc)$ was computed in \cite{KYZ}.

Let $\cc$ be a pre-triangulated {\degfin} dg-category over $\mathbf k$ which is $n$CY for some odd $n$ and such that for all $x, y \in \cc$, we have $\Ext^i(x,y)=0$ for $|i| \gg 0$, i.e. $\cc$ is locally proper. Note that the CY condition implies that the 
local properness
is equivalent to left local homological finiteness. The Euler pairing $\langle x, y \rangle = \langle x, y \rangle_{\Z}$ is well-defined on $\cc$. We can rewrite it as
\begin{equation}
\label{euler_pos}
 \langle z, x \rangle = \langle z,x\rangle_{< 0} - \langle x,z\rangle_{< 0} + \langle z,x\rangle_{0,\ldots,n}
\end{equation}
if $n > 0$
and 
\begin{equation}
\label{euler_neg}
 \langle z, x \rangle = \langle z,x\rangle_{<0} - \langle x,z\rangle_{<0} - \langle z,x\rangle_{n+1,\ldots,-1}
\end{equation}
if  $n < 0$.
Using these formulas, we can prove the following.

\begin{proposition}
\label{prop:derived_hall_comparison}
Let $\cc$ be a pre-triangulated  
locally proper dg-category over $\mathbf k$ which is $n$CY for some odd $n$.  
Then the algebra
$\hallcy(\cc)$ coincides with the twisted tensor product $(\cd\ch(\cc) \otimes_\mathbb{Q} \mathbb{C})_{\mathrm{tw}}$,
where the twist is given by $q^{\frac{1}{2}\langle z, x \rangle}$.
\end{proposition}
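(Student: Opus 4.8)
Proposition~\ref{prop:derived_hall_comparison} follows by comparing the two products in the single basis $\{u_x\}$ of $\hdfx(\cc)=\C\pi_0(\cc^\sim)$, or equivalently by exhibiting the invertible rescaling $u_x\mapsto m_x$ as an algebra isomorphism onto $(\cd\ch(\cc)\otimes_\QQ\C)_{\mathrm{tw}}$. The plan is as follows. Recall from the list of bases in Section~\ref{subsec:hallOddCY} that, under the local properness hypothesis (which by the $n$CY condition is the same as left local homological finiteness), $m_x=\sqrt{|\aut(x)|}\,q^{\frac12\langle x,x\rangle_{<0}}h_x=q^{\frac12\langle x,x\rangle_{\le0}}u_x$. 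First I would rewrite the twisted derived Hall product in the $\{u_x\}$ basis: by definition $m_z\ast m_x=q^{\frac12\langle z,x\rangle}\,q^{-\langle z,x\rangle_{\le0}}\sum_{\delta\in\Ext^1(z,x)}m_{C(\delta)}$, so substituting $m_w=q^{\frac12\langle w,w\rangle_{\le0}}u_w$ for $w\in\{z,x,C(\delta)\}$ yields, with $y\coloneqq C(\delta)$,
\[
u_z\ast u_x=\sum_{\delta\in\Ext^1(z,x)} q^{\frac12\left(\langle z,x\rangle-2\langle z,x\rangle_{\le0}+\langle y,y\rangle_{\le0}-\langle z,z\rangle_{\le0}-\langle x,x\rangle_{\le0}\right)}u_{C(\delta)}.
\]
It then remains to identify this exponent with the one in Proposition~\ref{prop:prod_intrinsic_positive_2} (for $n>0$), respectively Proposition~\ref{prop:prod_intrinsic_negative} (for $n<0$).

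Second, I would simplify the exponent using the $n$CY rewriting of the Euler form. For $n>0$, \eqref{euler_pos} gives $\langle z,x\rangle=\langle z,x\rangle_{<0}-\langle x,z\rangle_{<0}+\langle z,x\rangle_{0,\ldots,n}$; combining this with $\langle z,x\rangle_{\le0}=\langle z,x\rangle_{<0}+\langle z,x\rangle_0$ and the CY duality $\langle z,x\rangle_n=-\langle x,z\rangle_0$ (here $n$ odd enters), a short manipulation turns $\langle z,x\rangle-2\langle z,x\rangle_{\le0}$ into $\langle z,x\rangle_{1,\ldots,n-1}-\langle z,x\rangle_{\le0}-\langle x,z\rangle_{\le0}$. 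Thus the matter reduces to the purely homological identity
\[
\langle z,z\rangle_{\le0}+\langle x,x\rangle_{\le0}+\langle z,x\rangle_{\le0}+\langle x,z\rangle_{\le0}-\langle y,y\rangle_{\le0}=r_1(\delta)+r_n(\delta).
\]

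Third, I would deduce this from the bookkeeping already carried out in the derivation of the product formula. There, the spectral sequence of the $3$-step filtration on $\Hom^\bullet(y,y)$ produced, for every $i$, the relation \eqref{dim_ext_and_4x4matrix}, namely $\dim\Ext^i(x\oplus z,x\oplus z)=\dim\Ext^i(y,y)+d_i(\delta)+d_{i+1}(\delta)$, and the $n$CY property gave $d_i(\delta)=r_i(\delta)+r_{n-i+1}(\delta)$. Since $\cc$ is locally proper, $d_i(\delta)=0$ for $|i|\gg0$, so summing the first relation with alternating signs over all $i\le0$ is legitimate; the right-hand side telescopes to $d_1(\delta)=r_1(\delta)+r_n(\delta)$, while the left-hand side is exactly $\langle z,z\rangle_{\le0}+\langle x,x\rangle_{\le0}+\langle z,x\rangle_{\le0}+\langle x,z\rangle_{\le0}-\langle y,y\rangle_{\le0}$, as desired. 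The case $n<0$ is handled identically, replacing \eqref{euler_pos} by \eqref{euler_neg} and Proposition~\ref{prop:prod_intrinsic_positive_2} by Proposition~\ref{prop:prod_intrinsic_negative}; the unit clearly matches on both sides.

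The only genuine obstacle is conceptual rather than computational: the derived Hall coefficient $q^{-\langle z,x\rangle_{\le0}}$ is independent of $\delta$, whereas the intrinsic coefficient involves the ranks $r_i(\delta)$, which are not. The reconciliation is supplied precisely by the $\delta$-dependent factor $q^{\frac12\langle C(\delta),C(\delta)\rangle_{\le0}}$ picked up when passing from $m_{C(\delta)}$ to $u_{C(\delta)}$, together with the telescoping identity above. The points to track carefully are the convergence of the alternating sums over $i\le0$ (guaranteed by local properness) and the signs in the $n$CY duality, in particular the parity of $n$ used in \eqref{euler_pos}–\eqref{euler_neg}.
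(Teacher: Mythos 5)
Your proof is correct and is essentially the paper's argument run in the opposite direction: the paper twists the intrinsic product by $q^{-\frac{1}{2}\langle z,x\rangle}$ and rescales $a_x\mapsto m_x$ using the telescoped identity $\langle x\oplus z,x\oplus z\rangle_{<0}=\langle y,y\rangle_{<0}-d_0(\delta)$, whereas you rescale $m_x\mapsto u_x$ and use the same telescoping of \eqref{dim_ext_and_4x4matrix} over $i\le 0$ to get $\langle x\oplus z,x\oplus z\rangle_{\le 0}-\langle y,y\rangle_{\le 0}=d_1(\delta)=r_1(\delta)+r_n(\delta)$, matching Propositions~\ref{prop:prod_intrinsic_positive_2} and~\ref{prop:prod_intrinsic_negative}. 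The key ingredients (the basis rescalings, the identities \eqref{euler_pos}--\eqref{euler_neg} with the odd-$n$ CY duality $\langle z,x\rangle_n=-\langle x,z\rangle_0$, and the alternating sum of \eqref{dim_ext_and_4x4matrix} justified by local properness) are the same, so this is the same proof up to the choice of basis and direction.
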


\begin{proof}
We go backwards and twist the multiplication in the intrinsic Hall algebra by $q^{-\frac{1}{2}\langle z, x \rangle}$. Using the above formulas \eqref{euler_pos} and \eqref{euler_neg}, we see that, for any odd $n \in \Z$, in the twisted algebra the multiplication reads 
\begin{equation}
\label{twisted_intrinsic}
a_z \ast a_x = \sum_{\delta\in\Ext^1(z,x)}q^{\frac{1}{2}\left(-\langle z,x\rangle_{< 0} +
\langle x,z\rangle_{<0} + d_0(\delta)\right) - \langle z,x\rangle_0}a_{C(\delta)}.
\end{equation}
Recall the formula \eqref{dim_ext_and_4x4matrix}. By taking the alternating sum of the expressions on both sides over all $i < 0$ (which is finite by the local properness assumption), we get
\[
\langle x \oplus z, x \oplus z \rangle_{< 0} = \langle y, y \rangle_{< 0} - d_0(\delta).
\]
Therefore, we can rewrite the twisted multiplication as follows:
\begin{equation}
\begin{aligned}
\label{eq:intermediate_calculation}
a_z \ast a_x &= \sum_{\delta\in\Ext^1(z,x)}q^{-\langle z,x\rangle_{\leq 0} +\frac{1}{2}\left(\langle z,x\rangle_{< 0} +
\langle x,z\rangle_{< 0} + d_0(\delta)\right)}a_{C(\delta)}\\
&=\sum_{\delta\in\Ext^1(z,x)}q^{-\langle z,x\rangle_{\leq 0}  - \frac{1}{2}(\langle x,x\rangle_{< 0}+\langle z,z\rangle_{< 0}-\langle y,y\rangle_{< 0})}a_{C(\delta)}.
\end{aligned}
\end{equation}

Recall that the basis $\{a_x\}$ is obtained from the basis of the half-densities by the rescaling
\[
a_x =  \sqrt{|\aut(x)|} \cdot  h_x.
\]
In other words, it is obtained from the basis of the measures $\{m_x\}$ by the rescaling 
\[
a_x =  q^{-\frac{1}{2}\langle x, x \rangle_{< 0}} m_x.
\]
Rescaling back to the basis of measures, we see that the formula \eqref{eq:intermediate_calculation} becomes
\[m_z \ast m_x = \sum_{\delta\in\Ext^1(z,x)}q^{-\langle z,x\rangle_{\leq 0}} m_{C(\delta)},
\]
which is precisely the formula in the derived Hall algebra. Since the twist is defined on the grading group, we can twist back and obtain the algebra $\hallcy(\cc)$, just in a rescaled basis.
This completes the proof. 
\end{proof}

\iftoggle{arxiv}{
For completeness, we also compare $\hallcy(\cc) = \hallcy(\ct)$ with a version of $\cd\ch(\ct)$ introduced in \cite{XuChen}, see also \cite{ShenChenXu}. There, it is defined for $\Hom$-finite idempotent complete triangulated categories $\ct$ which have an odd period $t > 1$, i.e. $[t] \cong \mbox{Id}$. The same definition and the same proof of associativity apply if this periodicity assumption is weakened by the following assumption: 
For every morphism $x \overset{\alpha}\to y$ and every $z$ in $\ct$, we have 
\begin{align}
\label{eq:odd_periodic1}
\rank \left(\Ext^0(y, z) \overset{\circ \alpha}\to \Ext^0(x, z)\right) &= \rank\left(\Ext^0(y, z[t]) \overset{\circ \alpha}\to \Ext^0(x, z[t])\right);\\
\label{eq:odd_periodic2}
\rank \left(\Ext^0(z, x) \overset{\alpha\circ}\to \Ext^0(z, y)\right) &= \rank\left(\Ext^0(z[t], x) \overset{\alpha\circ}\to \Ext^0(z[t], y)\right).
\end{align}
This assumption holds if the category is only ``odd periodic up to sign'', but the actual period is even.

Under this assumption, the algebra $\cd\ch'(\ct)$ can be defined over $\C$ in a basis $\{m'_x \mid x \in \Iso(\ct)\}$, playing the role of the basis of measures, with the product
\[
m'_z \cdot m'_x \coloneqq q^{\frac{1}{2}\langle z, x \rangle_{1,\ldots,t}}
\sum_{\delta \in \Ext^1(z, x)} m'_{C(\delta)}.
\]

Similarly to the proof of Proposition \ref{prop:derived_hall_comparison}, we can show the following. We omit the details.

\begin{proposition}
Let $\cc$ be a pre-triangulated {\degfin}  dg-category over $\mathbf k$ which is $n$CY for some odd $n$ and satisfies assumption the conditions on ranks \eqref{eq:odd_periodic1} and \eqref{eq:odd_periodic2}. Then the algebra
$\hallcy(\cc)$ is isomorphic to the twist of the algebra $\cd\ch'(\ct)$ by the form 
\begin{equation}
\begin{cases}
q^{\frac{1}{4}(-\langle z, x \rangle_{1,\ldots,t} + \langle x, z \rangle_{1,\ldots,t}) +\frac{1}{2}\langle z, x \rangle_{1,\ldots,n-1}} & \text{for }  n \geq 1;\\
q^{\frac{1}{4}(-\langle z, x \rangle_{1,\ldots,t} + \langle x, z \rangle_{1,\ldots,t}) - \frac{1}{2}\langle z, x \rangle_{n,\ldots,0}} & \text{for }  n \leq -1.
\end{cases}
\end{equation}
\end{proposition}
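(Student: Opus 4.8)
The plan is to follow the template of the proof of Proposition~\ref{prop:derived_hall_comparison}: since $\hallcy(\ct)$ and $\cd\ch'(\ct)$ are both built on the free $\C$-module on $\Iso(\ct)$, it suffices to produce a diagonal change of basis $u_x\mapsto q^{\lambda_x}m'_x$ under which the product of $\hallcy(\ct)$ acquires structure constants equal to $q^{\Phi(z,x)}$ times those of $\cd\ch'(\ct)$, with $\Phi$ the asserted form. Assume first $n\ge 1$. By Proposition~\ref{prop:prod_intrinsic_positive_2}, $u_z\cdot u_x=\sum_{\delta\in\Ext^1(z,x)}q^{\frac12(\langle z,x\rangle_{1,\ldots,n-1}-r_1(\delta)-r_n(\delta))}u_{C(\delta)}$; using $d_i(\delta)=r_i(\delta)+r_{n-i+1}(\delta)$ this exponent equals $\frac12\langle z,x\rangle_{1,\ldots,n-1}-\frac12 d_1(\delta)$, so the whole point is to re-express $d_1(\delta)$ in terms of $\Ext$-dimensions of $x$, $z$ and $y:=C(\delta)$ only.

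For that, take the identity~\eqref{dim_ext_and_4x4matrix}, namely $\dim\Ext^i(x\oplus z,x\oplus z)=\dim\Ext^i(y,y)+d_i(\delta)+d_{i+1}(\delta)$, multiply by $(-1)^i$ and sum over one period $i\in\{1,\ldots,t\}$. Since $t$ is odd the right-hand side telescopes to $-d_1(\delta)-d_{t+1}(\delta)$; invoking the hypotheses~\eqref{eq:odd_periodic1}--\eqref{eq:odd_periodic2}, which force the relevant rank functions, hence the $d_i(\delta)$, to be $t$-periodic, we get $d_{t+1}(\delta)=d_1(\delta)$, so
\begin{equation*}
d_1(\delta)=\tfrac12\bigl(\langle y,y\rangle_{1,\ldots,t}-\langle x,x\rangle_{1,\ldots,t}-\langle z,z\rangle_{1,\ldots,t}-\langle x,z\rangle_{1,\ldots,t}-\langle z,x\rangle_{1,\ldots,t}\bigr).
\end{equation*}
This is the periodic analogue of the identity $\langle x\oplus z,x\oplus z\rangle_{<0}=\langle y,y\rangle_{<0}-d_0(\delta)$ used in the proof of Proposition~\ref{prop:derived_hall_comparison}, the difference being that the telescoping sum now runs over a single period instead of over all negative degrees. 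Substituting, and taking $\lambda_x=\frac14\langle x,x\rangle_{1,\ldots,t}$ for the change of basis, all self-$\Ext$ contributions of $x$, $z$, $y$ cancel, and in the basis $\{m'_x\}$ the product of $\hallcy(\ct)$ becomes $\sum_{\delta}q^{\frac12\langle z,x\rangle_{1,\ldots,n-1}+\frac14\langle x,z\rangle_{1,\ldots,t}+\frac14\langle z,x\rangle_{1,\ldots,t}}\,m'_{C(\delta)}$. Comparing with $m'_z\cdot m'_x=q^{\frac12\langle z,x\rangle_{1,\ldots,t}}\sum_\delta m'_{C(\delta)}$ gives $\Phi(z,x)=\frac14\bigl(\langle x,z\rangle_{1,\ldots,t}-\langle z,x\rangle_{1,\ldots,t}\bigr)+\frac12\langle z,x\rangle_{1,\ldots,n-1}$, as claimed. (That $\langle-,-\rangle_{1,\ldots,t}$ descends to $K_0(\ct)$, so that this part of $\Phi$ is genuinely a bilinear form, again follows from~\eqref{eq:odd_periodic1}--\eqref{eq:odd_periodic2} applied to the connecting morphism of a triangle, which equalizes the two boundary ranks of the truncated long exact sequence; this is the same reason $\cd\ch'$ is well-defined in this generality.)

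The case $n<0$ is formally identical, starting from Proposition~\ref{prop:prod_intrinsic_negative}, which has $-\langle z,x\rangle_{n,\ldots,0}$ in place of $\langle z,x\rangle_{1,\ldots,n-1}$; since $d_1(\delta)=r_1(\delta)+r_n(\delta)$ and~\eqref{dim_ext_and_4x4matrix} hold verbatim for negative $n$, the same computation with the same rescaling yields $\Phi(z,x)=\frac14\bigl(\langle x,z\rangle_{1,\ldots,t}-\langle z,x\rangle_{1,\ldots,t}\bigr)-\frac12\langle z,x\rangle_{n,\ldots,0}$.

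The main obstacle is the identity $d_{t+1}(\delta)=d_1(\delta)$: under genuine periodicity $[t]\cong\mathrm{Id}$ it is immediate, but under the weakened hypotheses~\eqref{eq:odd_periodic1}--\eqref{eq:odd_periodic2} it must be extracted by rewriting each map $D^i$ on the $E_1$-page (equivalently, $r_i(\delta)$ together with its Calabi--Yau dual) as the rank of a pre- or post-composition map with a morphism assembled from $\delta$, to which the hypotheses then apply. This is the same bookkeeping that underlies the associativity proof in~\cite{XuChen}, which is why the statement can be asserted with details omitted; a minor point is that $\Phi$ involves quarter-integer powers of $q$, so one works over an extension of $\QQ$ containing $q^{1/4}$.
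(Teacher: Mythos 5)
Your bookkeeping is exactly the ``similar'' argument the paper has in mind: rescaling by $\lambda_x=\frac14\langle x,x\rangle_{1,\ldots,t}$, summing \eqref{dim_ext_and_4x4matrix} with signs over one period $i=1,\ldots,t$ (which telescopes to $-d_1(\delta)-d_{t+1}(\delta)$ because $t$ is odd), and comparing with $m'_z\cdot m'_x=q^{\frac12\langle z,x\rangle_{1,\ldots,t}}\sum_\delta m'_{C(\delta)}$ does reproduce the stated twist, and the $n<0$ case and the remark about $q^{1/4}$ are fine. You also correctly isolate the crux: everything reduces to $d_{t+1}(\delta)=d_1(\delta)$, equivalently $t$-periodicity of the combined ranks $r_i(\delta)$, since with the natural diagonal rescaling the structure constants of the two algebras can only match if $d_1(\delta)$ equals $\tfrac12\bigl(d_1(\delta)+d_{t+1}(\delta)\bigr)$, the quantity that \eqref{dim_ext_and_4x4matrix} actually determines from $(x,z,y)$.

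The gap is in your justification of that crux. You assert that $d_{t+1}=d_1$ can be ``extracted by rewriting each map $D^i$ as the rank of a pre- or post-composition map with a morphism assembled from $\delta$,'' but $D^i$ is the graded commutator with $\delta$ on $\Ext^{\bullet}(x\oplus z,x\oplus z)$: its rank is $r_i(\delta)+r_{n-i+1}(\delta)$, and $r_i(\delta)=\dim\bigl(\im L_\delta+\im R_\delta\bigr)$ mixes a post-composition and a pre-composition with $\delta$. Conditions \eqref{eq:odd_periodic1}--\eqref{eq:odd_periodic2} only equalize ranks of \emph{one-sided} composition maps (their $\alpha=\mathrm{id}$ instance does give $t$-periodicity of all $\Ext$-dimensions, hence of $\rank L_\delta$, $\rank R_\delta$, and of $d_i+d_{i+1}$), but the dimension of the sum of the two images also involves $\dim(\im L_\delta\cap\im R_\delta)$, which is the kernel of the two-sided map $c\mapsto (f\circ c,\,c\circ g)$ and is not the rank of any single pre- or post-composition to which the hypotheses apply. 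Periodicity of $d_i+d_{i+1}$ alone only yields $d_{i+t}-d_i=(-1)^{i-1}\bigl(d_{1+t}-d_1\bigr)$, and in the {\degfin} setting there is no vanishing range to force this alternating difference to be zero. So the key identity is not proved by your argument; this is precisely where the details omitted in the paper lie, and closing it needs a genuine further input (e.g.\ exploiting the $n$CY duality beyond $d_i=r_i+r_{n-i+1}$, or a strengthening of the rank hypotheses to two-sided composition maps $c\mapsto\beta c\alpha$), not just the one-sided conditions as stated.
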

} 

\subsubsection{Exact subcategories and subalgebras}

\begin{lemma}
\label{lem:formula_with_vanishing_d_0}
Let $\cc$ be a pre-triangulated {\degfin} dg-category over $\mathbf k$ which is $n$CY for some odd $n$. Assume that for a pair of isomorphism classes of objects $z, x \in \pi_0(\cc^\sim)$, we have $d_0(\delta) = 0,$ for all $\delta \in \Ext^1(z, x)$.  
Then in the basis
$\left\{a_x \mid x \in \pi_0(\cc^\sim) \right\}$ 
the product in $\hallcy(\cc)$ is given by the formula 
\begin{equation}
a_z \cdot a_x = \begin{cases}
\sum_{\delta\in\Ext^1(z,x)}q^{\frac{1}{2}\langle z,x\rangle_{0,\ldots,n}-\langle z,x\rangle_{0}}a_{C(\delta)} & \text{for }n\geq 1; \\
\sum_{\delta\in\Ext^1(z,x)}q^{-\frac{1}{2}\langle z,x\rangle_{n+1,\ldots,-1}-\langle z,x\rangle_{0}}a_{C(\delta)}  & \text{for }n\leq -1.
\end{cases}
\end{equation}
\end{lemma}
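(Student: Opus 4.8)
The plan is to read off the claim directly from the explicit product formulas already obtained, namely Proposition~\ref{prop:prod_intrinsic_positive} in the case $n\geq 1$ and Proposition~\ref{prop:prod_intrinsic_negative} in the case $n\leq -1$, together with the identity $d_0(\delta)=r_0(\delta)+r_{n+1}(\delta)$, which is the $i=0$ instance of the relation $d_i(\delta)=r_i(\delta)+r_{n-i+1}(\delta)$ established earlier from the $n$CY property. So no new geometry or homological algebra is needed; the lemma is a bookkeeping consequence of what has already been proved.

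First I would observe that each $r_i(\delta)$ is, by definition, the rank of a $\mathbf k$-linear map between finite-dimensional spaces, hence a non-negative integer. Consequently the hypothesis $d_0(\delta)=0$, in view of $d_0(\delta)=r_0(\delta)+r_{n+1}(\delta)$, forces $r_0(\delta)=0$ and $r_{n+1}(\delta)=0$ for every $\delta\in\Ext^1(z,x)$. This passage from the vanishing of the sum $d_0(\delta)$ to the individual vanishing of the two ranks via non-negativity is the only step that is not purely formal substitution, and it is immediate; I expect this to be the ``main obstacle'' only in the sense that there is essentially none.

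It then remains to substitute $r_0(\delta)=r_{n+1}(\delta)=0$ into the relevant formula. For $n\geq 1$, the formula of Proposition~\ref{prop:prod_intrinsic_positive},
\[
a_z\cdot a_x=\sum_{\delta\in\Ext^1(z,x)}q^{\frac{1}{2}(\langle z,x\rangle_{0,\ldots,n}+r_0(\delta)+r_{n+1}(\delta))-\langle z,x\rangle_0}a_{C(\delta)},
\]
collapses to $\sum_{\delta}q^{\frac{1}{2}\langle z,x\rangle_{0,\ldots,n}-\langle z,x\rangle_0}a_{C(\delta)}$, which is the asserted expression in the case $n\geq 1$. For $n\leq -1$ the same substitution into the first displayed formula of Proposition~\ref{prop:prod_intrinsic_negative} yields $\sum_{\delta}q^{-\frac{1}{2}\langle z,x\rangle_{n+1,\ldots,-1}-\langle z,x\rangle_0}a_{C(\delta)}$, as claimed. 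The only point to keep in mind while writing this up is that the hypothesis is (and must be) imposed on \emph{every} $\delta\in\Ext^1(z,x)$, which is exactly what is required since the product is indexed by all such $\delta$; beyond that the argument is a one-line specialization of the general formulas.
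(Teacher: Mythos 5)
Your argument is correct and is exactly the paper's proof: the lemma is read off by substituting the hypothesis into the formulas of Propositions~\ref{prop:prod_intrinsic_positive} and~\ref{prop:prod_intrinsic_negative}. (In fact, since only the sum $r_0(\delta)+r_{n+1}(\delta)=d_0(\delta)$ appears in the exponent, the non-negativity step separating the two ranks is not even needed, though it is harmless.)
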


\begin{proof}
This follows immediately from the formulas in Propositions \ref{prop:prod_intrinsic_positive} and \ref{prop:prod_intrinsic_negative}.
\end{proof}

Let $\ce$ be a full additive subcategory of $\ct = H^0(\cc)$. We say that it is \emph{extension-closed} in $\ct$ if for every triangle $x \overset{\beta}\to y \overset{\gamma}\to z \overset{\delta[1]}\to x[1]$, we have $y \in \ce$ whenever both $x \in \ce$ and $z \in \ce$. It is known that $\ce$ is extension-closed in $\ct$ and moreover in every such triangle $\beta$ is monic and $\gamma$ is epic if and only if the class of exact sequences  $x \overset{\beta}\to y \overset{\gamma} \to z$ defines a (Quillen) exact structure on $\ce$ (see \cite[Remark 2.18, Corollary 3.18, Proposition 3.22]{NakaokaPalu}). If these equivalent conditions hold, we say that the embedding $\ce \hookrightarrow \ct$ \emph{induces an exact structure on $\ce$}. 
We note that these equivalent conditions imply that for every such triangle, the maps $\Hom(z, t) \to \Hom(y, t)$ and $\Hom(t, x) \to \Hom(t, y)$ are injective for every $t \in \ce$. In particular, for $x, z, t \in \ce$ and every $\delta \in \Ext^1(z, x)$, both maps $\Ext^{-1}(t, z) \overset{L_{\delta}}\to \Ext^0(t, x)$ and $\Ext^{-1}(x, t) \overset{R_{\delta}}\to \Ext^0(z, t)$ are zero. By the $n$CY property, we also have that both maps $\Ext^{n}(x, t) \overset{R_{\delta}}\to \Ext^{n+1}(z, t)$ and $\Ext^{n}(t, z) \overset{L_{\delta}}\to \Ext^0(t, x)$ are zero. Combining this all together, we find that, in particular, in such exact categories, for $x, z \in \ce$ and every $\delta \in \Ext^1(z, x)$, we have $d_0(\delta) = 0$. We are now ready to show the following.

\begin{theorem}
\label{thm:exact_embedding}
Let $\cc$ be a pre-triangulated {\degfin} dg-category over $\mathbf k$ which is $n$CY for some odd $n$. Assume that $\ce$ is a full additive subcategory of $\ct = H^0(\cc)$ such that the embedding $\ce \hookrightarrow \ct$ induces an exact structure on $\ce$. Then this embedding induces an embedding of Hall algebras \[
(\ch(\ce)_{\scriptscriptstyle{\mathrm{tw}}}, \diamond) \hookrightarrow \hallcy(\ct), \,\,\, m_z \mapsto a_z,
\]
where $(\ch(\ce)_{\scriptscriptstyle{\mathrm{tw}}}, \diamond)$ is the twist of $(\ch(\ce) \otimes_{\QQ} \C, \cdot)$ given by
\begin{equation*}
m_z \diamond m_x \coloneqq 
\begin{cases}
q^{\frac{1}{2}\langle z, x \rangle_{0,\ldots,n}} m_z \cdot m_x  & \text{for }n\geq 1; \\
q^{-\frac{1}{2}\langle z, x \rangle_{n+1,\ldots,-1}} m_z \cdot m_x  & \text{for }n\leq -1.
\end{cases}
\end{equation*}
\end{theorem}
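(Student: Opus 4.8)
The idea is that the theorem is a bookkeeping consequence of Lemma~\ref{lem:formula_with_vanishing_d_0} once the exact-category data of $\ce$ is matched with the triangulated data of $\ct$. First I would record the dictionary. Since $\ce\hookrightarrow\ct$ is full, $\Hom_\ce(z,x)=\Hom_\ct(z,x)$ for $z,x\in\ce$, so $|\Hom(z,x)|=q^{\langle z,x\rangle_0}$, and $\Iso(\ce)\to\Iso(\ct)$ is injective. Since the embedding induces an exact structure, conflations in $\ce$ are exactly the triangles of $\ct$ with all three terms in $\ce$; by extension-closedness this forces $C(\delta)\in\ce$ for every $\delta\in\Ext^1_\ct(z,x)$ with $z,x\in\ce$, so $\Ext^1_\ce(z,x)=\Ext^1_\ct(z,x)$ and $\mt(\delta)=C(\delta)$. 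Consequently the $\C$-linear map $\Phi\colon \ch(\ce)\otimes_\QQ\C\to\hallcy(\ct)$, $m_x\mapsto a_x$, is well-defined and injective, and its image $V=\operatorname{span}_\C\{a_x\mid x\in\Iso(\ce)\}$ is closed under the product of $\hallcy(\ct)$: by Lemma~\ref{lem:formula_with_vanishing_d_0}, $a_z\cdot a_x$ is a $\C$-combination of the elements $a_{C(\delta)}$, $\delta\in\Ext^1_\ct(z,x)$, all of whose indices lie in $\ce$. Thus $V$ is a unital subalgebra of $\hallcy(\ct)$, with unit $a_0$ the class of the zero object.

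Next I would compare structure constants. For $z,x\in\ce$ and $n\ge 1$, Lemma~\ref{lem:formula_with_vanishing_d_0} gives
\[
a_z\cdot a_x=\sum_{\delta\in\Ext^1(z,x)}q^{\frac12\langle z,x\rangle_{0,\ldots,n}-\langle z,x\rangle_0}\,a_{C(\delta)},
\]
while on the Hall side
\[
m_z\diamond m_x=q^{\frac12\langle z,x\rangle_{0,\ldots,n}}\cdot\frac{1}{|\Hom(z,x)|}\sum_{\delta\in\Ext^1(z,x)}m_{\mt(\delta)}
=\sum_{\delta\in\Ext^1(z,x)}q^{\frac12\langle z,x\rangle_{0,\ldots,n}-\langle z,x\rangle_0}\,m_{C(\delta)},
\]
and these agree term by term under $\Phi$. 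The case $n\le -1$ is identical after replacing $\frac12\langle z,x\rangle_{0,\ldots,n}$ by $-\frac12\langle z,x\rangle_{n+1,\ldots,-1}$ and using the corresponding formula in Proposition~\ref{prop:prod_intrinsic_negative} via Lemma~\ref{lem:formula_with_vanishing_d_0}. Hence the product that $V$ inherits from $\hallcy(\ct)$ pulls back along $\Phi$ to exactly $\diamond$; in particular $(\ch(\ce)_{\scriptscriptstyle{\mathrm{tw}}},\diamond)$ is an associative unital algebra and $\Phi$ is an algebra isomorphism onto the subalgebra $V\subseteq\hallcy(\ct)$, which is the asserted embedding $m_z\mapsto a_z$.

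There is no genuinely hard step remaining: the one point that needs care is the vanishing $d_0(\delta)=r_0(\delta)+r_{n+1}(\delta)=0$ for $z,x\in\ce$, which is precisely what licenses the use of Lemma~\ref{lem:formula_with_vanishing_d_0}, and this has already been discharged in the discussion preceding the statement --- in an exact structure inflations are monic and deflations epic, which kills the connecting maps $L_\delta,R_\delta$ out of $\Ext^{-1}$ into $\Ext^0$, and their $n$CY duals out of $\Ext^n$ into $\Ext^{n+1}$ then vanish as well. If one prefers to verify directly that $(\ch(\ce)_{\scriptscriptstyle{\mathrm{tw}}},\diamond)$ is a legitimate twisted Hall algebra --- rather than deducing associativity of $\diamond$ from $\Phi$ as above --- the same vanishing of connecting maps shows that the truncated Euler form $\langle-,-\rangle_{0,\ldots,n}$ (resp. $\langle-,-\rangle_{n+1,\ldots,-1}$) is additive on conflations in each variable, hence descends to a bilinear form on $K_0(\ce)$ and defines an honest twist; I would include this as a short remark but not belabor it.
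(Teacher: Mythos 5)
Your proposal is correct and takes essentially the same route as the paper: its proof is precisely the observation that $d_0(\delta)=0$ for $z,x\in\ce$ (established, together with bilinearity of the truncated Euler form, in the discussion preceding the theorem) reduces everything to comparing the product formula of Lemma~\ref{lem:formula_with_vanishing_d_0} with the definition of $\diamond$, which is what you do. The one step you assert without argument --- that equivalence classes of conflations in the induced exact structure biject with $\Ext^1_{\ct}(z,x)$, so the two sums match term by term --- is likewise left implicit in the paper and does hold (deflations being epic in $\ce$ forces distinct classes $\delta\neq\delta'$ to give inequivalent conflations).
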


We first note that the vanishing of (ranks of) all the 4 connecting morphisms (to $\Ext^0$ and from $\Ext^n$) $L_{\delta}, R_{\delta}$ for all $x, y, t \in \ce$ and $\delta \in \Ext^1(z, x)$ implies that $q^{\frac{1}{2}\langle -, ? \rangle_{0,\ldots,n}}$ is indeed a bilinear form on $K_0(\ce)$, and so $(\ch(\ce)_{\scriptscriptstyle{\mathrm{tw}}}, \diamond)$ is a well-defined associative algebra.

\begin{proof}
Thanks to the vanishing of $d_0(\delta)$ for $x, z \in \ce$ and every $\delta \in \Ext^1(z, x)$, this follows immediately from comparing the definition of $(\ch(\ce)_{\scriptscriptstyle{\mathrm{tw}}}, \diamond)$ with formulas in Lemma \ref{lem:formula_with_vanishing_d_0}. 
\end{proof}

\begin{corollary}
\label{cor:embedding_same_twist}
Assume that we are in the setting of Theorem \ref{thm:exact_embedding} and moreover $n \geq 1$, and we have \begin{align*}
\Ext^i_{\ce}(-,?) &\cong \Ext^i_{\ct}(-,?) & \text{for } 0 \leq i \leq n;\\
\Ext^i_{\ce}(-,?) &= 0 & \text{for } i > n.
\end{align*}
Then the embedding $\ce \hookrightarrow \ct$ induces an embedding of Hall algebras \[
(\ch(\ce)_{\scriptscriptstyle{\mathrm{tw}}}, \ast) \hookrightarrow \hallcy(\ct), \,\,\, m_z \mapsto a_z.
\]
\end{corollary}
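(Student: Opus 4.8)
The plan is to deduce this immediately from Theorem~\ref{thm:exact_embedding} by showing that, under the extra hypotheses, the $\diamond$-twist appearing there is literally the standard Euler twist $\ast$ on $\ch(\ce)$. First I would observe that since the embedding $\ce\hookrightarrow\ct$ induces an exact structure on $\ce$, the Yoneda $\Ext$-groups of $\ce$ vanish in negative degrees, while by hypothesis they vanish for $i>n$; hence the Euler pairing $\langle z,x\rangle_\ce=\sum_{i=0}^n(-1)^i\dim\Ext^i_\ce(z,x)$ is well-defined, and it is precisely this pairing that enters the definition of $(\ch(\ce)_{\scriptscriptstyle{\mathrm{tw}}},\ast)$.

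Next I would use the assumed isomorphisms $\Ext^i_\ce(-,?)\cong\Ext^i_\ct(-,?)$ for $0\le i\le n$ (automatic for $i=0,1$ from $\ce$ being a full extension-closed subcategory with the induced exact structure, and part of the hypothesis for $2\le i\le n$) to identify $\langle z,x\rangle_\ce$ with the truncated Euler pairing $\langle z,x\rangle_{0,\ldots,n}$ computed in $\ct$. Since $n\ge1$, the $\diamond$-product of Theorem~\ref{thm:exact_embedding} is $m_z\diamond m_x=q^{\frac12\langle z,x\rangle_{0,\ldots,n}}m_z\cdot m_x$, so this identification gives an equality of algebras $(\ch(\ce)_{\scriptscriptstyle{\mathrm{tw}}},\diamond)=(\ch(\ce)_{\scriptscriptstyle{\mathrm{tw}}},\ast)$. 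Composing the embedding of Theorem~\ref{thm:exact_embedding} with this identification then yields the desired embedding $(\ch(\ce)_{\scriptscriptstyle{\mathrm{tw}}},\ast)\hookrightarrow\hallcy(\ct)$, $m_z\mapsto a_z$.

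There is no substantive obstacle here: all the algebraic content --- that $m_z\mapsto a_z$ is a well-defined injective algebra homomorphism --- is already contained in Theorem~\ref{thm:exact_embedding}, and this corollary merely records the case in which the somewhat ad hoc twist $\diamond$ collapses to the familiar Euler twist on $\ch(\ce)$. The only point requiring a line of care is the bookkeeping that matches the truncation range $\{0,\ldots,n\}$ appearing in $\diamond$ with the degrees in which $\Ext_\ce$ and $\Ext_\ct$ are assumed to agree and, respectively, in which $\Ext_\ce$ vanishes, so that the Euler form of $\ce$ is exactly $\langle z,x\rangle_{0,\ldots,n}$.
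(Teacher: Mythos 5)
Your proposal is correct and follows exactly the paper's own route: both identify the Euler form of $\ce$ with the truncated pairing $\langle z,x\rangle_{0,\ldots,n}$ of $\ct$ using the assumed agreement of $\Ext$-groups in degrees $0\leq i\leq n$ and the vanishing of $\Ext^i_\ce$ for $i>n$, so that $(\ch(\ce)_{\scriptscriptstyle{\mathrm{tw}}},\diamond)=(\ch(\ce)_{\scriptscriptstyle{\mathrm{tw}}},\ast)$, and then invoke Theorem~\ref{thm:exact_embedding}. No gaps.
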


\begin{proof}
This follows from Theorem \ref{thm:exact_embedding} by noting that, by the assumption, for all $z, x \in \ce$ we have $\frac{1}{2}\langle z, x \rangle_{0,\ldots,n} = \frac{1}{2}\langle z, x \rangle_{\ce}$, and so $(\ch(\ce)_{\scriptscriptstyle{\mathrm{tw}}}, \ast) = (\ch(\ce)_{\scriptscriptstyle{\mathrm{tw}}}, \diamond)$.
\end{proof}

Following J{\o}rgensen \cite{Jorgensen}, we say that a full additive subcategory $\ca \subseteq \ct$ is a \emph{proper abelian subcategory} if it is abelian and a sequence $0 \to x \overset{\beta}\to y \overset{\gamma}\to z \to 0$ is short exact in $\ca$ if and only if there exists a triangle $x \overset{\beta}\to y \overset{\gamma}\to z \overset{\delta[1]}\to x[1]$ in $\ct$. In particular, the abelian exact structure on $\ca$ is induced by the embedding $\ca \hookrightarrow \ct$. 

\begin{corollary}
\label{cor:abelian_embedding}
Let $\cc$ be a pre-triangulated {\degfin} dg-category over $\mathbf k$ which is $n$CY for some odd $n$. Assume that $\ca$ is a proper abelian subcategory of $\ct = H^0(\cc)$. Then this embedding induces an embedding of Hall algebras \[
(\ch(\ca)_{\scriptscriptstyle{\mathrm{tw}}}, \diamond) \hookrightarrow \hallcy(\ct), \,\,\, m_z \mapsto a_z,
\]
and the left hand side coincides with $(\ch(\ce)_{\scriptscriptstyle{\mathrm{tw}}}, \ast)$ whenever the assumptions of Corollary~\ref{cor:embedding_same_twist} hold.
\end{corollary}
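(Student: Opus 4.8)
The plan is to read Corollary~\ref{cor:abelian_embedding} off Theorem~\ref{thm:exact_embedding}, supplemented by Corollary~\ref{cor:embedding_same_twist} for the last clause; essentially all the content is in checking that a proper abelian subcategory $\ca\subseteq\ct$ satisfies the hypothesis of Theorem~\ref{thm:exact_embedding}. By J{\o}rgensen's definition \cite{Jorgensen} a sequence $0\to x\xrightarrow{\beta}y\xrightarrow{\gamma}z\to 0$ is short exact in $\ca$ exactly when there is a triangle $x\xrightarrow{\beta}y\xrightarrow{\gamma}z\xrightarrow{\delta[1]}x[1]$ in $\ct$; as noted just before the statement of the corollary, this says that the abelian exact structure on $\ca$ is precisely the one induced by the embedding $\ca\hookrightarrow\ct$ in the sense recalled before Theorem~\ref{thm:exact_embedding}, so in particular, via the Nakaoka--Palu characterization \cite{NakaokaPalu}, $\ca$ is extension-closed in $\ct$ and in each such triangle $\beta$ is monic and $\gamma$ is epic. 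Under this identification the Hall algebra $\ch(\ce)$ of the exact category $\ce=\ca$ with its induced exact structure is literally the ordinary Hall algebra $\ch(\ca)$ of the abelian category: extension-closedness forces the natural map $\Ext^1_\ca(z,x)\to\Hom_\ct(z,x[1])=\Ext^1_\ct(z,x)$ to be a bijection (surjective since every $\delta$ produces a triangle whose middle term lies in $\ca$, injective since a short exact sequence is determined up to isomorphism by its class), and for $\delta\in\Ext^1_\ca(z,x)$ the middle term $\mt(\delta)$ of the corresponding short exact sequence is the cone $C(\delta)$ occurring in the formulas of Lemma~\ref{lem:formula_with_vanishing_d_0}.

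Granting this, apply Theorem~\ref{thm:exact_embedding} with $\ce=\ca$: it yields the algebra embedding $(\ch(\ca)_{\scriptscriptstyle{\mathrm{tw}}},\diamond)\hookrightarrow\hallcy(\ct)$, $m_z\mapsto a_z$, where $\diamond$ is the twist of $(\ch(\ca)\otimes_{\QQ}\C,\cdot)$ by $q^{\frac{1}{2}\langle z,x\rangle_{0,\ldots,n}}$ for $n\geq 1$ and by $q^{-\frac{1}{2}\langle z,x\rangle_{n+1,\ldots,-1}}$ for $n\leq -1$. This is meaningful: $\cc$ is {\degfin}, so $\Hom_\ca(x,y)$ and $\Ext^1_\ca(x,y)$ are finite and $\ch(\ca)$ is defined, and the vanishing of the relevant connecting maps (equivalently $d_0(\delta)=0$) for objects of $\ca$, which was checked in the discussion preceding Theorem~\ref{thm:exact_embedding}, makes the twisting cocycle descend to a bilinear form on $K_0(\ca)$, so $(\ch(\ca)_{\scriptscriptstyle{\mathrm{tw}}},\diamond)$ is a well-defined associative algebra. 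That $m_z\mapsto a_z$ is multiplicative is then exactly the comparison in the proof of Theorem~\ref{thm:exact_embedding}, using $\mt(\delta)=C(\delta)$ from the previous paragraph.

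For the final assertion, assume in addition the hypotheses of Corollary~\ref{cor:embedding_same_twist}, i.e. $n\geq 1$, $\Ext^i_\ca(-,?)\cong\Ext^i_\ct(-,?)$ for $0\leq i\leq n$, and $\Ext^i_\ca(-,?)=0$ for $i>n$. Then for $z,x\in\ca$ one has $\tfrac{1}{2}\langle z,x\rangle_{0,\ldots,n}=\tfrac{1}{2}\langle z,x\rangle_{\ca}$, so the twist $\diamond$ agrees with the Euler-form twist $\ast$, and the embedding identifies the standard twisted Hall algebra of $\ca$ with a subalgebra of $\hallcy(\ct)$. I expect the only step that is not a direct invocation of earlier results to be the first one: matching J{\o}rgensen's notion of proper abelian subcategory to the hypothesis of Theorem~\ref{thm:exact_embedding}, and in particular confirming that $\ca$ is extension-closed in $\ct$ and that $\Ext^1_\ca(z,x)$ exhausts $\Hom_\ct(z,x[1])$ — without which the sums over extensions in the source and target Hall products would fail to line up. This is a matter of unwinding definitions rather than of a new idea.
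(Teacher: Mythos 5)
Your proposal matches the paper's treatment: the paper gives no separate proof of Corollary~\ref{cor:abelian_embedding}, deriving it immediately from the remark that a proper abelian subcategory has its exact structure induced by the embedding (the sentence preceding the statement) together with Theorem~\ref{thm:exact_embedding} and Corollary~\ref{cor:embedding_same_twist}, which is exactly your route. Your extra unwinding --- identifying $\Ext^1_{\ca}(z,x)$ with $\Ext^1_{\ct}(z,x)$ compatibly with middle terms so that $\mt(\delta)=C(\delta)$ --- is a reasonable elaboration of what the paper leaves implicit, and is correct at the paper's level of rigor.
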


\begin{example}
The above statements apply in the following special cases.
\begin{itemize}
\item[(i)] If $\ca$ is the heart of a $t$-structure on $\ct$, then it is a proper abelian subcategory and Corollary \ref{cor:abelian_embedding} applies. This is the $n$CY counterpart of the heart property of derived Hall algebras established by To\"en \cite{ToenDerivedHall}.
\item[(ii)] If $\ce$ is a \emph{presilting} subcategory of $\ct$, meaning that $\Ext^i_{\ct}(\ce, \ce) = 0, \forall i > 0$, then Theorem~\ref{thm:exact_embedding} applies. By definition, the induced exact structure on $\ce$ is the split one, so the Hall algebra $\ch(\ce)$ is in fact commutative up to powers of $q^{\pm 1}$, and so all the twisted versions we considered are in fact commutative up to powers of $q^{\pm \frac{1}{2}}$. This case includes \emph{silting} subcategories, and so equivalently hearts of bounded weight structures \cite{Bondarko} also known as co-hearts of bounded co-$t$-structures \cite{Pauksztello} (see \cite{MSSS}).
\item[(iii)] An $n$CY abelian category $\ca$ embeds as a proper abelian subcategory into its \emph{$m$-periodic derived category} $\cd_m(\ca)$ as long as $m \geq n$, and so Corollary \ref{cor:abelian_embedding} applies for this embedding. We will discuss in detail the case $m = 2$ in the next subsection. 
\item[(iv)] The original motivation of \cite{Jorgensen} was to study negative Calabi--Yau categories. These cannot admit any $t$-structures with non-zero heart \cite{HJY}, but can still contain interesting abelian subcategories. It is proved in \cite{Jorgensen} that  for $n \leq -2$, the extension closure $\langle \mathscr{S} \rangle$ of any \emph{$(-n)$-orthogonal collection} $\mathscr{S}$ inside any triangulated category $\ct$ satisfying our general assumptions is in fact a proper abelian subcategory. Further, by definition, for each $z, x \in \langle \mathscr{S} \rangle$, we have $\Ext^i(z, x) = 0$ for all $n +1 \leq i \leq -1$. Thus, for odd $n \leq -3$, Corollary \ref{cor:abelian_embedding} applies and, moreover, $(\ch(\langle \mathscr{S} \rangle)_{\scriptscriptstyle{\mathrm{tw}}}, \diamond)$ is just $(\ch(\langle \mathscr{S} \rangle) \otimes_{\QQ} \C, \cdot)$.
This covers $n$-cluster categories of finite acyclic quivers $Q$ for $n \leq -3$. Such a category is an algebraic $n$CY category, and $\mod \mathbf{k} Q$ is an example of its proper abelian subcategory arising as  the extension closure  of a $(-n)$-orthogonal collection.
We refer the reader to \cite{Jorgensen} for all the definitions, details, and references.
\end{itemize}
\end{example}

\begin{remark}
\label{rem:exact_ext^-1}
It is important to note that we do not require $\Ext_{\cc}^{-1}$ to vanish for objects in the subcategory, only the connecting maps $L_{\delta}, R_{\delta}$. 
Examples where $\Ext_{\cc}^{-1}(x, y)$ does not necessarily vanish for objects $x, y$ in a proper abelian subcategory of an oddCY category will be given in the next subsection in the form of root categories of $1$CY abelian categories. 
\end{remark}

\begin{remark}
\label{rem:distinguished}
If the subcategory $\ca \subset \ct$ is not proper, but only distinguished in the sense of \cite{Linckelmann}, meaning that some of triangles with 2 outer terms belonging to $\ca$ do not correspond to short exact sequences in $\ca$, a suitable modification of results of \cite{FangGorsky} implies that the appropriately twisted Hall algebra of the subcategory is a subalgebra of a certain nontrivial flat degeneration of $\hallcy(\ct)$. 
\end{remark}

\subsection{2-periodic complexes, root categories, and Drinfeld doubles}
\label{sec:root_categories}

In this section we let $\ca$ be a $\Hom$-finite (essentially small) abelian category of global dimension $n < \infty$, linear over $\mathbf k = \F_q$. 

With $\ca$ we associate the category $\cc_2(\ca)$ of \emph{2-periodic} (or $\Z/2$-graded) complexes, whose objects are diagrams of the form
\[
X^{\bullet} = \xymatrix{X^0 \ar@<0.5ex>[r]^{d^0} & X^1 \ar@<0.5ex>[l]^{d^1}},
\]
with $X^0, X^1 \in \ca$, $d^0 \circ d^1 = d^1 \circ d^0 = 0$.

The category $\cc_2(\ca)$ is also an abelian category. One can define cohomology $H^i, \, i \in \Z/2$ in the usual way $H^i(X^{\bullet}) \coloneqq  \Kerr d^i/  \Imm d^{i-1}$.  One has natural definitions of acyclic complexes forming the full subcategory $\cc_{2, ac}(\ca)$, quasi-isomorphisms, the homotopy category, and thus one can define the \emph{2-periodic} (or \emph{$\Z/2$-graded}) derived category $\cd_2(\ca)$ just as in the $\Z$-graded case. In particular, for all $X^{\bullet}, Y^{\bullet} \in \cc_2(\ca)$, $ i \geq 0$, we have naturally defined maps $\Ext^i_{\cc_2(\ca)}(X^{\bullet}, Y^{\bullet}) \to \Ext^i_{\cd_2(\ca)}(X^{\bullet}, Y^{\bullet}).$ It was shown in \cite[Proposition 4.1.2, Proposition 4.1.9]{Gorsky_thesis} that these maps are bijective for  all $X^{\bullet}, Y^{\bullet} \in \cc_2(\ca)$, $i > n$.

The category $\cd_2(\ca)$ can be alternatively defined as the triangulated hull of the orbit category $\cd^b(\ca)/[2]$ in the sense of \cite{Keller2005}. The latter orbit category was first considered by Happel \cite{Happel} who coined the name \emph{root category}, since in the case of $\ca$ being the category of representations of an orientation of a Dynkin diagram $\Gamma$, the indecomposable objects of $\cd^b(\ca)/[2]$ are in bijection with the roots of the root system of the simple Lie algebra corresponding to $\Gamma$. In this case, $\cd^b(\ca)/[2]$ is already triangulated, and so coincides with its triangulated hull \cite{PengXiao1997}. Fu \cite{Fu} extended this approach to more general categories; following him, we sometimes also call $\cd_2(\ca)$ the root category of $\ca$ for arbitrary abelian category $\ca$ satisfying our general assumptions, and denote it by $\cR(\ca)$.

As $\ca$ is essentially small, its bounded derived category $\cd^b(\ca)$ admits a dg enhancement which we will denote by $\cd^b_{dg}(\ca)$ (see e.g. \cite{Krause}).
Consider the perfect derived category of the Laurent polynomial algebra $\perf \mathbf k[t^{\pm 1}]$, with the generator $t$ in degree $2$. The tensor product category $\cD^b_{dg}(\ca) \otimes \perf \mathbf k[t^{\pm 1}]$  gives an enhancement of the triangulated hull of the orbit category $\cd^b(\ca)/[2]$, i.e. of the $2$-periodic derived category $\cd_2(\ca)$. 

\begin{remark}
The standard reference on the fact that $\Z/2$-graded dg categories can be interpreted as categories enriched over complexes over $\mathbf k[t^{\pm 1}]$ is \cite{Dyckerhoff2011}.
When $\ca$ has enough projectives and finite global dimension, we have an equivalence $\cd^b_{dg}(\ca) \cong \perf(\ca)$; in this case, the suitable dg enhancement is discussed in detail in \cite{Stai, Saito} based on \cite{Gorsky2013, Zhao}.
\end{remark}

We now present a construction introduced in \cite{Gorsky_thesis} which generalizes \cite{bridgeland_quantum, Gorsky2013}. 

Consider the set $\Iso(\cc_2(\ca))$ as a commutative monoid with addition $[A] + [B] = [A \oplus B]$. We define a monoid $M'$ as the quotient of $\Iso(\cc_2(\ca))$ modulo the ideal of relations generated by relations of the form $[X^{\bullet}] + [Z^{\bullet}] = [Y^{\bullet}]$ for short exact sequences $X^{\bullet} \to Y^{\bullet} \to Z^{\bullet}$ with at least one of $X$ and $Z$ being acyclic. Further, we consider the localization $\widetilde{M}$ of the monoid $M'$ at the classes of acyclic complexes. 

We have commutative diagrams of monoids
\[
\begin{tikzcd}
   M_0(\cc_{2, ac}(\ca)) \arrow[r,>->] \arrow[d,->,]&  M' \arrow[r,->>,"p'"]\arrow[d,->,] & \mbox{Iso}(\cD_2(\ca)) \arrow[d,equal]\\  
    K_0(\cc_{2, ac}(\ca)) \arrow[r,>->] &  \widetilde{M} \arrow[r,->>,"p"] & \mbox{Iso}(\cD_2(\ca)) 
\end{tikzcd}
\]
and 
\[
\begin{tikzcd}
    M' \arrow[r,->>,"p'"]\arrow[d,->>,] & \mbox{Iso}(\cD_2(\ca)) \arrow[d,->>]\\  
   M_0(\cc_2(\ca)) \arrow[r,->>] & M_0(\cD_2(\ca)) = K_0(\cD_2(\ca)),
\end{tikzcd}
\]
where $M_0(-)$ denotes the Grothendieck monoid of an exact or a triangulated category in the sense of \cite{BerensteinGreenstein, enomoto2022grothendieck}, and of groups
\[
\begin{tikzcd}
    G(\widetilde{M}) = G(M') \arrow[r,->>]\arrow[d,->>,] & G(\Iso(\cD_2(\ca))) \arrow[d,->>]\\ 
   K_0(\cc_2(\ca)) \arrow[r,->>] & K_0(\cD_2(\ca)), 
\end{tikzcd}
\]
where $G(-)$ denotes the group completion of a monoid. 

We can think of $K_0(\cc_{2, ac}(\ca))$, (isomorphic to) the group of units of $M$, as the kernel of $p$.
In fact, $M'$ is the Grothendieck monoid of the exact structure on (the underlying additive category of) $\cc_2(\ca)$ obtained by pulling back the split exact structure on $\cd_2(\ca)$, and its group of units is trivial. $G(\widetilde{M}) = G(M')$ is the Grothendieck group of this exact structure, $G(\Iso(\cd_2(\ca)))$ is the Grothendieck group of the split exact structure on $\cd_2(\ca)$.
The category of acyclic complexes $\cc_{2, ac}(\ca)$ with its abelian exact structure is a Serre subcategory of $\cc_2(\ca)$ considered with this pullback exact structure. 

Finally, we consider the canonical inclusion map of Grothendieck groups given by the composition
\[
i_{ac}: K_0(\cc_{2, ac}(\ca)) \to \widetilde{M} \to G(\widetilde{M})
\]
and define $M$ to be the quotient of $\widetilde{M}$ by the ideal generated by $\Kerr(i_{ac})$.

Consider a pair of complexes $X^{\bullet}, Z^{\bullet} \in \cc_2(\ca)$. For every $\delta \in \Ext_{\cD_2(\ca)}^1(Z^{\bullet}, X^{\bullet})$, the pair $X^{\bullet}, Z^{\bullet}$ determines a canonical lift $\mt(\delta) \in M$ of the isomorphism class of $\mbox{Cone}(\delta)$ via $p$. 
Namely, if $\delta$ is represented by a roof $Z^{\bullet}[-1] \overset{\alpha[-1]}\leftarrow Z'^{\bullet}[-1] \overset{\beta}\to X^{\bullet}$, with $\alpha[-1]$ being a quasi-isomorphism, then $\mt(\delta) := [\mbox{Cone}(\beta)] - [\mbox{Cone}(\alpha[-1])]$. It is proved in \cite{Gorsky_thesis} that this does not depend on the roof.

We consider the ``truncated relative Euler pairing'', of sorts:
\[
\langle Z^{\bullet}, X^{\bullet} \rangle_{\cc_2/\cd_2, > 0} \coloneqq \sum_{i=1}^{\infty} (-1)^i \left(\dim \Ext^i_{\cc_2(\ca)}(Z^{\bullet}, X^{\bullet}) - \dim \Ext^i_{\cD_2(\ca)}(Z^{\bullet}, X^{\bullet})\right);
\]
and the Euler pairing   
\[
\langle -,?\rangle_{\cc_2} \coloneqq \sum_{i=0}^{\infty} (-1)^i \dim \Ext^i_{\cc_2(\ca)}(-, ?),
\]
the latter being well-defined if one of the arguments is acyclic and descending to the corresponding (quotients of) Grothendieck groups $K_0(\cc_2(\ca)) \times (K_0(\cc_{2, ac}(\ca))/\Kerr(i_{ac}))$ and $(K_0(\cc_{2, ac}(\ca))/\Kerr(i_{ac})) \times K_0(\cc_2(\ca))$. 

\begin{definition}
The Hall algebra $\cS\cd\ch_2(\ca)$ is a $\mathbb{Q}$-algebra with basis given by $M$ and product
\begin{align*}
[Z^{\bullet}] \cdot [X^{\bullet}] &= 
 \underbrace{\left(\frac{1}{|\Hom_{\cc_2(\ca)}(Z^{\bullet}, X^{\bullet})|}  
\prod_{i = 1}^{\infty}  \left(\frac{|\Ext^i_{\cc_2(\ca)}(Z^{\bullet}, X^{\bullet})|}{|\Ext^i_{\cD_2(\ca)}(Z^{\bullet}, X^{\bullet})|}\right)^{(-1)^{i+1}}\right)}_{q^{-\dim\Hom_{\cc_2(\ca)}(Z^{\bullet}, X^{\bullet}) - \langle Z^{\bullet}, X^{\bullet} \rangle_{\cc_2/\cd_2, > 0}}}
\sum_{\delta \in \Ext^1_{\cD_2(\ca)}(Z^{\bullet}, X^{\bullet})} [\mt(\delta)]
\end{align*} 
for $X^{\bullet}, Z^{\bullet}$ arbitrary representatives in $\cc_2(\ca)$ of elements in $M'/\langle \Kerr(i_{ac})\rangle$ (in the above infinite product all factors for $i>n$ are $1$ by \cite[Proposition 4.1.2, Proposition 4.1.9]{Gorsky_thesis}), and more generally,
\begin{gather*}
([Z^{\bullet}] \cdot [K^{\bullet}]^{-1})\cdot ([X^{\bullet}]\cdot [L^{\bullet}]^{-1})  =
q^{\dagger} \sum_{\delta \in \Ext^1_{\cD_2(\ca)}(Z^{\bullet}, X^{\bullet})} [\mt(\delta)] \cdot [K^{\bullet} \oplus L^{\bullet}]^{-1},\\
\dagger=-\dim\Hom_{\cc_2(\ca)}(Z^{\bullet}, X^{\bullet}) - \langle Z^{\bullet}, X^{\bullet} \rangle_{\cc_2/\cd_2, > 0}+\langle L^{\bullet} - X^{\bullet}, K^{\bullet} \rangle_{\cc_2(\ca)} + \langle K^{\bullet}, X^{\bullet} \rangle_{\cc_2(\ca)}
\end{gather*}
for $X^{\bullet}, Z^{\bullet}$ as above and $K^{\bullet}, L^{\bullet} \in \cc_{2, ac}(\ca)$.
\end{definition}

The following is proved in \cite{Gorsky_thesis}: 

\begin{itemize}
\item The algebra is well-defined, that is, the product does not depend on the choice of representatives $(Z^{\bullet}, K^{\bullet})$ and $(X^{\bullet}, L^{\bullet})$ in $\Iso(\cc_2(\ca)) \oplus \Iso(\cc_{2, ac}(\ca))$ of elements in $M$.
\item It is associative, unital, and is naturally graded by $K_0(\cc_2(\ca))$.
\item It is a free module over the twisted group algebra of 
$K_0(\cc_{2, ac}(\ca))/\Kerr(i_{ac})$,
where the twist is given by the Euler pairing and each choice of representatives of the quasi-isomorphism classes gives a basis. In fact, we have
\begin{equation}
\label{eq:prod_acyclic_sdh}
[K^{\bullet}] \cdot [X^{\bullet}] = q^{-\langle K^{\bullet}, X^{\bullet} \rangle_{\cc_2}} [K^{\bullet} \oplus X^{\bullet}]; \, \, [X^{\bullet}] \cdot [K^{\bullet}] = q^{-\langle X^{\bullet}, K^{\bullet} \rangle_{\cc_2}} [K^{\bullet} \oplus X^{\bullet}],
\end{equation}
for $K^{\bullet}$ being the class of an acyclic complex. 
\item The algebra is tilting invariant. 
\item If $\ca$ has enough projectives, the map $i_{ac}$ is injective and so $M = \widetilde{M}$ by results in \cite{bridgeland_quantum}. The algebra is isomorphic to the localization of the ordinary Hall algebra $\cH(\cc_2(\mathrm{proj} \cA))$ at the classes of contractible complexes. The latter was introduced by Bridgeland in his seminal work \cite{bridgeland_quantum}, and so is often called the \emph{Bridgeland--Hall algebra of $\ca$}.
\item If $\ca$ is hereditary, i.e. $n = 1$, then the map $i_{ac}$ is  injective and so $M = \widetilde{M}$ by results in \cite{LuPeng}. The algebra is isomorphic to the localization of a certain ideal quotient of the ordinary Hall algebra $\cH(\cc_2(\ca))$ at the multiplicative Ore set given by the classes of acyclic complexes. This localization was introduced in partial generality in \cite{Gorsky2013} and in full generality in \cite{LuPeng} under the name \emph{$\Z/2$-graded semi-derived Hall algebra of $\ca$}. This name explains our notation.
\end{itemize}

Such algebras, for $\ca$ hereditary, appeared in the context of the additive categorification of quantum groups. More precisely, their twisted versions were used to realize Drinfeld doubles of twisted and extended Hall algebras of $\ca$, see \cite{bridgeland_quantum, Gorsky_thesis, LuPeng, Yanagida}.
The twist is in fact defined for $\ca$ of arbitrary finite global dimension: We denote by $(\cS\cd\ch_{2, \scriptscriptstyle{\mathrm{tw}}}(\ca), \ast)$ the algebra obtained from $(\cS\cd\ch_2(\ca) \otimes_{\QQ} \C, \cdot)$ by twisting the multiplication by the rule
\begin{equation}
\label{eq:sdh_twist_dw}
[Z^{\bullet}] \ast  [X^{\bullet}] =  q^{\frac{1}{2}\langle Z^{\bullet}, X^{\bullet} \rangle_{\scriptscriptstyle{\mathrm{dw}}}} [Z^{\bullet}] \cdot [X^{\bullet}],
\end{equation}
where 
\[
\langle Z^{\bullet}, X^{\bullet} \rangle_{\scriptscriptstyle{\mathrm{dw}}} \coloneqq \langle Z^0, X^0 \rangle_\ca + 
\langle Z^1, X^1 \rangle_\ca.
\]
This bilinear form descends to the grading group of the algebra --- that is, to $K_0(\cc_2(\ca))$. 

Assume that $\cd^b_{dg}(\ca)$  admits a weak right $n$CY structure. This in particular implies that the triangulated category $\cd^b(\ca)$ is $n$CY and the abelian category $\ca$ is $n$CY. The category $\perf \mathbf k[t^{\pm 1}]$, with the generator $t$ in degree $2$, is $\Z/2$-graded and naturally $2$CY, thus it can be also seen as a $0$CY category. The tensor product category $\cD^b_{dg}(\ca) \otimes \perf \mathbf k[t^{\pm 1}]$, which enhances $\cd_2(\ca)$, thus admits a (weak right) $n$CY structure.

\begin{remark}
It is straightforward to check that for $Z^{\bullet}, X^{\bullet}$ concentrated in the same degree --- say, in degree 0 --- we have 
\[
\Ext^i_{\cd_2(\ca)}(Z^{\bullet}, X^{\bullet}) \cong \bigoplus_{j = i \mod 2; j \geq 0} \Ext^j_{\ca}(Z^0, X^0).
\]
This implies that if $\ca$ has global dimension at most 2, the full subcategory of $\cd_2(\ca)$ formed by complexes concentrated in degree $0$, which is equivalent to $\ca$ as an abelian category, is a proper abelian subcategory of $\cd_2(\ca)$, but if the global dimension of $\ca$ is larger, it is only a distinguished abelian subcategory (i.e. it is not extension-closed in $\cd_2(\ca)$). We thus have the following (cf. Remark \ref{rem:distinguished}).
\end{remark}

\begin{lemma}
If $\ca$ is $1$CY, the embedding $\ca \to \cd_2(\ca)$ as complexes concentrated in degree $i$, for $i \in \Z/2$, induces an algebra embedding 
$\ch_{2, \scriptscriptstyle{\mathrm{tw}}}(\ca) \hookrightarrow \cS\cd\ch_{2, \scriptscriptstyle{\mathrm{tw}}}(\ca)$.

If $\ca$ is $n$CY for some odd $n \geq 3$, such an embedding of categories does not induce an algebra embedding into $\cS\cd\ch_{2, \scriptscriptstyle{\mathrm{tw}}}(\ca)$, but only into its certain flat degeneration.
\end{lemma}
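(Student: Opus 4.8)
The plan is to deduce both halves of the statement from the exact-subcategory machinery developed above, specialized to the embedding $\ca\hookrightarrow\cd_2(\ca)$ of complexes concentrated in a single degree, together with the comparison between $\hallcy$ and $\cS\cd\ch_2$. First I would invoke the preceding remark: for $Z^\bullet,X^\bullet$ concentrated in degree $0$ one has $\Ext^i_{\cd_2(\ca)}(Z^\bullet,X^\bullet)\cong\bigoplus_{j\equiv i\ (2),\,j\geq 0}\Ext^j_\ca(Z^0,X^0)$. When $\ca$ is $1$CY, so $n=1$ and higher $\Ext_\ca$ vanish beyond degree $1$, this collapses to $\Ext^0_{\cd_2}\cong\Hom_\ca$ and $\Ext^1_{\cd_2}\cong\Ext^1_\ca$ on the subcategory; a triangle with outer terms in $\ca$ corresponds exactly to a short exact sequence in $\ca$, so $\ca$ is a \emph{proper abelian subcategory} of $\cd_2(\ca)$ in the sense of J\o rgensen. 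Since $\cd^b_{dg}(\ca)\otimes\perf\mathbf k[t^{\pm1}]$ is an $n$CY (here $1$CY) pre-triangulated {\degfin} dg-enhancement of $\cd_2(\ca)$, Corollary~\ref{cor:abelian_embedding} applies and gives an algebra embedding $(\ch(\ca)_{\scriptscriptstyle{\mathrm{tw}}},\diamond)\hookrightarrow\hallcy(\cd_2(\ca))$. For $n=1$ the twist $q^{\frac12\langle z,x\rangle_{0,1}}$ coincides with $q^{\frac12\langle z,x\rangle_\ca}$ on the subcategory, so $(\ch(\ca)_{\scriptscriptstyle{\mathrm{tw}}},\diamond)=(\ch(\ca)_{\scriptscriptstyle{\mathrm{tw}}},\ast)$; this is the content of Corollary~\ref{cor:embedding_same_twist} with $n=1$.

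Next I would identify $\hallcy(\cd_2(\ca))$ with $\cS\cd\ch_{2,\scriptscriptstyle{\mathrm{tw}}}(\ca)$. This is the comparison asserted just before Theorem~\ref{thm:DD_intro} in the introduction (``our intrinsically defined algebra of the root category of $\ca$ matches a central reduction of the algebra defined in \cite{bridgeland_quantum,Gorsky2013,Gorsky_thesis,LuPeng}''); granting that identification, together with the fact that the ``dw'' twist $\langle Z^\bullet,X^\bullet\rangle_{\scriptscriptstyle{\mathrm{dw}}}=\langle Z^0,X^0\rangle_\ca+\langle Z^1,X^1\rangle_\ca$ restricts on degree-$0$ complexes to $\langle Z^0,X^0\rangle_\ca=\langle Z^0,X^0\rangle_{0,1}$ in $\cd_2(\ca)$, the embedding $(\ch(\ca)_{\scriptscriptstyle{\mathrm{tw}}},\ast)\hookrightarrow\hallcy(\cd_2(\ca))\cong\cS\cd\ch_{2,\scriptscriptstyle{\mathrm{tw}}}(\ca)$ of the first paragraph becomes exactly the claimed $\ch_{2,\scriptscriptstyle{\mathrm{tw}}}(\ca)\hookrightarrow\cS\cd\ch_{2,\scriptscriptstyle{\mathrm{tw}}}(\ca)$. (Alternatively, and perhaps more cleanly for a self-contained argument, one can avoid $\hallcy$ entirely and verify directly from the defining product formula of $\cS\cd\ch_2(\ca)$ that on the copy of $\ca$ the structure constants reduce to the Ringel--Hall ones: for $Z^\bullet,X^\bullet$ in degree $0$ the correction factor $q^{-\dim\Hom_{\cc_2}(Z^\bullet,X^\bullet)-\langle Z^\bullet,X^\bullet\rangle_{\cc_2/\cd_2,>0}}$ must be shown to equal $|\Hom_\ca(Z^0,X^0)|^{-1}$, which follows from $\Hom_{\cc_2}(Z^\bullet,X^\bullet)=\Hom_\ca(Z^0,X^0)$ together with the fact that when $\ca$ is hereditary $\Ext^i_{\cc_2}=\Ext^i_{\cd_2}$ for $i\geq 1$ on such objects, so the higher product is $1$; and $\mt(\delta)$ for $\delta\in\Ext^1_{\cd_2}(Z^\bullet,X^\bullet)=\Ext^1_\ca(Z^0,X^0)$ is just the class of the middle term of the corresponding exact sequence in $\ca$. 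Then multiply by the dw-twist and compare with the definition of $\ch_{2,\scriptscriptstyle{\mathrm{tw}}}(\ca)$.)

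For the second half ($\ca$ $n$CY with odd $n\geq 3$): by the remark preceding the lemma, when $\mathrm{gl.dim}\,\ca\geq 3$ the subcategory of degree-$0$ complexes is \emph{not} extension-closed in $\cd_2(\ca)$—for instance $\Ext^2_{\cd_2(\ca)}(Z^\bullet,X^\bullet)\cong\Ext^0_\ca(Z^0,X^0)$ need not vanish, so a nonsplit class there produces a triangle with degree-$0$ outer terms whose middle term is a genuinely $\Z/2$-graded (non-degree-$0$) complex. Hence $\ca$ sits inside $\cd_2(\ca)$ only as a \emph{distinguished} abelian subcategory in the sense of \cite{Linckelmann}, not a proper one, and Corollary~\ref{cor:abelian_embedding} does not apply; instead Remark~\ref{rem:distinguished} (the modification of \cite{FangGorsky}) says the appropriately twisted Hall algebra of such a distinguished subcategory embeds into a nontrivial flat degeneration of $\hallcy(\cd_2(\ca))\cong\cS\cd\ch_{2,\scriptscriptstyle{\mathrm{tw}}}(\ca)$ rather than into the algebra itself. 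One should note here that $\hallcy$ is only defined for \emph{odd} $n$, which is why the statement restricts to odd $n\geq 3$. The main obstacle is making the ``flat degeneration'' claim precise: one must specify the degeneration parameter (the extra classes $\mt(\delta)$ coming from triangles that fail to be short exact acquire nilpotent/limiting coefficients) and check that \cite{FangGorsky}, stated in the setting of triangulated categories with a distinguished abelian subcategory, transports verbatim to the periodic/oddCY setting and is compatible with the ``dw'' twist—this is where I would expect to spend the real effort; the positive half is essentially a bookkeeping exercise once the $\hallcy\cong\cS\cd\ch_{2,\scriptscriptstyle{\mathrm{tw}}}$ identification is in hand.
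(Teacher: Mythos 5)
Your first (main) route does not prove the statement as written: you identify $\hallcy(\cd_2(\ca))$ with $\cS\cd\ch_{2,\scriptscriptstyle{\mathrm{tw}}}(\ca)$, but Theorem~\ref{thm:root_cat_comparison} identifies $\hallcy(\cd_2(\ca))$ with the \emph{central reduction} $\cS\cd\ch_{2,\scriptscriptstyle{\mathrm{tw}}}(\ca)/\left\langle [K^{\bullet}]-1\right\rangle$, which is a proper quotient (the classes of acyclic complexes span a nontrivial twisted group algebra of $K_0(\cc_{2,ac}(\ca))/\Kerr(i_{ac})$ over which $\cS\cd\ch_{2,\scriptscriptstyle{\mathrm{tw}}}(\ca)$ is free). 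So even granting the embedding $(\ch(\ca)_{\scriptscriptstyle{\mathrm{tw}}},\ast)\hookrightarrow\hallcy(\cd_2(\ca))$ from Corollaries~\ref{cor:embedding_same_twist} and~\ref{cor:abelian_embedding}, you only obtain a map into the quotient; an injection into a quotient of $\cS\cd\ch_{2,\scriptscriptstyle{\mathrm{tw}}}(\ca)$ neither produces nor implies an algebra embedding into $\cS\cd\ch_{2,\scriptscriptstyle{\mathrm{tw}}}(\ca)$ itself, which is what the lemma asserts. (Also note that Theorem~\ref{thm:root_cat_comparison} appears after the lemma, so the intended argument does not pass through $\hallcy$ at all.) The same conflation affects your treatment of the odd $n\geq 3$ case, where the lemma speaks of a flat degeneration of $\cS\cd\ch_{2,\scriptscriptstyle{\mathrm{tw}}}(\ca)$, not of $\hallcy(\cd_2(\ca))$.

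Your parenthetical alternative is the correct argument, and it is essentially what the paper has in mind (no proof is spelled out; it rests on the $\Ext$-computations for stalk complexes in the remark preceding the lemma, i.e.\ on \cite[Lemma 4.1.1]{Gorsky_thesis}): one checks directly on stalk complexes concentrated in a fixed degree that $\Hom_{\cc_2(\ca)}(Z^{\bullet},X^{\bullet})\cong\Hom_\ca(Z,X)$, that for $\ca$ $1$CY (hence hereditary) the relative term $\langle Z^{\bullet},X^{\bullet}\rangle_{\cc_2/\cd_2,>0}$ vanishes and $\Ext^1_{\cd_2(\ca)}(Z^{\bullet},X^{\bullet})\cong\Ext^1_\ca(Z,X)$ with $\mt(\delta)$ the stalk complex of the middle term, and that $\langle Z^{\bullet},X^{\bullet}\rangle_{\scriptscriptstyle{\mathrm{dw}}}=\langle Z,X\rangle_\ca$, so the twisted product restricts to the Ringel--Hall product of $(\ch(\ca)_{\scriptscriptstyle{\mathrm{tw}}},\ast)$; injectivity follows since non-isomorphic objects of $\ca$ give non-quasi-isomorphic stalk complexes, hence linearly independent basis elements of the free module $\cS\cd\ch_{2,\scriptscriptstyle{\mathrm{tw}}}(\ca)$. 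For odd $n\geq 3$ the point is exactly the one you make --- $\ca$ is only a distinguished, not a proper, abelian subcategory, since classes in $\Ext^3_\ca\subseteq\Ext^1_{\cd_2(\ca)}$ produce triangles whose cones are not stalk complexes --- and the ``flat degeneration'' claim is then asserted at the same remark level as in the paper (Remark~\ref{rem:distinguished}), provided it is phrased for $\cS\cd\ch_{2,\scriptscriptstyle{\mathrm{tw}}}(\ca)$ rather than for $\hallcy(\cd_2(\ca))$.
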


\begin{remark}
To illustrate Remark \ref{rem:exact_ext^-1}, 
we note that if $\ca$ is $1$CY, for $Z^{\bullet}, X^{\bullet}$ concentrated in degree $0$, we have $\Ext^{-1}(Z^{\bullet}, X^{\bullet}) \cong \Ext^1(Z^0, X^0)$ which is non-zero whenever $\Hom(X^0, Z^0) \neq 0$. In particular, $\Ext^{-1}(Z^{\bullet}, Z^{\bullet}) \neq 0$ for any (non-zero) $Z^{\bullet}$ concentrated in a single degree. Still, such $Z^{\bullet}$ form a proper abelian subcategory of $\cd_2(\ca)$.
\end{remark}

We are now able to formulate the main result of this subsection.

\begin{theorem}
\label{thm:root_cat_comparison}
Assume that  $\cd^b_{dg}(\ca)$ is $n$CY, for some (positive) odd $n$.
Then the classes of acyclic complexes are central in $\cS\cd\ch_{2, \scriptscriptstyle{\mathrm{tw}}}(\ca)$, and $\hallcy(\cd_2(\ca))$ is isomorphic to the central reduction by the ideal $\left\langle [K^{\bullet}] - 1 \mid K^{\bullet} \text{ is acyclic} \right\rangle$ of the algebra  $\cS\cd\ch_{2, \scriptscriptstyle{\mathrm{tw}}}(\ca)$.
\end{theorem}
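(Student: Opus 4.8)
\textit{Overview.} The plan is to prove the two assertions in turn: first that each acyclic class $[K^\bullet]$ is central in $(\cS\cd\ch_{2,\scriptscriptstyle{\mathrm{tw}}}(\ca),\ast)$, and then that killing these classes yields an algebra isomorphic to $\hallcy(\cd_2(\ca))$. Note that $\hallcy(\cd_2(\ca))$ is defined: by the tensor-product construction recalled before the theorem, $\cd^b_{dg}(\ca)\otimes\perf\mathbf k[t^{\pm1}]$ is a {\degfin} pre-triangulated enhancement of $\cd_2(\ca)$ carrying a weak right $n$CY structure with $n$ odd and positive, so Proposition~\ref{propHallSpanCY} and Theorem/Definition~\ref{thm:algebra_assoc_main} apply, and since the connecting map of~\eqref{relcyLES} for $\cc^{\typeacat{2}}\to\cc^{\times 3}$ vanishes (cf.\ Section~\ref{subsec:hallOddCY}) the structure constants are those of Proposition~\ref{prop:prod_intrinsic_positive}.

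\textit{Centrality.} By~\eqref{eq:prod_acyclic_sdh}, for $K^\bullet$ acyclic and $X^\bullet\in\cc_2(\ca)$ arbitrary one has $[K^\bullet]\cdot[X^\bullet]=q^{-\langle K^\bullet,X^\bullet\rangle_{\cc_2}}[K^\bullet\oplus X^\bullet]$ and $[X^\bullet]\cdot[K^\bullet]=q^{-\langle X^\bullet,K^\bullet\rangle_{\cc_2}}[K^\bullet\oplus X^\bullet]$, so after the $\scriptscriptstyle{\mathrm{dw}}$-twist~\eqref{eq:sdh_twist_dw} centrality of $[K^\bullet]$ reduces to
\[
\tfrac12\langle K^\bullet,X^\bullet\rangle_{\scriptscriptstyle{\mathrm{dw}}}-\langle K^\bullet,X^\bullet\rangle_{\cc_2}=\tfrac12\langle X^\bullet,K^\bullet\rangle_{\scriptscriptstyle{\mathrm{dw}}}-\langle X^\bullet,K^\bullet\rangle_{\cc_2}.
\]
Acyclicity of $K^\bullet$ gives $[K^0]=[K^1]=[\Kerr d^0]+[\Kerr d^1]$ in $K_0(\ca)$ via the exact sequences $0\to\Kerr d^i\to K^i\to\Kerr d^{i+1}\to0$, and the description of $\langle-,?\rangle_{\cc_2}$ on a pair with one acyclic argument (which descends to the Grothendieck groups, cf.~\cite{Gorsky_thesis}) writes both $\langle K^\bullet,X^\bullet\rangle_{\cc_2}$ and $\langle X^\bullet,K^\bullet\rangle_{\cc_2}$ as sums of values of $\langle-,?\rangle_\ca$ on $[\Kerr d^0],[\Kerr d^1],[X^0],[X^1]$. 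Since $\cd^b_{dg}(\ca)$ is $n$CY with $n$ odd the abelian category $\ca$ is $n$CY and $\langle-,?\rangle_\ca$ is skew-symmetric on $K_0(\ca)$; feeding this and $[K^0]=[K^1]$ into both sides, each collapses to $\langle[\Kerr d^0]+[\Kerr d^1],[X^0]+[X^1]\rangle_\ca$, proving the identity. Taking $X^\bullet$ itself acyclic shows in addition that the $\scriptscriptstyle{\mathrm{dw}}$-twisted $2$-cocycle on the acyclic part $K_0(\cc_{2,ac}(\ca))/\Kerr(i_{ac})$ is \emph{identically $1$}, not merely a coboundary.

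\textit{The central reduction and the rescaling.} Since the $[K^\bullet]$ are central, $\left\langle[K^\bullet]-1\mid K^\bullet\text{ acyclic}\right\rangle$ is a two-sided central ideal, and triviality of the cocycle on the acyclic part guarantees that the quotient is nonzero with a basis $\{\overline{[X^\bullet]}\}$ indexed by $x\in\Iso(\cd_2(\ca))$ once a $2$-periodic representative $X^\bullet$ is fixed for each $x$; here one uses that $\cS\cd\ch_2(\ca)$ is free over the twisted group algebra of the acyclic part, so the reduction has $\C$-dimension $|\Iso(\cd_2(\ca))|=\dim_\C\hallcy(\cd_2(\ca))$. In this basis the product is $\overline{[Z^\bullet]}\cdot\overline{[X^\bullet]}=q^{e(Z^\bullet,X^\bullet)}\sum_{\delta\in\Ext^1_{\cd_2(\ca)}(z,x)}\overline{[\mt(\delta)]}$ with $e(Z^\bullet,X^\bullet)=\tfrac12\langle Z^\bullet,X^\bullet\rangle_{\scriptscriptstyle{\mathrm{dw}}}-\dim\Hom_{\cc_2(\ca)}(Z^\bullet,X^\bullet)-\langle Z^\bullet,X^\bullet\rangle_{\cc_2/\cd_2,>0}$, and one must produce a rescaling $\overline{[X^\bullet]}\mapsto q^{\ell(X^\bullet)}a_x$ intertwining this with the formula $a_z\cdot a_x=\sum_\delta q^{\frac12(\langle z,x\rangle_{0,\ldots,n}+r_0(\delta)+r_{n+1}(\delta))-\langle z,x\rangle_0}a_{C(\delta)}$ of Proposition~\ref{prop:prod_intrinsic_positive}.

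\textit{Matching, and the main obstacle.} Although $e(Z^\bullet,X^\bullet)$ is independent of $\delta$ whereas the intrinsic coefficient is not, the discrepancy is carried by the scalar comparing $\overline{[\mt(\delta)]}$ with $\overline{[\mathrm{Cone}(\delta)^\bullet]}$ for the fixed representative of the cone: being built from a roof, $\mt(\delta)$ differs from $\mathrm{Cone}(\delta)^\bullet$ by direct sum with an acyclic complex whose $K_0(\ca)$-class — and hence, via the second paragraph, the resulting power of $q$ — varies with $\delta$. Carrying this out requires (i) expressing $\dim\Hom_{\cc_2(\ca)}(Z^\bullet,X^\bullet)$ and $\langle Z^\bullet,X^\bullet\rangle_{\cc_2/\cd_2,>0}$ through $\dim\Ext^i_{\cd_2(\ca)}(z,x)$ and the $2$-periodic $\Ext$-data of minimal representatives, using that $\Ext^i_{\cc_2(\ca)}\xrightarrow{\sim}\Ext^i_{\cd_2(\ca)}$ for $i>n$ \cite{Gorsky_thesis}; (ii) rewriting $\tfrac12\langle-,?\rangle_{\scriptscriptstyle{\mathrm{dw}}}$ in terms of the truncated Euler pairing $\langle-,?\rangle_{0,\ldots,n}$ on $\cd_2(\ca)$; and (iii) evaluating the $\delta$-dependent scalar with the twisted-complex spectral-sequence identities of Proposition~\ref{prop:prod_intrinsic_positive}, in particular~\eqref{dim_ext_and_4x4matrix}, which in the $2$-periodic $n$CY case ($n$ odd) force $r_{n-i+1}(\delta)=r_i(\delta)$ and $\dim\Ext^i(w,w)=\dim\Ext^{i+1}(w,w)$. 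The function $\ell$ is then read off and is automatically representative-independent because the left-hand side of the intertwining equation is. The genuine work is step (iii) together with the translation between the $\cc_2(\ca)$-level homological regularization built into $\cS\cd\ch_{2,\scriptscriptstyle{\mathrm{tw}}}(\ca)$ and the intrinsic data ($\gamma$, the ranks $r_i(\delta)$, $|\aut|$) of $\cd_2(\ca)$ — above all, pinning down how the canonical lift $\mt(\delta)$ sits relative to a fixed $2$-periodic representative of its cone. A more structural, if less self-contained, route is to note that $\cS\cd\ch_{2,\scriptscriptstyle{\mathrm{tw}}}(\ca)$ is the (twisted) semi-derived Hall algebra of $\cc_2(\ca)$ with the pullback of the split exact structure on $\cd_2(\ca)$, and to invoke the principle that the central reduction of such an algebra along its acyclic classes is the intrinsic Hall algebra of the associated triangulated quotient; for $n=1$ this recovers the classical identification of the reduction with the Drinfeld double from~\cite{bridgeland_quantum,LuPeng,Yanagida}.
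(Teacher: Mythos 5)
Your centrality argument is fine and in fact takes a slightly different route from the paper: instead of first re-expressing the twist, you compute $[K^\bullet]\ast[X^\bullet]$ and $[X^\bullet]\ast[K^\bullet]$ directly from \eqref{eq:prod_acyclic_sdh} and \eqref{eq:sdh_twist_dw} and reduce centrality to a $K_0$-identity using $[K^0]=[K^1]$ and the skew-symmetry of $\langle-,?\rangle_\ca$ for odd $n$; granting the standard expression of $\langle K^\bullet,X^\bullet\rangle_{\cc_2}$ and $\langle X^\bullet,K^\bullet\rangle_{\cc_2}$ through $[\Kerr d^0],[\Kerr d^1]$ (which you cite rather than verify), this works, whereas the paper obtains centrality as a byproduct of rewriting the $\mathrm{dw}$-twist via Lemmas~\ref{lem:our_twist_sdh} and~\ref{lem:forms_agree_sdh}.

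The isomorphism claim, however, is not proved: you describe the program but stop exactly where the paper's proof begins its real work. Concretely, you never establish the analogue of Lemma~\ref{lem:forms_agree_sdh}, i.e.\ the identity expressing $\frac12\langle Z^\bullet,X^\bullet\rangle_{\scriptscriptstyle{\mathrm{dw}}}$ through $\dim\Hom_{\cc_2(\ca)}$, the relative pairings $\langle-,?\rangle_{\cc_2/\cd_2,>0}$ and the truncated pairing $\langle-,?\rangle_{1,\ldots,n-1}$ on $\cd_2(\ca)$ (in the paper this is checked on stalk complexes, using the explicit comparison of $\Ext$ in $\cc_2(\ca)$ versus $\cd_2(\ca)$, and it is what makes the structure constants comparable at all); and you never carry out the computation showing that, after the rescaling, the exponent is $\frac12(\langle Z^\bullet,X^\bullet\rangle_{1,\ldots,n-1}-d_{\cd_2,1}(\delta))$, i.e.\ the telescoping argument with \eqref{dim_ext_and_4x4matrix} applied simultaneously in $\cc_2(\ca)$ and $\cd_2(\ca)$ that produces \eqref{eq:product_with_f(delta)} and matches the $\{u_x\}$-basis formula (note the paper matches the $u$-basis, not the $a$-basis formula of Proposition~\ref{prop:prod_intrinsic_positive} that you target). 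Your own text labels these steps ``the genuine work,'' so the proposal is a strategy, not a proof. The suggested fallback is worse than a gap: the ``principle that the central reduction of such an algebra along its acyclic classes is the intrinsic Hall algebra of the associated triangulated quotient'' is precisely the content of Theorem~\ref{thm:root_cat_comparison}; the references \cite{bridgeland_quantum,LuPeng,Yanagida} identify the reduction with a Drinfeld double in the hereditary $1$CY case, not with the intrinsically defined $\hallcy(\cd_2(\ca))$, which is the new object here, so invoking that principle is circular.
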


Before proceeding with the proof, we give an alternative definition of the form $\langle Z^{\bullet}, X^{\bullet} \rangle_{\scriptscriptstyle{\mathrm{dw}}}$ (which uses the $n$CY property of the category $\cd_2(\ca)$).

\begin{lemma}
\label{lem:our_twist_sdh}
The form 
\[
\begin{aligned}
&\dim\Hom_{\cc_2(\ca)}(Z^{\bullet}, X^{\bullet}) + \langle Z^{\bullet}, X^{\bullet} \rangle_{\cc_2/\cd_2, > 0} \\ &- \dim\Hom_{\cc_2(\ca)}(X^{\bullet}, Z^{\bullet}) - \langle X^{\bullet}, Z^{\bullet} \rangle_{\cc_2/\cd_2, > 0} + \langle Z^{\bullet}, X^{\bullet} \rangle_{1,\ldots,n-1}
\end{aligned}
\]
where the last term is the truncated Euler pairing in the category $\cd_2(\ca)$ defined as in earlier sections,
is well-defined on $K_0(\cc_2(\ca)) \times K_0(\cc_2(\ca))$.
\end{lemma}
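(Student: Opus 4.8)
The plan is to show that the displayed expression is nothing but a second formula for $\langle Z^{\bullet}, X^{\bullet} \rangle_{\scriptscriptstyle{\mathrm{dw}}} = \langle Z^0, X^0\rangle_\ca + \langle Z^1, X^1\rangle_\ca$. Once this equality of integer‑valued functions on $\cc_2(\ca)^{\mathrm{op}}\times\cc_2(\ca)$ is established, well‑definedness on $K_0(\cc_2(\ca))\times K_0(\cc_2(\ca))$ is automatic: the assignment $[Z^\bullet]\mapsto([Z^0],[Z^1])$ is a homomorphism $K_0(\cc_2(\ca))\to K_0(\ca)^{\oplus 2}$ because a short exact sequence in $\cc_2(\ca)$ is termwise short exact in $\ca$, and $\langle -,?\rangle_\ca$ descends to $K_0(\ca)$ since $\ca$ has finite global dimension $n$. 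So the lemma reduces to that equality.

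To prove the equality I would first repackage the two $\cc_2(\ca)$‑terms. Write $e^i(-,?)=\dim\Ext^i_{\cc_2(\ca)}(-,?)$ and $\bar e^i(-,?)=\dim\Ext^i_{\cd_2(\ca)}(-,?)$, with $\Ext^0=\Hom$. By \cite[Proposition 4.1.2, Proposition 4.1.9]{Gorsky_thesis} the comparison map $\Ext^i_{\cc_2(\ca)}\to\Ext^i_{\cd_2(\ca)}$ is an isomorphism for $i>n$, so the regularized pairing $\langle Z^\bullet,X^\bullet\rangle_{\cc_2/\cd_2,\geq 0}\coloneqq\sum_{i\geq 0}(-1)^i(e^i-\bar e^i)$ is a finite sum, and tautologically $\dim\Hom_{\cc_2(\ca)}(Z^\bullet,X^\bullet)+\langle Z^\bullet,X^\bullet\rangle_{\cc_2/\cd_2,>0}=\langle Z^\bullet,X^\bullet\rangle_{\cc_2/\cd_2,\geq 0}+\bar e^0(Z^\bullet,X^\bullet)$. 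Thus the displayed expression becomes $\bigl(\langle Z^\bullet,X^\bullet\rangle_{\cc_2/\cd_2,\geq 0}-\langle X^\bullet,Z^\bullet\rangle_{\cc_2/\cd_2,\geq 0}\bigr)+\bigl(\bar e^0(Z^\bullet,X^\bullet)-\bar e^0(X^\bullet,Z^\bullet)\bigr)+\langle Z^\bullet,X^\bullet\rangle^{\cd_2}_{1,\ldots,n-1}$.

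Next I would exploit the structure of $\cd_2(\ca)$, computed in its dg‑enhancement $\cd^b_{dg}(\ca)\otimes\perf\mathbf k[t^{\pm 1}]$: the groups $\Ext^i_{\cd_2(\ca)}$ are $2$‑periodic in $i$, and the $n$CY property gives $\Ext^i_{\cd_2(\ca)}(Z^\bullet,X^\bullet)\cong\Ext^{n-i}_{\cd_2(\ca)}(X^\bullet,Z^\bullet)^\vee$; since $n$ is odd this forces $\bar e^i(Z^\bullet,X^\bullet)$ to depend only on the parity of $i$ and to satisfy $\bar e^i(Z^\bullet,X^\bullet)=\bar e^{i+1}(X^\bullet,Z^\bullet)$. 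Setting $\alpha=\bar e^0(Z^\bullet,X^\bullet)$, $\beta=\bar e^1(Z^\bullet,X^\bullet)$ one gets $\bar e^0(Z^\bullet,X^\bullet)-\bar e^0(X^\bullet,Z^\bullet)=\alpha-\beta$ and $\langle Z^\bullet,X^\bullet\rangle^{\cd_2}_{1,\ldots,n-1}=\tfrac{n-1}{2}(\alpha-\beta)$. It then remains to evaluate $\langle Z^\bullet,X^\bullet\rangle_{\cc_2/\cd_2,\geq 0}-\langle X^\bullet,Z^\bullet\rangle_{\cc_2/\cd_2,\geq 0}$ using once more the comparison of \cite{Gorsky_thesis} in degrees $\leq n$ together with the elementary identification of $\Hom_{\cc_2(\ca)}$ with the $0$‑cocycles of the $\Z/2$‑graded $\Hom$‑complex; substituting back, all $\alpha,\beta$‑contributions should cancel and the expression collapse to $\langle Z^0,X^0\rangle_\ca+\langle Z^1,X^1\rangle_\ca$. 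I would double‑check the bookkeeping on the hereditary case $n=1$, where $\langle Z^\bullet,X^\bullet\rangle^{\cd_2}_{1,\ldots,n-1}=0$ and the identity reduces in a few lines to $\dim\Hom_\ca(Z^0,X^0)-\dim\Ext^1_\ca(Z^0,X^0)+\dim\Hom_\ca(Z^1,X^1)-\dim\Ext^1_\ca(Z^1,X^1)$.

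The hard part is exactly this last step: controlling the comparison between $\Ext_{\cc_2(\ca)}$ and $\Ext_{\cd_2(\ca)}$ in the low degrees $0$ and $1$, where the comparison map is in general neither injective nor surjective. Each of $\dim\Hom_{\cc_2(\ca)}(Z^\bullet,X^\bullet)$ and $\langle Z^\bullet,X^\bullet\rangle_{\cc_2/\cd_2,>0}$ is individually \emph{not} a function of the $K_0$‑classes, and the whole content of the lemma is that after antisymmetrizing and adding the correction $\langle Z^\bullet,X^\bullet\rangle^{\cd_2}_{1,\ldots,n-1}$ the non‑$K_0$‑invariant pieces telescope away. Making this telescoping precise — unwinding the exact sequences of \cite{Gorsky_thesis} relating $\Ext_{\cc_2(\ca)}$, $\Ext_{\cd_2(\ca)}$ and the $\Z/2$‑graded $\Hom$‑complex in degrees $0,1$ — is where essentially all the work lies.
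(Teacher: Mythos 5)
There is a genuine gap. Your plan is to prove the \emph{pointwise} identity between the displayed form and $\langle Z^{\bullet}, X^{\bullet} \rangle_{\scriptscriptstyle{\mathrm{dw}}}$ for arbitrary objects of $\cc_2(\ca)$, and then get $K_0$-bilinearity for free; but the only step you actually carry out is the easy repackaging (regularizing the relative pairing, and using $2$-periodicity plus the odd $n$CY symmetry to write $\bar e^0(Z^\bullet,X^\bullet)-\bar e^0(X^\bullet,Z^\bullet)=\alpha-\beta$ and $\langle Z^\bullet,X^\bullet\rangle_{1,\ldots,n-1}=\tfrac{n-1}{2}(\alpha-\beta)$). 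The crucial cancellation --- evaluating $\langle Z^\bullet,X^\bullet\rangle_{\cc_2/\cd_2,\geq 0}-\langle X^\bullet,Z^\bullet\rangle_{\cc_2/\cd_2,\geq 0}$ for arbitrary complexes and showing that everything collapses to $\langle Z^0,X^0\rangle_\ca+\langle Z^1,X^1\rangle_\ca$ --- is exactly what you defer, and it is at least as hard as the lemma itself: in degrees $0$ and $1$ the comparison maps $\Ext^i_{\cc_2(\ca)}\to\Ext^i_{\cd_2(\ca)}$ are neither injective nor surjective, their kernels and cokernels are not $K_0$-invariants, and no mechanism for their cancellation is given. Note also that you cannot reduce to stalk complexes here the way the paper does when it later identifies the form with $\langle -,?\rangle_{\scriptscriptstyle{\mathrm{dw}}}$: that reduction uses precisely the well-definedness on $K_0\times K_0$ that this lemma is supposed to establish, so invoking it would be circular. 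As it stands, the proposal replaces the statement to be proven by a strictly stronger unproven identity.

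For contrast, the paper's proof does not go through the $\scriptstyle{\mathrm{dw}}$-form at all: it checks additivity of the displayed expression on a short exact sequence $0\to Z_1^\bullet\to Z_2^\bullet\to Z_3^\bullet\to 0$ directly, using the four long exact sequences (in $\cc_2(\ca)$ and in $\cd_2(\ca)$, in each argument against $X^\bullet$). The alternating sums of the terms $\dim\Hom_{\cc_2(\ca)}+\langle-,?\rangle_{\cc_2/\cd_2,>0}$ telescope to $\rank a^1_{\cd_2}$ and $\rank b^1_{\cd_2}$ (ranks of the first connecting maps in $\cd_2(\ca)$), the failure of additivity of $\langle-,?\rangle_{1,\ldots,n-1}$ is $-\rank a^1_{\cd_2}+\rank a^n_{\cd_2}$ since $n$ is odd, and the $n$CY duality identifies $\rank b^1_{\cd_2}$ with $\rank a^n_{\cd_2}$, so the defects cancel. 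If you want to salvage your route, you would have to supply an argument of comparable substance for the pointwise identity (e.g.\ via the exact sequences relating $\Hom$ in $\cc_2(\ca)$, its homotopy category, and $\cd_2(\ca)$), which the proposal does not contain.
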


\begin{proof}
Suppose that we have a short exact sequence $0 \to Z_1^\bullet \to Z_2^\bullet \to Z_3^\bullet \to 0$
in $\cc_2(\ca)$; it induces a distinguished triangle in $\cd_2(\ca)$. We have a pair of long exact sequences from this short exact sequence (resp. triangle) to $X^{\bullet}$ in $\cc_2(\ca)$ (resp. in $\cd_2(\ca)$), and another pair of those from $X^{\bullet}$ to this short exact sequence (resp. triangle). Let $a^i_{\cc_2}, a^i_{\cd_2}, b^i_{\cc_2}, b^i_{\cd_2}$ be the connecting morphisms from $\Ext^{i-1}$ to $\Ext^i$ in these four long exact sequences. 
Using the first pair, we see that 
\begin{align*}
\sum_{j=1}^3(-1)^{j-1}&\left(\dim\Hom_{\cc_2(\ca)}(Z_j^{\bullet}, X^{\bullet}) + \langle Z_j^{\bullet}, X^{\bullet} \rangle_{\cc_2/\cd_2, > 0}\right) = \\
&= \rank a^1_{\cc_2} + \sum_{i=1}^{\infty} (-1)^i (\rank a^i_{\cc_2} + \rank a^{i+1}_{\cc_2} -  \rank a^i_{\cd_2} - \rank a^{i+1}_{\cd_2}) =
\rank a^1_{\cd_2}.
\end{align*}
Similarly, using the second pair, we get that 
\begin{align*}
\sum_{j=1}^3(-1)^{j-1}&\left(\dim\Hom_{\cc_2(\ca)}(X^{\bullet}, Z_j^{\bullet}) + \langle X^{\bullet},Z_j^{\bullet} \rangle_{\cc_2/\cd_2, > 0}\right) =
\rank b^1_{\cd_2}.
\end{align*}
Thanks to $n$ being odd, we find from the second sequence that 
\[
\langle Z_1^{\bullet}, X^{\bullet} \rangle_{1,\ldots,n-1} - \langle Z_2^{\bullet}, X^{\bullet} \rangle_{1,\ldots,n-1} + \langle Z_3^{\bullet}, X^{\bullet} \rangle_{1,\ldots,n-1} = - \rank a^1_{\cd_2} + \rank a^n_{\cd_2}.
\]
Finally, the $n$CY property of $\cd_2(\ca)$ implies that $b^1_{\cd_2}$ is dual to $a^n_{\cd_2}$, and so their ranks coincide. 
Combining this all together, we see that the alternating sum of the form in the statement of the lemma for $Z_i^\bullet, \, i = 1, 2, 3$ and $X^\bullet$ equals
\[\rank a^1_{\cd_2} - \rank b^1_{\cd_2} - \rank a^1_{\cd_2} + \rank a^n_{\cd_2} = 0.\]
The dual arguments show the additivity of the form on short exact sequences in the second argument.
\end{proof}

\begin{lemma}
\label{lem:forms_agree_sdh}
The form defined in Lemma \ref{lem:our_twist_sdh} coincides with the form $\langle Z^{\bullet}, X^{\bullet} \rangle_{\scriptscriptstyle{\mathrm{dw}}}$.
\end{lemma}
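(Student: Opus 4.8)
The plan is to check an identity of biadditive forms on $K_0(\cc_2(\ca)) \times K_0(\cc_2(\ca))$ by evaluating both sides on a generating set. The form $\langle -, ?\rangle_{\scriptscriptstyle{\mathrm{dw}}}$ is manifestly biadditive, and Lemma~\ref{lem:our_twist_sdh} guarantees the same for the form $F$ defined there, so it suffices to compare $F$ and $\langle -, ?\rangle_{\scriptscriptstyle{\mathrm{dw}}}$ on pairs of generators. First I would pin down $K_0(\cc_2(\ca))$: writing $\mathrm{stalk}_i(M)$ for $M$ placed in degree $i\in\Z/2$ with zero differentials, a short filtration argument (using the subcomplex $0\to\ker d^1\to X^1$ in degree $1$ and the quotient) shows $[X^\bullet] = [\mathrm{stalk}_0(X^0)] + [\mathrm{stalk}_1(X^1)]$ in $K_0(\cc_2(\ca))$; since short exact sequences in $\cc_2(\ca)$ are degreewise exact, this upgrades to an isomorphism $K_0(\cc_2(\ca))\cong K_0(\ca)\oplus K_0(\ca)$, $[X^\bullet]\mapsto([X^0],[X^1])$. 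In this description $\langle -,?\rangle_{\scriptscriptstyle{\mathrm{dw}}}$ is the block-diagonal form with both diagonal blocks equal to $\langle-,?\rangle_\ca$ and vanishing off-diagonal blocks.

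Next I would use the shift autoequivalence $[1]$ of $\cc_2(\ca)$, which exchanges $\mathrm{stalk}_0$ and $\mathrm{stalk}_1$ and is compatible with the functor $\cc_2(\ca)\to\cd_2(\ca)$; hence $\Hom_{\cc_2(\ca)}$, all $\Ext^i_{\cc_2(\ca)}$, and all $\Ext^i_{\cd_2(\ca)}$ are $[1]$-invariant on pairs, so $F$ is $[1]$-invariant and therefore also has block structure of the shape $\left(\begin{smallmatrix}\phi & \psi\\ \psi & \phi\end{smallmatrix}\right)$. It thus remains to prove exactly two identities: $F(\mathrm{stalk}_0 M,\mathrm{stalk}_0 N)=\langle M,N\rangle_\ca$ and $F(\mathrm{stalk}_0 M,\mathrm{stalk}_1 N)=0$.

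For these I would compute the three families of $\Ext$-groups entering $F$. The groups $\Ext^i_{\cd_2(\ca)}$ between stalk complexes are the "wrapped" sums $\Ext^i_{\cd_2(\ca)}(\mathrm{stalk}_0 M,\mathrm{stalk}_0 N)\cong\bigoplus_{j\ge 0,\ j\equiv i\,(2)}\Ext^j_\ca(M,N)$ (and the shifted analogue for $\mathrm{stalk}_1 N$), as in the remark preceding Lemma~\ref{lem:our_twist_sdh}. The groups $\Ext^i_{\cc_2(\ca)}$ between stalks I would obtain from the $2$-periodic projective resolution $\cdots\to C'(P)\to C(P)\to\mathrm{stalk}_0 P\to 0$, where $C(P),C'(P)$ are the contractible complexes on a projective $P$ (these are projective in $\cc_2(\ca)$ since $\Hom_{\cc_2(\ca)}(C(P),-)\cong\Hom_\ca(P,(-)^0)$ is exact, and similarly for $C'(P)$), spliced with a projective resolution of $M$ in $\ca$; applying $\Hom_{\cc_2(\ca)}(-,\mathrm{stalk}_\ast N)$ kills alternate columns and leaves $\dim\Ext^i_{\cc_2(\ca)}(\mathrm{stalk}_0 M,\mathrm{stalk}_0 N)=\sum_{0\le j\le i,\ j\equiv i\,(2)}\dim\Ext^j_\ca(M,N)$, with the analogous truncated sum for the cross term. (This formula and its stabilization to the $\cd_2(\ca)$-groups for $i\ge n$ is essentially \cite[Prop.~4.1.2, Prop.~4.1.9]{Gorsky_thesis}.) Substituting into the alternating sums defining $F$, using that $n$ is odd and invoking the Serre duality $\dim\Ext^j_\ca(M,N)=\dim\Ext^{n-j}_\ca(N,M)$ coming from the $n$CY structure on $\ca$ to rewrite the terms with swapped arguments, the $n$-dependent contributions cancel against the term $\langle-,?\rangle_{1,\ldots,n-1}$ in $\cd_2(\ca)$, which for the $\Z/2$-graded category $\cd_2(\ca)$ equals $\tfrac{n-1}{2}\langle-,?\rangle_{\Z/2}$; one is then left with $\langle M,N\rangle_\ca$ in the first case and $0$ in the second.

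The main obstacle is the bookkeeping in this last step: the naive Euler-type sums over $i$ diverge, and only the differences $\dim\Ext^i_{\cc_2(\ca)}-\dim\Ext^i_{\cd_2(\ca)}$ are eventually zero, so one must reorganize everything into finite sums before interchanging orders of summation, and then track parities carefully to see the precise cancellation against $\tfrac{n-1}{2}\langle-,?\rangle_{\Z/2}$. A secondary point needing care is the $\Ext_{\cc_2(\ca)}$ computation when $\ca$ has no projectives (e.g. the $1$CY example of coherent sheaves on an elliptic curve), for which one either appeals to \cite{Gorsky_thesis} directly or runs the dual argument using enough injectives, or reformulates in the derived category.
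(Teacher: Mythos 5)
Your proposal takes essentially the same route as the paper's proof: both forms descend to $K_0(\cc_2(\ca))\times K_0(\cc_2(\ca))$ (the left-hand one by Lemma \ref{lem:our_twist_sdh}), $K_0(\cc_2(\ca))$ is generated by stalk complexes, and the identity is then verified on stalks using the formulas for $\Ext^i_{\cc_2(\ca)}$ and $\Ext^i_{\cd_2(\ca)}$ between stalk complexes (which the paper quotes from \cite[Lemma 4.1.1]{Gorsky_thesis}) together with the $n$CY duality on $\ca$. The bookkeeping you defer is precisely the paper's Case 1/Case 2 computation with $k=(n-1)/2$, which indeed yields $\langle Z,X\rangle_{\ca}$ and $0$ as you predict; your $[1]$-shift symmetry and the identification $K_0(\cc_2(\ca))\cong K_0(\ca)\oplus K_0(\ca)$ are only minor streamlinings of the same argument.
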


\begin{proof}
We need to prove that, for any pair of complexes $Z^{\bullet}, X^{\bullet} \in \cc_2(\ca),$ we have
\begin{equation}
\label{cw_agrees_with_twist}
\begin{aligned}
\langle Z^{\bullet}, X^{\bullet} \rangle_{\scriptscriptstyle{\mathrm{dw}}} =
&\left(\dim\Hom(Z^{\bullet}, X^{\bullet}) + \langle Z^{\bullet}, X^{\bullet} \rangle_{\cc_2/\cd_2, > 0}\right) \\ &- \left(\dim\Hom(X^{\bullet}, Z^{\bullet}) + \langle X^{\bullet}, Z^{\bullet} \rangle_{\cc_2/\cd_2, > 0}\right) +   \langle Z^{\bullet}, X^{\bullet} \rangle_{1,\ldots,n-1}.
\end{aligned}
\end{equation}
Both sides are well-defined on $K_0(\cc_2(\ca)) \times K_0(\cc_2(\ca)),$ and we know that $K_0(\cc_2(\ca))$ is generated by the classes of complexes concentrated in a single degree
(see e.g. \cite[Proposition 4.1.9]{Gorsky_thesis}). 
Therefore, it is sufficient to verify the identity \eqref{cw_agrees_with_twist} on such complexes. 

The formulas expressing higher extensions groups in 
$\cc_2(\ca)$ and in 
$\cD_2(\ca)$ between stalk complexes in terms of extensions in $\ca$ are obtained by straightforward calculations and can be found in \cite[Lemma 4.1.1]{Gorsky_thesis}. 
Using these, we compute all the terms in \eqref{cw_agrees_with_twist} for a pair of stalk complexes. There are two cases:

\emph{Case 1}: $Z^{\bullet}, X^{\bullet}$ are concentrated in the same degree. Without loss of generality, we assume that
\[
Z^{\bullet} = \xymatrix{0 \ar@<0.5ex>[r] & Z \ar@<0.5ex>[l]}, \qquad X^{\bullet} = \xymatrix{0 \ar@<0.5ex>[r] & X \ar@<0.5ex>[l]},
\]
for some $Z, X \in \ca$. 

We then have the following, where $k=(n-1)/2$:
\begin{align*}
\langle Z^{\bullet}, X^{\bullet} \rangle_{\scriptscriptstyle{\mathrm{dw}}} & = \langle Z, X \rangle_{\ca};\\
\dim\Hom_{\cc_2(\ca)}(Z^{\bullet}, X^{\bullet}) & = \dim\Hom(Z, X) = \dim\Ext^0 (Z, X);\\
\langle Z^{\bullet}, X^{\bullet} \rangle_{\cc_2/\cd_2, > 0} & = \sum_{i=0}^{k} i \dim\Ext^{2i+1}(Z, X) - \sum_{j=1}^{k} (j-1) \dim\Ext^{2j}(Z, X);\\
\dim\Hom(X^{\bullet}, Z^{\bullet}) & = \dim\Hom (X, Z) =  \dim\Ext^{2k+1} (Z, X) ;\\
\langle X^{\bullet}, Z^{\bullet} \rangle_{\cc_2/\cd_2, > 0} & = \sum_{j=0}^{k} (k-j) \dim\Ext^{2j}(Z, X) - \sum_{i=0}^{k-1} (k-i-1) \dim\Ext^{2i+1}(Z, X);\\
\langle Z^{\bullet}, X^{\bullet} \rangle_{1,\ldots,n-1} & = k \langle Z, X \rangle_{\ca}.
\end{align*}
Summing up, we find that the right hand side of \eqref{cw_agrees_with_twist} equals
\begin{align*}
&\dim\Ext^0 (Z, X) + \sum_{i=0}^{k} i \dim\Ext^{2i+1}(Z, X) - \sum_{j=1}^{k} (j-1) \dim\Ext^{2j}(Z, X) \\
&- \dim\Ext^{2k+1} (Z, X) - \sum_{j=0}^{k} (k-j) \dim\Ext^{2j}(Z, X) + \sum_{i=0}^{k-1} (k-i-1) \dim\Ext^{2i+1}(Z, X) \\
&+ k \langle Z, X \rangle_{\ca} \\ &= - (k-1) \langle Z, X \rangle_{\ca} + k \langle Z, X \rangle_{\ca} = \langle Z, X \rangle_{\ca},
\end{align*}
which in turn equals the left hand side of \eqref{cw_agrees_with_twist}.

\emph{Case 2}: $Z^{\bullet}, X^{\bullet}$ are concentrated in different degrees. Without loss of generality, we assume that
\[
Z^{\bullet} = \xymatrix{Z \ar@<0.5ex>[r] & 0 \ar@<0.5ex>[l]}, \qquad X^{\bullet} = \xymatrix{0 \ar@<0.5ex>[r] & X \ar@<0.5ex>[l]},
\]
for some $Z, X \in \ca$. 

We then have the following:
\begin{align*}
\langle Z^{\bullet}, X^{\bullet} \rangle_{\scriptscriptstyle{\mathrm{dw}}} & = 0;\\
\dim\Hom(Z^{\bullet}, X^{\bullet}) & = 0;\\
\langle Z^{\bullet}, X^{\bullet} \rangle_{\cc_2/\cd_2, > 0} &= \sum_{i=0}^{k} i \dim\Ext^{2i}(Z, X) - \sum_{j=0}^{k} j \dim\Ext^{2j+1}(Z, X);\\
\dim\Hom(X^{\bullet}, Z^{\bullet}) & = 0;\\
\langle X^{\bullet}, Z^{\bullet} \rangle_{\cc_2/\cd_2, > 0} & = \sum_{i=0}^{k} (k-i) \dim\Ext^{2i+1}(Z, X) - \sum_{i=0}^{k} (k-i) \dim\Ext^{2i}(Z, X);\\
\langle Z^{\bullet}, X^{\bullet} \rangle_{1,\ldots,n-1} & = - k \langle Z, X \rangle_{\ca}.
\end{align*}
Summing up, we find that the right hand side of \eqref{cw_agrees_with_twist} equals
\begin{align*}
&\sum_{i=0}^{k} i \dim\Ext^{2i}(Z, X) - \sum_{j=0}^{k} j \dim\Ext^{2j+1}(Z, X) \\ &-\sum_{i=0}^{k} (k-i) \dim\Ext^{2i+1}(Z, X) + \sum_{i=0}^{k} (k-i) \dim\Ext^{2i}(Z, X) \\
&- k \langle Z, X \rangle_{\ca}\\
&= k \langle Z, X \rangle_{\ca} - k \langle Z, X \rangle_{\ca} = 0,
\end{align*}
which in turn equals the left hand side of \eqref{cw_agrees_with_twist}.
\end{proof}

\noindent \emph{Proof of Theorem \ref{thm:root_cat_comparison}.}
By Lemma \ref{lem:forms_agree_sdh}, the multiplication in the algebra 
$\cS\cd\ch_{2, \scriptscriptstyle{\mathrm{tw}}}(\ca)$ 
reads
\begin{align}
\label{eq:twisted_sdh}
[Z^{\bullet}] \ast [X^{\bullet}] = & q^{\frac{1}{2}\diamond} \times
\sum_{\delta \in \Hom_{\cD_2(\ca)}(Z^{\bullet}[-1], X^{\bullet})} [\mt(\delta)].
\end{align} 
where
\begin{align*}
\diamond= &-\dim\Hom_{\cc_2(\ca)}(Z^{\bullet}, X^{\bullet}) - \langle Z^{\bullet}, X^{\bullet} \rangle_{\cc_2/\cd_2, > 0}\\&- \dim\Hom_{\cc_2(\ca)}(X^{\bullet}, Z^{\bullet}) - \langle X^{\bullet}, Z^{\bullet} \rangle_{\cc_2/\cd_2, > 0} +  \langle Z^{\bullet}, X^{\bullet} \rangle_{1,\ldots,n-1}.
\end{align*}
For $K^{\bullet}$ acyclic, $X^\bullet \in \cc_2(\ca)$, we have 
$\langle K^{\bullet}, X^{\bullet} \rangle_{1,\ldots,n-1} = 0 = \langle X^{\bullet}, K^{\bullet} \rangle_{1,\ldots,n-1}$ and
\begin{align*}
\left(\dim\Hom_{\cc_2(\ca)}(K^{\bullet}, X^{\bullet}) + \langle K^{\bullet}, X^{\bullet} \rangle_{\cc_2/\cd_2, > 0}\right) = \langle K^{\bullet}, X^{\bullet} \rangle_{\cc_2};\\
\left(\dim\Hom_{\cc_2(\ca)}(X^{\bullet}, K^{\bullet}) + \langle X^{\bullet}, K^{\bullet} \rangle_{\cc_2/\cd_2, > 0}\right) = \langle X^{\bullet}, K^{\bullet} \rangle_{\cc_2}.
\end{align*}
So we have
\[
[K^\bullet] \ast [X^\bullet] = q^{-\frac{1}{2} \left( \langle K^{\bullet}, X^{\bullet} \rangle_{\cc_2(
\ca)} +  \langle X^{\bullet}, K^{\bullet} \rangle_{\cc_2(\ca)}\right)} [K^\bullet \oplus X^\bullet] = [X^\bullet] \ast [K^\bullet].
\]
This shows that the classes of acyclic complexes are indeed central in $\cS\cd\ch_{2, \scriptscriptstyle{\mathrm{tw}}}(\ca)$.

We perform the central reduction by the ideal $\left\langle [K^{\bullet}] - 1 \mid K^{\bullet} \text{ is acyclic} \right\rangle$. Since $\cS\cd\ch_{2, \scriptscriptstyle{\mathrm{tw}}}(\ca)$ was a free module over the twisted group algebra of the Grothendieck group $K_0(\cc_{2, ac}(\ca))$
of acyclic complexes, the central reduction is a $\C$-vector space where each choice of representative of 
isomorphism classes in $\cd_2(\ca)$ gives a basis. It thus remains to prove that the multiplication in this central reduction agrees with the one in $\hallcy(\cd_2(\ca))$.

We rescale the basis by $q^{\frac{1}{2}\left(\dim\Hom_{\cc_2(\ca)}(M^{\bullet}, M^{\bullet}) + \langle M^{\bullet}, M^{\bullet} \rangle_{\cc_2/\cd_2, > 0}\right)}$. In the rescaled basis, the formula \eqref{eq:twisted_sdh} reads
\begin{align}
\label{eq:product_with_f(delta)}
[Z^{\bullet}] \ast [X^{\bullet}] = & q^{f(\delta)}
 \sum_{\delta \in \Hom_{\cD_2(\ca)}(Z^{\bullet}[-1], X^{\bullet})} [\mt(\delta)],
\end{align}
where 
\begin{align*}
f(\delta) \coloneqq& \frac{1}{2} (\langle Z^{\bullet}, X^{\bullet} \rangle_{1,\ldots,n-1} -\dim\Hom_{\cc_2(\ca)}(Z^{\bullet}, X^{\bullet}) - \langle Z^{\bullet}, X^{\bullet} \rangle_{\cc_2/\cd_2, > 0} \\
&-\dim\Hom_{\cc_2(\ca)}(X^{\bullet}, Z^{\bullet}) - \langle X^{\bullet}, Z^{\bullet} \rangle_{\cc_2/\cd_2, > 0} \\&- \dim\Hom_{\cc_2(\ca)}(Z^{\bullet}, Z^{\bullet}) - \langle Z^{\bullet}, Z^{\bullet} \rangle_{\cc_2/\cd_2, > 0}\\
&-\dim\Hom_{\cc_2(\ca)}(X^{\bullet}, X^{\bullet}) - \langle X^{\bullet}, X^{\bullet} \rangle_{\cc_2/\cd_2, > 0} \\&+\dim\Hom_{\cc_2(\ca)}(\mt(\delta), \mt(\delta)) + \langle \mt(\delta), \mt(\delta) \rangle_{\cc_2/\cd_2, > 0}).    
\end{align*}

As we performed the central reduction, we can now assume without loss of generality that basis elements are represented by complexes in prescribed quasi-isomorphism classes. Furthermore, we can take any representative of such a quasi-isomorphism class and the above formula \eqref{eq:product_with_f(delta)} will remain correct. In other words, $f(\delta)$ does indeed depend only on $\delta$, and not on the choice of representatives $Z^{\bullet}, X^{\bullet}$ of the respective quasi-isomorphism classes. For a fixed $\delta$, we take an arbitrary lift to a morphism $\beta: Z'^{\bullet} \to X'^{\bullet}$ in $\cc_2(\ca)$, so that the corresponding $\mt(\delta)$ is given by $Y'^\bullet\coloneqq\mbox{Cone}(\beta)$. The formula \eqref{dim_ext_and_4x4matrix} applies both in $\cc_2(\ca)$ and in $\cd_2(\ca)$ for $\beta$ and self-extensions of $Y'^\bullet$ (to directly reduce the former to the dg case, we may consider $\cc_2(\ca)$ as a heart of a bounded $t$-structure in an algebraic category $\cd^b(\cc_2(\ca))$, then use the arguments of Section \ref{subsec:hallOddCY} for its pre-triangulated dg enhancement). 
We thus find that
\begin{align*}
f(\delta) &= \frac{1}{2} (\langle Z^{\bullet}, X^{\bullet} \rangle_{1,\ldots,n-1} - (\dim\Hom_{\cc_2(\ca)}(Z'^\bullet \oplus X'^\bullet, Z'^\bullet \oplus X'^\bullet) - \dim\Hom_{\cc_2(\ca)}(Y'^\bullet, Y'^\bullet)) \\
&\quad-\sum_{i=1}^{\infty} (-1)^i (\dim\Ext^i_{\cc_2(\ca)}(Z'^\bullet \oplus X'^\bullet, Z'^\bullet \oplus X'^\bullet) - \dim\Ext^i_{\cc_2(\ca)}(Y'^\bullet, Y'^\bullet) \\
&\quad-\dim\Ext^i_{\cd_2(\ca)}(Z'^\bullet \oplus X'^\bullet, Z'^\bullet \oplus X'^\bullet) - \dim\Ext^i_{\cd_2(\ca)}(Y'^\bullet, Y'^\bullet)) 
\\
&=\frac{1}{2} (\langle Z^{\bullet}, X^{\bullet} \rangle_{1,\ldots,n-1} - 
d_{\cc_2, 1}(\beta) - \sum_{i=1}^{\infty} (-1)^i (d_{\cc_2, i}(\beta) + d_{\cc_2, i+1}(\beta) - d_{\cd_2, i}(\beta) - d_{\cd_2, i+1}(\beta)))
\\
&=\frac{1}{2} (\langle Z^{\bullet}, X^{\bullet} \rangle_{1,\ldots,n-1} - d_{\cd_2, 1}(\beta)) = \frac{1}{2} (\langle Z^{\bullet}, X^{\bullet} \rangle_{1,\ldots,n-1} - d_{\cd_2, 1}(\delta)).
\end{align*} 
Thus, the formula \eqref{eq:product_with_f(delta)} can be rewritten as 
\[
[Z^{\bullet}] \ast [X^{\bullet}] =  q^{\frac{1}{2} \left(\langle Z^{\bullet}, X^{\bullet} \rangle_{1,\ldots,n-1} - d_{\cd_2, 1}(\delta)\right)}
 \sum_{\delta \in \Hom_{\cD_2(\ca)}(Z^{\bullet}[-1], X^{\bullet})} [\mt(\delta)],
\]
which coincides with the multiplication in $\hallcy(\cd_2(\ca))$ in the basis $\{u_x\}$. So our rescaling of the basis is an algebra isomorphism. \hfill$\Box$

\medskip

\begin{remark}
By inspecting the definition of the algebra $\cS\cd\ch_{2, \scriptscriptstyle{\mathrm{tw}}}(\ca)/\left\langle [K^{\bullet}] - 1 \mid K^{\bullet} \text{ is acyclic} \right\rangle$ and the proof of Theorem \ref{thm:root_cat_comparison}, we note that the statement remains correct if $\cd_2(\ca)$ is only required to be $n$CY as a triangulated category. Under this assumption, we can thus alternatively deduce the associativity of $\hallcy(\cd_2(\ca))$ (meaning the algebra defined in the basis $\{u_x\}$ by the corresponding product formula) from that of $\cS\cd\ch_{2, \scriptscriptstyle{\mathrm{tw}}}(\ca)/\left\langle [K^{\bullet}] - 1 \mid K^{\bullet} \text{ is acyclic} \right\rangle$. For $n = 1$, which is arguably the most interesting case as will be illustrated below, $\ca$ being $1$CY immediately implies that $\cd_2(\ca)$ is $1$CY as a triangulated category. This can be checked directly using the facts that  for $\ca$ hereditary, $\cd^b(\ca)/[2]$ is itself triangulated and thus equivalent to $\cd_2(\ca)$, and every object in all the triangulated categories involved is (quasi-)isomorphic to a direct sum of its homologies. 
\end{remark}

Since the algebra $\hallcy(\cd_2(\ca))$ depends only on the triangulated category $\cd_2(\ca)$, we deduce the same invariance property for the central reduction algebra. 

\begin{corollary}
Let $\ca, \ca'$ be a pair of abelian categories
such that 
$\cd_2(\ca) \overset\sim\to \cd_2(\ca')$ is an equivalence of $n$CY categories, for some odd $n \geq 1$. Then we have an isomorphism 
\[
\cS\cd\ch_{2, \scriptscriptstyle{\mathrm{tw}}}(\ca)/\left\langle [K^{\bullet}] - 1 \mid K^{\bullet} \text{ is acyclic} \right\rangle \overset\sim\to \cS\cd\ch_{2, \scriptscriptstyle{\mathrm{tw}}}(\ca')/\left\langle [K^{\bullet}] - 1 \mid K^{\bullet} \text{ is acyclic} \right\rangle.
\]
In particular, such central reductions are isomorphic as long as  $\cd^b(\ca) \overset\sim\to \cd^b(\ca')$.
\end{corollary}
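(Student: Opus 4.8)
The plan is to bootstrap the Corollary from Theorem~\ref{thm:root_cat_comparison} together with the fact, established in the Functoriality subsection, that $\hallcy$ is intrinsic to the underlying triangulated category. First I would record the consequence of Theorem~\ref{thm:root_cat_comparison} and the Remark following it: for any abelian category $\cb$ (satisfying our standing assumptions) such that $\cd_2(\cb)$ is $n$CY as a triangulated category with $n$ odd and positive, there is a canonical isomorphism
\[
\cS\cd\ch_{2, \scriptscriptstyle{\mathrm{tw}}}(\cb)/\left\langle [K^{\bullet}] - 1 \mid K^{\bullet} \text{ is acyclic} \right\rangle \;\cong\; \hallcy(\cd_2(\cb)).
\]
Applying this to both $\ca$ and $\ca'$ reduces the first assertion to constructing an algebra isomorphism $\hallcy(\cd_2(\ca)) \cong \hallcy(\cd_2(\ca'))$.

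For that, I would invoke that the structure constants of $\hallcy(\ct)$ in the basis $\{a_x\}$ given by Propositions~\ref{prop:prod_intrinsic_positive} and~\ref{prop:prod_intrinsic_negative} are expressed purely through the $\Ext$-groups of $\ct$ and the ranks $r_i(\delta)$ of the multiplication maps $L_\delta$, $R_\delta$, all of which are preserved by any triangle equivalence. Hence a triangle equivalence between {\degfin} triangulated categories that are each $n$CY for the same odd $n$ transports the product formula of one onto that of the other, inducing an algebra isomorphism of the associated Hall algebras (this is the content of the Corollary in the Functoriality subsection). The given equivalence $\cd_2(\ca) \overset\sim\to \cd_2(\ca')$ of $n$CY categories is in particular such a triangle equivalence, so it yields $\hallcy(\cd_2(\ca)) \cong \hallcy(\cd_2(\ca'))$; combined with the first paragraph this proves the main claim.

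For the ``in particular'' clause I would argue as follows. A triangle equivalence $\cd^b(\ca) \overset\sim\to \cd^b(\ca')$ commutes with the shift functor up to natural isomorphism, hence with $[2]$, so it descends to an equivalence of orbit categories $\cd^b(\ca)/[2] \overset\sim\to \cd^b(\ca')/[2]$ and therefore extends to an equivalence of their triangulated hulls $\cd_2(\ca) \overset\sim\to \cd_2(\ca')$; concretely this is visible via the enhancements $\cd^b_{dg}(\ca)\otimes\perf\mathbf k[t^{\pm 1}]$ and $\cd^b_{dg}(\ca')\otimes\perf\mathbf k[t^{\pm 1}]$. Since $\cd^b_{dg}(\ca)$ is $n$CY and $\perf\mathbf k[t^{\pm 1}]$ is $0$CY, the tensor product $\cd^b_{dg}(\ca)\otimes\perf\mathbf k[t^{\pm 1}]$ is an $n$CY dg enhancement of $\cd_2(\ca)$, so $\cd_2(\ca)$ is $n$CY as a triangulated category; transporting this property along the triangulated equivalence just constructed shows the same for $\cd_2(\ca')$. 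The first part then applies verbatim.

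I do not expect a genuine obstacle here, since every ingredient is already available; the only point requiring care is keeping the hypotheses at the correct categorical level. The relevant equivalences need only be triangle equivalences — they need not respect the Calabi--Yau structures, only preserve the \emph{property} of being $n$CY — and in the ``in particular'' clause the $n$CY property of $\cd_2(\ca')$ should be obtained by transport along the equivalence of root categories rather than reproved intrinsically for $\ca'$.
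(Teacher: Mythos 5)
Your proposal is correct and follows essentially the paper's own route: identify each central reduction with $\hallcy(\cd_2(-))$ via Theorem~\ref{thm:root_cat_comparison} (using the remark that only the triangulated $n$CY property of the root category is needed), then use that $\hallcy$ depends only on the triangulated category, with the ``in particular'' clause handled by passing a derived equivalence to the (hulls of the) orbit categories. Your extra care about where the $n$CY property of $\cd_2(\ca')$ comes from (transport along the equivalence, via the enhancement $\cd^b_{dg}(\ca)\otimes\perf\mathbf k[t^{\pm 1}]$) is consistent with, and slightly more explicit than, the paper's treatment.
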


This generalizes the tilting invariance (of central reductions) of $\cS\cd\ch_{2, \scriptscriptstyle{\mathrm{tw}}}(\ca)$ proved in \cite{Gorsky2013, Gorsky_thesis, LuPeng}.

For $n = 1$, the algebra $\cS\cd\ch_{2, \scriptscriptstyle{\mathrm{tw}}}(\ca)$ is isomorphic to the Drinfeld double of the twisted extended Hall algebra of $\ca$ \cite{bridgeland_quantum, Gorsky2013, Yanagida, LuPeng}. From the formulas, it follows directly (see also \cite{BurbanSchiffmannEllipticHall}) that, under our $n$CY assumption, the central reduction $\cS\cd\ch_{2, \scriptscriptstyle{\mathrm{tw}}}(\ca)/\left\langle [K^{\bullet}] - 1 \mid K^{\bullet} \text{ is acyclic} \right\rangle$ 
is then isomorphic to the Drinfeld double (DD) of the twisted (non-extended!) Hall algebra of $\ca$. We thus get the following.

\begin{corollary}
\label{cor:DD}
Let $\ct$ be a triangulated category equivalent to the root category $\cd_2(\ca)$ of a $1$CY abelian category $\ca$.
Then $\halluncy(\cd_2(\ca))$  is isomorphic to the Drinfeld double of the twisted Hall algebra of $\ca$. 
\end{corollary}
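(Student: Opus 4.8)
The plan is to derive Corollary~\ref{cor:DD} by specializing Theorem~\ref{thm:root_cat_comparison} to $n=1$ and combining it with the known identification of the $\Z/2$-graded semi-derived Hall algebra with a Drinfeld double. First note that a $1$CY abelian category is automatically hereditary (Serre-type duality forces $\Ext^{\geq 2}$ to vanish: $\Ext^2(x,y)\cong\Ext^{-1}(y,x)^\vee=0$), so $\cd^b(\ca)/[2]$ is already triangulated and coincides with $\cd_2(\ca)=\cR(\ca)$; moreover, as observed in the Remark following Theorem~\ref{thm:root_cat_comparison}, $\cd_2(\ca)$ is $1$CY \emph{as a triangulated category} --- every object there being (quasi-)isomorphic to the direct sum of its stalk cohomologies reduces this to the $1$CY property of $\ca$ itself. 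Thus the triangulated version of Theorem~\ref{thm:root_cat_comparison} recorded in that Remark applies with $n=1$, yielding
\[
\halluncy(\cd_2(\ca))\;\cong\;\cS\cd\ch_{2, \scriptscriptstyle{\mathrm{tw}}}(\ca)\big/\big\langle [K^{\bullet}]-1 \mid K^{\bullet}\text{ acyclic}\big\rangle .
\]

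Second, I would invoke the main results of \cite{bridgeland_quantum, Gorsky2013, Gorsky_thesis, LuPeng, Yanagida}: for hereditary $\ca$ the algebra $\cS\cd\ch_{2, \scriptscriptstyle{\mathrm{tw}}}(\ca)$ is isomorphic to the Drinfeld double of the twisted \emph{extended} Hall algebra of $\ca$, in such a way that the structure of $\cS\cd\ch_{2, \scriptscriptstyle{\mathrm{tw}}}(\ca)$ as a free module over the twisted group algebra of $K_0(\cc_{2, ac}(\ca))$ (an item in the list of properties above) matches the two commuting copies of the Cartan part $\C[K_0(\ca)]$ sitting inside the double. The ideal $\langle [K^{\bullet}]-1\rangle$ is then precisely the ideal generated by $K_\alpha-1$ for both torus copies, so the quotient is the Drinfeld double of the twisted \emph{non-extended} Hall algebra $\ch(\ca)_{\scriptscriptstyle{\mathrm{tw}}}$; concretely one checks that the Hopf pairing between the positive and negative parts, together with the cross relations of the double, descend unchanged to this quotient (cf.\ \cite{BurbanSchiffmannEllipticHall}). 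Chaining the two isomorphisms gives $\halluncy(\cd_2(\ca))\cong\mathrm{DD}\big(\ch(\ca)_{\scriptscriptstyle{\mathrm{tw}}}\big)$.

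Finally, for the statement as phrased --- for any triangulated $\ct$ equivalent to $\cd_2(\ca)$ --- one uses the functoriality corollary of Section~\ref{subsec:hallOddCY}: an equivalence of triangulated categories admitting {\degfin} $n$CY dg-enhancements of the same dimension induces an algebra isomorphism of the associated Hall algebras, so $\halluncy(\ct)\cong\halluncy(\cd_2(\ca))$ and the right-hand side is an invariant of the triangulated category $\ct$. The step requiring genuine care is the second one: matching the central reduction by $\langle[K^\bullet]-1\rangle$ with the passage from the extended to the non-extended double, i.e.\ verifying that killing all classes of acyclic complexes removes exactly the doubled torus, and that the double's remaining structure (pairing, coproduct-induced cross relations, grading) is compatible with this quotient. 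This amounts to lining up two explicit presentations and introduces no new idea beyond the comparison already carried out in Theorem~\ref{thm:root_cat_comparison} and Lemma~\ref{lem:forms_agree_sdh}, but it is where all the bookkeeping lives.
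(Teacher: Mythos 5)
Your proposal is correct and follows essentially the same route as the paper: specialize Theorem~\ref{thm:root_cat_comparison} (via the remark that for $n=1$ the triangulated $1$CY property of $\cd_2(\ca)$ suffices) and then invoke the identification of $\cS\cd\ch_{2,\scriptscriptstyle{\mathrm{tw}}}(\ca)$ with the Drinfeld double of the twisted extended Hall algebra from \cite{bridgeland_quantum,Gorsky2013,Yanagida,LuPeng}, so that the central reduction by the acyclic classes kills the doubled Cartan part and yields the double of the non-extended twisted Hall algebra, as in \cite{BurbanSchiffmannEllipticHall}. The bookkeeping you flag in the last step is exactly what the paper dispenses with by saying it ``follows directly from the formulas,'' so there is no substantive difference in approach.
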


\begin{corollary}
\label{cor:der_inv_DD}
Let $\ca$ be a $1$CY abelian category. 
Then the Drinfeld double of the twisted Hall algebra of $\ca$ is an invariant of the root category $\cd_2(\ca)$. In particular, it is invariant under arbitrary derived equivalences $\cd^b(\ca) \overset\sim\to \cd^b(\cb)$.
\end{corollary}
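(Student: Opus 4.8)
The plan is to derive both assertions from Corollary~\ref{cor:DD} together with the fact that $\halluncy$ is determined by the underlying triangulated category. Indeed, by Proposition~\ref{prop:prod_intrinsic_positive_2} and Proposition~\ref{prop:prod_intrinsic_negative} --- and, for $n=1$, by the simplified formula $u_z\cdot u_x=\sum_{\delta\in\Ext^1(z,x)}q^{-r_1(\delta)}u_{C(\delta)}$ --- the product in $\hallcy(\cc)$ is written purely in terms of $\Hom$-spaces and compositions in $H^0(\cc)$ and makes no reference to the chosen $n$CY structure; hence $\halluncy(\cd_2(\ca))$ is an invariant of the triangulated category $\cd_2(\ca)=\cR(\ca)$. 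Since Corollary~\ref{cor:DD} identifies $\halluncy(\cd_2(\ca))$ with the Drinfeld double of $\ch(\ca)_{\scriptscriptstyle{\mathrm{tw}}}$, this already proves the first assertion.

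For the ``in particular'' clause it suffices to show that a derived equivalence $\cd^b(\ca)\simeq\cd^b(\cb)$ induces a triangle equivalence $\cd_2(\ca)\simeq\cd_2(\cb)$ of $1$CY categories; the first part, applied to both $\ca$ and $\cb$, then yields the asserted isomorphism of Drinfeld doubles. I would argue in three steps. (1) $\cb$ is automatically $1$CY abelian: transporting a Serre functor along the equivalence shows that $[1]$ is a Serre functor on $\cd^b(\cb)$, and Serre duality applied to $\Hom_{\cd^b(\cb)}(x,y[i])$ for $x,y$ in the heart gives $\Ext^i_\cb(x,y)\cong(\Ext^{1-i}_\cb(y,x))^\vee$; in particular $\cb$ is hereditary, so it satisfies the running hypotheses of Section~\ref{sec:root_categories} and $\cd_2(\cb)$ is defined. (2) Lift the derived equivalence to a quasi-equivalence $\cd^b_{dg}(\ca)\simeq\cd^b_{dg}(\cb)$ of the canonical bounded dg enhancements (by uniqueness of dg enhancements, or, in this hereditary setting, by a two-sided tilting complex); tensoring with $\perf\mathbf k[t^{\pm1}]$ and using that $\cd^b_{dg}(\ca)\otimes\perf\mathbf k[t^{\pm1}]$ enhances $\cd_2(\ca)$ produces a quasi-equivalence of enhancements, hence a triangle equivalence $\cd_2(\ca)\simeq\cd_2(\cb)$. (Alternatively, without enhancements: a triangle equivalence commutes with $[2]$ up to natural isomorphism, hence descends to the orbit categories $\cd^b(-)/[2]$ compatibly with the canonical functors into their triangulated hulls, and functoriality of Keller's triangulated hull gives the equivalence $\cd_2(\ca)\simeq\cd_2(\cb)$.) (3) A triangle equivalence preserves the $1$CY property, so this is an equivalence of $1$CY triangulated categories and the argument of the first paragraph (via the functoriality results of Section~\ref{subsec:hallOddCY}) applies.

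The only genuinely delicate point is Step~(2): upgrading a bare triangulated equivalence to one functorial enough to pass to the $2$-periodic derived category --- either realizing the derived equivalence by a dg bimodule / two-sided tilting complex, or invoking functoriality of the triangulated-hull construction of an orbit category. Steps~(1) and~(3), and the reduction carried out in the first paragraph, are purely formal given the results already in place. The ``in particular'' clause is exactly Cramer's derived invariance of the Drinfeld double~\cite{Cramer}, here recovered intrinsically from the triangulated root category.
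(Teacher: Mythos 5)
Your proposal is correct and follows essentially the same route as the paper: the paper deduces this corollary immediately from Corollary~\ref{cor:DD} together with the observation that $\hallcy$ (hence $\halluncy(\cd_2(\ca))$) depends only on the triangulated root category, leaving the passage from a derived equivalence to an equivalence of root categories implicit. Your steps (1)--(3) merely spell out that implicit passage (via uniqueness/functoriality of dg enhancements tensored with $\perf\mathbf k[t^{\pm1}]$, or orbit-category functoriality), which is consistent with how the paper defines $\cd_2(\ca)$.
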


\begin{remark}
We note that if the twisted (extended or not) Hall algebra of $\ca$ is only a topological bialgebra, the proper definition of the Drinfeld double as a Hopf algebra involves taking a certain completion, but this is not required if one is to consider only the associative algebra structure, see \cite[Section 4]{LuPeng} for a detailed account. 
\end{remark}

\begin{remark}
\label{rem:DD_comparisons}
We briefly discuss the place of Corollary \ref{cor:der_inv_DD} in the context of earlier works.
\begin{itemize}
\item[(i)] Derived invariance of the Drinfeld double $DD(\ch_{\scriptscriptstyle{\mathrm{tw}}}^{e}(\ca))$ of the twisted extended Hall algebra of a hereditary abelian category $\ca$ was first established by Cramer \cite{Cramer} without constructing it explicitly as a Hall-like algebra of a category. We do not know if there exist equivalences of root categories of abelian categories not induced by derived equivalences (in \cite[Section 4]{chen2023derived} the non-existence of such equivalences is conjectured). In other words, Corollary \ref{cor:der_inv_DD} certainly recovers Cramer's result in the case of $1$CY categories $\ca$ and possibly generalizes it.

\item[(ii)] It was claimed by Bridgeland \cite{bridgeland_quantum} and proved by Yanagida \cite{Yanagida} that the twisted localization of $\ch(\cc_2(\proj \ca))$ is isomorphic to $DD(\ch_{\scriptscriptstyle{\mathrm{tw}}}^{e}(\ca))$ provided $\ca$ has enough projectives, and in this generality results of the first named author \cite{Gorsky2013}, where this twisted localization of $\ch(\cc_2(\proj \ca))$ was realized as $\cS\cd\ch_{2, \scriptscriptstyle{\mathrm{tw}}}(\ca)$,
imply the tilting invariance of $DD(\ch_{\scriptscriptstyle{\mathrm{tw}}}^{e}(\ca))$, partially recovering results of Cramer \cite{Cramer}. Lu and Peng \cite{LuPeng} generalized this to the case of arbitrary essentially small finitary hereditary $\ca$. In all these works, the construction of the algebra used the choice of a hereditary category $\ca$, various homological information about the category of 2-periodic complexes $\cc_2(\ca)$, and taking a non-trivial algebra localization. While we restrict ourselves to the CY generality, our algebra does not depend on the choice of $\ca$, i.e. on the choice of the triangular decomposition, in a sense. One may say that it is defined \emph{globally}, adopting the terminology used by Peng and Xiao \cite{PengXiao} in a closely related context. 

\item[(iii)] Very recently, two different new constructions of twisted Hall algebras of root categories of hereditary abelian categories appeared in \cite{zhang2022hall} (for arbitrary essentially small finitary hereditary $\ca$) and in \cite{chen2023derived} (under a weaker version of the $1$CY condition). Their advantages are in not using the categories of complexes and algebra localizations. The former uses the structure constants of $\cd^b(\ca)$ in order to define the multiplication of an algebra associated with $\cd_2(\ca)$, while the latter seems to be closer in spirit to our approach, with multiplication counting extensions in $\cd_2(\ca)$. Both definitions however are not ``global'': in both works, the structure constants do depend on the choice of a hereditary abelian category $\ca$ whose root category is equivalent to a given triangulated category, and isomorphisms between an arbitrary object in the root category $\cd_2(\ca)$ and the direct sum of its homologies with respect to $\ca$ are used prominently in constructions and proofs. The algebras with suitably twisted multiplications are shown to be isomorphic to $DD(\ch_{\scriptscriptstyle{\mathrm{tw}}}^{e}(\ca))$, resp. to $DD(\ch_{\scriptscriptstyle{\mathrm{tw}}}(\ca))$. Their derived invariance is then deduced from \cite{Cramer}, and does not immediately follow from the definitions itself, as opposed to our $\halluncy(\ca)$. \emph{A posteriori}, $\halluncy(\ca)$ is then isomorphic to the central reduction of the algebra in \cite{zhang2022hall} (with an appropriate twist) and to the algebra in \cite{chen2023derived}, since each of these three algebras is isomorphic to $DD(\ch_{\scriptscriptstyle{\mathrm{tw}}}(\ca))$.

\item[(iv)] Going beyond the CY case, one does not look for the Drinfeld double of $\ch_{\scriptscriptstyle{\mathrm{tw}}}(\ca)$, as the same Hopf algebra as in the CY case is not well-defined on the latter. Instead, one extends $\ch_{\scriptscriptstyle{\mathrm{tw}}}(\ca)$ by the (twisted) group algebra of the Grothendieck group $K_0(\ca)$ or of its numerical quotient $K_0^{num}(\ca)$, and defines a Hopf algebra on this extended version $\ch_{\scriptscriptstyle{\mathrm{tw}}}^{\scriptscriptstyle{\mathrm{ex}}}(\ca)$. Now the (reduced) Drinfeld double construction can be aplied to this algebra. This recovers, in particular, quantum groups of Kac--Moody type associated with quivers. Thus, one would like to define an extended twisted Hall algebra of the root category of an arbitrary hereditary finitary abelian category. Most constructions discussed in points (i)--(iii) produce such algebras by different means, which use different extra bits of information involving the choice of category $\ca$ in nontrivial ways. \iftoggle{arxiv}{We expect that a more natural definition could be obtained using Kontsevich's construction \ref{ex:Kontsevich} with some carefully designed twist. We plan to address this in future work}.
 
\item[(v)] Peng and Xiao \cite{PengXiao} associated a Kac--Moody Lie algebra over $\Z/(q-1)$ with an arbitrary essentially small $\Hom$-finite $\F_q$-linear 2-periodic triangulated category. For the root category of a hereditary finite-dimensional algebra $A$, they proved that such a Lie algebra associated with $\cd_2(\mod A)$ is isomorphic to the Lie algebra naturally associated with $A$ in works of Gabriel and Ringel. The latter result was significantly generalized in the recent work \cite{fang2023lie}. There, for any finite-dimensional algebra $A$ of finite global dimension, a limit of a suitable motivic version of the twisted Bridgeland--Hall algebra of $A$ was introduced and shown to contain a Lie subalgebra isomorphic to a Lie algebra associated with the category $\cK_2(\proj A) \overset\sim\to \cd_2(\mod A)$ via certain constructible functions. The latter recovers the Lie algebra of Peng--Xiao \cite{PengXiao} for this category. The definition of this Lie algebra of constructible functions is ``global'', in the sense that it depends only on the structure of the triangulated root category and not on the algebra $A$. This shows the derived invariance of a certain limit version of the twisted Bridgeland--Hall algebra of $A$. All of this suggests that our algebras $\hallcy(\ct)$ admit, in the case of $\ct$ having ind-constructible stacks of objects, motivic versions whose classical limits may contain interesting Lie algebras. We do not pursue this line of research in this article.
\end{itemize}
\end{remark}

\iftoggle{arxiv}
{
\subsection{Extended Hall algebras of CY triples and their central reductions}

In this subsection we discuss a different construction of extended Hall algebras associated with triangulated categories $\ct$ realized as certain Verdier quotients. For $\ct$ algebraic and $n$CY for some odd $n$, we show that suitable central reductions of twisted versions of such extended algebras recover $\hallcy(\ct)$. The general constructions do not require the existence of enhancements, so we present them in the triangulated setting.

\subsubsection{Extended Hall algebras via Verdier quotients}
\label{subsec:ext_ha_reduction}

Assume that we have triangulated category $\cb$ and a thick subcategory $\cn \subseteq \cb$. Under suitable finiteness conditions, we can define an extended version of a Hall algebra of the Verdier quotient $\ct := \cb / \cn$, following \cite{Gorsky_thesis, Gorsky2024, Kontsevich_private}. This is similar in spirit to the construction of algebras $\cS\cd\ch_2(\ca)$, but instead of working with an abelian category of complexes and its localization $\cd_2(\ca)$, we take a localization of triangulated categories as an input. Let $\pi: \cb \to \ct$ denote the Verdier quotient functor.

Consider the set $\Iso(\cb)$ as a commutative monoid with addition $[x] + [y] = [x \oplus y]$.

We write $[u] \sim_c [v]$ if there exists a triangle $x \to y \to z \overset{h}\to x[1]$ in $\cb$ such that $\pi h = 0$ (equivalently, such that $h$ factors through $\cn$) and we have isomorphisms $u \overset\sim\to y$ and $v \overset\sim\to x \oplus z$ in $\cb$. We define a congruence $[u] \approx_c [v]$ by taking the transitive closure of the reflexive closure of the relation $\sim_c$. 

We define a monoid $M'$ as the quotient $\Iso(\cb) /_{\approx_c}$. It can be equivalently described as the quotient of $\Iso(\cb)$ modulo the ideal of relations generated by relations of the form $[x] + [z] = [y]$ for triangles  $x \to y \to z \overset{h}\to x[1]$ with at least one of $x$ and $z$ belonging to $\cn$.

We have a commutative diagram of monoids 
\[
\begin{tikzcd}[ampersand replacement=\&]
    K_0(\cn) \arrow[r,>->] \& M' \arrow[r,->>,"p"]\arrow[d,->>,] \& \Iso(\ct) \arrow[d,->>]\\   
   \& K_0(\cb) \arrow[r,->>] \& K_0(\ct). 
\end{tikzcd}
\]

We can think of $K_0(\cn)$
as the kernel of $p$. We define $M$ as the quotient of $M'$ by the ideal generated by the kernel of the canonical composition $i_{\cn}: K_0(\cn) \to M' \to G(M')$, with the latter term being the group completion of $M'$.

\begin{remark}
The fact that $\approx_c$ is indeed a congruence is proved in \cite[Proposition 2.7]{enomoto2022grothendieck}. In fact, $M'$ 
is the Grothendieck monoid of the extriangulated structure on the underlying additive category of $\cb$ obtained by pulling back the split structure on $\ct$ via $\pi$, in the sense of \cite[Section 2, Appendix A]{CKP24}, and $\cn$ is a Serre subcategory of $\cb$ considered with this extriangulated structure.
The fact that the canonical map $M_0(\cn) = K_0(\cn) \to M'$ is injective is a special case of \cite[Proposition 37]{enomoto2022grothendieck}, and the statement that $M'/K_0(\cn) \cong  \Iso(\ct)$ is a special case of \cite[Corollary 4.28]{enomoto2022grothendieck}. Taking the quotient by the ideal generated by $\Kerr(i_{\cn})$ guarantees that the images of the classes of objects in $\cn$ act injectively on $M$.
\end{remark}

Consider a pair of elements $x, z \in \cb$. For every $\delta \in \Hom_{\ct}(z[-1], x)$, the pair $x, z$ determines a canonical lift $\mt(\delta) \in M$ of the isomorphism class of $\mbox{Cone}(\delta)$ (which we will denote by $C(\delta)$ as before) via $p$. 
Namely, if $\delta$ is represented by a roof $z[-1] \overset{\alpha[-1]}\leftarrow z'[-1] \overset{\beta}\to x $, with $C(\alpha[-1]) \in \cn$, then $\mt(\delta) := [C(\beta) \oplus C(\alpha)] = [C(\beta)] + [C(\alpha)]$. It is straightforward to check that this does not depend on the roof.

Similarly to the notation in Section  \ref{sec:root_categories}, we consider ``truncated relative Euler pairings'' for $z, x \in \cb$ and $I \subseteq \Z$:
\[
\langle z, x \rangle_{\cb/\ct, I} \coloneqq \sum_{i \in I} (-1)^i \left(\dim \Ext^i_{\cb}(z, x) - \dim \Ext^i_{\ct}(z, x)\right).
\]

We are now almost ready to present two versions of Hall algebras associated with $\cn \subseteq \cb$, well-defined under different finiteness conditions. In either of the settings, for $u \in \cn, v \in \cb$, we have 
$\Ext^i_{\cb}(u, v) = \Ext^i_{\cb}(v, u) = 0$ for $|i| \gg 0$. The Euler pairings in terms of extension groups in $\cb$ are thus well-defined on $K_0(\cn) \times M'$ and on  $M' \times K_0(\cn)$. In fact, they descend onto $(K_0(\cn)/\Kerr(i_{\cn})) \times M$ and on  $M \times (K_0(\cn)/\Kerr(i_{\cn}))$. We denote both of them by $\langle -, ? \rangle_{\cb}$. There is no clash of notation, since the restrictions of the pairings  on  $(K_0(\cn)/\Kerr(i_{\cn})) \times (K_0(\cn)/\Kerr(i_{\cn}))$ agree.

\begin{definition}
\label{def:subcat+LRF}
Assume that we have a thick subcategory $\cn \subseteq \cb$, $\cb$ is left locally homologically finite and, moreover, for each $u, v \in \cb$, the map $\Ext^i_{\cb}(u, v) \to \Ext^i_{\ct}(u, v)$ is a bijection for $i \gg 0$. The Hall algebra $\ch'(\cb, \cn)$ is a $\mathbb{Q}$-algebra with basis given by $M$ and product
\[
[z] \cdot [x] \coloneqq 
\underbrace{\left(\prod_{i = -\infty}^{0} |\Ext^i_{\cb}(z, x)|^{(-1)^{i+1}}  
\prod_{i = 1}^{\infty}  \left(\frac{|\Ext^i_{\cb}(z, x)|}{|\Ext^i_{\ct}(z, x)|}\right)^{(-1)^{i+1}}\right)}_{q^{-\langle z, x \rangle_{\cb, \leq 0} - \langle z, x \rangle_{\cb/\ct, > 0}}}
\sum_{\delta \in \Ext^1_{\ct}(z, x)} [\mt(\delta)], 
 \]
for $x, z$ arbitrary representatives in $\cb$ of elements in $M$.
\end{definition}

\begin{definition}
\label{def:subcat+RLF}
Assume that we have a thick subcategory $\cn \subseteq \cb$, $\cb$ is right locally homologically finite and, moreover, for each $u, v \in \cb$, the map $\Ext^i_{\cb}(u, v) \to \Ext^i_{\ct}(u, v)$ is a bijection for $i \ll 0$. The Hall algebra $\ch(\cb, \cn)$ is a $\mathbb{Q}$-algebra with basis given by $M$ and product
\[
[z] \cdot [x] \coloneqq
\underbrace{\left(\prod_{i = -\infty}^{0} \left(\frac{|\Ext^i_{\cb}(z, x)|}{|\Ext^i_{\ct}(z, x)|}\right)^{(-1)^i}
\prod_{i = 1}^{\infty} |\Ext^i_{\cb}(z, x)|^{(-1)^i} \right)}_{q^{\langle z, x \rangle_{\cb/\ct, \leq 0} + \langle z, x \rangle_{\cb, > 0}}}
\sum_{\delta \in \Ext^1_{\ct}(z, x)} [\mt(\delta)], 
 \]
 for $x, z$ arbitrary representatives in $\cb$ of elements in $M$.
\end{definition}

The following properties are satisfied and can be proved similarly to \cite{Gorsky_thesis}; details will appear in a more general setting in \cite{Gorsky2024}.

\begin{itemize}
\item The algebras are well-defined (under the respective finiteness conditions), that is, the products do not depend on the choice of representatives in $\cb$ of elements in $M$.
\item The algebras $\ch(\cb, \cn)$ and $\ch'(\cb, \cn)$ are associative and unital whenever they are well-defined (i.e. under assumptions of the corresponding definitions \ref{def:subcat+RLF}, resp. \ref{def:subcat+LRF}).
\item Either of the algebras is a free module over the twisted group algebra of $K_0(\cn)/\Kerr(i_{\cn})$, where the twist is given by the Euler pairing. Each choice of representatives in $M$ of isomorphism classes in $\ct$ gives a basis of this module. In fact, in the setting of $\ch'(\cb, \cn)$, we have
\[
[z] \cdot [x] = q^{-\langle z, x \rangle_{\cb}} [z \oplus x]; \qquad [x] \cdot [z] = q^{-\langle x, z \rangle_{\cb}} [z \oplus x];
\]
and in the setting of $\ch(\cb, \cn)$, we have
\[
[z] \cdot [x] = q^{\langle z, x \rangle_{\cb}} [z \oplus x]; \qquad [x] \cdot [z] = q^{\langle x, z \rangle_{\cb}} [z \oplus x],
\]
for $z \in \cn, x \in \cb$.
\item Either of the algebras is a twisted flat $K_0(\cn)/\Kerr(i_{\cn})$--deformation of $\ch(\ct)$, whenever the latter is well-defined.
\item For given $\cn, \cb$ both finiteness conditions are satisfied if and only if for all $x, y \in \cb$, we have 
$\Ext^i_{\cb}(x, y) = \Ext^i_{\ct}(x, y) = 0$ for $|i| \gg 0$.
Then the Euler pairings
$\langle -, ? \rangle_{\cb}$
and $\langle -, ? \rangle_{\ct}$ are well-defined on $K_0(\cb) \times K_0(\cb)$ and  $K_0(\ct) \times K_0(\ct)$ respectively, and the algebra $\ch(\cb, \cn)$ is isomorphic to the twist of the algebra $\ch'(\cb, \cn)$ by $q^{\langle -, ? \rangle_{\cb/\ct, \Z}+ \langle -,?\rangle_{\cb}} = q^{(2 \langle -, ? \rangle_{\cb} - \langle -,?\rangle_{\ct})}$.

\item An exact functor $F: \cb_1 \to \cb_2$ which restricts to a fully faithful functor $\cn_1 \to \cn_2$ and induces a fully faithful functor $\ct_1 \coloneqq \cb_1/\cn_1\to \cb_2 / \cn_2 =: \ct_2$ defines a homomorphism $\ch(\cb_1, \cn_1)_{\scriptscriptstyle{\mathrm{tw}}} \to \ch(\cb_2, \cn_2)$ where the twist is given by 
$q^{\langle z, x \rangle_{\cb_2/\cb_1}}$, whenever these two algebras are well-defined. A similar statement holds for $\ch'(\cb_1, \cn_1)_{\scriptscriptstyle{\mathrm{tw}}} \to \ch'(\cb_2, \cn_2)$, whenever these two algebras are well-defined. 
If the induced map of monoids $M_1 \to M_2$ is an isomorphism and the induced functor $\ct_1 \to \ct_2$ is an equivalence, then the respective algebra homomorphism is an isomorphism. If $F$ is fully faithful, then the twist is trivial.
\item Assume that we are in the setting of Definition \ref{def:subcat+LRF} and we have $[z] = [\widetilde{z}] + [z^{\cn}], [x] = [\widetilde{x}] + [x^{\cn}]$, where $z^{\cn}, x^{\cn} \in \cn$ and the map  $\Ext^i_{\cb}(\widetilde{z}, \widetilde{x}) \to \Ext^i_{\ct}(\widetilde{z}, \widetilde{x})$ is surjective for $i = 1$ and bijective for $i \geq 2$.
Then we have
\begin{equation}
\label{eq:replacements-LRHF}
[z] \cdot [x] = q^{\langle z^{\cn}, \widetilde{z} \rangle_{\cb}  + \langle x^{\cn}, \widetilde{x} \rangle_{\cb} + \langle x^{\cn}, \widetilde{z} \rangle_{\cb} - \langle \widetilde{z}, x^{\cn} \rangle_{\cb} -\langle z^{\cn}, x^{\cn} \rangle_{\cb} - \langle \widetilde{z}, \widetilde{x} \rangle_{\cb, \leq 0} } [z^{\cn} \oplus x^{\cn}] 
\sum_{\delta \in \Hom_{\cb}(\widetilde{z}[-1], \widetilde{x})} [C(\delta)].
\end{equation}
\item Assume that we are in the  setting of Definition \ref{def:subcat+RLF} and we have $[z] = [\widetilde{z}] + [z^{\cn}], [x] = [\widetilde{x}] + [x^{\cn}]$, where $x^{\ca}, z^{\cn} \in \cn$ and the map  $\Ext^i_{\cb}(\widetilde{z}, \widetilde{x}) \to \Ext^i_{\ct}(\widetilde{z}, \widetilde{x})$ is surjective for $i = 1$ and bijective for $i \leq 0$.
Then we have
\begin{equation}
\label{eq:replacements-RLHF}
[z] \cdot [x] = q^{\langle z^{\cn}, \widetilde{z} \rangle_{\cb} + \langle x^{\cn}, \widetilde{x} \rangle_{\cb} + \langle x^{\cn}, \widetilde{z} \rangle_{\cb} - \langle \widetilde{z}, x^{\cn} \rangle_{\cb} - \langle z^{\cn}, x^{\cn} \rangle_{\cb} + \langle \widetilde{z}, \widetilde{x} \rangle_{\cb, > 0}} [z^{\cn} \oplus x^{\cn}]
\sum_{\delta \in \Hom_{\cb}(\widetilde{z}[-1], \widetilde{x})} [C(\delta)].
\end{equation}
\item Classes of elements of the two-sided kernel of the Euler pairing inside $K_0(\cn)/\Kerr(i_{\cn})$ (i.e. classes $[u]$ of objects $u \in \cn$ such that $\left\langle u, v \right\rangle = \left\langle v, u \right\rangle = 0,  \quad \forall v \in \cb$) are central, and so one is allowed to take \emph{numerical} central reductions by specializing them to 1. 
\end{itemize}

\subsubsection{Calabi--Yau conditions}
\label{sec:CY_triples+Kontsevich}

\begin{definition} \label{def:CY-triples}
Following Iyama--Yang \cite{IyamaYang}, we say that categories  $(\cb, \cn, \cm)$, with $\cn, \cm \subseteq \cb$ subcategories of $\cb$, $\cn$ being thick, form a \textit{$(n+1)$-Calabi--Yau triple} for some integer $n \geq 1$ if the following conditions are satisfied, combined with our assumption on $\cb$ being $\Hom$-finite, essentially small, and idempotent complete:
\begin{itemize}
    \item There is a bifunctorial isomorphism
    \[
     \Hom(x, y)^{\vee} \cong \Hom(y, x[n+1]),
    \]
    for any $x \in \cn, y \in \cb$; 
    \item $\cm$ is a \emph{silting} subcategory of $\cb$, meaning that the minimal thick subcategory of $\cb$ containing $\cm$ is $\cb$ itself, and $\Ext^i(\cm, \cm) = 0, \forall i > 0$;
    \item $\cm$ is a \emph{dualizing $\mathbf k$-variety}, in the sense of \cite{AuslanderReiten}; equivalently, it has a Serre functor;
    \item $\cb$ admits a $t$-structure right adjacent to the weight structure \cite{Bondarko} (also known as  co-$t$-structure \cite{Pauksztello})) $\cm$,
    $(\cm[<0]^{\perp_{\cb}}, \cm[>0]^{\perp_{\cb}})$, with $\cm[>0]^{\perp_{\cb}} \subseteq \ca$.
\end{itemize}
\end{definition}

It is proved in \cite[Theorem 4.10]{IyamaYang} that these conditions imply that $\cb$ also admits a  $t$-structure left adjacent to $\cm$. Further, the quotient category $\ct := \cb/\cn$ is $\Hom$-finite and there is a bifunctorial isomorphism
    \[
    \Hom_\ct(x, y)^{\vee} \cong \Hom_\ct(y, x[n]),
    \]
for any $x, y \in \ct$, i.e. $\ct$ is a $n$-Calabi--Yau triangulated category.

\begin{proposition}
\label{prop:silting_CY_triple}
Assume that given a pair $\cn  \subseteq \cb$, there exists a subcategory $\cm \subseteq \cb$ such that $(\cb, \cn, \cm)$ is a $(n+1)$-Calabi--Yau triple as defined above. Then the pair $\cn  \subseteq \cb$ satisfies the conditions of Definition \ref{def:subcat+RLF}. In particular, the Hall algebra of the pair $\ch(\cn, \cb)$ is well-defined. Further, for each $z, x$ we can find decompositions such that the formula \eqref{eq:replacements-RLHF} applies.
\end{proposition}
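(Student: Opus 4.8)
The plan is to verify the three assumptions of Definition~\ref{def:subcat+RLF} one at a time, using the structural consequences of the $(n+1)$-Calabi--Yau triple axioms, and then to produce the required decompositions from the $t$-structure adjacent to the silting subcategory $\cm$.

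First I would establish right local homological finiteness of $\cb$. Since $\cm$ is a silting subcategory which is a dualizing $\mathbf k$-variety, every object of $\cb$ lies in the thick subcategory generated by $\cm$, and the co-$t$-structure $(\cm[<0]^{\perp_{\cb}},\cm[>0]^{\perp_{\cb}})$ provides, for each $x\in\cb$, a bounded "weight filtration" by objects built from $\cm$. Thus it suffices to check the vanishing of $\Ext^i_{\cb}(m,m')$ for $i\gg 0$ and $m,m'\in\cm$, and a standard induction along weight filtrations then gives $\Ext^i_{\cb}(x,y)=0$ for $i\gg 0$ for all $x,y$. For $m,m'\in\cm$ this vanishing in \emph{positive} degrees is immediate from the silting condition $\Ext^i(\cm,\cm)=0$ for $i>0$; so $\cb$ is in fact right locally homologically finite with the sharper property that $\Ext^i_{\cb}(x,y)$ is bounded above in $i$ uniformly after passing to weight filtrations. (I would be slightly careful here that "right locally homologically finite" as used in the paper means $\Ext^i=0$ for $i\gg 0$, which is exactly what this produces.)

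Second I would verify that $\Ext^i_{\cb}(u,v)\to\Ext^i_{\ct}(u,v)$ is a bijection for $i\ll 0$. Here I would use the localization triangle of the Verdier quotient together with the Calabi--Yau duality $\Hom(x,y)^\vee\cong\Hom(y,x[n+1])$ for $x\in\cn$, $y\in\cb$: the obstruction to this map being an isomorphism in degree $i$ is controlled by $\Hom_{\cb}(u, N[i])$ and $\Hom_{\cb}(N[i],v)$ for objects $N\in\cn$ appearing in the roof/cone description of morphisms in $\ct$. By the $(n+1)$CY duality on $\cn$, $\Hom_{\cb}(N[i],v)\cong\Hom_{\cb}(v,N[n+1-i])^\vee$, which vanishes for $i\ll 0$ by the finiteness just established (equivalently, directly: $\Hom_{\cb}(u,N[i])$ vanishes for $i\ll 0$ since $u$ has a bounded weight filtration by objects in $\add\cm$ and $N$ lies in the heart $\ca$ of the adjacent $t$-structure, forcing these Hom's to vanish in very negative degrees). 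Combining the two, the comparison map is an isomorphism for $i$ sufficiently negative, which is the remaining hypothesis of Definition~\ref{def:subcat+RLF}. Hence $\ch(\cb,\cn)$ is well-defined by the bullet points following that definition.

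Finally, for the decomposition statement I would invoke the $t$-structure $(\cm[>0]^{\perp_{\cb}},\cm[<0]^{\perp_{\cb}})$ — or rather the pair of adjacent $t$-structures whose existence is guaranteed by \cite[Theorem~4.10]{IyamaYang} — to write any $z,x\in\cb$ as $[z]=[\widetilde z]+[z^{\cn}]$ and $[x]=[\widetilde x]+[x^{\cn}]$ in $M$ with $z^{\cn},x^{\cn}\in\cn$ and $\widetilde z,\widetilde x$ chosen in a full subcategory (a "section" of $\ct$ inside $\cb$, e.g. $\cm[>0]^{\perp_{\cb}}\cap{}^{\perp_{\cb}}\cm[<0]$, the heart-like slice on which $\pi$ is fully faithful in non-positive degrees) on which the comparison map $\Ext^i_{\cb}(\widetilde z,\widetilde x)\to\Ext^i_{\ct}(\widetilde z,\widetilde x)$ is an isomorphism for $i\leq 0$ and surjective for $i=1$. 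That $\pi$ restricted to such a slice has the claimed Ext-comparison behaviour in non-positive degrees follows from the orthogonality defining the $t$-structure, since the cones of the relevant roofs land in $\cn$ and are supported in the appropriate part of the $t$-structure. Then formula~\eqref{eq:replacements-RLHF} applies verbatim to this decomposition.

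\textbf{Main obstacle.} The delicate point is the second step: pinning down precisely \emph{which} degrees the comparison $\Ext^\bullet_{\cb}\to\Ext^\bullet_{\ct}$ is an isomorphism in, and matching that to the "$i\ll 0$" and "$i\leq 0$ (resp.\ $i=1$ surjective)" thresholds in Definitions~\ref{def:subcat+RLF} and the decomposition hypothesis. This requires a careful bookkeeping of the localization triangle against the adjacent $t$-structure — one must show the "error terms" $\Hom_{\cb}(-,\cn[i])$ sit in the correct cohomological range — and it is here that all four axioms of the CY triple (silting, dualizing variety, the CY duality on $\cn$, and the existence of the adjacent $t$-structure with heart $\subseteq\ca$) get used simultaneously. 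The rest is routine given the general properties of $\ch(\cb,\cn)$ already recorded after Definition~\ref{def:subcat+RLF}.
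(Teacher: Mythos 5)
The first step of your proposal (right local homological finiteness from the existence of the silting subcategory, by induction along bounded weight filtrations) is fine and is exactly what the paper gets by citing \cite{AiharaIyama}. The genuine gap is in your second step, and it propagates into the third. There is no localization long exact sequence comparing $\Hom_{\cb}(u,v)$ with $\Hom_{\ct}(u,v)$ for a general Verdier quotient: morphisms in $\ct$ are a filtered colimit over roofs whose denominators have cones in $\cn$, so the ``error terms'' are not Hom's into and out of a single object $N\in\cn$, and, more seriously, the objects $N$ occurring in the roofs range over all of $\cn$. The vanishing $\Hom_{\cb}(u,N[i])=0$ for $i\ll 0$ holds with a threshold depending on $N$, so it gives no uniform bound over the colimit; your parenthetical justification that ``$N$ lies in the heart of the adjacent $t$-structure'' is also false --- $\cn$ is a thick subcategory whose objects are not contained in the heart (in the model case $\cb=\perf A$, $\cn=\pvd A$ they merely have bounded total cohomology, with no uniform bound). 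So bijectivity of $\Ext^i_{\cb}(u,v)\to\Ext^i_{\ct}(u,v)$ for $i\ll 0$ is not established by your argument, and your ``main obstacle'' paragraph correctly identifies that this is where the real content lies --- it is not routine bookkeeping that can be deferred.

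The paper closes this gap with the fundamental domain $\cf=\cm\ast\cm[1]\ast\cdots\ast\cm[n-1]=\cb_{\geq 1-n}\cap\cb_{\leq 0}$ and the Iyama--Yang results: $\Hom_{\cb}(x,y)\to\Hom_{\ct}(x,y)$ is bijective for $x\in\cb_{\leq 0}$, $y\in\cb_{\geq 1-n}$ (their Proposition 5.9), hence $\Ext^i_{\cb}\to\Ext^i_{\ct}$ is bijective for all $i\leq 0$ on objects of $\cf$; the quotient functor restricts to an additive equivalence $\cf\overset{\sim}\to\ct$ (their Theorem 5.8(b)), and every object $x\in\cb$ is connected to some $x^{\cf}\in\cf$ by two explicit triangles with third terms in $\cn$, so four long exact sequences transport the comparison to arbitrary $x,y$ for $i\ll 0$. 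For the decomposition step your proposal also misses a crucial point: one cannot take both $\widetilde z$ and $\widetilde x$ in the same slice. The paper takes $\widetilde z=z^{\cf}$ but $\widetilde x=\bigl((x[-1])^{\cf}\bigr)[1]$, i.e.\ shifts the replacement in the second argument by one; this shift is precisely what yields surjectivity of the comparison in degree $1$ together with bijectivity in degrees $\leq 0$, as required for \eqref{eq:replacements-RLHF}. Your proposed slice $\cm[>0]^{\perp_{\cb}}\cap{}^{\perp_{\cb}}\cm[<0]$ is essentially $\add\cm$, which is too small to serve as a section of $\ct$ unless $n=1$, and without the shift the degree-$1$ surjectivity is not addressed at all.
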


\begin{proof}
The right local homological finiteness follows directly from the existence of a silting subcategory $\cm \subseteq \cb$, see \cite[Proposition 2.4]{AiharaIyama}. 

For any $x \in \cn$, $y \in \cb$, we also have $\Hom(x, y[-i])^{\vee} \cong  \Hom(y[-(n+1)], x[i])$, 
and so  $\Ext^{-i}(x, y) = 0$ for $i \gg 0$. Similarly, we have $\Hom(y, x[-i]) \cong \Hom(x([-(n+1)], y[i])^{\vee}$; therefore,  $\Ext^{-i}(y, x) = 0,$ for $i \gg 0$. To summarize, for $x \in \cn, y \in \cb$, we have 
$\Ext^i_{\cb}(x, y) = \Ext^i_{\cb}(y, x) = 0$ for $|i| \gg 0$.

To show that for each $x, y \in \cb$, the map $\Ext^i_{\cb}(x, y) \to \Ext^i_{\ct}(x, y)$ is a bijection for $i \ll 0$, we consider the following subcategory $\cf \subseteq \cb$ called the \emph{fundamental domain} \cite{IyamaYang}:
\[
\cf := \cm \ast \cm[1] \ast \cdots \ast \cm[n-1] = \cb_{\geq 1 - n} \cap \cb_{\leq 0}.
\]
It is proved in \cite[Proposition 5.9]{IyamaYang} that the canonical map $\Hom_{\cb}(x,y) \to \Hom_{\ct}(x, y)$ is a bijection for any $x \in \cb_{\leq 0}, y \in  \cb_{\geq 1 - n}$. By definition of weight structures, it follows that for such $x, y$, the map $\Hom_{\cb}(x,y[-i]) \to \Hom_{\ct}(x, y[-i])$ is a also a bijection for each $i \geq 0$. It follows that for all $x, y \in \cf$, the map $\Ext^{i}_{\cb}(x,y) \to \Ext^{i}_{\ct}(x,y)$ is bijective for all $i \leq 0$.

By \cite[Theorem 5.8.(b)]{IyamaYang}, the functor $\cb \to \ct$ restricts to an equivalence of additive categories $\cf \overset{\sim}\to \ct$. The proof of density in loc.cit. shows that each object $x$ in $\cb$ is connected to an object $x^{\cf}$ of $\cf$ via two explicitly constructed triangles with one term belonging to $\cn$ in either of them. Given a pair of objects $x, y$, it now follows from four long exact sequences, two in the first argument and two in the second, that $\Ext^{i}_{\cb}(x,y) \cong \Ext^{i}(x^{\cf}, y^{\cf})$ for $i \ll 0$. Combined with the previous paragraph, this gives the desired bijectivity of the map  $\Ext^{i}_{\cb}(x,y) \to \Ext^{i}_{\ct}(x,y)$ for $i \ll 0.$

Finally, for the decomposition sufficient for the formula \eqref{eq:replacements-RLHF} to apply, we use the same triangles, only shift those in the second argument by 1. In other words, we have $\widetilde{x} = x^{\cf}$ and $\widetilde{y} = ((y[-1])^{\cf})[1]$. 
\end{proof}

\begin{remark}
Note that only the existence of a silting subcategory $\cm$ is used, not the choice of $\cm$. In particular, the algebra $\ch(\cb, \cn)$ does not depend on the choice of $\cm$ satisfying the conditions of Definition \ref{def:CY-triples}.
\end{remark}

As explained in \cite[Remark 5.3]{IyamaYang}, $(\cd, \cn, \cm)$ is an $(n+1)$-CY triple, $\cb$ is algebraic, and $\cm$ is the additive closure of a silting object, then there exists a dg-algebra $A$ satisfying the following conditions:

\begin{itemize}
\item There is a triangulated equivalence $\cb \to \perf A$ which restricts to an equivalence $\ca \to \pvd A$, where $\pvd A$ is the full subcategory of $\cd A$ consisting of dg-modules whose underlying dg $\mathbf k$-module is perfect;
\item $H^i(A) = 0$ for $i > 0$;
\item $H^0(A)$ is finite-dimensional,
\end{itemize}
together with the bifunctorial Calabi--Yau pairing between $\perf A$ and its full subcategory $\pvd A$. Note that this last assumption on the CY pairing holds if $A$ is \emph{left} (or \emph{smooth}) Calabi--Yau.
This setup recovers the construction of generalized higher cluster categories \cite{Guo}, which in turn generalized \cite{Amiot}.

\begin{corollary}
Let $A$ be a dg-algebra satisfying the above conditions. Then the pair $\pvd A \subseteq \perf A$ satisfies the conditions of Definition~\ref{def:subcat+RLF}. In particular, the Hall algebra of the pair $\ch(\perf A, \pvd A)$ is well-defined. Further, for each $z, x$ we can find decompositions such that the formula \eqref{eq:replacements-RLHF} applies.
\end{corollary}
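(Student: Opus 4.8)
The plan is to deduce the corollary from Proposition~\ref{prop:silting_CY_triple} by exhibiting the appropriate Calabi--Yau triple. I would set $\cb = \perf A$, $\cn = \pvd A$ and $\cm = \add A$, the additive closure of the free rank-one dg-module regarded as a silting object, and check that $(\cb, \cn, \cm)$ is an $(n+1)$-Calabi--Yau triple in the sense of Definition~\ref{def:CY-triples}. Once this is done, Proposition~\ref{prop:silting_CY_triple} applies directly: it gives that $\pvd A \subseteq \perf A$ satisfies the conditions of Definition~\ref{def:subcat+RLF}, hence that $\ch(\perf A, \pvd A)$ is well-defined, and it simultaneously produces, for each pair $z, x$, the decompositions for which \eqref{eq:replacements-RLHF} holds, so the final sentence of the corollary requires no separate argument.

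The substance is then the verification of the four conditions of Definition~\ref{def:CY-triples}. The bifunctorial isomorphism $\Hom(x,y)^{\vee} \cong \Hom(y, x[n+1])$ for $x \in \pvd A$, $y \in \perf A$ is exactly the Calabi--Yau pairing between $\perf A$ and $\pvd A$ assumed of $A$ (and automatic when $A$ is left, i.e.\ smooth, Calabi--Yau, in which case moreover $\pvd A \subseteq \perf A$, so the Verdier quotient is meaningful). For the silting condition, $\perf A$ is by definition the thick subcategory generated by $A$, and $\Ext^{i}_{\perf A}(A,A) = H^{i}(A) = 0$ for $i > 0$, so $\cm = \add A$ is silting. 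Since $\End_{\perf A}(A) = H^{0}(A)$ is a finite-dimensional $\mathbf k$-algebra, $\add A$ is equivalent to $\proj H^{0}(A)$, a dualizing $\mathbf k$-variety carrying the Serre functor $D H^{0}(A) \otimes_{H^{0}(A)} -$, which is the third condition. For the fourth, $\perf A$ carries a $t$-structure right adjacent to the weight structure $\cm = \add A$, with the co-aisle contained in the required aisle, because $A$ has cohomology concentrated in non-positive degrees with finite-dimensional $H^{0}$; this is precisely the standard input of the generalized (higher) cluster category construction, and I would cite \cite{Amiot, Guo} and \cite[Sections 4 and 5]{IyamaYang} for the adjacency and its compatibility with the remaining axioms.

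The point that needs care --- and the one I expect to be the main obstacle --- is the finiteness bookkeeping around this adjacent $t$-structure: $\perf A$ is in general \emph{not} $\Hom$-finite (it fails to be so whenever $H^{\bullet}(A)$ is infinite-dimensional, the typical situation for a non-proper smooth dg-algebra), so Definition~\ref{def:CY-triples} must here be read with the $\Hom$-finiteness of $\cb$ relaxed --- which is harmless, since the conclusions of Proposition~\ref{prop:silting_CY_triple} only use the right local homological finiteness that is automatic from the existence of the silting subcategory $\cm$. Granting this reading the verification is complete. Finally, unwinding the proof of Proposition~\ref{prop:silting_CY_triple}, the decomposition witnessing \eqref{eq:replacements-RLHF} is obtained by connecting each object of $\perf A$ to the fundamental domain $\cf = \cm \ast \cm[1] \ast \cdots \ast \cm[n-1]$ via the two (co-)truncation triangles of the weight structure $\cm$; for $A$ these are explicit, so no additional computation is required.
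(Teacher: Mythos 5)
Your route is the paper's route: the listed conditions on $A$ (non-positivity, finite-dimensional $H^0$, CY pairing between $\perf A$ and $\pvd A$) are exactly what makes $(\perf A,\pvd A,\add A)$ an $(n+1)$-CY triple in the sense of Definition~\ref{def:CY-triples} — this is the Iyama--Yang correspondence recalled just before the statement — and the corollary is then an immediate application of Proposition~\ref{prop:silting_CY_triple} with $\cm=\add A$, including the decompositions via the fundamental domain, precisely as you say. Your verification of the four axioms (silting since $\Ext^{>0}(A,A)=H^{>0}(A)=0$, $\add A\simeq\proj H^0(A)$ a dualizing variety, adjacent $t$-structure from non-positivity as in \cite{Amiot,Guo,IyamaYang}) is the content the paper leaves implicit.

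The one paragraph that needs correction is the Hom-finiteness discussion. First, Hom-finiteness of a triangulated category is a per-degree condition ($\Hom(x,y[i])$ finite-dimensional for each fixed $i$), so it is not ``failed whenever $H^{\bullet}(A)$ is infinite-dimensional'': e.g.\ $A=\mathbf k[t]$ with $t$ in degree $-2$ has infinite total cohomology but $\perf A$ is Hom-finite. Second, and more importantly, you cannot relax this hypothesis and call it harmless: the structure constants of $\ch(\perf A,\pvd A)$ in Definition~\ref{def:subcat+RLF} involve the cardinalities $|\Ext^i_{\perf A}(z,x)|$ for $i>0$ and the quotients $|\Ext^i_{\perf A}|/|\Ext^i_{\ct}|$ for $i\le 0$, as well as a sum over $\Ext^1_{\ct}(z,x)$; right local homological finiteness only makes the products finite in length, while finiteness of each factor requires degreewise finiteness of the Hom-spaces of $\perf A$ over $\F_q$. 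The correct reading is that Hom-finiteness of $\cb=\perf A$ is part of the paper's standing assumptions (Section~3) and of Definition~\ref{def:CY-triples}, and it holds automatically when $A$ arises from a Hom-finite CY triple as in the paragraph preceding the corollary; for an abstract $A$ it is an implicit hypothesis, not something one may drop.
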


\begin{remark}
We note that the construction of decompositions uses objects in $\cf[1]$. It thus appears necessary to use the ambient category $\cb$ or at least its subcategory $\cf \ast \cf[1]$ and not only the $\infty$-categorical structure of the nerve of $\cf$ in order to define $\ch(\cb, \cn)$.
\end{remark}

\begin{proposition}
A quasi-isomorphism $F$ of dg-algebras $A \to B$ satisfying the above conditions induces an isomorphism of Hall algebras $\ch(\perf A, \pvd A) \overset\sim\to \ch(\perf B, \pvd B)$.
\end{proposition}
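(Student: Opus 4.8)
The plan is to use $F$ to produce a compatible pair of triangulated equivalences $\perf A\simeq\perf B$ and $\pvd A\simeq\pvd B$, and then invoke the functoriality property of $\ch(-,-)$ recorded in the bulleted list above, by which an \emph{equivalence} of pairs $(\cb,\cn)$ induces an \emph{isomorphism} of Hall algebras with trivial twist. Note that both Hall algebras in question are already well-defined by the preceding corollary, so nothing in that direction has to be reproved. First I would recall the standard fact (Keller) that a quasi-isomorphism $F\colon A\to B$ of dg-algebras induces a Quillen equivalence between the respective categories of dg-modules, so that derived extension of scalars $-\otimes_A^{\mathbb L}B$ and restriction of scalars along $F$ are mutually quasi-inverse triangulated equivalences $\cd A\simeq\cd B$.

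Next I would check that this equivalence is compatible with the relevant subcategories. Perfectness is intrinsic to the triangulated category (compact $=$ perfect), so the equivalence restricts to a triangulated equivalence $G\colon\perf A\to\perf B$. For a cofibrant $M\in\cd A$ the underlying $\mathbf k$-complex of $M\otimes_A B$ is quasi-isomorphic to that of $M$, since $A\to B$ is a quasi-isomorphism and tensoring a cofibrant module preserves quasi-isomorphisms; hence $G$ carries dg-modules with perfect underlying $\mathbf k$-complex to such, i.e. $G(\pvd A)=\pvd B$. Consequently $G$ descends to a triangulated equivalence $\bar G\colon\perf A/\pvd A\xrightarrow{\ \sim\ }\perf B/\pvd B$ of Verdier quotients. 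Being an exact equivalence, $G$ also induces a bijection on isomorphism classes of objects compatible with direct sums, carries the triangles with a term in $\pvd A$ (the generators of the congruence $\approx_c$) bijectively to those with a term in $\pvd B$, matches $\Kerr(i_{\pvd A})$ with $\Kerr(i_{\pvd B})$, and respects the canonical lifts $\mt(\delta)$; thus it yields an isomorphism $M_A\cong M_B$ of the indexing monoids lying over $\bar G$.

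Finally I would apply the functoriality bullet point to the exact equivalence $G$: as $G$ is fully faithful the twist in the induced homomorphism $\ch(\perf A,\pvd A)\to\ch(\perf B,\pvd B)$ is trivial, and since the induced monoid map is an isomorphism and $\bar G$ is an equivalence of Verdier quotients, this homomorphism is an isomorphism. The one point demanding genuine (if routine) care is the verification that $\pvd$ is preserved under $-\otimes_A^{\mathbb L}B$, i.e. controlling the underlying $\mathbf k$-complex; the remaining bookkeeping — that the finiteness hypotheses and the Euler pairings in $\perf A$ and $\perf B$ entering the structure constants correspond under $G$ — is automatic because $G$ is an equivalence and $H^\bullet(A)\cong H^\bullet(B)$. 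I do not anticipate a serious obstacle.
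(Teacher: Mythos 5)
Your proof is correct and takes essentially the same approach as the paper: the quasi-isomorphism induces an equivalence $\perf A \to \perf B$ restricting to an equivalence $\pvd A \to \pvd B$, and the conclusion follows from the general functoriality property of $\ch(\cb,\cn)$. Your additional verifications (preservation of $\pvd$ under derived base change, matching of the indexing monoids) are the details the paper leaves implicit.
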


\begin{proof}
The quasi-isomorphism $F$ induces an equivalence $\perf A \to \perf B$ which restricts to an equivalence $\pvd A \to \pvd B$. The result then follows from the general functoriality property.
\end{proof}

\begin{example}
As mentioned above, the construction applies to (higher generalized) cluster categories. A prototypical case is given by the cluster category of a Jacobi-finite quiver with potential. In this case, the dg-algebra $A$ in question is the so-called Ginzburg dg-algebra $\Gamma_{Q, W}$. Such categories provide additive categorifications of cluster algebras with principal coefficients. We do not give the precise definitions, instead referring the reader to the survey \cite{Keller2012} on these and related topics. 
\end{example}

For a Ginzburg dg-algebra $\Gamma_{Q, W}$, the simple dg-modules $S_i$ are $3$-spherical. They correspond to vertices of the quiver $Q$. It is proved in \cite{KellerYang} that the mutations of quivers with potential in the sense of Derksen--Weyman--Zelevinsky \cite{DWZ} admit a categorification in terms of the derived category $\cD(\Gamma_{Q, W})$. In fact, for each mutation $\mu_i: (Q, W) \to (Q', W')$, there exists a pair of equivalences $\Phi_{\pm}: \cD(\Gamma_{Q', W'}) \overset\sim\to \cD(\Gamma_{Q, W})$ related by a spherical twist with respect to $S_i$. Further, these equivalences restrict to equivalences $\perf(\Gamma_{Q', W'}) \overset\sim\to \perf(\Gamma_{Q, W})$ 
and
$\pvd(\Gamma_{Q', W'}) \overset\sim\to \pvd(\Gamma_{Q, W})$. This immediately implies the following.

\begin{corollary}
The mutation of quivers with potentials induces an isomorphism of Hall algebras $\ch(\perf \Gamma_{Q, W}, \pvd \Gamma_{Q, W})$.
\end{corollary}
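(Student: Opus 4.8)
The plan is to reduce everything to the general functoriality property of the Hall algebras $\ch(\perf-,\pvd-)$ established just above. Recall that for a mutation $\mu_i\colon (Q,W)\to(Q',W')$ of a Jacobi-finite quiver with potential, Keller--Yang \cite{KellerYang} construct a pair of triangulated equivalences $\Phi_\pm\colon \cD(\Gamma_{Q',W'})\overset\sim\to\cD(\Gamma_{Q,W})$ which restrict to equivalences $\perf(\Gamma_{Q',W'})\overset\sim\to\perf(\Gamma_{Q,W})$ and $\pvd(\Gamma_{Q',W'})\overset\sim\to\pvd(\Gamma_{Q,W})$. Since both $\perf(\Gamma_{Q,W})$ and $\pvd(\Gamma_{Q,W})$ satisfy the finiteness conditions of Definition~\ref{def:subcat+RLF} (by the corollary above, as $\Gamma_{Q,W}$ is a smooth $3$CY dg-algebra with $H^{>0}=0$ and $H^0$ finite-dimensional), the algebras $\ch(\perf\Gamma_{Q,W},\pvd\Gamma_{Q,W})$ and $\ch(\perf\Gamma_{Q',W'},\pvd\Gamma_{Q',W'})$ are both well-defined.

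First I would observe that either of the Keller--Yang equivalences, say $\Phi_+$, is an exact functor $\cD(\Gamma_{Q',W'})\to\cD(\Gamma_{Q,W})$ whose restriction to $\perf(\Gamma_{Q',W'})\to\perf(\Gamma_{Q,W})$ is an equivalence and whose further restriction to $\pvd(\Gamma_{Q',W'})\to\pvd(\Gamma_{Q,W})$ is an equivalence. This is exactly the input required by the functoriality bullet point for the algebras $\ch(\cb,\cn)$: an exact functor $F\colon\cb_1\to\cb_2$ restricting to a fully faithful (here: essentially surjective, hence equivalence) functor $\cn_1\to\cn_2$ and inducing an equivalence on the Verdier quotients $\ct_1\to\ct_2$ (the latter here being the cluster categories, which are equivalent). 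Since $F=\Phi_+$ is itself fully faithful (an equivalence), the twist appearing in that functoriality statement is trivial, and moreover the induced map of Grothendieck monoids $M_1\to M_2$ is an isomorphism and the induced functor $\ct_1\to\ct_2$ is an equivalence, so the resulting algebra homomorphism $\ch(\perf\Gamma_{Q',W'},\pvd\Gamma_{Q',W'})\to\ch(\perf\Gamma_{Q,W},\pvd\Gamma_{Q,W})$ is an isomorphism. This completes the proof.

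The only point requiring care — and the step I expect to be the main obstacle — is confirming that the Keller--Yang equivalence, which is stated as a triangulated equivalence of derived categories, genuinely fits the hypotheses of the functoriality bullet: namely that it is exact (clear, as a composition of a triangle functor with a spherical twist), that its restrictions to $\perf$ and $\pvd$ are well-defined equivalences (this is the content of \cite[Theorem ...]{KellerYang}, recorded in the paragraph preceding the corollary), and that it respects the canonical lifts $\mt(\delta)\in M$ of cones — but this last compatibility is automatic because $\mt(\delta)$ is constructed purely from cones of morphisms and the subcategory $\cn=\pvd$, both of which are preserved by an exact equivalence restricting to $\pvd$. Since the functoriality statement for $\ch(\cb,\cn)$ is asserted to hold with essentially the same proof as in \cite{Gorsky_thesis} (details deferred to \cite{Gorsky2024}), no further verification is needed here, and one simply invokes it.
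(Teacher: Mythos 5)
Your proposal is correct and follows essentially the same route as the paper: invoke the Keller--Yang equivalences $\Phi_\pm$, note they restrict to equivalences on $\perf$ and $\pvd$, and conclude by the general functoriality property of $\ch(\cb,\cn)$ (exactly as in the paper's preceding proposition on quasi-isomorphisms of dg-algebras). The extra verifications you flag (well-definedness via the Ginzburg algebra satisfying the CY-triple conditions, compatibility with the lifts $\mt(\delta)$) are indeed automatic and the paper treats them as such.
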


In other words, the algebra $\ch(\perf \Gamma_{Q, W}, \pvd \Gamma_{Q, W})$ is an invariant of a mutation class of (Jacobi-finite) quivers with potential.

The spherical twists with respect $S_i$'s generate a braid group action by autoequivalences on $\cD(\Gamma_{Q, W})$ which restricts onto $\perf(\Gamma_{Q, W})$ and $\pvd(\Gamma_{Q, W})$. This implies the following.

\begin{proposition}
The braid group generated by spherical twists with respect to the simple $\Gamma_{Q, W}$-modules acts on $\ch(\perf \Gamma_{Q, W}, \pvd \Gamma_{Q, W})$.
\end{proposition}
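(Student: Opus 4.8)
The plan is to combine the braid group action by autoequivalences quoted just above with the functoriality of the construction $\ch(-,-)$ recorded in the bullet list of Section~\ref{subsec:ext_ha_reduction}.

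First I would make precise what a single spherical twist does to the Hall algebra. By the functoriality bullet, any exact functor $F\colon\perf\Gamma_{Q,W}\to\perf\Gamma_{Q,W}$ that restricts to a fully faithful endofunctor of $\pvd\Gamma_{Q,W}$ and induces a fully faithful endofunctor of the Verdier quotient $\ct=\perf\Gamma_{Q,W}/\pvd\Gamma_{Q,W}$ induces an algebra homomorphism of $\ch(\perf\Gamma_{Q,W},\pvd\Gamma_{Q,W})$; the twist is trivial when $F$ is fully faithful, and the homomorphism is an isomorphism when $F$ is an equivalence whose induced map on the monoid $M$ is an isomorphism. Since each $T_{S_i}$ is an autoequivalence of $\perf\Gamma_{Q,W}$ preserving $\pvd\Gamma_{Q,W}$ (as recalled before the statement, using \cite{KellerYang}), it is fully faithful, the induced map on $\Iso$ and hence on $M$ is bijective, and the induced endofunctor of $\ct$ is an equivalence. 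Thus each $T_{S_i}$ yields a genuine algebra automorphism $\Phi_i$ of $\ch(\perf\Gamma_{Q,W},\pvd\Gamma_{Q,W})$, with no twist.

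Next I would check that $i\mapsto\Phi_i$ extends to a homomorphism from the braid group $B$ acting on $\cD(\Gamma_{Q,W})$ by the $T_{S_i}$. Two formal facts suffice: (a) the homomorphism induced by a composite $F\circ G$ is the composite of those induced by $F$ and by $G$, which is built into the construction of the functoriality map, since the latter is defined through the action of the functor on isomorphism classes of objects, on the triangles with one term in $\cn$ — hence on the classes $\mt(\delta)$ — and on the $\Hom$ and $\Ext$ spaces governing the structure constants; and (b) naturally isomorphic functors induce the same algebra homomorphism, since they agree on all of the data just listed. Given (a) and (b), the braid relations $T_{S_i}T_{S_j}T_{S_i}\cong T_{S_j}T_{S_i}T_{S_j}$ and $T_{S_i}T_{S_j}\cong T_{S_j}T_{S_i}$ — which hold only up to natural isomorphism — descend to the exact identities $\Phi_i\Phi_j\Phi_i=\Phi_j\Phi_i\Phi_j$ and $\Phi_i\Phi_j=\Phi_j\Phi_i$ among automorphisms, so $\sigma_i\mapsto\Phi_i$ defines the asserted $B$-action on $\ch(\perf\Gamma_{Q,W},\pvd\Gamma_{Q,W})$.

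The hardest part will be the bookkeeping behind (a) and (b): one must unwind the definition of the functoriality homomorphism — in particular the definition of $\mt(\delta)$ via roofs $z[-1]\leftarrow z'[-1]\to x$ with cone in $\cn$, and the presentation of $M$ as a quotient of $\Iso(\cb)$ — and verify that a natural isomorphism of autoequivalences genuinely induces the same map on these data, so that the relations among the $\Phi_i$ become honest equalities rather than equalities "up to something". Everything else is immediate once the functoriality property of $\ch(-,-)$ stated above is taken as known.
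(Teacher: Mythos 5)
Your argument is correct and is essentially the paper's own: the paper simply notes that the spherical twists generate a braid group action by autoequivalences of $\cD(\Gamma_{Q,W})$ restricting to $\perf\Gamma_{Q,W}$ and $\pvd\Gamma_{Q,W}$, and then invokes the general functoriality property of $\ch(\cb,\cn)$ from Section~\ref{subsec:ext_ha_reduction}, exactly as you do. Your extra care about compatibility with composition and invariance under natural isomorphism (so that the braid relations become honest equalities of automorphisms) is a sound elaboration of the same route, since the induced maps depend only on the action on isomorphism classes, triangles with a term in $\cn$, and the relevant $\Hom$/$\Ext$ data.
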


The above construction works for a pair $\cn \subseteq \cb$ with a version of the $(n+1)$-Calabi--Yau condition for some positive $n$ and a certain extra piece of data given by a silting subcategory $\cm$. There is a dual analogue of this for $n+1 \leq 0$ developed by H.~Jin in \cite{Jin}. There, instead of a silting subcategory, one requires the existence of a \emph{simply-minded collection} and a pair of weight structures on $\cb$, mutually compatible in a certain precise sense. It is proved that in presence of such data, the Verdier quotient $\ct = \cb/\cn$ is realizable, as an additive category, as an ideal quotient of a suitable subcategory of $\cb$, which is an analogue of the fundamental domain $\cf$ in the positive CY setting. By replacing the arguments of \cite{IyamaYang} used in the proof of Proposition \ref{prop:silting_CY_triple} by their counterparts in this negative CY setting, namely by \cite[Theorem 1.1 and its proof]{IyamaYang2} and \cite[Lemma 2.6.(2), Theorem 4.5.(1)--(2)]{Jin}, we get the following. 

\begin{proposition}
\label{prop:triple_negative}
Assume that given a pair $\cn  \subseteq \cb$, there exists a simple-minded collection $\cS \subseteq \cb$ such that $(\cb, \cn, \cS)$ is an $(n+1)$-Calabi--Yau triple for some $n \leq -1$, in the sense of \cite{Jin}. Then the pair $\cn  \subseteq \cb$ satisfies the conditions of Definition \ref{def:subcat+LRF}. In particular, the Hall algebra of the pair $\ch(\cb, \cn)$ is well-defined. Further, for each $z, x$ we can find decompositions such that the formula \eqref{eq:replacements-LRHF} applies.
\end{proposition}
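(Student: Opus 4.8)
The plan is to mirror, in the negative Calabi--Yau setting, the proof of Proposition~\ref{prop:silting_CY_triple}, replacing each ingredient drawn from Iyama--Yang~\cite{IyamaYang} by its counterpart for negative CY triples due to Iyama--Yang~\cite{IyamaYang2} and H.~Jin~\cite{Jin}. Concretely, I must verify the two finiteness hypotheses of Definition~\ref{def:subcat+LRF} --- namely that $\cb$ is \emph{left} locally homologically finite, and that for each $u,v\in\cb$ the canonical map $\Ext^i_{\cb}(u,v)\to\Ext^i_{\ct}(u,v)$ is a bijection for $i\gg 0$ --- and then exhibit, for arbitrary $z,x\in\cb$, decompositions $[z]=[\widetilde z]+[z^{\cn}]$, $[x]=[\widetilde x]+[x^{\cn}]$ with $z^{\cn},x^{\cn}\in\cn$ and with $\Ext^i_{\cb}(\widetilde z,\widetilde x)\to\Ext^i_{\ct}(\widetilde z,\widetilde x)$ surjective for $i=1$ and bijective for $i\geq 2$, so that~\eqref{eq:replacements-LRHF} is applicable.

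First I would establish left local homological finiteness. In the positive case this came for free from the existence of a silting subcategory via~\cite[Proposition 2.4]{AiharaIyama}; here the dual input is the pair of compatible weight structures (equivalently, the simple-minded collection $\cS$ together with a bounded weight structure), which by the results of~\cite{Jin} forces $\Ext^{-i}_{\cb}(x,y)=0$ for $i\gg 0$. Next, the Calabi--Yau duality $\Hom(x,y)^{\vee}\cong\Hom(y,x[n+1])$ for $x\in\cn$, $y\in\cb$ (now with $n+1\leq 0$) gives, exactly as in the positive case, $\Ext^i_{\cb}(x,y)=\Ext^i_{\cb}(y,x)=0$ for $|i|\gg 0$ whenever one of the arguments lies in $\cn$.

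The substantive step is the comparison of $\Ext$-groups in $\cb$ and in $\ct=\cb/\cn$ in high degree, together with the construction of the decompositions. In the positive setting one uses the fundamental domain $\cf=\cm\ast\cm[1]\ast\cdots\ast\cm[n-1]$, the bijectivity $\Hom_{\cb}(x,y)\xrightarrow{\sim}\Hom_{\ct}(x,y)$ for $x\in\cb_{\leq 0},\,y\in\cb_{\geq 1-n}$ from~\cite[Proposition 5.9]{IyamaYang}, and the additive equivalence $\cf\xrightarrow{\sim}\ct$ from~\cite[Theorem 5.8]{IyamaYang}, whose density proof connects each $x\in\cb$ to some $x^{\cf}\in\cf$ by two triangles with a term in $\cn$. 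Here I would invoke the negative-CY analogue: by~\cite[Lemma 2.6.(2), Theorem 4.5.(1)--(2)]{Jin} (in combination with~\cite[Theorem 1.1 and its proof]{IyamaYang2}) the Verdier quotient $\ct$ is realized as an ideal quotient of an analogue $\cf$ of the fundamental domain, the functor $\cb\to\ct$ restricts to an additive equivalence on $\cf$, the canonical map $\Hom_{\cb}\to\Hom_{\ct}$ is bijective on the appropriate half-spaces determined by the weight structures (hence, shifting, $\Ext^i_{\cb}(x,y)\to\Ext^i_{\ct}(x,y)$ is bijective for $i\geq 0$, a fortiori for all $i$ in the relevant range after passing to $\cf$), and each $x\in\cb$ is connected to some $x^{\cf}\in\cf$ by two triangles each having a term in $\cn$. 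Chasing the resulting four long exact sequences (two in each variable) gives $\Ext^i_{\cb}(x,y)\cong\Ext^i_{\cf}(x^{\cf},y^{\cf})\cong\Ext^i_{\ct}(x,y)$ for $i\gg 0$, which is the second finiteness condition of Definition~\ref{def:subcat+LRF}.

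Finally, for~\eqref{eq:replacements-LRHF} I would take $\widetilde x=x^{\cf}$ and, mirroring the positive case where one shifted the second-argument triangles by~$1$, here I would instead shift the \emph{first}-argument triangle appropriately so that $\widetilde z=((z[1])^{\cf})[-1]$, arranged so that $\Ext^i_{\cb}(\widetilde z,\widetilde x)\to\Ext^i_{\ct}(\widetilde z,\widetilde x)$ is surjective for $i=1$ and bijective for $i\geq 2$; the vanishing/bijectivity properties above plus the long exact sequences of the two connecting triangles make this bookkeeping routine. The main obstacle I anticipate is purely one of translation: making sure the roles of ``left'' and ``right'' (which weight structure, which half-space, which degree range, which argument gets shifted) are all dualized consistently, since in the negative-CY world the fundamental domain and the compatible $t$-/weight-structures of~\cite{Jin} sit on the opposite side from~\cite{IyamaYang}, and a single sign error in ``shift the first versus the second argument'' would break the hypotheses of~\eqref{eq:replacements-LRHF}. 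No genuinely new idea is needed beyond carefully invoking the cited structural results of~\cite{IyamaYang2,Jin} in place of those of~\cite{IyamaYang}.
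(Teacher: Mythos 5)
Your proposal is essentially the paper's own argument: the paper proves this proposition only by the remark that one replaces the ingredients of the proof of Proposition~\ref{prop:silting_CY_triple} (silting subcategory, fundamental domain, and the comparison results of \cite{IyamaYang}) by their negative-CY counterparts from \cite[Theorem 1.1]{IyamaYang2} and \cite[Lemma 2.6.(2), Theorem 4.5.(1)--(2)]{Jin}, which is exactly the dualization you carry out, including the dual finiteness argument and the shifted-fundamental-domain decomposition making \eqref{eq:replacements-LRHF} applicable. The only caveat you already flag yourself --- keeping the direction of the shift and the degree ranges consistent when dualizing --- is precisely the bookkeeping the paper leaves implicit, so no further gap remains.
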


\begin{example}
Let $n \leq -1$, $B$ be a finite-dimensional hereditary algebra over $\mathbf{k}$, and consider the trivial extension dg-algebra $A = B \oplus B^\vee[-n-1]$. Then $A$ is a non-positive proper Gorenstein dg algebra, and it is proved in \cite{Jin, Jin2020} that the triple $(\pvd A, \perf A, \cS)$ is an $(n+1)$-Calabi--Yau triple, where $\cS$ is the set of simple dg A-modules concentrated in degree $0$. Thus, Proposition \ref{prop:triple_negative} applies. Further, the category $\pvd A / \perf A$ is equivalent as (an algebraic) triangulated category to the $n$-cluster category $\cc_{n}(B) \coloneqq \cd^b(\mod B)/(\nu \circ [-n])$ of $B$, see the discussion in \cite[Section 6]{Jin2020} following \cite{Keller2005}.

More generally, for a non-positively graded finite-dimensional symmetric algebra $A$ with socle concentrated in degree $n+1$ for some $n \leq -1$, the same triple $(\pvd A, \perf A, \cS)$ is an $(n+1)$-Calabi--Yau triple  and the category $H^0(\pvd A / \perf A)$ is $n$CY as a triangulated category \cite{Brightbill2020, Jin}.
\end{example}

We now discuss 
a construction we learned from M.~Kontsevich. It is a variation of the construction of $\cS\cd\ch_2(\ca)$, but does not use abelian categories of complexes in any way. This example played an important role in our finding of a correct definition of $\hallcy(\cc)$.

\begin{example} 
\label{ex:Kontsevich}
\cite{Kontsevich_private}
Let $\cc$ be a pre-triangulated dg-category over the graded polynomial algebra $\mathbf{k}[t]$, with the generator $t$ in some positive degree. Assume that for all $x, y \in \cc$, $\Hom_{\cc}(x, y)$ is a finite rank free $\mathbf{k}[t]$-module. We consider the tensor product categories $\cc_0 \coloneqq \cc \otimes_{\mathbf{k}[t]} \mathbf{k}$ and $\cc_{gen} \coloneqq \cc \otimes_{\mathbf{k}[t]} \mathbf{k}[t^{\pm 1}]$.
We have a canonical functor $\cc \to \cc_{gen}$ and denote by $\cc_{tors} \subseteq \cc$ its kernel. It is not difficult to check that the pair $(H^0(\cc), H^0(\cc_{tors}))$ satisfies the conditions of Definition \ref{def:subcat+LRF}. Thus, the algebra $\ch'(H^0(\cc), H^0(\cc_{tors}))$ is well-defined. Moreover, for each pair $(z, x)$, we can find decompositions such that the formula \eqref{eq:replacements-LRHF} applies.

As an example, we can take $\cc = \cd^b_{dg}(\ca)\otimes_{\mathbf k}\perf \mathbf{k}[t]$ for an abelian category $\ca$ such that for all $x, y$, we have $\Ext^i(x, y) = 0$ for $i \gg 0$,  and take $t$ having degree 2. In this case, the Hall algebra $\ch'(H^0(\cc), H^0(\cc_{tors}))$ is an extended algebra of $H^0(\cd^b_{dg}(\ca) \otimes \perf \mathbf{k}[t^{\pm 1}])$, i.e. of the root category of $\ca$. This extended algebra is by construction derived invariant, since the multiplication uses $H^0(\cc_0) = \cd^b(\ca)$ as the input. 
\end{example}

We note that if $\cc_0$ is $n$CY, then so is $\cc_{gen}$, and we also have a $(n-1)$CY
pairing between $H^0(\cc_{tors})$ and $H^0(\cc)$. 

\begin{remark}
The difference between Example \ref{ex:Kontsevich} and the cases of Calabi--Yau triples involving silting subcategories or simply-minded collections is that we do not realize the underlying additive category of $H^0(\cc_{gen})$ as a subquotient of the underlying additive category of $H^0(\cc)$, i.e. we do not appeal to the existence of fundamental domains. For this reason, decompositions of $[z], [x]$ used in the formula \eqref{eq:replacements-LRHF} are defined ``locally'', i.e. they actually depend on a pair $(z, x)$, and we do not have a decomposition for a given $z$ which works for all $x$.
\end{remark}

\subsubsection{Central reduction and relation to $\hallcy$}

Here, we show that a central reduction of a suitable twist of $\ch'(\cb, \cn)$ or $\ch(\cb, \cn)$ is isomorphic to $\hallcy(\ct)$, provided $\cb$ is algebraic (and, therefore, so are $\cn$ and $\ct$ as well) and $\ct$ is $n$CY for some odd $n$. This applies to all the $(n+1)$-Calabi--Yau triples and to setting of Example \ref{ex:Kontsevich} from Section \ref{sec:CY_triples+Kontsevich}

We assume from now on that $n \geq 1$ and $\cn \subseteq \cb$ satisfy the condition of Definition \ref{def:subcat+LRF}; the same arguments apply and the similar statements hold true in the other three cases. The strategy is similar to that in Section \ref{sec:root_categories}.

\begin{lemma}
\label{lem:twist_extended_Verdier}
The form 
\[
\langle z, x \rangle_{\cb, \leq 0}  + \langle z, x \rangle_{\cb/\ct, > 0}  -\langle x, z \rangle_{\cb, \leq 0} - \langle x, z \rangle_{\cb/\ct, > 0} + \langle z, x \rangle_{1,\ldots,n-1}
\]
where the last term is the truncated Euler pairing in the category $\ct$,
is well-defined on $K_0(\cb) \times K_0(\cb)$.
\end{lemma}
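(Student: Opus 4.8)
The plan is to mimic the proof of Lemma~\ref{lem:our_twist_sdh}, working with triangles in $\cb$ in place of short exact sequences in $\cc_2(\ca)$. It suffices to check that the form is additive in each argument on distinguished triangles of $\cb$, since $K_0(\cb)$ is generated by isomorphism classes with the triangle relations. So fix a triangle $z_1\to z_2\to z_3\xrightarrow{+1}$ in $\cb$ and an object $x\in\cb$. Applying $\Hom_{\cb}(-,x)$ and $\Hom_{\ct}(-,x)$ gives a pair of long exact sequences, and applying $\Hom_{\cb}(x,-)$ and $\Hom_{\ct}(x,-)$ gives another pair; let $a^i_{\cb},a^i_{\ct},b^i_{\cb},b^i_{\ct}$ denote the connecting maps $\Ext^{i-1}\to\Ext^i$ in these four sequences.

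First I would compute the alternating sums. From the first pair of long exact sequences, telescoping the contributions of $\Ext^i$ for $i\le 0$ and noting that $\Ext^i_{\cb}(-,x)\to\Ext^i_{\ct}(-,x)$ is bijective for $i\ll 0$ (so the tails cancel), one gets
\[
\sum_{j=1}^3(-1)^{j-1}\left(\langle z_j,x\rangle_{\cb,\leq 0}+\langle z_j,x\rangle_{\cb/\ct,>0}\right)=\rank a^1_{\ct}.
\]
Here I need to be careful about which connecting map survives: the difference $\langle -,x\rangle_{\cb,\le 0}+\langle -,x\rangle_{\cb/\ct,>0}$ is, up to sign, $\langle-,x\rangle_{\ct,\le 0}+(\text{correction terms that telescope})$, and the mismatch between the $\cb$- and $\ct$-Euler characteristics of the triangle is measured exactly by the failure of the map $\Ext^\bullet_{\cb}\to\Ext^\bullet_{\ct}$ to be an isomorphism, which the $\approx_c$/Verdier setup pins down. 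Similarly the second pair gives $\sum_{j}(-1)^{j-1}(\langle x,z_j\rangle_{\cb,\le 0}+\langle x,z_j\rangle_{\cb/\ct,>0})=\rank b^1_{\ct}$. The truncated Euler pairing in $\ct$ contributes, using that $n$ is odd (so $\langle z_j,x\rangle_{0,\ldots,n}=0$ in the $n$CY category $\ct$) exactly as in Lemma~\ref{lem:our_twist_sdh},
\[
\sum_{j=1}^3(-1)^{j-1}\langle z_j,x\rangle_{1,\ldots,n-1}=-\rank a^1_{\ct}+\rank a^n_{\ct}.
\]
Finally the $n$CY duality on $\ct$ identifies $b^1_{\ct}$ with the dual of $a^n_{\ct}$, hence $\rank b^1_{\ct}=\rank a^n_{\ct}$, and summing everything gives $\rank a^1_{\ct}-\rank b^1_{\ct}-\rank a^1_{\ct}+\rank a^n_{\ct}=0$. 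Additivity in the second argument follows by the dual computation.

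The main obstacle is bookkeeping the truncated/relative Euler pairings correctly: unlike the $\cc_2(\ca)$ case where $\Hom_{\cc_2}$ appears as a separate term, here $\langle z,x\rangle_{\cb,\le 0}$ already absorbs the $\Ext^0$ contribution, so I must verify that the telescoping of the long exact sequences really does leave only $\rank a^1_{\ct}$ and not some residual term coming from $\Ext^0_{\cb}$ versus $\Ext^0_{\ct}$ — this is where the hypothesis in Definition~\ref{def:subcat+LRF} that $\Ext^i_{\cb}\to\Ext^i_{\ct}$ is bijective for $i\gg 0$, together with $\cb$ being left locally homologically finite (so all sums are finite), is essential. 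Once the three alternating-sum identities are established, the conclusion is immediate, exactly parallel to Lemmas~\ref{lem:our_twist_sdh} and the odd-$n$ cancellation there.
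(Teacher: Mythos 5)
Your proposal is correct and is essentially the paper's own proof: the paper proves this lemma by saying "the same proof as for Lemma~\ref{lem:our_twist_sdh} applies, replacing $\cc_2(\ca)$ by $\cb$, $\cd_2(\ca)$ by $\ct$, short exact sequences by triangles, and $\dim\Hom_{\cc_2(\ca)}(-,?)$ by $\langle -,?\rangle_{\cb,\leq 0}$", and you carry out exactly that telescoping argument with the four connecting maps and the $n$CY duality $\rank b^1_{\ct}=\rank a^n_{\ct}$. Two parenthetical slips (which do not affect the argument): the comparison maps $\Ext^i_{\cb}\to\Ext^i_{\ct}$ are bijective for $i\gg 0$, not $i\ll 0$ (the tails in degrees $\leq 0$ are killed instead by left local homological finiteness of $\cb$, as you correctly say later), and oddness of $n$ is used only through the sign $(-1)^{n-1}=+1$ in front of $\rank a^n_{\ct}$, not through any vanishing of $\langle z_j,x\rangle_{0,\ldots,n}$, which need not hold for $z_j\neq x$.
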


\begin{proof}
The same proof as for Lemma \ref{lem:our_twist_sdh} applies here, up to replacing $\cc_2(\ca)$ by $\cb$, $\cd_2(\ca)$ by $\ct$, short exact sequences by distinguished triangles, and $\dim \Hom_{\cc_2(\ca)}(-,?)$ by $\langle -, ? \rangle_{\cb, \leq 0}$.
\end{proof}

Let $(\ch'(\cb, \cn)_{\scriptscriptstyle{\mathrm{tw}}}, \ast)$ be the following twist of the algebra $(\ch'(\cb, \cn) \otimes_{\QQ} \C, \cdot)$:
\[
[z] \ast [x] = q^{\frac{1}{2}\left(\langle z, x \rangle_{\cb, \leq 0} + \langle z, x \rangle_{\cb/\ct, > 0}  -\langle x, z \rangle_{\cb, \leq 0} - \langle x, z \rangle_{\cb/\ct, > 0} + \langle z, x \rangle_{1,\ldots,n-1}\right)} [z] \cdot [x].
\]

The twist is indeed well-defined by Lemma \ref{lem:twist_extended_Verdier}.

\begin{lemma}
The classes of elements in $\cn$ are central in  $(\ch'(\cb,\cn)_{\scriptscriptstyle{\mathrm{tw}}}, \ast)$.
\end{lemma}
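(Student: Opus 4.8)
The plan is to mimic the argument already used for the analogous statement in the root category case (the centrality of classes of acyclic complexes in $\cS\cd\ch_{2, \scriptscriptstyle{\mathrm{tw}}}(\ca)$, established in the proof of Theorem \ref{thm:root_cat_comparison}). First I would record that, for $u \in \cn$ and $x \in \cb$, the truncated relative Euler pairing $\langle u, x \rangle_{\cb/\ct, > 0}$ vanishes: this is because the map $\Ext^i_{\cb}(u, x) \to \Ext^i_{\ct}(u, x)$ is an isomorphism for every $i > 0$ (indeed, $\cn$ is thick and $u$ becomes zero in $\ct$, so all positive Ext-groups $\Ext^i_{\ct}(u,x)$ vanish; but also, from the long exact sequences defining the Verdier quotient, the relevant comparison is an isomorphism, and in any case the alternating sum telescopes to zero). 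Similarly $\langle x, u \rangle_{\cb/\ct, > 0} = 0$. Likewise $\langle u, x \rangle_{1,\ldots,n-1} = 0 = \langle x, u \rangle_{1,\ldots,n-1}$, since these are truncated Euler pairings in $\ct$ and $u$ maps to $0$ in $\ct$.

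Next I would feed this into the structural formula for the product with a class from $\cn$: the excerpt records that $\ch'(\cb,\cn)$ is a free module over the twisted group algebra of $K_0(\cn)/\Kerr(i_{\cn})$, with
\[
[u] \cdot [x] = q^{-\langle u, x \rangle_{\cb}} [u \oplus x], \qquad [x] \cdot [u] = q^{-\langle x, u \rangle_{\cb}} [u \oplus x]
\]
for $u \in \cn$, $x \in \cb$. Here $\langle -, ? \rangle_{\cb}$ denotes the genuine (untruncated) Euler pairing, which is well-defined on the relevant pairs because $\Ext^i_{\cb}(u,x) = \Ext^i_{\cb}(x,u) = 0$ for $|i| \gg 0$ whenever one argument lies in $\cn$. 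Now I compute the twisted products. Using the vanishing above, the twisting exponent in $[u] \ast [x]$ reduces to $\tfrac12(\langle u, x\rangle_{\cb,\leq 0} - \langle x, u\rangle_{\cb,\leq 0})$, and combined with the $q^{-\langle u, x\rangle_{\cb}}$ from the untwisted product, using $\langle u, x\rangle_{\cb} = \langle u,x\rangle_{\cb,\leq 0} + \langle u,x\rangle_{\cb,> 0}$, we get
\[
[u] \ast [x] = q^{-\frac12(\langle u,x\rangle_{\cb} + \langle x,u\rangle_{\cb})} [u \oplus x],
\]
and by the same computation with the roles reversed,
\[
[x] \ast [u] = q^{-\frac12(\langle x,u\rangle_{\cb} + \langle u,x\rangle_{\cb})} [u \oplus x] = [u] \ast [x].
\]
Since $[u \oplus x]$ does not depend on the order, these are equal, so $[u]$ is central. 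It remains to observe that this suffices: the classes $[u]$ for $u \in \cn$ generate the twisted group algebra of $K_0(\cn)/\Kerr(i_{\cn})$ as a subalgebra, so centrality of the generators gives centrality of the whole subalgebra.

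I anticipate the only genuine subtlety will be the careful bookkeeping of which truncation of the Euler pairing appears where, and checking that all pairings invoked are actually well-defined (finiteness) on the pairs $(\cn, \cb)$ and $(\cb, \cn)$ — this is exactly the point where the hypothesis that one argument lies in $\cn$ is used, and it is already flagged in the excerpt. The argument is otherwise a direct transcription of the root-category case, with $\cc_2(\ca) \rightsquigarrow \cb$, $\cd_2(\ca) \rightsquigarrow \ct$, acyclic complexes $\rightsquigarrow$ objects of $\cn$, and $\dim\Hom_{\cc_2(\ca)}(-,?) \rightsquigarrow \langle -, ?\rangle_{\cb,\leq 0}$, so no new ideas are needed.
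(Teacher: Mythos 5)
Your route is the paper's own: compute the twisted product of a class from $\cn$ with an arbitrary basis class and check that the exponent is symmetric in the two arguments, using the structural formula $[u]\cdot[x]=q^{-\langle u,x\rangle_{\cb}}[u\oplus x]$. The conclusion and your final symmetric exponent $-\tfrac12(\langle u,x\rangle_{\cb}+\langle x,u\rangle_{\cb})$ are correct, but one intermediate justification is false and needs repair.

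The false step is the claim that $\Ext^i_{\cb}(u,x)\to\Ext^i_{\ct}(u,x)$ is an isomorphism for every $i>0$ (equivalently, that $\langle u,x\rangle_{\cb/\ct,>0}=0$) when $u\in\cn$. Since $u$ becomes zero in $\ct$, the target groups vanish, so your claim would force $\Ext^i_{\cb}(u,x)=0$ for all $i>0$ --- and this fails in exactly the situations the lemma is built for: in an $(n+1)$-Calabi--Yau triple the defining duality gives $\Ext^{n+1}_{\cb}(u,x)\cong\Hom_{\cb}(x,u)^{\vee}$, typically nonzero, while $\Ext^{n+1}_{\ct}(u,x)=0$; the appeal to a ``telescoping'' alternating sum has no content here, since that sum is just $\langle u,x\rangle_{\cb,>0}$. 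What is true is only that the $\ct$-terms drop out, i.e. $\langle u,x\rangle_{\cb/\ct,>0}=\langle u,x\rangle_{\cb,>0}$ and $\langle x,u\rangle_{\cb/\ct,>0}=\langle x,u\rangle_{\cb,>0}$ (and, as you correctly note, $\langle u,x\rangle_{1,\ldots,n-1}=0=\langle x,u\rangle_{1,\ldots,n-1}$). Substituting these, the twisting exponent becomes $\tfrac12(\langle u,x\rangle_{\cb}-\langle x,u\rangle_{\cb})$, not $\tfrac12(\langle u,x\rangle_{\cb,\le0}-\langle x,u\rangle_{\cb,\le0})$; combined with the factor $q^{-\langle u,x\rangle_{\cb}}$ this gives precisely your exponent $-\tfrac12(\langle u,x\rangle_{\cb}+\langle x,u\rangle_{\cb})$, and the same computation with the roles of $u$ and $x$ reversed gives the same exponent for $[x]\ast[u]$, so centrality follows. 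In short, the final formula and the argument survive, but only after replacing ``the relative truncated pairings vanish'' by ``they equal the positive-degree parts of the $\cb$-pairings''; with that correction your computation coincides with the one in the paper. (The closing remark about generating the twisted group algebra of $K_0(\cn)/\Kerr(i_{\cn})$ is harmless but not needed: the lemma asserts centrality of the classes $[u]$ themselves, which is exactly what the computation against all basis classes $[x]$ establishes.)
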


\begin{proof}
By definition, for $z \in \cn$, we have 
\begin{align*}
[z] \ast [x] &= q^{-\langle z, x \rangle_{\cb} + \frac{1}{2}\left(\langle z, x \rangle_{\cb, \leq 0} + \langle z, x \rangle_{\cb/\ct, > 0}  -\langle x, z \rangle_{\cb, \leq 0} - \langle x, z \rangle_{\cb/\ct, > 0} + \langle z, x \rangle_{1,\ldots,n-1}\right)} [z \oplus x] \\
&= q^{-\frac{1}{2}\left(\langle z, x \rangle_{\cb, \leq 0}+\langle x, z \rangle_{\cb, \leq 0}\right)} [z \oplus x]\\
&= q^{-\langle x, z \rangle_{\cb} + \frac{1}{2}\left(\langle x, z \rangle_{\cb, \leq 0} + \langle x, z \rangle_{\cb/\ct, > 0}  -\langle z, x \rangle_{\cb, \leq 0} - \langle z, x \rangle_{\cb/\ct, > 0} + \langle x, z \rangle_{1,\ldots,n-1}\right)} [z \oplus x] = [x] \ast [z].
\end{align*}
\end{proof}

\begin{remark}
In case $n$ is even (and so $(n+1)$ is odd), one can check in similar way that the form
\[
\langle z, x \rangle_{\cb, \leq 0}  + \langle z, x \rangle_{\cb/\ct, > 0}  +\langle x, z \rangle_{\cb, \leq 0} + \langle x, z \rangle_{\cb/\ct, > 0} + \langle z, x \rangle_{1,\ldots,n-1}
\]
is well-defined on $K_0(\cb) \times K_0(\cb)$. However, in a suitably twisted algebra, the classes of elements in $\cn$ are not central in general, as so one cannot take a central reduction with respect to them. 
This is one of the reasons we doubt that there may exist a general construction of non-extended intrinsic Hall algebras of even-periodic evenCY categories.
\end{remark}

\begin{theorem}
\label{thm:extended_Verdier_comparison}
Assume that $\cn, \cb, \ct$ are algebraic, $\ct$ is $n$CY for some odd $n \geq 1$, and $\ch'(\cb, \cn)$ is well-defined.
Then $\hallcy(\ct)$ is isomorphic to the central reduction:
\[
\ch'(\cb,\cn)_{\scriptscriptstyle{\mathrm{tw}}} / \langle [z] - 1 \, | \, z \in \cn \rangle \overset\sim\to \hallcy(\ct).
\]
\end{theorem}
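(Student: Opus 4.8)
## Proof plan for Theorem~\ref{thm:extended_Verdier_comparison}

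The plan is to mirror the strategy used in the proof of Theorem~\ref{thm:root_cat_comparison}, replacing $\cc_2(\ca)$ by $\cb$, $\cd_2(\ca)$ by $\ct$, the category of acyclic complexes by $\cn$, and short exact sequences by distinguished triangles; accordingly $\dim\Hom_{\cc_2(\ca)}(-,?)$ gets replaced by $\langle -,?\rangle_{\cb,\leq 0}$ throughout. First I would record the structural input already available: $(\ch'(\cb,\cn)_{\scriptscriptstyle{\mathrm{tw}}},\ast)$ is a free module over the twisted group algebra of $K_0(\cn)/\Kerr(i_{\cn})$ with a basis indexed by any choice of representatives of $\Iso(\ct)$ (a bulleted property in Section~\ref{subsec:ext_ha_reduction}), and the classes $[z]$, $z\in\cn$, are central by the lemma just proved. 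Hence the central reduction $\ch'(\cb,\cn)_{\scriptscriptstyle{\mathrm{tw}}}/\langle [z]-1\mid z\in\cn\rangle$ is a $\C$-vector space with basis indexed by $\Iso(\ct)$, and it remains only to identify the multiplication with that of $\hallcy(\ct)$ in the basis $\{u_x\}$ of Proposition~\ref{prop:prod_intrinsic_positive_2}.

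The computation of structure constants proceeds exactly as in the proof of Theorem~\ref{thm:root_cat_comparison}. Starting from the twisted product formula for $\ch'(\cb,\cn)_{\scriptscriptstyle{\mathrm{tw}}}$ and using the decomposition~\eqref{eq:replacements-LRHF} (valid by Definition~\ref{def:subcat+LRF} after choosing, for each pair $(z,x)$, representatives $\widetilde z,\widetilde x$ in the image of $\cb\to\ct$ with the required surjectivity/bijectivity of $\Ext^i_\cb\to\Ext^i_\ct$), the acyclic (i.e.\ $\cn$-)parts get killed in the central reduction and one is left with a sum over $\delta\in\Hom_\cb(\widetilde z[-1],\widetilde x)\cong\Ext^1_\ct(z,x)$ of $[C(\delta)]$ with an explicit power of $q$. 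I would then rescale the basis by $q^{\frac12(\langle M,M\rangle_{\cb,\leq 0}+\langle M,M\rangle_{\cb/\ct,>0})}$ (the analogue of the rescaling in the proof of Theorem~\ref{thm:root_cat_comparison}). Applying the algebraicity hypothesis, identity~\eqref{dim_ext_and_4x4matrix} holds both in $\cb$ and in $\ct$ for the map $\beta$ realizing $\delta$ and for the self-extensions of $C(\beta)$; taking the alternating sum of~\eqref{dim_ext_and_4x4matrix} over $i\leq 0$ in $\cb$ and the difference $\cb$ minus $\ct$ over $i\geq 1$, all the $\langle\cdot,\cdot\rangle_{\cb,\leq 0}$ and $\langle\cdot,\cdot\rangle_{\cb/\ct,>0}$ contributions telescope, leaving the exponent $\frac12(\langle z,x\rangle_{1,\ldots,n-1}-d_{\ct,1}(\delta))$. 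By Lemma~\ref{lem:pasting-law} and the fact that $d_{\ct,1}(\delta)=r_1(\delta)+r_n(\delta)$ (using that $\ct$ is $n$CY with $n$ odd, so that $d_1=r_1+r_n$ and $\langle\cdot,\cdot\rangle_{0,\ldots,n}$ of a single object vanishes), this matches precisely the product in $\hallcy(\ct)$ written in the basis $\{u_x\}$ of Proposition~\ref{prop:prod_intrinsic_positive_2}. I would then note that the three remaining cases — $n\geq 1$ with the right-finiteness condition of Definition~\ref{def:subcat+RLF} and $\ch(\cb,\cn)$, and $n\leq -1$ with either finiteness condition — follow by the same computation with the obvious sign changes, using~\eqref{eq:replacements-RLHF} in place of~\eqref{eq:replacements-LRHF} and Proposition~\ref{prop:prod_intrinsic_negative} in place of Proposition~\ref{prop:prod_intrinsic_positive_2}.

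The main obstacle I anticipate is the bookkeeping in the telescoping step: one must be careful that the exponent $f(\delta)$ obtained after rescaling genuinely depends only on $\delta\in\Ext^1_\ct(z,x)$ and not on the chosen lift $\beta:\widetilde z\to\widetilde x$ in $\cb$ or on the representatives $\widetilde z,\widetilde x$ themselves. As in the proof of Theorem~\ref{thm:root_cat_comparison}, this independence is forced a posteriori by the fact that the product in $\ch'(\cb,\cn)_{\scriptscriptstyle{\mathrm{tw}}}$ (after central reduction) is already well-defined on quasi-isomorphism classes, so one is free to pick the most convenient representatives; the substantive content is just that~\eqref{dim_ext_and_4x4matrix} holds simultaneously in $\cb$ and $\ct$, which is where algebraicity of $\cn\subseteq\cb$ is used (pass to a pre-triangulated dg enhancement and apply the spectral-sequence arguments of Section~\ref{subsec:hallOddCY}). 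A secondary point to check is that in the $\ch(\cb,\cn)$ cases the right-finiteness guarantees the relevant alternating sums over $i>0$ are finite, so the same telescoping is legitimate.
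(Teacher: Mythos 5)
Your proposal matches the paper's proof, which is literally "the same proof as Theorem~\ref{thm:root_cat_comparison}, up to the modification of Lemma~\ref{lem:twist_extended_Verdier}": centrality plus the free-module structure over the twisted group algebra of $K_0(\cn)/\Kerr(i_{\cn})$ give a basis indexed by $\Iso(\ct)$ after reduction, and the same rescaling and telescoping via \eqref{dim_ext_and_4x4matrix} (valid in both $\cb$ and $\ct$ by algebraicity) identifies the exponent with $\tfrac12(\langle z,x\rangle_{1,\ldots,n-1}-d_{\ct,1}(\delta))$, i.e.\ the $\{u_x\}$-basis product of $\hallcy(\ct)$. The only cosmetic slip is attributing $d_{\ct,1}(\delta)=r_1(\delta)+r_n(\delta)$ to Lemma~\ref{lem:pasting-law} rather than to the CY-duality computation of Section~\ref{subsec:hallOddCY}; otherwise the argument is the paper's.
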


\begin{proof}
The same proof as for Theorem \ref{thm:root_cat_comparison} applies here, up to the same modification as in the proof of Lemma \ref{lem:twist_extended_Verdier}.
\end{proof}

\begin{remark}
If we rescale the basis of $\ch'(\cb,\cn)_{\scriptscriptstyle{\mathrm{tw}}}$ by setting 
\[
[[x]] = q^{-\frac{1}{2}(\langle z, x \rangle_{\cb, \leq 0} + \langle z, x \rangle_{\cb/\ct, > 0})} [x],
\]
the central reduction can be rewritten as 
\[
\ch'(\cb,\cn)_{\scriptscriptstyle{\mathrm{tw}}} / \langle [[x \oplus z]] - [[z]] \, | \, z \in \cn, x \in \cb \rangle.
\]
\end{remark}

\begin{remark}
A different construction of extended Hall algebras was introduced in \cite{Gorsky2018} as a generalization of \cite{bridgeland_quantum}.  Let $\cf$ be an essentially small, idempotent-complete, $\Hom$-finite Frobenius exact category, with $\underline{\cf}$ its stable category (which is algebraic triangulated by definition) and $\cp$ the full subcategory of projective-injective objects. It was then shown that the localization $\ch(\cf)[[p]^{-1} \, | \, p \in \cp]$, called the \emph{semi-derived Hall algebra} just as algebras in Section \ref{sec:root_categories} and denoted $\cS\cd\ch(\cf, \cp)$, satisfies many properties similar to those of algebras $\ch'(\cb, \cn)$ and $\ch(\cb, \cn)$ (a common generalization of these constructions will be introduced in \cite{Gorsky2024}). It can thus also be seen an extended Hall algebra of $\underline{\cf}$. Arguments similar to those in the proof of Theorem \ref{thm:root_cat_comparison} also show the following.
\end{remark}

\begin{proposition}
Assume that $\underline{\cf}$ is $n$CY for some odd $n$. Then we have an isomorphism 
\[
\cS\cd\ch(\cf, \cp)_{\scriptscriptstyle{\mathrm{tw}}} / \left\langle m_p - 1 \, | \, p \in \cp \right\rangle \overset\sim\to \hallcy(\underline{\cf}),
\]
where $(\cS\cd\ch(\cf, \cp)_{\scriptscriptstyle{\mathrm{tw}}}, \ast)$ is the twist of $(\cS\cd\ch(\cf, \cp) \otimes_{\QQ} \C, \cdot)$ by setting 
\[
m_z \ast m_x = \sqrt{\frac{|\Hom_{\cf}(z, x)|}{|\Hom_{\cf}(x, z)|}} q^{\langle z, x \rangle_{1,\ldots, n-1}} m_z \cdot m_x
\]
if $n > 1$, and the similar twist in case $n \leq -1$.
\end{proposition}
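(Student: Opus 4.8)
The plan is to run the same argument as in the proofs of Theorems~\ref{thm:root_cat_comparison} and~\ref{thm:extended_Verdier_comparison}, transporting it along the dictionary ``$\cc_2(\ca)\rightsquigarrow\cf$, category of acyclic complexes $\rightsquigarrow\cp$, $\cd_2(\ca)\rightsquigarrow\underline{\cf}$, short exact sequences $\rightsquigarrow$ conflations of $\cf$, which induce distinguished triangles in $\underline{\cf}$''. The first step is to check that the proposed twist is well-defined, i.e. that its exponent $\tfrac12(\dim\Hom_{\cf}(z,x)-\dim\Hom_{\cf}(x,z))+\langle z,x\rangle_{1,\ldots,n-1}$ (for $n>0$; the analogous expression with $-\langle z,x\rangle_{n,\ldots,0}$ for $n<0$) descends to a bilinear form on the grading group $K_0(\cf)$. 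Since $\cf$ is Frobenius, the canonical maps $\Ext^i_{\cf}(z,x)\to\Ext^i_{\underline{\cf}}(z,x)$ are isomorphisms for $i\geq 1$, so the ``relative Euler pairing'' of Lemma~\ref{lem:our_twist_sdh} is identically zero here and the exponent is literally the displayed one. Well-definedness is then proved exactly as in Lemma~\ref{lem:our_twist_sdh}: from a conflation $0\to z_1\to z_2\to z_3\to 0$ and the four long exact sequences obtained by applying $\Hom_{\cf}(-,x)$, $\Hom_{\cf}(x,-)$, $\Hom_{\underline{\cf}}(-,x)$, $\Hom_{\underline{\cf}}(x,-)$, one identifies the alternating sums of $\dim\Hom_{\cf}(z_j,x)$, $\dim\Hom_{\cf}(x,z_j)$, $\langle z_j,x\rangle_{1,\ldots,n-1}$ with ranks of three connecting maps, and the $n$CY property of $\underline{\cf}$ together with $n$ odd forces their signed sum to vanish.

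Next I would verify that the classes $m_p$, $p\in\cp$, are central in $(\cS\cd\ch(\cf,\cp)_{\scriptscriptstyle{\mathrm{tw}}},\ast)$, so that the quotient in the statement makes sense. For $p$ projective-injective one has $\Ext^{\geq1}_{\cf}(p,x)=\Ext^{\geq1}_{\cf}(x,p)=0$, hence $\langle p,x\rangle_{\cf}=\dim\Hom_{\cf}(p,x)$ and $\langle x,p\rangle_{\cf}=\dim\Hom_{\cf}(x,p)$, and $\langle p,x\rangle_{1,\ldots,n-1}=0$ because $p\cong0$ in $\underline{\cf}$. Feeding this and the structural identities $m_p\cdot m_x=q^{-\langle p,x\rangle_{\cf}}m_{p\oplus x}$, $m_x\cdot m_p=q^{-\langle x,p\rangle_{\cf}}m_{p\oplus x}$ (valid just as for $\ch'(\cb,\cn)$) into the twist of Step~1 gives $m_p\ast m_x=q^{-\frac12(\dim\Hom_{\cf}(p,x)+\dim\Hom_{\cf}(x,p))}m_{p\oplus x}=m_x\ast m_p$, as required. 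Since $\cS\cd\ch(\cf,\cp)$ is free over the twisted group algebra on the classes of objects of $\cp$, the central reduction $\langle m_p-1\mid p\in\cp\rangle$ is a $\C$-vector space with basis indexed by $\Iso(\underline{\cf})$, and I would fix representatives having no projective-injective summands.

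The remaining, and most delicate, step is to identify the multiplication on this central reduction with that of $\hallcy(\underline{\cf})$. Following the proof of Theorem~\ref{thm:root_cat_comparison}, I would rescale the chosen basis by $q^{\frac12\dim\Hom_{\cf}(M,M)}$ (again the relative-Euler correction being trivial), write the twisted product as $m_z\ast m_x=q^{-\frac12\dim\Hom_{\cf}(z,x)-\frac12\dim\Hom_{\cf}(x,z)+\langle z,x\rangle_{1,\ldots,n-1}}\sum_{\delta}m_{\mt(\delta)}$, and evaluate the resulting exponent $f(\delta)$ by applying the identity~\eqref{dim_ext_and_4x4matrix} to a two-step complex modelling $\mt(\delta)$ in $\cd^b_{dg}(\cf)$ and separately in an enhancement of $\underline{\cf}$. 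The degree-$0$ instance over $\cf$ reads $\dim\Hom_{\cf}(z\oplus x,z\oplus x)-\dim\Hom_{\cf}(\mt(\delta),\mt(\delta))=d_{\cf,1}(\delta)$ since $\Ext^{-1}_{\cf}$ vanishes, the higher instances over $\cf$ and over $\underline{\cf}$ differ only in degree $1$ (because $\Ext^{\geq1}$ agree), so the telescoping sum collapses exactly as there to $f(\delta)=\tfrac12\bigl(\langle z,x\rangle_{1,\ldots,n-1}-d_{\underline{\cf},1}(\delta)\bigr)$; using $d_{\underline{\cf},1}(\delta)=r_1(\delta)+r_n(\delta)$ from the $n$CY property, this is precisely the $\{u_x\}$-basis formula of Proposition~\ref{prop:prod_intrinsic_positive_2}, so the rescaling is an algebra isomorphism onto $\hallcy(\underline{\cf})$. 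For $n\leq-1$ the same computation, compared instead with Proposition~\ref{prop:prod_intrinsic_negative} and the negative-$n$ twist, gives the claim.

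The main obstacle will be the bookkeeping inside this last step, specifically the fact — already present for $\cS\cd\ch_{2,\scriptscriptstyle{\mathrm{tw}}}(\ca)$ — that the middle term $\mt(\delta)$ of an extension in $\cf$ can acquire projective-injective summands even when $z$ and $x$ have none (for example $0\to k\to \mathbf k[x]/x^2\to k\to0$ over $\mathbf k[x]/x^2$). One therefore has to argue, as in the proof of Theorem~\ref{thm:root_cat_comparison}, that after the central reduction the coefficient $f(\delta)$ depends only on $\delta$ and may be computed from any lift of $\delta$ to a morphism in $\cf$ realizing $\mt(\delta)$ as a cone, so that the a priori leftover contributions of projective summands cancel against the rescaling factor; modulo this point, the whole argument is a routine transcription of the computations already carried out for the root category.
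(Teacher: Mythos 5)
Your overall strategy is exactly the paper's: the paper gives no separate argument for this proposition beyond the remark that it follows by the same reasoning as Theorem~\ref{thm:root_cat_comparison}, and your dictionary ($\cc_2(\ca)\rightsquigarrow\cf$, acyclics $\rightsquigarrow\cp$, $\cd_2(\ca)\rightsquigarrow\underline{\cf}$, with the relative terms $\langle-,?\rangle_{\cf/\underline{\cf},>0}$ vanishing because $\Ext^{\geq 1}_{\cf}\cong\Ext^{\geq 1}_{\underline{\cf}}$ in a Frobenius category) is the intended transcription. Your centrality computation for $m_p$, the use of freeness over the (twisted) group algebra on $\cp$ to get a basis indexed by $\Iso(\underline{\cf})$, the rescaling by $q^{\pm\frac12\dim\Hom_{\cf}(M,M)}$, the appeal to \eqref{dim_ext_and_4x4matrix} in $\cd^b_{dg}(\cf)$ versus an enhancement of $\underline{\cf}$, and the identification $d_{\underline{\cf},1}(\delta)=r_1(\delta)+r_n(\delta)$ all match what the omitted proof is meant to do, and your closing remark about middle terms acquiring projective-injective summands is handled, as in the root-category case, by the fact that the collapsed exponent depends only on $\delta$.

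There is, however, one concrete point where your write-up does not add up as stated. You take the twist exponent verbatim from the statement, namely $\tfrac12\bigl(\dim\Hom_{\cf}(z,x)-\dim\Hom_{\cf}(x,z)\bigr)+\langle z,x\rangle_{1,\ldots,n-1}$, but then assert that the telescoping gives $f(\delta)=\tfrac12\bigl(\langle z,x\rangle_{1,\ldots,n-1}-d_{\underline{\cf},1}(\delta)\bigr)$. Running your own computation with that twist, the truncated Euler form enters $f(\delta)$ with the same coefficient it has in the twist: one finds
\[
f(\delta)=\langle z,x\rangle_{1,\ldots,n-1}-\tfrac12\,d_{\underline{\cf},1}(\delta),
\]
which matches the $\{u_x\}$-formula of Proposition~\ref{prop:prod_intrinsic_positive_2} only when the truncated form vanishes identically (e.g.\ $n=\pm1$), and a diagonal change of basis cannot repair a discrepancy of the form $q^{\frac12\langle z,x\rangle_{1,\ldots,n-1}}$ in general. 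The exact analogue of Theorem~\ref{thm:root_cat_comparison} and Lemma~\ref{lem:forms_agree_sdh} produces the twist $\sqrt{|\Hom_{\cf}(z,x)|/|\Hom_{\cf}(x,z)|}\;q^{\frac12\langle z,x\rangle_{1,\ldots,n-1}}$, i.e.\ with the coefficient $\tfrac12$, and it is this version that your argument actually proves; note also that your well-definedness step (the analogue of Lemma~\ref{lem:our_twist_sdh}) establishes descent to $K_0(\cf)$ precisely for the combination $\dim\Hom_{\cf}(z,x)-\dim\Hom_{\cf}(x,z)+\langle z,x\rangle_{1,\ldots,n-1}$, not for the variant in which $\langle z,x\rangle_{1,\ldots,n-1}$ is doubled relative to the other terms. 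So either carry the coefficient $\tfrac12$ consistently through the last step (and flag that the statement's exponent should read $q^{\frac12\langle z,x\rangle_{1,\ldots,n-1}}$), or explain where an extra factor would come from; as written, the final collapse is asserted rather than obtained from your starting twist.
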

} 

\section{CY 2-spans and their homotopy cardinality}
\label{sec_2spans}

In this section we categorify the constructions of Section~\ref{sec_hcardcy}.
In particular, 1-categories whose morphisms are 1-spans are extended to 2-categories whose 2-morphisms are 2-spans.
In Section~\ref{subsec:cy2spans} we discuss Calabi--Yau structures on 2-spans of dg-categories.
Then, in Section~\ref{subsec_lin2cat}, we construct linearization functors $\hdff$ and $\hdffx$ categorifying $\hdf$ and $\hdfx$, respectively.

All the dg-categories in this section are assumed to be {\degfin}.

\subsection{Calabi--Yau 2-spans}
\label{subsec:cy2spans}

Let $\spancat_2$ be the (weak) 2-category whose objects are locally finite homotopy types, 1-morphisms are 1-spans of such, and 2-morphisms are equivalence classes of 2-spans as in \eqref{diag:2span}.
(An $(\infty,2)$-categorical version of $\spancat_2$ was constructed in~\cite[Chapter 10]{DK_highersegal}, though for our purposes its homotopy 2-category suffices.)
Our aim in this subsection is to give a variant of this construction for CY dg-categories.

The notion of relative Calabi--Yau structure was generalized by Christ--Dyckerhoff--Walde~\cite{CDW_complexes} to complexes and cubical diagrams of linear stable $\infty$-categories. 
Here, we will be interested in the special case of (weak) CY structures on 2-spans, and that they are the 2-morphisms of an appropriate 2-category.

Assuming $\cc$ has binary products, we can fold a diagram of the form \eqref{diag:2span} to a square
\[
\begin{tikzcd}
    \cc \arrow[r,"g_1"]\arrow[d,"g_2"'] & \cb_1 \arrow[d,"{(f_{11},f_{21})}"]\\   
   \cb_2 \arrow[r,"{(f_{12},f_{22})}"'] & \ca_1{\times} \ca_2  
\end{tikzcd}
\]
and, assuming $\cc$ has a terminal object, we can send a general coherent square to a 2-span with $\ca_2=*$.
Thus, we can often restrict to the special case of squares instead of general 2-spans.

\begin{definition}
    A \defword{(weak) $n$-Calabi--Yau structure} on a coherent square
    \begin{equation}\label{diag:CYsquare}
    \begin{tikzcd}
        \cc \arrow[r,"g_1"]\arrow[d,"g_2"'] & \cb_1\arrow[d,"f_1"] \\
        \cb_2 \arrow[r,"f_2"'] & \ca
    \end{tikzcd}
    \end{equation}
    is given by a weak right $(n-2)$CY structure on $\ca$, weak right $(n-1)$CY structures on $f_1$ and $f_2$, and a weak right $n$CY structure on $(g_1,g_2):\ce\to \cb_1{\times}_{\ca}\cb_2$, which are all compatible.
    In detail, this means we have an equivalence $\alpha:\ca[n-2]^\vee\to\ca$ and bicartesian squares of bimodules
    \begin{equation}
        \begin{tikzcd}
            \cb_i \arrow[r,"f_i"]\arrow[d] & f_i^*\ca \arrow[d,"f_i^\vee\circ(\alpha^{-1})^\vee"]\\ 
            0 \arrow[r] & \cb_i[n-2]^\vee 
        \end{tikzcd}
        \qquad
        \begin{tikzcd}
            \cc \arrow[r,"{(g_1,g_2)}"]\arrow[d] & (g_1,g_2)^*\left(\cb_1{\times}_{\ca}\cb_2\right) \arrow[d,"{(g_1^\vee,g_2^\vee)\circ(\beta^{-1})^\vee}"] \\
            0 \arrow[r] & \cc[n-1]^\vee
        \end{tikzcd}
    \end{equation}
    where the CY structure $\beta:\left(\cb_1{\times}_{\ca}\cb_2\right)[n-1]^\vee\to \cb_1{\times}_{\ca}\cb_2$ is provided by Corollary~\ref{cor:CYspancomp}.
\end{definition}

In the remainder of this subsection we will show that the CY property is preserved under vertical composition of 2-spans and composition of a 2-span and a 1-span (``whiskering'').
We note that in any weak 2-category, horizontal composition can be recovered, up to equivalence, from vertical composition and whiskering (horizontal composition with identity 2-morphisms) as follows: If $f,f':a\to b$ and $g,g':b\to c$ are 1-morphisms and $\alpha:f\to f'$ and $\beta:g\to g'$ are 2-morphisms, then the horizontal composition of $\alpha$ and $\beta$ is $\beta *\alpha\cong(\beta * 1_{f'})(1_g * \alpha)$.

\begin{lemma}\label{lem:FuseTripple}
Suppose we have a diagram of dg-categories
\[
\begin{tikzcd}
    \cb_1 \arrow[r,"f_1"] & \ca_1 & \cb_2 \arrow[l,"f_2"']\arrow[r,"f_4"] & \ca_2 \\
          & \cb_3 \arrow[u,"f_3"]
\end{tikzcd}
\]
where $\ca_1$ and $\ca_2$ are $(n-2)$CY and $f_1$, $f_3$, and $(f_2,f_4)$ are $(n-1)$CY.
Then the natural functor
\[
\cb_1{\times_{\ca_1}}\cb_2{\times}_{\ca_1}\cb_3\longrightarrow \left(\cb_1{\times}_{\ca_1}\cb_3\right)\times\left(\left(\cb_1{\times}_{\ca_1}\cb_2\right){\times}_{\ca_2}\left(\cb_2{\times}_{\ca_1}\cb_3\right)\right)
\]
is $n$CY.
\end{lemma}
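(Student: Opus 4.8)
The plan is to reduce the statement to the composability results already established for $1$CY spans, namely Proposition~\ref{prop:CYSpanComp} and Corollary~\ref{cor:CYspancomp}, together with the pasting law Lemma~\ref{lem:pasting-law}. Write $\cd \coloneqq \cb_1{\times}_{\ca_1}\cb_2{\times}_{\ca_1}\cb_3$ for the triple fiber product. The key observation is that $\cd$ can be obtained as a fiber product in two compatible ways: first by forming $\cb_1{\times}_{\ca_1}\cb_2$ and $\cb_2{\times}_{\ca_1}\cb_3$ and then fibering over $\ca_2$ via the structure maps $f_4$ on the common factor $\cb_2$ (this uses that $(f_2,f_4)$ is $(n-1)$CY, so $\cb_2$ carries compatible maps to both $\ca_1$ and $\ca_2$); and second by forming $\cb_1{\times}_{\ca_1}\cb_3$ directly. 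So the first step is to exhibit the target of the asserted functor as genuinely built out of these smaller fiber products, and to identify the functor in the statement with the one induced by the universal properties.

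Next I would run the window-diagram argument from the proof of Proposition~\ref{prop:CYSpanComp}. Corollary~\ref{cor:CYspancomp} applied to the $(n-1)$CY functors $f_1:\cb_1\to\ca_1$ and $f_3:\cb_3\to\ca_1$ gives an $(n-1)$CY structure on $\cb_1{\times}_{\ca_1}\cb_3$; applied to $f_1$ and $f_2$ it gives one on $\cb_1{\times}_{\ca_1}\cb_2$; applied to $f_2$ and $f_3$ (using the $\ca_1$-structure maps on $\cb_2$) it gives one on $\cb_2{\times}_{\ca_1}\cb_3$. Then Proposition~\ref{prop:CYSpanComp}, applied to the span $(\cb_1{\times}_{\ca_1}\cb_2)\to\ca_2$ and $(\cb_2{\times}_{\ca_1}\cb_3)\to\ca_2$ over the $(n-2)$CY category $\ca_2$ — more precisely Corollary~\ref{cor:CYspancomp} again, since these are spans to a single category — produces an $n$CY structure on $(\cb_1{\times}_{\ca_1}\cb_2){\times}_{\ca_2}(\cb_2{\times}_{\ca_1}\cb_3)$. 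The point is then that this last category is equivalent to $\cd$: fibering $\cb_1{\times}_{\ca_1}\cb_2$ and $\cb_2{\times}_{\ca_1}\cb_3$ over $\ca_2$ along the two copies of $f_4$ forces the two $\cb_2$-coordinates to agree (since $\cb_2\to\cb_2{\times}_{\ca_2}\cb_2$ is the relevant diagonal, which here is an equivalence onto the appropriate fiber product by the pasting law), recovering the triple product over $\ca_1$. I would spell this equivalence out using Lemma~\ref{lem:pasting-law}(2) twice.

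With $\cd$ identified and equipped with an $n$CY structure, it remains to verify that the functor $\cd\to(\cb_1{\times}_{\ca_1}\cb_3)\times\big((\cb_1{\times}_{\ca_1}\cb_2){\times}_{\ca_2}(\cb_2{\times}_{\ca_1}\cb_3)\big)$ is itself $n$CY with the compatible structures on the two factors of the target. For the second factor this is tautological — that is the $n$CY structure just constructed, and $\cd$ maps to it by an equivalence. For the first factor one assembles the defining bicartesian square of $\cb$-bimodules exactly as in~\eqref{diag:window}: build the $3\times 3$ ``window diagram'' for the triple fiber product $\cb_1{\times}_{\ca_1}\cb_2{\times}_{\ca_1}\cb_3$, note that the lower-right entry is the shifted $\mathbf k$-dual of the upper-left by the $(n-2)$CY structures on $\ca_1$ and $\ca_2$, argue each small square is bicartesian by replacing the $f_i$ by fibrations and using short exact sequences of diagonal bimodules, and conclude via the pasting law (Lemma~\ref{lem:pasting-law}(3)) that the outer square is bicartesian. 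The main obstacle I anticipate is purely bookkeeping: correctly tracking which of the several structure maps out of $\cb_2$ (to $\ca_1$ versus to $\ca_2$) enters each square, and checking that the ``fold'' identifying the iterated fiber product over $\ca_2$ with the triple product over $\ca_1$ is compatible with all the bimodule duality identifications, rather than any genuinely new homological input. A clean way to organize this is to treat the whole configuration as a composition of CY $1$-spans
\[
\cb_1\longrightarrow\ca_1\longleftarrow\cb_2\longrightarrow\ca_1\longleftarrow\cb_3
\]
together with the auxiliary CY $1$-span $\cb_2\to\ca_2$, and to invoke associativity of span composition (which holds by Theorem~\ref{thm:functoriality} at the level of the $2$-category $\spancat^{n\mathrm{CY}}$, and at the level of CY structures by iterated application of Proposition~\ref{prop:CYSpanComp}), so that the two bracketings of the composite yield equivalent $n$CY functors out of $\cd$.
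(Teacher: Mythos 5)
There is a genuine gap, and it sits at the very center of your reduction. The fiber product $\left(\cb_1{\times}_{\ca_1}\cb_2\right){\times}_{\ca_2}\left(\cb_2{\times}_{\ca_1}\cb_3\right)$ is \emph{not} equivalent to the triple product $\cb_1{\times}_{\ca_1}\cb_2{\times}_{\ca_1}\cb_3$: an object of the former is a tuple $(b_1,b_2,b_2',b_3)$ with equivalences $f_1(b_1)\simeq f_2(b_2)$, $f_4(b_2)\simeq f_4(b_2')$, $f_2(b_2')\simeq f_3(b_3)$, so the two $\cb_2$-coordinates are only identified after applying $f_4$, and $f_1(b_1)$ need not agree with $f_3(b_3)$ at all. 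Your justification --- that the diagonal $\cb_2\to\cb_2{\times}_{\ca_2}\cb_2$ is an equivalence onto the relevant fiber product --- holds only when $f_4$ is fully faithful, which is not assumed and already fails for the basic example $\typeacat{n}\to\typeacat{1}^{n+1}$ of Example~\ref{ex:an1cy}. Note also that the lemma's target is only $(n-1)$CY (each factor is a composite of $(n-1)$CY spans, by Proposition~\ref{prop:CYSpanComp} and Corollary~\ref{cor:CYspancomp}), while the functor out of the triple product is claimed to be $n$CY: this is a categorical ``pair of pants'', genuinely new data that cannot be produced by composing $1$-spans, so neither Proposition~\ref{prop:CYSpanComp} nor any appeal to associativity of span composition (and certainly not Theorem~\ref{thm:functoriality}, which concerns the linearization) can yield it. Once the claimed equivalence is withdrawn, the steps built on it --- ``for the second factor this is tautological'' and the treatment of the first factor separately --- collapse; in any case one cannot verify the CY property factorwise, since the defining datum is a single bicartesian square of bimodules involving the pullback of the diagonal bimodule of the \emph{whole} target.

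What the proof actually requires, and what your sketch does not supply, is a bimodule-level computation. Writing $B_i$, $A_j$ for the diagonal bimodules pulled back to $\cb_1{\times}_{\ca_1}\cb_2{\times}_{\ca_1}\cb_3$-bimodules, the key point is the identification
\[
\mathrm{fib}\bigl(B_1{\times} B_2{\times} B_3\to A_1{\times} A_2\bigr)\;\simeq\;\bigl(B_1{\times}_{A_1} B_2{\times}_{A_1} B_3\bigr)[n-1]^\vee,
\]
obtained by pasting the three given relative CY squares for $f_1$, $f_3$ and $(f_2,f_4)$ against the shifted dual of the defining bicartesian square of the triple fiber product over $\ca_1$; only after this can one assemble a window of bicartesian squares whose outer rectangle is the defining square of the asserted $n$CY structure, with the two factors of the target entering through the subquotients $B_1{\times}_{A_1}B_3$ and $\left(B_1{\times}_{A_1}B_2\right){\times}\left(B_2{\times}_{A_1}B_3\right)$ mapping onward to $\mathrm{fib}(B_2\to A_2)$. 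Your closing remark about running the window-diagram argument gestures in the right direction, but without the displayed identification, and with the wrong identification of the target category, the argument does not close.
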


\begin{remark}
The lemma has a geometric analog, which provides some intuition for why it is true.
For simplicity we consider the special case $\ca_2=*$.
Let $B_i$, $i=1,2,3$, be compact oriented (topological) manifolds with boundary with given identification $\partial B_i\cong A$ for some fixed closed oriented manifold $A$.
Construct a manifold $T$ by gluing the three manifolds with corners $B_i\times [-1,1]$ together, where $\partial B_i\times [0,1]$ is glued to $\partial B_{i+1}\times [-1,0]$, $i\in\Z/3$.
Then 
\[
\partial T=(B_1\sqcup_A B_2)\sqcup (B_1\sqcup_A B_3)\sqcup (B_2\sqcup_A B_3)
\]
and $B_1\sqcup_A B_2\sqcup_A B_3$ is a deformation retract of $T$.
\end{remark}

\begin{proof}
We will use Roman letters to denote the pullback of the various diagonal bimodules to the category of $\cb_1{\times}_{\ca_1}\cb_2{\times}_{\ca_1}\cb_3$-bimodules.
First, we claim that in the coherent diagram
\begin{equation}\label{diag:FuseTripple1}
\begin{tikzcd}
B_1{\times}_{A_1}B_2{\times}_{A_1}B_3 \arrow[r]\arrow[d] & \left(B_1{\times}_{A_1}B_2\right){\times}_{A_2}\left(B_2{\times}_{A_1}B_3\right) \arrow[d] & \\
B_1{\times}_{A_1}B_3\arrow[d]\arrow[r] & \mathrm{fib}\left(B_1{\times} B_2{\times} B_3\to A_1{\times} A_2\right)\arrow[r]\arrow[d] & B_1{\times} B_3 \arrow[d] \\
0 \arrow[r] & \mathrm{fib}(B_2\to A_2) \arrow[r] & A_1
\end{tikzcd}
\end{equation}
all squares are bicartesian.
The bottom and left rectangles (combinations of two squares) are bicartesian by definition.
That the bottom right square is bicartesian can be seen from the following diagram:
\begin{equation}\label{diag:FuseTripple2}
\begin{tikzcd}
\mathrm{fib}\left(B_1{\times} B_2{\times} B_3\to A_1{\times} A_2\right)\arrow[r]\arrow[d] & B_1{\times} B_2{\times} B_3 \arrow[r]\arrow[d] & B_1{\times} B_3 \arrow[d] \\
\mathrm{fib}\left(B_2\to A_2\right)\arrow[r]\arrow[d] & A_1{\times} B_2\arrow[r]\arrow[d] & A_1 \\
0 \arrow[r] & A_1{\times} A_2
\end{tikzcd}
\end{equation}
Finally, we have a diagram
\begin{equation}\label{diag:FuseTripple3}
\begin{tikzcd}
B_1{\times} B_2{\times} B_3 \arrow[d]\arrow[r] & A_1^3{\times} A_2 \arrow[d]\arrow[r] & A_1{\times} A_2\arrow[d] \\
0 \arrow[r] & (B_1{\times} B_2{\times} B_3)[n-2]^\vee \arrow[r] & (B_1{\times}_{A_1} B_2{\times}_{A_1} B_3)[n-2]^\vee
\end{tikzcd}
\end{equation}
where the left square comes from the assumptions of the lemma (and is bicartesian) and the right square is the shifted dual of the bicartesian square
\begin{equation}\label{diag:FuseTripple4}
\begin{tikzcd}
B_1{\times}_{A_1}B_2{\times}_{A_1} B_3 \arrow[r]\arrow[d] & A_1 \arrow[d] \\
B_1{\times} B_2{\times} B_3 \arrow[r] & A_1^3
\end{tikzcd}
\end{equation}
which comes from the definition of $B_1{\times}_{A_1}B_2{\times}_{A_1} B_3$, together with the square containing the identity map of $A_2$.
From \eqref{diag:FuseTripple3} we conclude 
\[
\mathrm{fib}(B_1{\times} B_2{\times} B_3\to A_1{\times} A_2)\cong (B_1{\times}_{A_1} B_2{\times}_{A_1} B_3)[n-1]^\vee
\]
and inserting this back into \eqref{diag:FuseTripple1} gives the required bicartesian square.
\end{proof}

\begin{proposition}\label{prop:CYSquareVComp}
Given two $n$CY squares
\[
\begin{tikzcd}
    \cc_1 \arrow[r]\arrow[d] & \cb_2 \arrow[d] \\
    \cb_1 \arrow[r] & \ca
\end{tikzcd}\qquad
\begin{tikzcd}
    \cb_2 \arrow[d] & \cc_2 \arrow[l]\arrow[d] \\
    \ca  & \cb_3 \arrow[l]
\end{tikzcd}
\]
which are compatible in the sense that the two functors $\cb_2\to\ca$ together with their CY structure are equivalent, then the (vertically) composed square
\[
\begin{tikzcd}
    \cc_1{\times}_{\cb_2}\cc_2 \arrow[r]\arrow[d] & \cb_3 \arrow[d] \\
    \cb_1 \arrow[r] & \ca
\end{tikzcd}
\]
is naturally $n$CY.
\end{proposition}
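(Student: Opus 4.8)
The plan is to reduce the statement to a pasting argument about bicartesian squares of bimodules, exactly in the spirit of the proof of Proposition~\ref{prop:CYSpanComp} and Lemma~\ref{lem:FuseTripple}. The key observation is that the vertical composite of the two squares is built by first forming the composite $1$-span $\cb_1 \leftarrow \cc_1 \to \cb_3$ obtained by pasting the two given $n$CY spans $(\cb_1 \leftarrow \cc_1 \to \cb_2)$ and $(\cb_2 \leftarrow \cc_2 \to \cb_3)$ over $\cb_2$; this composite is $n$CY by Proposition~\ref{prop:CYSpanComp}, and its CY structure is compatible with the remaining $n$CY span going to $\ca$, which is $(g_1,g_2)$-type data for the first square together with the corresponding data for the second. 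So the first step is to set up the folded picture: both input squares are, by definition, the data of an $(n-2)$CY structure on $\ca$, compatible $(n-1)$CY structures on the two functors into $\ca$, and an $n$CY structure on the appropriate span into the fiber product over $\ca$.

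First I would spell out what the composed square's $n$CY structure must be. Writing $\cc = \cc_1 \times_{\cb_2} \cc_2$, the composed square has top-right corner $\cb_3$, bottom-left $\cb_1$, bottom-right $\ca$, and we need: the $(n-2)$CY structure on $\ca$ (already given and shared), the $(n-1)$CY structures on $\cb_1 \to \ca$ and $\cb_3 \to \ca$ (given, coming from the two squares respectively), and an $n$CY structure on the functor $\cc \to \cb_1 \times_\ca \cb_3$. For the last piece I would invoke Lemma~\ref{lem:FuseTripple} with $\ca_1 = \cb_2$, $\ca_2 = \ca$, $\cb_1 \rightsquigarrow \cc_1$, $\cb_3 \rightsquigarrow \cc_2$, and $\cb_2 \rightsquigarrow$ (the middle term), or more directly apply Proposition~\ref{prop:CYSpanComp} to compose the spans $\cc_1 \to \cb_1 \times \cb_2$ and $\cc_2 \to \cb_2 \times \cb_3$ to get that $\cc \to \cb_1 \times \cb_3$ is $n$CY; I then need to promote this to the \emph{relative} statement over $\ca$, i.e. that $\cc \to \cb_1 \times_\ca \cb_3$ is $n$CY. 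This is where the analogue of the window diagram~\eqref{diag:window} enters: I would assemble the four defining bicartesian squares of the two input squares into a larger coherent diagram of $\cc$-bimodules and use the pasting law Lemma~\ref{lem:pasting-law}(3) to extract the desired bicartesian square witnessing the relative $n$CY structure on $\cc \to \cb_1 \times_\ca \cb_3$.

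Concretely, the diagram I would write down has rows and columns indexed by the three ``layers'' $\cb_1$, $\cb_2$, $\cb_3$ and pulls back everything to $\cc$-bimodules. The bottom-right corner is $\cc[n-1]^\vee$ (or $\cc[n]^\vee$ depending on the grading bookkeeping — I would fix conventions against the definition of CY square so the shift matches), and the key point is that the fiber $\mathrm{fib}(\cb_2 \to \ca)$, which appears in the middle, is identified via the $(n-2)$CY structure on $\ca$ and the $(n-1)$CY structures on $\cb_1, \cb_3$ with the shifted dual of the appropriate piece, so that the big outer rectangle becomes the shifted $\mathbf k$-dual of the span-composition square. After replacing the relevant functors by fibrations (as in Proposition~\ref{prop:CYSpanComp}, so that $\cc$ is the strict pullback and the diagonal bimodules form honest short exact sequences), the constituent squares are bicartesian by direct inspection, and Lemma~\ref{lem:pasting-law}(3) propagates this to the outer rectangle.

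The main obstacle I expect is \emph{compatibility bookkeeping}: checking that the CY structure on $\cc \to \cb_1 \times_\ca \cb_3$ produced by the pasting argument is genuinely compatible with the shared $(n-2)$CY structure on $\ca$ and with the two $(n-1)$CY structures on $\cb_1 \to \ca$ and $\cb_3 \to \ca$ inherited from the input squares — i.e. that all the coherence data (the homotopies filling the squares, not just the objects) glue correctly. This is exactly the kind of step that is ``essentially formal but tedious'', and I would handle it by noting that each compatibility is an instance of the pasting law for bicartesian squares applied to the coherent diagram above, so no genuinely new input is needed; the cyclic-invariance subtlety that complicates the strict CY case is absent since we work with weak right CY structures throughout, as emphasized in Subsection~\ref{subsec_relcy}. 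I would also remark, as in Proposition~\ref{prop:CYSpanComp}, that one may freely replace the categories involved by quasi-equivalent ones to make the relevant functors fibrations, which is harmless for the existence statement.
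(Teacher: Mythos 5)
Your overall strategy---reduce the statement to a pasting argument for bicartesian squares of $\cc$-bimodules, $\cc=\cc_1{\times}_{\cb_2}\cc_2$---is the right one and matches the paper in spirit, but the concrete reductions you propose do not typecheck, and the step they were meant to supply is precisely the crux. The two CY squares do \emph{not} give $n$CY structures on the spans $\cb_1\leftarrow\cc_1\to\cb_2$ and $\cb_2\leftarrow\cc_2\to\cb_3$: an $n$CY span requires $(n-1)$CY structures on its feet, and $\cb_1,\cb_2,\cb_3$ carry no such structures---only the functors $\cb_i\to\ca$ are $(n-1)$CY. What the squares actually provide are $n$CY structures on $\cc_1\to\cb_1{\times}_{\ca}\cb_2$ and $\cc_2\to\cb_2{\times}_{\ca}\cb_3$, relative to the $(n-1)$CY structures on these fiber products furnished by Corollary~\ref{cor:CYspancomp}. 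Consequently Proposition~\ref{prop:CYSpanComp} cannot be applied to ``compose over $\cb_2$'', and your substitution $\ca_1=\cb_2$ in Lemma~\ref{lem:FuseTripple} violates its hypotheses ($\cb_2$ is not $(n-2)$CY and $\cc_i\to\cb_2$ are not $(n-1)$CY). There is also no ``absolute'' statement that $\cc\to\cb_1\times\cb_3$ is $n$CY waiting to be ``promoted'' to the relative one, since $\cb_1\times\cb_3$ has no $(n-1)$CY structure.

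The missing ingredient is Lemma~\ref{lem:FuseTripple} applied with $\ca_1=\ca$ and $\ca_2$ trivial: it yields the $n$CY structure on $\cb_1{\times}_{\ca}\cb_2{\times}_{\ca}\cb_3\to\left(\cb_1{\times}_{\ca}\cb_3\right)\times\left(\cb_1{\times}_{\ca}\cb_2\right)\times\left(\cb_2{\times}_{\ca}\cb_3\right)$, and this is exactly where the shared $(n-2)$CY structure on $\ca$ and all three $(n-1)$CY structures $\cb_i\to\ca$ interact (via the identification of $\mathrm{fib}(\cb_1\times\cb_2\times\cb_3\to\ca)$ with a shifted dual, not of $\mathrm{fib}(\cb_2\to\ca)$ as in your sketch). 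The paper then pastes, in the staircase-shaped diagram~\eqref{diag:staircase} of $\cc$-bimodules, this FuseTripple square with the product of the two given defining squares for $\cc_1\to\cb_1{\times}_{\ca}\cb_2$ and $\cc_2\to\cb_2{\times}_{\ca}\cb_3$, the definitional squares for $\cc_1{\times}_{\cb_2}\cc_2$ and $\cb_1{\times}_{\ca}\cb_2{\times}_{\ca}\cb_3$, and their shifted duals (the latter using the $(n-1)$CY self-dualities of $\cb_1{\times}_{\ca}\cb_2$ and $\cb_2{\times}_{\ca}\cb_3$); Lemma~\ref{lem:pasting-law} then extracts the bicartesian square exhibiting $\cc\to\cb_1{\times}_{\ca}\cb_3$ as $n$CY. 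Put differently, the correct span composition is over the $(n-1)$CY category $\left(\cb_1{\times}_{\ca}\cb_2\right)\times\left(\cb_2{\times}_{\ca}\cb_3\right)$, using $\cc\simeq(\cc_1\times\cc_2){\times}_{(\cb_1{\times}_{\ca}\cb_2)\times(\cb_2{\times}_{\ca}\cb_3)}\left(\cb_1{\times}_{\ca}\cb_2{\times}_{\ca}\cb_3\right)$, not over $\cb_2$; without this input your proposed window diagram does not contain the nodes needed for the pasting to close up.
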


\begin{proof}
We have the following coherent ``staircase'' diagram:
\begin{equation}\label{diag:staircase}
\adjustbox{scale=.9}{
\begin{tikzcd}
B_2[-1] \arrow[r]\arrow[d] & C_1{\times}_{B_2}C_2 \arrow[r]\arrow[d] & B_1{\times}_AB_2{\times}_AB_3 \arrow[r]\arrow[d] & B_1{\times}_AB_3 \arrow[d] \\
0 \arrow[r] & C_1{\times}C_2 \arrow[r]\arrow[d] & (B_1{\times}_AB_2)\times (B_2{\times}_AB_3) \arrow[r]\arrow[d] & (B_1{\times}_AB_2{\times}_AB_3)[n-1]^\vee \arrow[d] \\
 & 0 \arrow[r] & C_1[n-1]^\vee{\times}C_2[n-1]^\vee \arrow[r]\arrow[d] & (C_1{\times}_{B_2}C_2)[n-1]^\vee \arrow[d] \\
 & & 0 \arrow[r] & B_2[n-2]^\vee 
\end{tikzcd}
}
\end{equation}
We claim that all squares in \eqref{diag:staircase} are bicartesian.
\begin{itemize}[label={}]
    \item 
    $\ydiagram{3,1+2,2+1}*[*(black)]{1,3+0}$: From the definition of $C_1{\times}_{B_2}C_2$.   
    \item 
    $\ydiagram{3,1+2,2+1}*[*(black)]{3+0,3+0,2+1}$: Shifted dual of the previous square.
    \item 
    $\ydiagram{3,1+2,2+1}*[*(black)]{2,3+0}$: From the definition of $B_1{\times}_AB_2{\times}_AB_3$.
    \item 
    $\ydiagram{3,1+2,2+1}*[*(black)]{3+0,2+1,2+1}$: Shifted dual of the previous square and using the $(n-1)$CY structures on $\cb_1{\times}_{\ca}\cb_2$ and $\cb_2{\times}_{\ca}\cb_3$.
    \item 
    $\ydiagram{3,1+2,2+1}*[*(black)]{3+0,1+1}$: From the $n$CY structure on the two squares to be composed.
    \item 
    $\ydiagram{3,1+2,2+1}*[*(black)]{2+1,3+0}$: From Lemma~\ref{lem:FuseTripple}.
\end{itemize}

We conclude that the square $\ydiagram{3,1+2,2+1}*[*(black)]{1+2,1+2}$ exists and is bicartesian, which completes the proof.
\end{proof}

\begin{proposition}\label{prop:CYSquareWhisker}
Suppose we have a diagram
\[
\begin{tikzcd}
\cc \arrow[r]\arrow[d] & \cb_2 \arrow[d] & \cb_3 \arrow[dl]\arrow[dr] \\
\cb_1 \arrow[r] & \ca_1 & & \ca_2
\end{tikzcd}
\]
where the square is $n$CY and the span $\ca_1\xleftarrow{}\cb_3\to\ca_2$ is $(n-1)$CY.
Then the ``whiskered'' square
\[
\begin{tikzcd}
    \cc{\times}_{\ca_1}\cb_3 \arrow[r]\arrow[d] & \cb_2{\times}_{\ca_1}\cb_3 \arrow[d] \\
    \cb_1{\times}_{\ca_1}\cb_3 \arrow[r] & \ca_2
\end{tikzcd}
\]
is naturally $n$CY.
\end{proposition}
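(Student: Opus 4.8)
The plan is to assemble the $n$CY structure on the whiskered square entirely out of CY span compositions, so that everything reduces to Proposition~\ref{prop:CYSpanComp}, Corollary~\ref{cor:CYspancomp} and Lemma~\ref{lem:FuseTripple}. Write $\cb_i':=\cb_i\times_{\ca_1}\cb_3$ and $\cc':=\cc\times_{\ca_1}\cb_3$. By definition an $n$CY structure on the whiskered square amounts to: an $(n-2)$CY structure on $\ca_2$, $(n-1)$CY structures on $\cb_1'\to\ca_2$ and $\cb_2'\to\ca_2$, and an $n$CY structure on $\cc'\to\cb_1'\times_{\ca_2}\cb_2'$, all compatible. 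The first is part of the given $(n-1)$CY structure on the span $\ca_1\leftarrow\cb_3\to\ca_2$. For the second, since $\cb_i\to\ca_1$ is $(n-1)$CY (it is a leg of the given $n$CY square), I would view it as the $(n-1)$CY span $*\leftarrow\cb_i\to\ca_1$ and compose it with the $(n-1)$CY span $\ca_1\leftarrow\cb_3\to\ca_2$; the relevant intermediate pullback is exactly $\cb_i'$, so Proposition~\ref{prop:CYSpanComp} (applied with its CY dimension equal to $n-1$) makes $\cb_i'\to\ca_2$ an $(n-1)$CY functor, and Corollary~\ref{cor:CYspancomp} then produces the $(n-1)$CY structure on $\cb_1'\times_{\ca_2}\cb_2'$ that the definition of a CY square requires.

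For the last and main piece, the $n$CY structure on $\cc'\to\cb_1'\times_{\ca_2}\cb_2'$, the plan is again to compose two $n$CY spans. The given $n$CY structure on $(g_1,g_2)\colon\cc\to\cb_1\times_{\ca_1}\cb_2$ (with $\cb_1\times_{\ca_1}\cb_2$ carrying its Corollary~\ref{cor:CYspancomp} structure, built from the $(n-1)$CY functors $f_1,f_2$) is the $n$CY span $*\leftarrow\cc\to\cb_1\times_{\ca_1}\cb_2$. On the other side, Lemma~\ref{lem:FuseTripple}, applied to the configuration $\cb_1\xrightarrow{f_1}\ca_1\xleftarrow{f_2}\cb_2$ together with the span $\cb_3\to\ca_1\times\ca_2$ (so that $\cb_3$ plays the role of the category called $\cb_2$ in that lemma), yields an $n$CY span $\cb_1\times_{\ca_1}\cb_2\leftarrow\cb_1\times_{\ca_1}\cb_3\times_{\ca_1}\cb_2\to\cb_1'\times_{\ca_2}\cb_2'$, using precisely that $\ca_1,\ca_2$ are $(n-2)$CY and $f_1,f_2$ are $(n-1)$CY. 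Composing these two $n$CY spans via Proposition~\ref{prop:CYSpanComp}, the intermediate pullback $\cc\times_{\cb_1\times_{\ca_1}\cb_2}\bigl(\cb_1\times_{\ca_1}\cb_3\times_{\ca_1}\cb_2\bigr)$ is, by transitivity of base change (the projection $\cb_1\times_{\ca_1}\cb_3\times_{\ca_1}\cb_2\to\cb_1\times_{\ca_1}\cb_2$ being the base change of $\cb_3\to\ca_1$ along $\cb_1\times_{\ca_1}\cb_2\to\ca_1$), canonically $\cc\times_{\ca_1}\cb_3=\cc'$. Hence the composite is the $n$CY span $*\leftarrow\cc'\to\cb_1'\times_{\ca_2}\cb_2'$, i.e. $\cc'\to\cb_1'\times_{\ca_2}\cb_2'$ is $n$CY; naturality propagates from that of Proposition~\ref{prop:CYSpanComp} and Lemma~\ref{lem:FuseTripple}.

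The part I expect to be genuinely fiddly, rather than formal, is verifying that the $n$CY structure on $\cc'\to\cb_1'\times_{\ca_2}\cb_2'$ obtained this way is compatible, in the sense of the CY-square definition, with the $(n-2)$CY structure on $\ca_2$ and the $(n-1)$CY structures on $\cb_1'\to\ca_2$, $\cb_2'\to\ca_2$ --- concretely, that the $(n-1)$CY structure on $\cb_1'\times_{\ca_2}\cb_2'$ used implicitly inside the composite span is the one Corollary~\ref{cor:CYspancomp} builds from the legs $\cb_i'\to\ca_2$. Confirming this forces one to unwind the window diagram~\eqref{diag:window} behind Proposition~\ref{prop:CYSpanComp} for both span compositions above, as well as the diagrams~\eqref{diag:FuseTripple1}--\eqref{diag:FuseTripple4} in the proof of Lemma~\ref{lem:FuseTripple}, and to match up the intermediate shifted $\mathbf k$-duals; the bookkeeping is of the same flavour as the staircase~\eqref{diag:staircase} in the proof of Proposition~\ref{prop:CYSquareVComp}, complicated by the fact that $\cb_3$ sits over two objects at once, so the relevant diagram of bimodules is genuinely three-dimensional. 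A clean way to organize the whole argument is to first prove, as a standalone lemma, that base-changing an $n$CY functor over $\ca_1$ along an $(n-1)$CY span $\ca_1\leftarrow\cb_3\to\ca_2$ produces an $n$CY square, and only then specialize to $(g_1,g_2)$.
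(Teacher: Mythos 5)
Your proposal is correct and follows essentially the same route as the paper: the legs of the whiskered square are handled by Proposition~\ref{prop:CYSpanComp}, and the main functor $\cc\times_{\ca_1}\cb_3\to(\cb_1\times_{\ca_1}\cb_3)\times_{\ca_2}(\cb_2\times_{\ca_1}\cb_3)$ is obtained by composing the span $*\leftarrow\cc\to\cb_1\times_{\ca_1}\cb_2$ with the Lemma~\ref{lem:FuseTripple} span (with $\cb_3$ in the role of that lemma's middle category), the cartesian square in the paper's diagram~\eqref{whiskerRoofDiag} being exactly your base-change identification of the intermediate pullback with $\cc\times_{\ca_1}\cb_3$. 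The compatibility bookkeeping you flag is real but is exactly what the cited constructions already track, so no further argument is needed beyond what the paper records.
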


\begin{proof}
The right vertical and lower horizontal arrows in the whiskered square are CY by Proposition~\ref{prop:CYSpanComp}.
It remains to show that the functor 
\[
\cc{\times}_{\ca_1}\cb_3 \longrightarrow \left(\cb_1{\times}_{\ca_1}\cb_3\right){\times}_{\ca_2}\left(\cb_2{\times}_{\ca_1}\cb_3\right)
\]
is CY.
To see this consider the coherent diagram
\begin{equation}\label{whiskerRoofDiag}
\begin{tikzcd}[column sep=tiny]
 & & \cc{\times}_{\ca_1}\cb_3 \arrow[dl]\arrow[dr] & & \\
 & \cc \arrow[dl]\arrow[dr] & & \cb_1{\times}_{\ca_1}\cb_2{\times}_{\ca_1}\cb_3 \arrow[dl]\arrow[dr] & \\
 * & & \cb_1{\times}_{\ca_1}\cb_2 & & \left(\cb_1{\times}_{\ca_1}\cb_3\right){\times}_{\ca_2}\left(\cb_2{\times}_{\ca_1}\cb_3\right)
\end{tikzcd}
\end{equation}
where the lower-left span is CY by assumption, the lower-right span is CY by Lemma~\ref{lem:FuseTripple}, and the square is cartesian.
By Proposition~\ref{prop:CYSpanComp} the composed span is CY.
\end{proof}

\begin{proposition}\label{prop:idsquare}
If $f:\cb\to\ca$ is $(n-1)$CY, then the ``identity square''
\[
\begin{tikzcd}
\cb \arrow[d,"1"']\arrow[r,"1"] & \cb \arrow[d,"f"] \\
\cb \arrow[r,"f"'] & \ca
\end{tikzcd}
\]
is $n$CY, where the connecting map $\cb[n]^\vee\to\cb$ is a zero map.
\end{proposition}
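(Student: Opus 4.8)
The plan is to unwind the definition of an $n$CY structure on a square and reduce the statement to constructing a compatible $n$CY structure on the diagonal functor $\Delta=(1,1)\colon\cb\to\cD$, where $\cD\coloneqq\cb\times_\ca\cb$ carries the $(n-1)$CY structure $\beta$ furnished by Corollary~\ref{cor:CYspancomp} (applied with the CY dimension shifted down by one) out of the two copies of the given $(n-1)$CY functor $f$. Indeed, the remaining data required by the definition of a CY square --- the $(n-2)$CY structure on $\ca$ and the $(n-1)$CY structures on $f_1=f_2=f$ --- are precisely the given ones, so it suffices to produce the $n$CY structure on $\Delta$ extending $\beta$, together with its compatibility with $\beta$ (compatibility with the other pieces being tautological).

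First I would pull the window diagram of Proposition~\ref{prop:CYSpanComp}, in its degenerate form with $\ca_1=\ca_3=*$ and $\ca_2=\ca$ (so that its outer square is exactly the CY structure $\beta$), back along $\Delta$. Restriction of scalars along a dg-functor is exact and hence preserves bicartesian squares, so this yields a coherent $3\times 3$ diagram of $\cb$-bimodules with all four small squares bicartesian:
\[
\begin{tikzcd}
\Delta^*\cD \arrow[r]\arrow[d]& \cb \arrow[r]\arrow[d]& 0 \arrow[d]\\
\cb \arrow[r]\arrow[d]& f^*\ca \arrow[r]\arrow[d]& \cb^\vee[2-n] \arrow[d] \\
0 \arrow[r] & \cb^\vee[2-n] \arrow[r] & (\Delta^*\cD)^\vee[2-n].
\end{tikzcd}
\]
The top-left square identifies $\Delta^*\cD$ with the bimodule pullback $\cb\times_{f^*\ca}\cb$ and $\Delta$ with its diagonal; the first projection $\pi_1\colon\Delta^*\cD\to\cb$ is then split by $\Delta$, and pasting the two left-hand squares (Lemma~\ref{lem:pasting-law}(3)) gives $\mathrm{cofib}(\pi_1)\simeq\cb^\vee[2-n]$, hence $\mathrm{fib}(\pi_1)\simeq\cb^\vee[1-n]$ --- which is exactly $\mathrm{fib}(f\colon\cb\to f^*\ca)$ as read off the $(n-1)$CY structure on $f$. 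Therefore the fibre sequence $\cb\xrightarrow{\Delta}\Delta^*\cD\to\cb^\vee[1-n]$ splits, so $\mathrm{cofib}(\Delta)\simeq\cb^\vee[1-n]$ and the boundary map $\cb^\vee[1-n]\to\cb[1]$ --- which up to a shift is the connecting map $\cb[n]^\vee\to\cb$ of the statement --- vanishes.

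It then remains to recognise that this splitting realises the bicartesian square prescribed by Definition~\ref{def:CYfunctor}. The outer square of the pulled-back diagram exhibits the pulled-back form $\Delta^*(\beta^{-1})\colon\Delta^*\cD\xrightarrow{\sim}(\Delta^*\cD)^\vee[1-n]$, and I would check that the composite $\psi=\Delta^\vee\circ\iota\circ\Delta^*(\beta^{-1})\colon\Delta^*\cD\to\cb^\vee[1-n]$ --- with $\iota$ the canonical equivalence $\Delta^*(\cD^\vee[1-n])\simeq(\Delta^*\cD)^\vee[1-n]$, so that $\psi$ is the right-hand vertical map demanded by the definition --- agrees, under the splitting $\Delta^*\cD\simeq\cb\oplus\cb^\vee[1-n]$, with the projection onto the second summand. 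Granting this, the square
\[
\begin{tikzcd}
\cb \arrow[r,"\Delta"]\arrow[d]& \Delta^*\cD \arrow[d,"\psi"]\\
0 \arrow[r] & \cb^\vee[1-n]
\end{tikzcd}
\]
is bicartesian with the zero nullhomotopy as its coherence datum, which is the desired $n$CY structure on $\Delta$ with vanishing connecting map; its compatibility with $\beta$ is built in, and with the $(n-1)$CY structures on $f_1,f_2$ and the $(n-2)$CY structure on $\ca$ it is the evident one. I expect this last verification --- that $\psi$ is the projection, i.e.\ that $\Delta$ is isotropic for the form $\beta$ on $\cD$, the non-commutative avatar of the fact that the diagonal of a relative Lagrangian inclusion is again Lagrangian (cf.\ the geometric picture accompanying Lemma~\ref{lem:FuseTripple}) --- to be the only non-formal step; it comes down to tracking the self-duality of the pulled-back window diagram through the canonical identifications, using the pasting laws and the {\degfin} hypothesis to identify double duals.
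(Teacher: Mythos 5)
Your proposal is correct and is essentially the paper's own argument: the proof in the text exhibits the coherent diagram of $\cb$-bimodules $\cb\to\cb\times_{f^*\ca}\cb\to\cb^2$ lying over $0\to\cb^{\vee}[1-n]\to\cb$ lying over $0\to f^{*}\ca$, and its observation that the top rectangle (hence both top squares) is split is exactly your split fibre sequence $\cb^{\vee}[1-n]\to\Delta^{*}(\cb\times_{\ca}\cb)\xrightarrow{\pi_1}\cb$ with section $\Delta$, which yields both the bicartesian square for the diagonal and the vanishing of the connecting map. The one step you defer --- that the structure map built from $\beta^{-1}$ is, under this splitting, the projection onto $\cb^{\vee}[1-n]$ --- is precisely what the coherence of that diagram packages (equivalently, the self-duality of the pulled-back window diagram, whose lower-right square is the shifted dual of its upper-left square, as noted in the proof of Proposition~\ref{prop:CYSpanComp}); the paper's two-line proof leaves this check equally implicit, and your indicated method for carrying it out is the intended one.
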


\begin{proof}
This follows from the diagram of $\cb$-bimodules
\[
\begin{tikzcd}
\cb \arrow[r]\arrow[d] & \cb{\times}_{f^*\ca}\cb \arrow[r]\arrow[d] & \cb^2 \arrow[d] \\
0 \arrow[r] & \cb[n-1]^\vee \arrow[r]\arrow[d] & \cb \arrow[d] \\
 & 0 \arrow[r] &  f^*\ca 
\end{tikzcd}
\]
where the top rectangle, and thus both top squares, are split.
\end{proof}

The above propositions show that there is a (weak) 2-category $\spancat_2^{n\mathrm{CY}}$ with
\begin{itemize}
    \item Objects: {\degfin} dg-categories over $\mathbf k$ with $(n-2)$CY structure,
    \item Morphisms: $(n-1)$CY 1-spans,
    \item 2-morphisms: 2-spans admitting an $n$CY structure, up to equivalence.
\end{itemize}

We would like to emphasize that in the definition of 2-morphisms, we only require the existence of an $n$CY structure on the underlying 2-span. The choice of such a structure is not a part of the data, cf. Remark~\ref{rem: Weinstein} below.

Given that 2-spans without CY structure form a 2-category, the main thing to check is that vertical and horizontal composition of 2-morphisms preserve the property of admitting a CY structure. 
For vertical composition this is Proposition~\ref{prop:CYSquareVComp}. 
Horizontal composition can be written in terms of vertical composition and whiskering, so one can combine Propositions~\ref{prop:CYSquareVComp} and~\ref{prop:CYSquareWhisker}.
We will not go into more detail here, since on the one hand they will not be needed for our application in the next section, and on the other because of our somewhat artificial restriction to $k$-spans with $k=2$ and homotopy 2-categories instead of $(\infty,2)$-categories.

Throughout this subsection, a \textit{2-span} has been a coherent diagram of the type~\eqref{diag:2span}. Let us refer to such diagrams as \textit{lozenge diagrams}. Instead, one could also consider ``spans of spans'', i.e. \textit{window diagrams} of the following form:
\begin{equation}\label{diag:2spanWindow}
\begin{tikzcd}
     \ca_{11} & \cb_1 \arrow[l]\arrow[r] & \ca_{12} \\
    \cb_2 \arrow[u]\arrow[d] & \cc \arrow[u]\arrow[d]\arrow[l]\arrow[r] & \cb_3\arrow[u]\arrow[d] \\
    \ca_{21} & \cb_4 \arrow[l]\arrow[r] & \ca_{22}
\end{tikzcd}
\end{equation}
Indeed, we will encounter such diagrams in Section~\ref{sec:legendrian}.
There are several ways of turning such a diagram into a lozenge diagram by partially composing the \textit{boundary 1-spans} with apex $\cb_i$, $i=1,2,3,4$.
For example, composing the top span with the right span and the left span with the bottom span yields:
\begin{equation}\label{lozengeFromWindow}
\begin{tikzcd}
      & \cb_1 \times_{\ca_{12}}\cb_3 \arrow[dl]\arrow[dr] &  \\
     \ca_{11} & \cc \arrow[u]\arrow[d] & \ca_{22} \\
     &  \cb_2\times_{\ca_{21}} \cb_4 \arrow[ul]\arrow[ur] & 
\end{tikzcd}
\end{equation}
Alternatively, one could compose the left with the top spans and the right with the bottom span, or any three of the four spans.
For our purposes, it is important that the \textit{boundary categories} of all these diagrams are the same, as is made precise in the following lemma.

\begin{lemma}\label{lem:funToBoundary}
There is a canonical equivalence
\begin{equation*}
\lim\left(\adjustbox{scale=.9}{\begin{tikzcd}
     \ca_{11} & \cb_1 \arrow[l]\arrow[r] & \ca_{12} \\
    \cb_2 \arrow[u]\arrow[d] &  & \cb_3\arrow[u]\arrow[d] \\
    \ca_{21} & \cb_4 \arrow[l]\arrow[r] & \ca_{22}
\end{tikzcd}}\right) \cong
\lim\left(\adjustbox{scale=.9}{
\begin{tikzcd}
      & \cb_1 \times_{\ca_{12}}\cb_3 \arrow[dl]\arrow[dr] &  \\
     \ca_{11} &  & \ca_{22} \\
     &  \cb_2\times_{\ca_{21}} \cb_4 \arrow[ul]\arrow[ur] & 
\end{tikzcd}}\right) 
\end{equation*}
under $\cc$, which maps to both by the universal property.
Variants of this statement hold where the right-hand-side is replaced by the other lozenges obtained by composing some of the edges of the original window.
\end{lemma}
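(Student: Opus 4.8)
The plan is to prove this by an iterated application of the pasting law for limits (Lemma~\ref{lem:pasting-law}), working one corner of the window at a time. The point is purely formal: both sides are limits of diagrams, and we want to exhibit a cofinal (or, dually, a ``limit-absorbing'') manipulation of the index category that turns the window-shaped diagram into the lozenge-shaped one. Concretely, the limit over the window diagram \eqref{diag:2spanWindow} with the central $\cc$ removed is the same as the iterated limit in which we first form the four fiber products $\cb_1\times_{\ca_{12}}\cb_3$, $\cb_2\times_{\ca_{21}}\cb_4$ (and the remaining two boundary corners if one chooses a different lozenge), and then take the limit of the resulting smaller diagram.

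First I would set up notation: write $W$ for the punctured window shape (the poset whose nerve indexes \eqref{diag:2spanWindow} without the interior object $\cc$), and observe that the $8$ outer objects together with their maps into the $4$ ``bridge'' objects $\ca_{1j},\ca_{2j}$ organize into four commuting wedges. Then I would invoke the standard fact that a limit over a diagram may be computed by first taking limits over full subdiagrams that are ``downward closed'' in the appropriate sense and then taking the limit of the quotient diagram; in $\infty$-categorical language this is the computation of a limit via a right Kan extension along an inclusion, i.e.\ the content of the pasting law iterated. Specifically: the cospan $\cb_1\to\ca_{12}\leftarrow\cb_3$ sits as a full subdiagram of $W$, and replacing it by its limit $\cb_1\times_{\ca_{12}}\cb_3$ together with the induced maps to $\ca_{11}$ and $\ca_{22}$ does not change the total limit, by Lemma~\ref{lem:pasting-law}(2) applied to the relevant square (the square whose corners are the total limit, $\cb_2\times_{\ca_{21}}\cb_4$, $\cb_1\times_{\ca_{12}}\cb_3$, and the product $\ca_{11}\times\ca_{22}$, with the outer rectangle being the full window limit). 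Doing the same replacement for the lower cospan $\cb_2\to\ca_{21}\leftarrow\cb_4$ yields exactly the lozenge on the right-hand side, and its limit agrees with the left-hand side. The equivalence is canonical because at each step the comparison map is the one furnished by the universal property, and $\cc$ maps compatibly to every object in sight by construction of the window diagram, hence to both iterated limits.

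For the ``variants'': the other lozenges correspond to choosing a different pair of cospans (or a path through three of the four bridge objects) to contract first; the same iterated-pasting argument applies verbatim, and since each contraction step is a limit computation it commutes with the others, so all choices give canonically equivalent results, all receiving a canonical map from $\cc$. I would phrase the final statement as: each of the displayed limits is the limit of the restriction of the window diagram to a cofinal subcategory, hence they are all canonically equivalent and the maps from $\cc$ are compatible.

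The main obstacle is bookkeeping rather than mathematics: one must be careful that the ``partial composition'' is taken in the homotopy-coherent sense, so that the squares to which Lemma~\ref{lem:pasting-law} is applied are genuinely coherent squares and not merely commuting-up-to-homotopy ones, and that the comparison $2$-cells assemble coherently. In the model of quasi-categories this is automatic once one observes that forming the fiber product of a cospan that is a full subdiagram is the same as right Kan extending along the inclusion of that cospan into the whole shape, and right Kan extensions compose; but spelling this out carefully is the only place where any care is needed. I would therefore keep the written proof short, citing Lemma~\ref{lem:pasting-law} for the one nontrivial step and leaving the (purely formal) coherence verification to the reader.
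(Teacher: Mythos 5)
Your argument is correct and is essentially the paper's own proof, which consists of the single remark that limits can be computed iteratively; your elaboration via right Kan extensions along the inclusions of the two cospans (equivalently, iterated application of the pasting law) is exactly that fact spelled out. One phrasing caveat: the lozenge is not the restriction of the window to a cofinal subcategory --- its apices $\cb_1\times_{\ca_{12}}\cb_3$ and $\cb_2\times_{\ca_{21}}\cb_4$ are not objects of the window shape --- so your closing ``cofinal subcategory'' gloss should be replaced by the Kan-extension/iterated-limit formulation you already give in the body of the argument.
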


\begin{proof}
A special case of the fact that limits can be computed iteratively.
\end{proof}

\begin{remark} \label{rem: Weinstein}
After the first version of our paper was posted on the arXiv, the work~\cite{BCS}  
appeared, where an $(\infty,k)$-category of $n$CY $k$-spans is constructed.
More precisely, these authors mainly focus on the dual notion of \textit{left} CY \textit{cospans}, while the case of right CY spans is briefly treated in \cite[Appendix B]{BCS} (in the proper and strong CY case, but as sketched in \cite[Remark B.1]{BCS}, the same construction should work in the case of weak CY structures in a larger generality). The latter should in particular recover the 2-category $\spancat_2^{n\mathrm{CY}}$;
however, we still need the proofs and diagrams in this subsection for the discussion in the following Section~\ref{subsec_lin2cat}.
\end{remark}

\subsection{Linearized 2-category}
\label{subsec_lin2cat}

The plan of this subsection is to take $\spancat_2$ as a starting point and ``linearize its 2-morphisms'', that is, construct another 2-category, $L_2\spancat_2$, whose 2-morphisms between a pair of 1-morphisms form a vector space, together with a functor
\[
\hdff:\spancat_2\to L_2\spancat_2
\]
which is the identity on objects and 1-morphisms.
A variant of this which makes use of the formalism of Section~\ref{subsec_hcpercy} yields an analogous 2-functor, for even $n$,
\[
\hdffx:\spancat_2^{n\mathrm{CY}}\to L_2\spancat_2^{n\mathrm{CY}}
\]
which categorifies $\hdfx$.
Throughout this subsection we restrict for simplicity to spaces with finite $\pi_0$ and dg-categories with a finite number of isomorphism classes of objects. 

\begin{remark}
Instead of linearizing only the 2-morphisms, one can construct a functor $\spancat_2\to \vectcat_2(\mathbf k)$ to the (strict) 2-category of $\mathbf k$-linear categories, taking a space $X$ to its category of local systems, and which linearizes both 1-morphisms and 2-morphisms. 
For an $(\infty,2)$-categorical version of this functor see~\cite[Remark 4.2.5]{LurieHopkins}, where the authors partially prove 2-functoriality. 
The restriction of the functor to 1-groupoids was constructed by Morton~\cite{morton11}.
\end{remark}

\begin{definition}
    Define the 2-category $L_2\spancat_2$ whose objects and 1-morphisms are the same as those of $\spancat_2$ and whose 2-morphisms from a 1-span $B_1\to A_1{\times} A_2$ to a 1-span $B_2\to A_1{\times} A_2$ are $\hdf(B_1{\times}_{A_1{\times} A_2} B_2)$.
    The remaining structure of a 2-category is defined below.
\end{definition}

Vertical composition in $L_2\spancat_2$ is defined as follows.
Given a triple of 1-spans $B_i\to A_1{\times} A_2\eqqcolon A$, $i\in\{1,2,3\}$, we need a linear map
\[
\hdf(B_1{\times}_{A} B_2)\otimes \hdf(B_2{\times}_{A} B_3)\longrightarrow \hdf(B_1{\times}_{A} B_3)
\]
which we take to be the image under $\hdf$ of the 1-span
\[
B_1{\times}_{A} B_2\longleftarrow B_1{\times}_{A} B_2{\times}_A B_3\longrightarrow (B_2{\times}_{A} B_3){\times}(B_1{\times}_{A} B_3).
\]
Associativity of vertical composition follows from functoriality of $\hdf$.

Let us turn to horizontal composition (of which whiskering --- horizontal composition with identity 2-morphisms --- is a special case).
For a quadruple of 1-spans 
\begin{equation*}
\begin{tikzcd}
    & B_1 \arrow[dl]\arrow[dr] & & B_3 \arrow[dl]\arrow[dr] \\
    A_1 &  & A_2 & & A_3\\
    & B_2 \arrow[ul]\arrow[ur] & & B_4 \arrow[ul]\arrow[ur]
\end{tikzcd}
\end{equation*}
we need a linear map
\[
\hdf(B_1{\times}_{A_1{\times}A_2} B_2)\otimes \hdf(B_3{\times}_{A_2{\times}A_3} B_4)\longrightarrow \hdf\left((B_1{\times}_{A_2} B_3){\times}_{A_1{\times}A_3} (B_2{\times}_{A_2}B_4)\right)
\]
which comes from applying $\hdf$ to the 1-span
\[
\begin{tikzcd}[column sep=-1.5cm]
& (B_1{\times}_{A_1} B_2){\times}_{A_2} (B_3{\times}_{A_3} B_4) \arrow[dl]\arrow[dr] \\
(B_1{\times}_{A_1{\times}A_2} B_2){\times} (B_3{\times}_{A_2{\times}A_3} B_4) & & (B_1{\times}_{A_2} B_3){\times}_{A_1{\times}A_3} (B_2{\times}_{A_2}B_4)    
\end{tikzcd}
\]
or, more schematically:
\[
\lim\left(\adjustbox{scale=.4}{\begin{tikzcd}
    & \bullet \arrow[dl]\arrow[dr] & & & \bullet \arrow[dl]\arrow[dr] \\
    \bullet &  & \bullet & \bullet & & \bullet\\
    & \bullet \arrow[ul]\arrow[ur] & & & \bullet \arrow[ul]\arrow[ur]
\end{tikzcd}}\right)
\longleftarrow
\lim\left(\adjustbox{scale=.4}{\begin{tikzcd}
    & \bullet \arrow[dl]\arrow[dr] & & \bullet \arrow[dl]\arrow[dr] \\
    \bullet &  & \bullet & & \bullet\\
    & \bullet \arrow[ul]\arrow[ur] & & \bullet \arrow[ul]\arrow[ur]
\end{tikzcd}}\right)
\longrightarrow
\lim\left(\adjustbox{scale=.4}{\begin{tikzcd}
    & \bullet \arrow[dl]\arrow[r] & \bullet & \bullet \arrow[l]\arrow[dr] \\
    \bullet &  & & & \bullet\\
    & \bullet \arrow[ul]\arrow[r] & \bullet & \bullet \arrow[l]\arrow[ur]
\end{tikzcd}}\right)
\]

The 2-identity of a 1-span $B\to A_1{\times}A_2$ is the element
\[
\hdf\left(*\leftarrow B\to B{\times}_{A_1{\times} A_2} B\right)(1)\in\hdf(B{\times}_{A_1{\times} A_2} B).
\]

Next, we turn to the construction of the linearization functor.
\begin{definition}
    Define the 2-functor $\hdff:\spancat_2\to L_2\spancat_2$ to be the identity on objects and 1-morphisms and sending a 2-span
    \begin{equation}\label{2spanFor2Functor}
    \begin{tikzcd}
        & B_1 \arrow[dl,"f_{11}"']\arrow[dr,"f_{21}"] & \\
        A_1 & C \arrow[u,"g_1"]\arrow[d,"g_2"] & A_2 \\
        & B_2 \arrow[ul,"f_{12}"]\arrow[ur,"f_{22}"'] &
    \end{tikzcd}
    \end{equation}
    to 
    \[
    \hdf\left(*\leftarrow C\to B_1{\times}_{A_1{\times} A_2} B_2\right)(1)\in\hdf(B_1{\times}_{A_1{\times} A_2} B_2).
    \]
\end{definition}

\begin{proposition}\label{prop_hdff2functor}
    $\hdff$ is a 2-functor.
\end{proposition}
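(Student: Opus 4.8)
The plan is to verify the three compatibility conditions for $\hdff$ to be a 2-functor: compatibility with vertical composition of 2-morphisms, compatibility with horizontal composition of 2-morphisms, and preservation of identity 2-morphisms. Throughout, the key principle is that all the structure maps of $L_2\spancat_2$ were \emph{defined} as images under $\hdf$ of certain 1-spans, and $\hdf$ itself is a functor on $\spancat$ (Proposition~\ref{prop:PushPull} and the discussion following it). So every identity to be checked will reduce to an equivalence of 1-spans, which in turn reduces --- via Lemma~\ref{lem:funToBoundary} --- to the statement that limits can be computed iteratively and that various pullbacks agree.

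First I would treat vertical composition. Given two vertically composable 2-spans with apexes $C_1$ (between $B_1$ and $B_2$) and $C_2$ (between $B_2$ and $B_3$) over $A = A_1 \times A_2$, their composite 2-span has apex $C_1 \times_{B_2} C_2$. Applying $\hdff$ to the composite gives $\hdf(* \leftarrow C_1\times_{B_2} C_2 \to B_1\times_A B_3)(1)$. On the other hand, vertical composition in $L_2\spancat_2$ of $\hdff$ of the two pieces is computed by first taking $\hdf(* \leftarrow C_i \to B_i\times_A B_{i+1})(1)$ and then applying the map induced by the 1-span $B_1\times_A B_2 \leftarrow B_1\times_A B_2\times_A B_3 \to (B_2\times_A B_3)\times(B_1\times_A B_3)$. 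By functoriality of $\hdf$, the composite of these two span-maps is $\hdf$ applied to the composite span, and the composite span has apex $C_1\times_{B_1\times_A B_2}(B_1\times_A B_2\times_A B_3)\times_{(B_2\times_A B_3)}C_2 \cong C_1\times_{B_2} C_2$, using associativity of pullbacks and the fact that $C_i \to B_i\times_A B_{i+1}$ factors appropriately. Pushing forward $1\in\hdf(*)$ through both sides gives the same element, so the two agree. The horizontal-composition check is entirely analogous but bookkeeping-heavier: one expands both sides as $\hdf$ applied to a span whose apex is, in both cases, $(C_1\times_{A_2} C_3)$ or rather $C \times_{B_1\times_A B_2}(\cdots)\times_{B_3\times_{A'} B_4} C'$, and one matches the two apexes using iterated limits, exactly the kind of rearrangement packaged by Lemma~\ref{lem:funToBoundary}. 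The identity-2-morphism check is immediate: the 2-identity of $B\to A_1\times A_2$ in $\spancat_2$ is the identity square (the lozenge with apex $B$ and all legs identities/structure maps), and $\hdff$ of it is by definition $\hdf(*\leftarrow B\to B\times_{A_1\times A_2}B)(1)$, which is precisely the 2-identity in $L_2\spancat_2$.

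A point requiring a little care is that ``2-functor'' for weak 2-categories means the compatibility identities should hold, and here all the relevant 2-morphism sets are honest vector spaces and all higher coherences are invisible, so it suffices to check the equalities of linear maps on the standard generators, i.e. on the images of $1\in\hdf(*)$ under the various spans; I would make this reduction explicit at the outset. I expect the main obstacle to be purely organizational: tracking which pullbacks get identified with which under the iterated-limit reshuffling in the horizontal-composition case, since the 1-spans involved are built from the window-to-lozenge manipulations of Lemma~\ref{lem:funToBoundary} and one must be careful that the equivalences of apexes are the ones induced by the universal properties (so that pushing $1$ forward is genuinely the same on both sides), rather than merely abstract equivalences. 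Once the bookkeeping is set up so that every composite span appearing is recognized, via Lemma~\ref{lem:funToBoundary} and associativity of pullback, as $\hdf$ of a single canonical span, the functoriality of $\hdf$ does all the real work and the proof concludes.

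\begin{proof}
Since $\hdff$ is the identity on objects and 1-morphisms, it remains to check compatibility with vertical and horizontal composition of 2-morphisms and preservation of 2-identities. All 2-morphism spaces in $L_2\spancat_2$ are vector spaces of the form $\hdf(B_1\times_{A_1\times A_2}B_2)$, and every structural operation (vertical composition, horizontal composition, identities) was defined as the image under the functor $\hdf:\spancat\to\vectcat$ of an explicit 1-span. Hence it suffices to verify, in each case, an equivalence of the relevant 1-spans in $\spancat$, after which the desired identity of linear maps follows from the functoriality of $\hdf$ established in Proposition~\ref{prop:PushPull}.

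\emph{Vertical composition.} Let two 2-spans with apexes $C_1$ (from $B_1$ to $B_2$) and $C_2$ (from $B_2$ to $B_3$), all over $A\coloneqq A_1\times A_2$, be given, with composite 2-span of apex $C_1\times_{B_2}C_2$. Then $\hdff$ of the composite is $\hdf\big(*\leftarrow C_1\times_{B_2}C_2\to B_1\times_A B_3\big)(1)$. On the other hand, the vertical composite in $L_2\spancat_2$ of $\hdff(C_1)$ and $\hdff(C_2)$ is obtained by composing the 1-span $*\leftarrow C_1\to B_1\times_A B_2$ with $*\leftarrow C_2\to B_2\times_A B_3$ and then with $B_1\times_A B_2\leftarrow B_1\times_A B_2\times_A B_3\to (B_2\times_A B_3)\times(B_1\times_A B_3)$, evaluated on $1$. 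By functoriality of $\hdf$ and associativity of pullback, this composite span has apex equivalent to $C_1\times_{B_2}C_2$ and is the canonical span $*\leftarrow C_1\times_{B_2}C_2\to B_1\times_A B_3$, so the two agree.

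\emph{Identities.} The 2-identity of a 1-span $B\to A_1\times A_2$ in $\spancat_2$ is the identity lozenge with apex $B$; applying $\hdff$ gives $\hdf(*\leftarrow B\to B\times_{A_1\times A_2}B)(1)$, which is by definition the 2-identity in $L_2\spancat_2$.

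\emph{Horizontal composition.} Given a quadruple of 1-spans as in the definition of horizontal composition, with 2-spans of apexes $C$ (from $B_1$ to $B_2$, over $A_1\times A_2$) and $C'$ (from $B_3$ to $B_4$, over $A_2\times A_3$), the horizontal composite 2-span has apex the limit of the corresponding window diagram. Applying $\hdff$ to it yields $\hdf$ of the span from $*$ to $(B_1\times_{A_2}B_3)\times_{A_1\times A_3}(B_2\times_{A_2}B_4)$ with this apex. The horizontal composite in $L_2\spancat_2$ of $\hdff(C)$ and $\hdff(C')$ is computed by composing $*\leftarrow C\to B_1\times_{A_1\times A_2}B_2$ and $*\leftarrow C'\to B_3\times_{A_2\times A_3}B_4$ with the span
\[
(B_1\times_{A_1\times A_2}B_2)\times(B_3\times_{A_2\times A_3}B_4)\leftarrow (B_1\times_{A_1}B_2)\times_{A_2}(B_3\times_{A_3}B_4)\to (B_1\times_{A_2}B_3)\times_{A_1\times A_3}(B_2\times_{A_2}B_4)
\]
and evaluating on $1$. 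By functoriality of $\hdf$, this is $\hdf$ of the composite span, whose apex is, using associativity of pullback, equivalent to the limit of the window diagram; the identification of the two apexes is the canonical one of Lemma~\ref{lem:funToBoundary} coming from iterated computation of limits, hence compatible with the maps to the two boundary categories and with the structure maps from $C$ and $C'$. Therefore pushing $1$ forward gives the same element on both sides. This proves $\hdff$ respects horizontal composition.

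Combining the three compatibilities, $\hdff$ is a 2-functor.
\end{proof}
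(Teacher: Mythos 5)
Your proof is correct and takes essentially the same approach as the paper's: both reduce each compatibility to the functoriality of $\hdf$ applied to a composite of 1-spans, identifying the apex of the composite ($C_1\times_{B_2}C_2$ for vertical composition, $C\times_{A_2}C'$ for the horizontal case) via the relevant cartesian/iterated-pullback squares, with preservation of 2-identities holding by definition. The only cosmetic difference is that the paper checks whiskering (quoting the diagrams \eqref{diag:staircase} and \eqref{whiskerRoofDiag}) and relies on horizontal composition being generated by vertical composition and whiskering, whereas you verify full horizontal composition directly --- the underlying pullback identifications are the same.
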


\begin{proof}
Compatibility with vertical composition follows from functoriality of $\hdf$ for the following composition of 1-spans, part of \eqref{diag:staircase}:
\begin{equation*}
\begin{tikzcd}
C_1{\times}_{B_2}C_2 \arrow[r]\arrow[d] & B_1{\times}_AB_2{\times}_AB_3 \arrow[r]\arrow[d] & B_1{\times}_AB_3 \\
C_1{\times}C_2 \arrow[r]\arrow[d] & (B_1{\times}_AB_2)\times (B_2{\times}_AB_3) &  \\
* 
\end{tikzcd}
\end{equation*}
Compatibility with whiskering follows in the same way from~\eqref{whiskerRoofDiag}.
2-identities are preserved by definition.
\end{proof}

We now replace $\spancat_2$ by $\spancat_2^{n\mathrm{CY}}$ and $\hdf$ by $\hdfx$ to construct a functor $\hdffx:\spancat_2^{n\mathrm{CY}}\to L_2\spancat_2^{n\mathrm{CY}}$. 

\begin{definition}
    Suppose $n$ is even. Define the 2-functor $\hdffx:\spancat_2^{n\mathrm{CY}}\to L_2\spancat_2^{n\mathrm{CY}}$ to be the identity on objects and 1-morphisms and sending a 2-span of the form~\eqref{2spanFor2Functor}
    to 
    \[
    \hdfx\left(*\leftarrow C\to B_1{\times}_{A_1{\times} A_2} B_2\right)(1)\in\hdfx(B_1{\times}_{A_1{\times} A_2} B_2).
    \]
\end{definition}

\begin{proposition}\label{prop_hdffx2functor}
    $\hdffx$ is a 2-functor.
\end{proposition}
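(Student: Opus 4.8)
The strategy is to mimic the proof of Proposition~\ref{prop_hdff2functor}, replacing every appeal to functoriality of $\hdf$ by an appeal to functoriality of $\hdfx$, i.e. to Theorem~\ref{thm:functoriality}. The key observation is that all the 1-spans appearing in that proof --- the composition of 1-spans extracted from the staircase diagram~\eqref{diag:staircase} and the roof diagram~\eqref{whiskerRoofDiag} --- are built by composing the boundary 1-spans of our CY 2-spans, and each such composite carries an $n$CY structure by the results of Section~\ref{subsec:cy2spans}. Since $n$ is assumed even, Theorem~\ref{thm:functoriality} then makes $\hdfx$ functorial on these spans, which is exactly what is needed.

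First I would check compatibility with vertical composition. Given two vertically composable $n$CY 2-spans as in Proposition~\ref{prop:CYSquareVComp}, the composite is $n$CY by that proposition, and the relevant diagram is the left portion of the staircase~\eqref{diag:staircase}: the 1-span
\[
* \longleftarrow C_1{\times}_{B_2}C_2 \longrightarrow B_1{\times}_AB_3
\]
factors as the composition (in $\spancat^{n\mathrm{CY}}$) of $*\leftarrow C_1{\times}C_2$ with the span $C_1{\times}C_2 \leftarrow C_1{\times}_{B_2}C_2 \to B_1{\times}_AB_2{\times}_AB_3$ and then with $B_1{\times}_AB_2{\times}_AB_3 \to B_1{\times}_AB_3$; the last two of these are $n$CY because the squares in~\eqref{diag:staircase} are bicartesian (this was established inside the proof of Proposition~\ref{prop:CYSquareVComp}), and the corresponding squares of cores are cartesian. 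One must check the properness hypotheses on the $f_1^\sim$ legs, but these follow from the finiteness assumptions built into $\spancat_2^{n\mathrm{CY}}$ (each 2-span admits an $n$CY structure and we work with finitely many isomorphism classes of objects, so all the relevant $\pi_0$-maps have finite fibers). Applying Theorem~\ref{thm:functoriality} to this composite then gives the desired identity $\hdffx(\text{composite}) = (\text{vertical composition in } L_2\spancat_2^{n\mathrm{CY}}) \circ (\hdffx \otimes \hdffx)$, where vertical composition in the target was itself defined via $\hdfx$ applied to the middle leg $B_1{\times}_A B_2 \leftarrow B_1{\times}_A B_2{\times}_A B_3 \to (B_2{\times}_AB_3){\times}(B_1{\times}_AB_3)$.

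Next, compatibility with whiskering is handled identically, using that the whiskered square is $n$CY by Proposition~\ref{prop:CYSquareWhisker} and the coherent diagram~\eqref{whiskerRoofDiag}, whose lower-left and lower-right spans are $n$CY (by assumption and by Lemma~\ref{lem:FuseTripple}, respectively) and whose square is cartesian; Theorem~\ref{thm:functoriality} applied to the composed span in~\eqref{whiskerRoofDiag} yields the required compatibility with horizontal composition by identity 2-morphisms. Since horizontal composition in a weak 2-category is recovered from vertical composition and whiskering via $\beta*\alpha\cong(\beta*1_{f'})(1_g*\alpha)$, and both factors are handled by the above, general horizontal composition is then automatic. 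Finally, 2-identities are preserved by definition, just as in the proof of Proposition~\ref{prop_hdff2functor}: the 2-identity of a 1-span $B\to A_1{\times}A_2$ is $\hdfx(*\leftarrow B\to B{\times}_{A_1{\times}A_2}B)(1)$, which is precisely the image under $\hdffx$ of the identity CY 2-span furnished by Proposition~\ref{prop:idsquare} (note that the connecting map in that identity square is zero, so the $\gamma$-exponent in~\eqref{CYSpanLinearization} is unaffected).

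\textbf{Main obstacle.} The routine but genuinely necessary point is the coherence bookkeeping: one must verify that the $n$CY structures on the composed 1-spans extracted from~\eqref{diag:staircase} and~\eqref{whiskerRoofDiag} agree, as CY structures, with the ones obtained by the composition law of $\spancat^{n\mathrm{CY}}$ (Proposition~\ref{prop:CYSpanComp}), so that Theorem~\ref{thm:functoriality} applies verbatim. This is exactly the content that was packaged into Propositions~\ref{prop:CYSquareVComp} and~\ref{prop:CYSquareWhisker} --- indeed those propositions were proved by exhibiting precisely the bicartesian diagrams needed here --- so the obstacle is more a matter of carefully invoking the right diagram than of proving anything new; as in the $1$-categorical case, we will not spell out every coherence but will indicate the diagrams that make it work.
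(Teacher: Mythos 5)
Your proposal is correct and follows essentially the same route as the paper: the paper's proof likewise reduces to the argument for $\hdff$ (vertical composition via the span extracted from~\eqref{diag:staircase}, whiskering via~\eqref{whiskerRoofDiag}, 2-identities by definition), replacing $\hdf$ by $\hdfx$, invoking Theorem~\ref{thm:functoriality}, and noting that the relevant 1-spans are CY by the definition of the source 2-category together with Lemma~\ref{lem:FuseTripple}. Your additional remarks on properness and on the identity square are consistent with the paper's standing finiteness assumptions and Proposition~\ref{prop:idsquare}.
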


\begin{proof}
The proof is similar to that of Proposition~\ref{prop_hdff2functor}, but replacing $\hdf$ by $\hdfx$ and using its functoriality (Theorem~\ref{thm:functoriality}). 
We also need that the 1-spans in the diagram in that proof are CY, which follows by definition of the source category and Lemma~\ref{lem:FuseTripple}.
\end{proof}

For computing vertical composition in $\spancat_2^{n\mathrm{CY}}$, the following formula will prove to be useful.

\begin{lemma}\label{lem:gammaTrippleFormula}
Suppose $n$ is even. For $b\in\cb_1{\times_{\ca}}\cb_2{\times}_{\ca}\cb_3$, let 
\begin{gather*}
\gamma_{\mathrm{triple}}\coloneqq\gamma\left(\cb_1{\times_{\ca}}\cb_2{\times}_{\ca}\cb_3  \longrightarrow \left(\cb_1{\times}_{\ca}\cb_3\right)\times\left(\cb_1{\times}_{\ca}\cb_2\right){\times}\left(\cb_2{\times}_{\ca}\cb_3\right),b\right) \\
\eta\coloneqq\delta(*{\times}_{\ca{\times}\ca}(\cb_1{\times}\cb_2{\times}\cb_3)),\qquad \beta_{ij}\coloneqq\delta(\cb_i{\times}_\ca\cb_j)
\end{gather*}
then
\begin{equation*}
\gamma_{\mathrm{triple}}=
\begin{cases}
2\eta-\beta_{12}-\beta_{13}-\beta_{23}-\frac{1}{2}\langle a,a\rangle_{0,\ldots,n-2} \, & \mbox{for} \,\, n \geq 2;\\
2\eta-\beta_{12}-\beta_{13}-\beta_{23}+\frac{1}{2}\langle a,a\rangle_{n-1,\ldots,-1} \, & \mbox{for} \,\, n \leq 0,
\end{cases}
\end{equation*}
where $a$ is the image of $b$ in $\ca$.
\end{lemma}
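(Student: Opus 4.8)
The idea is to read off $\gamma_{\mathrm{triple}}$ from the long exact sequence of the defining bicartesian square of the $n$CY structure on the functor
\[
f\colon\cb_1{\times}_{\ca}\cb_2{\times}_{\ca}\cb_3\longrightarrow \left(\cb_1{\times}_{\ca}\cb_3\right)\times\left(\cb_1{\times}_{\ca}\cb_2\right)\times\left(\cb_2{\times}_{\ca}\cb_3\right),
\]
whose existence is Lemma~\ref{lem:FuseTripple}, and then to identify the various ranks appearing there with the defects $\eta$, $\beta_{ij}$ and with $\langle a,a\rangle$ in the relevant range of degrees by applying the defect-additivity Lemma~\ref{lem:DefectAdditivity} to the window diagram~\eqref{diag:FuseTripple1} (and its auxiliary diagrams~\eqref{diag:FuseTripple2}--\eqref{diag:FuseTripple4}) evaluated at $(b,b)$. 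This is exactly the same bookkeeping as in the proof of Theorem~\ref{thm:functoriality}, only with one more category in the fibre product, so I would model the proof on that argument.

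First I would recall that, by the definition of $\gamma$ in Definition~\ref{def:LagrInt}, for $n\ge 0$ we have $\gamma_{\mathrm{triple}}=\langle b,b\rangle_{0,\ldots,n-1}+\operatorname{rk}(\Ext^n_f(b,b)^\vee\to\Ext^0_f(b,b))$, where the rank term is the defect of the outer rectangle associated with $f$ via the long exact sequence~\eqref{relcyLES}; an analogous formula holds for $n<0$ with $\langle b,b\rangle_{n,\ldots,-1}$. Next, using the $n$CY long exact sequence for $f$ together with the fact that the target is a product of three $(n-1)$CY spans, I would express $\langle b,b\rangle_{0,\ldots,n-1}$ in terms of $\langle \pi_i(b),\pi_i(b)\rangle_{0,\ldots,n-1}$ for the three coordinate projections $\pi_i$ (onto $\cb_i{\times}_\ca\cb_j$) and the corresponding connecting-map ranks; since each $\cb_i{\times}_\ca\cb_j$ is $(n-1)$CY, those truncated self-pairings vanish in the odd case but \emph{not} in general, which is why a residual $\langle a,a\rangle$-term survives. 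The cleanest route is to run the same three-step filtration / spectral-sequence computation used in Subsection~\ref{subsec:hallOddCY} (the $4\times 4$-matrix differential and formula~\eqref{dim_ext_and_4x4matrix}) but now on the iterated fibre product, so that the alternating sums collapse to a single connecting rank.

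The computational heart is then to match ranks: apply Lemma~\ref{lem:DefectAdditivity} to the three nested bicartesian squares in~\eqref{diag:FuseTripple1}, whose defects I would call $\eta$ (from the bottom-left square, which by~\eqref{diag:FuseTripple2} has defect equal to $\delta(*{\times}_{\ca{\times}\ca}(\cb_1{\times}\cb_2{\times}\cb_3))$), the three $\beta_{ij}$ (the defects of the $\cb_i{\times}_\ca\cb_j$-spans, appearing as defects of the squares built from~\eqref{diag:FuseTripple3}--\eqref{diag:FuseTripple4}), and the remaining squares whose defects involve $\langle a,a\rangle$ in degrees $0,\ldots,n-2$ (resp.\ $n-1,\ldots,-1$). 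Additivity of the defect under pasting, exactly as in the $\gamma(y)=2\delta(y)+\cdots$ computation at the end of the proof of Theorem~\ref{thm:functoriality}, forces
\[
\gamma_{\mathrm{triple}}=2\eta-\beta_{12}-\beta_{13}-\beta_{23}-\tfrac12\langle a,a\rangle_{0,\ldots,n-2}
\]
for $n\ge 2$, and the sign-and-range change to $+\tfrac12\langle a,a\rangle_{n-1,\ldots,-1}$ for $n\le 0$ comes out of the corresponding rewriting of $\langle b,b\rangle_{n,\ldots,-1}$; the factor $2$ on $\eta$ is the same ``fusion of three legs at one vertex'' phenomenon (each of the three pairwise spans shares the boundary category $\ca$), and the coefficient $\tfrac12$ on $\langle a,a\rangle$ reflects that $\ca$ is only $(n-2)$CY so its self-pairing in the middle range is not forced to vanish but is genuinely halved, as in~\eqref{truncatedEulerCY}. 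I expect the main obstacle to be purely combinatorial: keeping the signs and the exact degree-ranges $\{0,\ldots,n-2\}$ versus $\{n-1,\ldots,-1\}$ consistent through the pasting of all the squares in~\eqref{diag:FuseTripple1}--\eqref{diag:FuseTripple4}, and checking that the connecting-map ranks that are \emph{not} among $\eta,\beta_{ij}$ really do assemble into $\pm\tfrac12\langle a,a\rangle$ in the stated range and nothing else — this is where I would be most careful, probably by first doing the case $n=2$ (where $\ca$ is $0$CY and the formula is cleanest) as a sanity check and then running the general alternating-sum argument.
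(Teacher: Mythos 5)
Your outline does follow the paper's route --- evaluate the bimodule diagrams behind Lemma~\ref{lem:FuseTripple} at $(b,b)$, apply the defect additivity of Lemma~\ref{lem:DefectAdditivity}, and use the definition of $\gamma$ as truncated Euler pairing plus a connecting rank --- but it stops exactly where the content of the lemma lies, and the one mechanism you do offer for the residual $\langle a,a\rangle$-term is misplaced. Since $n$ is even, each factor $\cb_i\times_{\ca}\cb_j$ of the target is $(n-1)$CY with $n-1$ odd, so the truncated self-pairings $\langle \pi_i(b),\pi_i(b)\rangle_{0,\ldots,n-1}$ you invoke \emph{do} vanish; the term $\tfrac12\langle a,a\rangle_{0,\ldots,n-2}$ does not come from them. (Your later remark that the halving reflects the evenCY self-duality of $\ca$, as in \eqref{truncatedEulerCY}, is the right intuition, but it is attached to the wrong square.) Likewise the spectral-sequence computation of Section~\ref{subsec:hallOddCY} plays no role here and does not obviously transfer to an iterated fibre product of categories.

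What is actually needed, and what the paper supplies, is a two-step defect computation that your sketch does not contain. First, a $2\times 2$ window of bicartesian bimodule squares (with $\mathrm{fib}(B_1{\times}B_2{\times}B_3\to A)$ and $A^3$ as intermediate corners --- not literally \eqref{diag:FuseTripple1}) gives, by additivity, the identity $\eta+\zeta=\gamma_{\mathrm{triple}}-\langle b,b\rangle_{0,\ldots,n-1}+\beta_{12}+\beta_{13}+\beta_{23}$, where $\zeta\coloneqq\delta\bigl(*{\times}_{\ca}(\cb_1{\times}\cb_2{\times}\cb_3)\bigr)$ is an auxiliary defect over a \emph{single} copy of $\ca$ that never appears in your plan. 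Second, \eqref{diag:FuseTripple3} yields $\zeta=\eta-\langle b,b\rangle_{0,\ldots,n-1}-\tfrac12\langle a,a\rangle_{0,\ldots,n-2}$, because the defect of its left-hand square --- the product of the three CY squares of the legs $\cb_i\to\ca$ --- equals $-\tfrac32\langle a,a\rangle_{0,\ldots,n-2}+\sum_i\langle b_i,b_i\rangle_{0,\ldots,n-2}$, each leg contributing one evenCY halving; eliminating $\zeta$ cancels the $\langle b,b\rangle$ terms and produces the stated coefficients $2\eta$, $-\beta_{ij}$, $-\tfrac12\langle a,a\rangle_{0,\ldots,n-2}$ (with the shifted range for $n\le 0$). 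Asserting that the pasting ``forces'' this formula, with the derivation deferred to careful bookkeeping, leaves the entire proof still to be done, and the misidentification of where the $\langle a,a\rangle$-term originates suggests that bookkeeping, as planned, would not close.
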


\begin{proof}
The coherent diagram of bimodules
\begin{equation*}
\begin{tikzcd}
B_1{\times}_{A}B_2{\times}_{A}B_3 \arrow[r]\arrow[d] & \left(B_1{\times}_{A}B_2\right){\times}\left(B_2{\times}_AB_3\right){\times}\left(B_1{\times}_{A}B_3\right) \arrow[d]\arrow[r] & (B_1{\times}B_2{\times}B_3)^2 \arrow[d] \\
0 \arrow[r]& \mathrm{fib}\left(B_1{\times} B_2{\times} B_3\to A\right)\arrow[r]\arrow[d] & B_1{\times} B_2{\times} B_3{\times} A^2 \arrow[d] \\
& 0 \arrow[r] & A^3
\end{tikzcd}
\end{equation*}
in which all squares are bicartesian gives, in case of $n \geq 2$,
\begin{equation}\label{gtflem_1}
\eta+\zeta=\gamma_{\mathrm{triple}}-\langle b,b\rangle_{0,\ldots,n-1}+\beta_{12}+\beta_{13}+\beta_{23}
\end{equation}
where $\zeta\coloneqq\delta(*{\times}_{\ca}(\cb_1{\times}\cb_2{\times}\cb_3))$.
Furthermore, the diagram~\eqref{diag:FuseTripple3} shows that
\begin{equation}\label{gtflem_2}
\zeta = \eta - \langle b,b\rangle_{0,\ldots,n-1}-\frac{1}{2}\langle a,a\rangle_{0,\ldots,n-2}
\end{equation}
using the fact that the defect of the left square in that diagram is
\[
-\frac{3}{2}\langle a,a\rangle_{0,\ldots,n-2}+\sum_{i=1}^3\langle b_i,b_i\rangle_{0,\ldots,n-2}
\]
where $b_i\in\cb_i$ is the image of $b$. 
Combining~\eqref{gtflem_1} and \eqref{gtflem_2} gives the claimed formula. For $n \leq 0$, the formula is obtained by a similar calculation.
\end{proof}

\section{Invariants of Legendrian knots and tangles}
\label{sec:legendrian}

The main goal of this section is to prove Theorem~\ref{thm:introLinvariants} expressing the graded ruling polynomial of a Legendrian knot $L$ as an invariant of the augmentation category of $L$.
In Section~\ref{subsec:ltangles} we recall elementary definitions around Legendrian tangles, in particular their ruling invariants.
In Section~\ref{subsec_augcats} we construct augmentation categories of Legendrian tangles as 2CY 2-spans.
Finally, in Section~\ref{subsec:augcat_count} we apply the formalism developed in the previous section to augmentation categories of Legendrian tangles and show that this recovers their ruling invariants.

\subsection{Legendrian tangles and their ruling polynomials}
\label{subsec:ltangles}

We fix throughout this subsection a grading group $\Z/N$ for some $N\in\Z_{\geq 0}$. 
All Legendrian curves are assumed to be $\Z/N$-graded.
(Later, when assigning dg-categories to Legendrian tangles, we will require $N$ to be even.) 
A \textbf{Legendrian tangle} is a properly embedded Legendrian curve $L\subset [0,1]\times\R^2$ with $\partial L\subset \{0,1\}\times \R^2$ and intersecting the boundary transversely.
Such a tangle is \textbf{generic} if the only singularities of the front projection are simple crossings and cusps, which moreover appear at distinct $x$-coordinates in $[0,1]$. 

\begin{definition}
The category $\ltanglecat$ of generic Legendrian tangles is defined as follows:
\begin{itemize}
    \item Objects are finite subsets $S\subset\R$ together with a function $\deg:S\to\Z/N$.
    \item Morphisms from $S_1$ to $S_0$ are represented by generic $\Z/N$-graded Legendrian tangles, $L$, with $L\cap [0,\varepsilon)\times\R^2=[0,\varepsilon)\times \{0\}\times S_0$ and $L\cap (1-\varepsilon,1]\times\R^2=(1-\varepsilon,1]\times \{0\}\times S_1$ for some $\varepsilon>0$, where the equality is as $\Z/N$-graded curves.
    Two tangles represent the same morphism if they are connected by a path of such tangles.
\end{itemize}
Composition of morphisms is concatenation of tangles.
Our conventions are chosen so that the front projection of the composed tangle $LR$ has $L$ on the left and $R$ on the right.
\end{definition}

\begin{remark}
There is a coarser and more natural equivalence relation on Legendrian tangles given by Legendrian isotopy, i.e. paths of Legendrian tangles which are not necessarily all generic.
We use the more restrictive equivalence relation only so that various functors from $\ltanglecat$ are easier to define. (We do not need to check invariance under Legendrian Reidemeister moves.)
\end{remark}

Any generic Legendrian tangle has an essentially unique factorization into \textbf{basic tangles}, i.e. those with precisely one singularity (crossing or cusp) in the front diagram, see Figure~\ref{fig:tangleFact}.
More precisely, any morphism in $\ltanglecat$ has a unique factorization into basic tangles, unique up to isomorphisms.

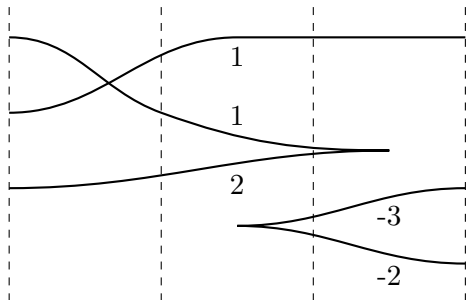
\begin{figure}[ht]
    \centering
    \begin{tikzpicture}
        \draw[thick] (-3,1) to[out=0,in=180] (0,2) to (3,2);
        \draw (0,2) node[below] {1};
        \draw[thick] (-3,2) to[out=0,in=160] (-1,1) to[out=-20,in=180] (2,.5) to[out=180,in=0] (-3,0);
        \draw (0,.7) node[above] {1};
        \draw (0,.3) node[below] {2};
        \draw[thick] (3,0) to[out=180,in=0] (0,-.5) to[out=0,in=180] (3,-1);
        \draw (2,-.1) node[below] {-3};
        \draw (2,-.9) node[below] {-2};
        \foreach \x in {-3,-1,1,3}
        {
        \draw[dashed] (\x,-1.5) to (\x,2.5);
        }
    \end{tikzpicture}
    \caption{An example of a graded Legendrian tangle and its factorization into basic tangles (by cutting along dashed lines). The numbers labeling the strands define the grading.}
    \label{fig:tangleFact}
\end{figure}

In the remainder of this subsection we will recall the definition of a certain functor, $\rpolyfun$, from $\ltanglecat$ to the category of free $\Z[z^{\pm 1}]$-modules defined by T.~Su~\cite{TaoSuThesis}, see also~\cite{HaidenFlagsTangles}.
This generalizes the graded ruling polynomial of Legendrian links, $L$, in the following sense: $\rpolyfun(\emptyset)=\Z[z^{\pm 1}]$ and $\rpolyfun(L)$ is multiplication by the ruling polynomial $P_L(z)$ of $L$.

We begin by defining rulings of finite sets and Legendrian tangles.

\begin{definition} \label{def:ruling-finite-set}
A \defword{ruling} on a graded finite subset $S\subset\R$ is a partition of $S$ into 2-element subsets $\{s,t\}$, $s<t$, so that $\deg(s)=\deg(t)+1$. 
Denote by $\rulings(S)$ the set of rulings of $S$ in this sense.
\end{definition}

\begin{definition}
Let $L$ be a generic ($\Z/N$-graded) Legendrian tangle with front $\pi(L)\subset [0,1]\times\R$.
A \defword{ruling} of $L$ is given by a collection of closed intervals $I_i\subset [0,1]$, $i=1,\ldots,n$, each with a pair of piecewise smooth functions $\alpha_i,\beta_i:I_i\to\R$, such that the following conditions hold:
\begin{enumerate}
    \item $\alpha_i(x),\beta_i(x)\in\pi(L)$ for all $x\in I_i$.
    \item $\alpha_i(x)<\beta_i(x)$ for $x\in \mathrm{int}(I_i)$.
    \item The graphs of the $\alpha_i$'s and $\beta_i$'s together cover all of $\pi(L)$ and intersect only at crossings and cusps.
    \item At a crossing, $\alpha_i$ and $\beta_i$ may ``jump'' from one branch of $\pi(L)$ to the other provided the degrees of the two branches are equal and the following condition holds.
    By the previous condition, there is a unique $j\neq i$ such that one of $\alpha_j$ or $\beta_j$ also passes through the same crossing.
    Then for $x$ slightly to the left or the right of the $x$-coordinate of the crossing, the two intervals $[\alpha_i(x),\beta_i(x)]$ and $[\alpha_j(x),\beta_j(x)]$ should be either disjoint or one contained in the other. 
    Such a crossing is called a \defword{switch} of the ruling.
    \item Let $I_i\eqqcolon[x_0,x_1]$. If $x_0\neq 0$, then $\alpha_i(x_0)=\beta_i(x_0)$ and this is a left cusp of $\pi(L)$. If $x_0=0$, then $\deg(\alpha_i(x_0))=\deg(\beta_i(x_0))+1$, where we refer to the grading on $\pi(L)$. Similarly, if $x_1\neq 1$, then $\alpha_i(x_1)=\beta_i(x_1)$ and this is a right cusp of $\pi(L)$. If $x_1=1$, then $\deg(\alpha_i(x_1))=\deg(\beta_i(x_1))+1$.
\end{enumerate}
Two rulings are identified if they differ by a permutation of the indexing set $\{1,\ldots,n\}$.
We denote the set of rulings of $L$ by $\rulings(L)$.
\end{definition}

A ruling, $\rho$, of a Legendrian tangle evidently restricts to a ruling of its left and right boundary, denoted $\partial_0\rho$ and $\partial_1\rho$ respectively.

\begin{definition}
Define the functor $\rpolyfun:\ltanglecat\to\mathrm{Mod}(\Z[z^{\pm 1}])$ on objects: $\rpolyfun(S)$ is the free $\Z[z^{\pm 1}]$-module with basis $\rulings(S)$, and on morphisms: 
\begin{equation*}
    \rpolyfun(L)(\rho_1)\coloneqq \sum_{\substack{\rho\in\rulings(L) \\ \partial_1\rho=\rho_1}} z^{\mathrm{sw}(\rho)-\mathrm{rc}(L)}\partial_0\rho
\end{equation*}
where $L$ is a tangle representing a morphism in $\ltanglecat$, $\rho_1\in\rulings(\partial_1 L)$, $\mathrm{sw}(\rho)$ is the number of switches of $\rho$, and $\mathrm{rc}(L)$ is the number of right cusps of $L$. 
\end{definition}
It is easy to see that $\rpolyfun$ is a functor. 
Less trivially, $\rpolyfun(L)$ depends only on the Legendrian isotopy class of $L$, see~\cite[Theorem 2.1.10]{TaoSuThesis}.

\subsection{Augmentation categories}
\label{subsec_augcats}

Henceforth, we assume that $N=2m$ is even ($m=0$ is also allowed) and that all dg-categories and Legendrian curves are $\Z/2m$-graded.
We also fix an arbitrary base field $\mathbf k$ for our dg-categories.

The goal of this subsection is to construct for any Legendrian tangle, $L$, a coherent diagram of dg-categories
\begin{equation}\label{diag:tanglecats}
\begin{tikzcd}
    \cc(\partial_0L) \arrow[d] & \cc(L) \arrow[l]\arrow[r]\arrow[d] & \cc(\partial_1L) \arrow[d] \\
    \mathrm{Loc}(\partial_0L) & \mathrm{Loc}(L) \arrow[l]\arrow[r] & \mathrm{Loc}(\partial_1L) 
\end{tikzcd}
\end{equation}
admitting a natural 2CY structure. More precisely, we show that the window diagram (as discussed in Section~\ref{subsec:cy2spans}) obtained by extending the above diagram by a row of three copies of the final dg-category on the top admits a 2CY structure extending the naturally defined 1CY structure on the boundary of the window diagram.

In the above, $\mathrm{Loc}(L)$ is the category of finite-rank $\mathbf k$-linear local systems on the 1-manifold with boundary $L$, $\cc(\partial_i L)$ turns out to be equivalent to the derived category of representations of the $A_n$ quiver, where $n+1=|\partial_iL|$. 
In the case when $L$ is a link, $\cc(L)$ contains, as a full subcategory, a category quasi-equivalent to the augmentation category $Aug_+(L)$ whose objects are the augmentations of the Chekanov--Eliashberg DGA of a link $L$.
Our approach for constructing the diagrams \eqref{diag:tanglecats} is to first define them for basic tangles (those with a single crossing or cusp) and then for general tangles by gluing, i.e. forming the homotopy pullback of dg-categories.
This is very similar to the ``Morse complex category'' of \cite{NRSSZ}, except that we do not assume that the front is in plat position.
The main novelty is the construction of the 2CY structures.

\subsubsection{Graded finite sets}

We begin by defining $\cc(S)$ for a finite subset $S$ of $\R$ with grading $\deg:S\to\Z/2m$.
Let $n=|S|$, then an object of $\cc(S)$ is given by an acyclic $\Z/2m$-graded complex of vector spaces over $\mathbf k$, $(C,d)$, of finite total dimension, together with an increasing filtration of graded subspaces
\begin{equation}
    0=F_0C\subseteq F_1C\subseteq \cdots \subseteq F_{n-1}C\subseteq F_nC=C
\end{equation}
preserved by the differential $d$.
The chain complex $\Hom_{\cc(S)}(C,D)$ is given by filtration preserving linear maps and with the usual differential.
We have a functor
\begin{equation}\label{grset_functor}
    [\deg]\circ\Gr\colon\cc(S)\longrightarrow\mathrm{Loc}(S)
\end{equation}
which sends a filtered acyclic complex $C$ to the local system over $S$ whose fiber over the $i$-th (smallest) point, $s_i\in S$, is the complex 
\[
\left(\Gr_iC\right)[\deg(s_i)]=\left(F_iC/F_{i-1}C\right)[\deg(s_i)].
\]

The category $\cc(S)$ is quasi-equivalent to the category $\typeacat{n-1}$ of complexes of representations of an $A_{n-1}$-type quiver.
The equivalence sends an object of $\cc(S)$ to the representation
\[
F_1C\longrightarrow F_2C \longrightarrow\cdots \longrightarrow F_{n-1}C
\]
of the $A_{n-1}$-type quiver with all arrows pointing to the right.
Conversely, given such a sequence we append any acyclic complex on the right and resolve so that all arrows become injective maps.
Thus, the functor~\eqref{grset_functor} is essentially the functor $\typeacat{n-1}\to\typeacat{1}^n$ from Example~\ref{ex:an1cy} and admits a 1CY structure.
Some care must be taken with the signs: We give the summand of $\mathrm{Loc}(S)$ corresponding to $s\in S$ the 0CY structure $(-1)^{\deg(s)}$.
Because of the shift in the definition of~\eqref{grset_functor}, these combine to a 1CY structure on the functor as in Example~\ref{ex:an1cy}.

There is a full subcategory $\mathrm{Loc}_1(S)$ of $\mathrm{Loc}(S)$ of those objects $E$ with $H^\bullet(E_s)\cong\mathbf{k}$ for all $s\in S$, i.e. rank 1 local systems in degree 0, and a corresponding full subcategory $\cc_1(S)$ of $\cc(S)$ of those objects which map to $\mathrm{Loc}_1(S)$ by the functor \eqref{grset_functor}.
The following proposition, due to Barannikov~\cite{barannikov94}, defines a one-to-one correspondence between rulings of $S$ and equivalence classes of objects in $\cc_1(S)$. 

\begin{proposition}\label{prop:rulingsClass}
    Let $S$ be a graded finite subset of $\R$.
    For each ruling, $\rho$, of $(S,\deg)$ let $C_\rho\in\cc_1(S)$ be the object with underlying chain complex $C_\rho=\bigoplus_{s\in S}(\mathbf k s)[-\deg(s)]$, $F_iC_\rho\subset C_\rho$ is the subspace generated by the $i$ smallest points in $S$, and differential with non-zero terms $d(t)=s$ if $s<t$ and $\{s,t\}$ belongs to $\rho$.
    Then the construction $\rho\mapsto C_\rho$ induces a bijection $\theta_S:\rulings(S)\to\pi_0(\cc_1(S)^\sim)$ between the set of rulings of $S$ and equivalence classes of objects of $\cc_1(S)$.
\end{proposition}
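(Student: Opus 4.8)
The plan is to show that the assignment $\rho \mapsto C_\rho$ lands in $\cc_1(S)$, is well-defined on equivalence classes, and defines a bijection $\theta_S \colon \rulings(S) \to \pi_0(\cc_1(S)^\sim)$. First I would check the construction is well-defined: given a ruling $\rho$, the complex $C_\rho = \bigoplus_{s \in S}(\mathbf k s)[-\deg(s)]$ with the prescribed differential is indeed a differential ($d^2 = 0$ is immediate since the non-zero terms $d(t) = s$ are supported on disjoint pairs, so no composable pair of arrows exists), it is acyclic (each pair $\{s,t\}$ contributes an acyclic two-term subquotient and $C_\rho$ is filtered by these), the grading condition $\deg(s) = \deg(t)+1$ for $\{s,t\} \in \rho$ (Definition~\ref{def:ruling-finite-set}) ensures $d$ has degree $+1$ as a map of $\Z/2m$-graded spaces, and the filtration by the $i$ smallest points is preserved by $d$ because arrows go from larger to smaller elements. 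Moreover $\Gr_i C_\rho = (\mathbf k s_i)[-\deg(s_i)]$, so applying $[\deg]\circ\Gr$ gives a rank-one local system in degree $0$ at each point; hence $C_\rho \in \cc_1(S)$.

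Next I would address injectivity of $\theta_S$. Two rulings $\rho \neq \rho'$ are distinguished by the pairing they induce on $S$; the differential of $C_\rho$ records exactly which pairs $\{s,t\}$ are matched (the $(s,t)$-entry of $d$ is non-zero iff $\{s,t\} \in \rho$), and this data is filtration-intrinsic — it can be recovered from the isomorphism type of $C_\rho$ as a filtered complex by looking at which $\Gr$-pieces are connected by the differential modulo lower filtration steps. More precisely, I would argue that a filtered quasi-isomorphism $C_\rho \to C_{\rho'}$ would have to be a filtered isomorphism (since both are minimal in the sense that the differential strictly decreases the filtration and the associated graded is concentrated appropriately), and such an isomorphism must respect the matching. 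So $\rho$ is recovered from the equivalence class of $C_\rho$, giving injectivity.

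For surjectivity I would invoke Barannikov's normal form: any object $C \in \cc_1(S)$ is a finite-dimensional acyclic $\Z/2m$-graded complex with a full filtration whose associated graded is one-dimensional in degree $\deg(s_i)$ at the $i$-th point. Running the standard ``Barannikov reduction'' (Gaussian elimination adapted to the filtration, as in~\cite{barannikov94}), one reduces $C$ by a filtered quasi-isomorphism to a complex whose differential, in the basis given by the filtration, is a partial matching on $S$: each basis vector is either a source or a target of exactly one arrow, or neither. Acyclicity forces the matching to be a perfect matching (no unmatched vector can survive in an acyclic complex), and the degree constraint on the differential forces each matched pair $\{s,t\}$ with $s<t$ to satisfy $\deg(s) = \deg(t)+1$; hence the resulting matching is a ruling $\rho$ with $C \simeq C_\rho$. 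Combining the three steps gives the bijection.

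The main obstacle I anticipate is making the Barannikov reduction precise \emph{in the filtered setting over an arbitrary field $\mathbf k$}, i.e.\ verifying that the Gaussian-elimination steps can always be chosen to preserve the filtration and that the process terminates in the claimed ``partial matching'' normal form. This is where one must be careful that the pivots used to cancel entries lie in the correct filtration step, so that no spurious interaction between distant points of $S$ is introduced; once the normal form is established, acyclicity and the degree bookkeeping are routine. I would either cite~\cite{barannikov94} directly for this reduction or spell out the one-variable induction on $|S|$, peeling off the arrow into (or the absence of an arrow at) the smallest point $s_1$ and applying the inductive hypothesis to the quotient filtered complex.
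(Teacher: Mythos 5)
Your proposal is essentially the paper's own treatment: the paper states this proposition without proof, attributing it to Barannikov's normal-form theorem, and your three-step scheme (well-definedness, injectivity via invariance of the matching under filtered equivalence, surjectivity via filtration-preserving Gaussian elimination plus acyclicity forcing a perfect matching with the correct degree shifts) is precisely that argument. One minor imprecision: for a general object of $\cc_1(S)$ the pieces $\Gr_i C$ need only have one-dimensional cohomology concentrated in degree $\deg(s_i)$, not be one-dimensional themselves, so the reduction must first pass to a filtered minimal model (associated graded differential zero) before the matching normal form appears — which is exactly what your filtered Gaussian-elimination step accomplishes.
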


\begin{remark}
A variant of the above construction where we do not impose the acyclicity condition on the total complex $C$ leads to a category $\ce(S)$ which is quasi-equivalent to $\typeacat{n}$, $n=|S|$.
This is in general a more natural construction, though for our purposes of making comparisons with counts of rulings it suffices to restrict to the subcategory $\cc(S)\subset \ce(S)$.    
\end{remark}

\subsubsection{Vertex}

Instead of Legendrian links and tangles one could, more generally, consider Legendrian \textit{graphs}.
Here, we will not systematically work in this generality, however it will be useful (to simplify proofs) to consider one particular case: a basic ``tangle'' which has, instead of a cusp or crossing, a single trivalent vertex, see Figure~\ref{fig:vertextangle}. 

\begin{figure}[ht]
\centering
\begin{tikzpicture}[scale=1]
\draw[thick] (-2,1.5) to (2,1.5);
\node at (0,1.2) {\vdots};
\draw[thick] (-2,.65) to (2,.65);
\draw[thick] (-2,.25) to [out=-10,in=180] (0,0) to (2,0);
\draw[thick] (-2,-.25) to [out=10,in=180] (0,0);
\draw[thick] (-2,-.65) to (2,-.65);
\node at (0,-1) {\vdots};
\draw[thick] (-2,-1.5) to (2,-1.5);
\end{tikzpicture}
\caption{Front of a Legendrian graph with a single 3-valent vertex.}
\label{fig:vertextangle}
\end{figure}
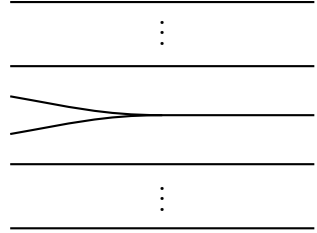

Suppose $L$ is such a Legendrian graph with a single 3-valent vertex which connects two points in $\partial_0L$ with a single point in $\partial_1 L$.
(There is no condition on the grading of the three edges which meet at the vertex, i.e. they are independent.)
In this case we define $\cc(L)\coloneqq \cc(\partial_0L)$ and the functor $\cc(L)\to\cc(\partial_1L)$ forgets the piece $F_iC$ of the filtration, where $i-1\geq 0$ is the number of strands of $L$ below the vertex.
Instead of $\mathrm{Loc}(L)$ it is more appropriate to consider the following category $\cc_0(L)$: An object of $\cc_0(L)$ is an object $E$ of $\mathrm{Loc}(\partial_1 L)$ together with a 2-step filtration (subspace) of $E_i$, the fiber over the $i$-th point of $\partial_1L$.
The functor $\cc(L)\to\cc_0(L)$ takes the associated graded with respect to the filtration $F_jC$, $j\neq i$, and remembers $F_iC\subset F_{i+1}C/F_{i-1}C$, as well as taking the appropriate shifts.
The functor $\cc_0(L)\to\mathrm{Loc}(\partial_1 L)$ forgets the 2-step filtration of $E_i$, while the functor $\cc_0(L)\to\mathrm{Loc}(\partial_0 L)$ takes the associated graded with respect to the 2-step filtration (and possibly shifts the two pieces).
Thus we obtain a diagram
\begin{equation}\label{diag:vertexcats}
\begin{tikzcd}
    \cc(\partial_0L) \arrow[d] & \cc(L) \arrow[l]\arrow[r]\arrow[d] & \cc(\partial_1L) \arrow[d] \\
    \mathrm{Loc}(\partial_0L) & \cc_0(L) \arrow[l]\arrow[r] & \mathrm{Loc}(\partial_1L) 
\end{tikzcd}
\end{equation}
of dg-categories and functors.

\begin{lemma}
\label{lem:vertex2cy}
    The diagram \eqref{diag:vertexcats} assigned to a Legendrian graph, $L$, with a single 3-valent vertex admits a natural 2CY structure.
\end{lemma}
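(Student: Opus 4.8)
The strategy is to exhibit the diagram \eqref{diag:vertexcats} as a window diagram of the type discussed in Section~\ref{subsec:cy2spans} and then produce the required CY data from the already-established $1$CY structure on the functor $\typeacat{n}\to\typeacat{1}^{n+1}$ of Example~\ref{ex:an1cy}, together with the gluing/tensoring constructions of Propositions~\ref{prop:CYSpanComp} and~\ref{propHallSpanCY}. First I would set $n+1 = |\partial_1 L|$ and let $i$ be such that the vertex sits between the $(i-1)$-st and $(i+1)$-st strands of $\partial_1 L$, so that $|\partial_0 L| = n+2$. Under the quasi-equivalences $\cc(S)\simeq\typeacat{|S|-1}$ discussed after \eqref{grset_functor}, the whole diagram \eqref{diag:vertexcats} becomes (up to Morita equivalence) a diagram of derived categories of $A_k$-type quivers and their products of $\typeacat 1$'s. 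Concretely, $\cc(L)\simeq\typeacat{n+1}$, $\cc(\partial_0 L)\simeq\typeacat{n+1}$ (these agree since $\cc(L)=\cc(\partial_0L)$ by construction), $\cc(\partial_1 L)\simeq\typeacat n$, $\mathrm{Loc}(\partial_0 L)\simeq\typeacat 1^{n+2}$, $\mathrm{Loc}(\partial_1 L)\simeq\typeacat 1^{n+1}$, and $\cc_0(L)\simeq\typeacat 1^{i-1}\times\typeacat 2\times\typeacat 1^{n-i}$.

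The key step is to recognize that, after folding, the window diagram assembles from three elementary pieces, each a tensor product of an $A_m\to\typeacat1^{m+1}$ functor (which is $1$CY by Example~\ref{ex:an1cy}) with a proper $0$CY category (a product of copies of $\typeacat1$ with the sign-twisted CY structures specified in the text, combining as in Example~\ref{ex:an1cy} with the appropriate degree shifts): the bottom two functors $\cc_0(L)\to\mathrm{Loc}(\partial_iL)$ come from the splitting of the $2$-step filtration on the $i$-th fiber, i.e. from $\typeacat2\to\typeacat1^{3}$ tensored with $\typeacat1^{n-1}$, and this is $1$CY by the tensor-product argument used in the proof of Proposition~\ref{propHallSpanCY}; the vertical functors $\cc(L)\to\cc_0(L)$ and $\cc(L)\to\mathrm{Loc}$ come likewise from $A_{n+1}\to (\text{products})$ functors that are $1$CY. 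Then I would invoke the gluing statement: the window diagram \eqref{diag:vertexcats}, boundary-completed by three copies of the final category on top, has its boundary $1$-spans all $1$CY by the above, and its inner category $\cc(L)$ is the (homotopy) limit of the boundary diagram. Using Corollary~\ref{cor:CYspancomp} and the pasting Lemma~\ref{lem:pasting-law}(3), one checks that the relevant bicartesian square of $\cc(L)$-bimodules --- obtained by tensoring the defining bicartesian squares of the $1$CY structures on the three pieces --- assembles into the bicartesian square providing the $2$CY structure on the square folded from the window, exactly as in the definition in Subsection~\ref{subsec:cy2spans}. Concretely: take the defining short exact sequence of bimodules from Example~\ref{ex:an1cy} for each of the three pieces, tensor each with the appropriate proper $0$CY factor so it stays bicartesian, and then paste them along $\cc_0(L)$ and $\mathrm{Loc}$ using Lemma~\ref{lem:pasting-law}; the outer rectangle is the square exhibiting $\cc(L)^\vee[2-n]$-type duality required by the $2$CY condition. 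The connecting maps all vanish here since every bimodule in sight is concentrated in a single cohomological degree (as observed in Example~\ref{ex:an1cy}), which considerably simplifies the coherence data --- all required homotopies can be taken to be zero.

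The main obstacle, I expect, is bookkeeping of the gradings and signs: the shifts $[\deg(s)]$ in the functor \eqref{grset_functor}, the sign-twisted $0$CY structures $(-1)^{\deg(s)}$ on the summands of $\mathrm{Loc}$, and the relabeling of which strand carries the $2$-step filtration all have to be tracked so that the tensored bicartesian squares actually glue coherently rather than merely up to a shift or sign. A secondary point requiring care is that $\cc(S)$ and $\ce(S)$ differ --- the acyclicity constraint on the total complex means we are working with $\typeacat{|S|-1}$ rather than $\typeacat{|S|}$ --- but since restriction to the acyclic subcategory is compatible with all the functors in \eqref{diag:vertexcats} and with the CY structures (it corresponds to deleting one ``free'' $\typeacat1$ factor uniformly), this does not cause trouble; I would remark on it but not belabor it. Once the signs are pinned down, the proof is a direct assembly: no genuinely new CY-theoretic input is needed beyond Example~\ref{ex:an1cy}, Proposition~\ref{propHallSpanCY}'s tensor trick, and the pasting lemma.
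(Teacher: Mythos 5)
There is a genuine gap, and it sits exactly at the step you leave to an ``assembly''. First, your structural claim that $\cc(L)$ ``is the (homotopy) limit of the boundary diagram'' is false: by Lemma~\ref{lem:funToBoundary} that limit is $\cc(\partial_0L)\times_{\mathrm{Loc}(\partial_0L)}\cc_0(L)\times_{\mathrm{Loc}(\partial_1L)}\cc(\partial_1L)\simeq\cc(\partial_0L)\times_{\mathrm{Loc}(\partial_0L)}\cc(L)$, whose objects are \emph{pairs} of filtered acyclic complexes with identified associated graded local systems --- strictly larger than $\cc(L)$. Second, and more importantly, verifying that the three boundary $1$-spans are $1$CY (which you do correctly in spirit, via Example~\ref{ex:an1cy} and splitting $\cc_0(L)$ factorwise) is only part of the required data: the content of the lemma is a $2$CY structure on the functor from $\cc(L)$ to that boundary limit, i.e.\ a bicartesian square of $\cc(L)$-bimodules with third term $\cc(L)^\vee[-1]$. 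Neither Corollary~\ref{cor:CYspancomp} (which only produces the $1$CY structure on the boundary limit itself) nor Lemma~\ref{lem:pasting-law} can manufacture this from the boundary structures alone --- there is no principle that a square whose boundary arrows are $1$CY admits a $2$CY structure --- and your recipe ``tensor the three defining sequences and paste along $\cc_0(L)$ and $\mathrm{Loc}$'' never specifies the bicartesian square relating the diagonal bimodule of $\cc(L)$ to the pulled-back boundary diagonals, which is precisely where the specific geometry of the vertex has to enter. (The phrase ``$\cc(L)^\vee[2-n]$-type duality'' also indicates the target of the required sequence, namely $\cc(L)^\vee[1-2]$, has not been pinned down.)

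The missing idea is the one the paper's proof turns on: the right square of \eqref{diag:vertexcats} is cartesian, because an $(n{+}1)$-step filtration is the same as an $n$-step filtration together with a $2$-step filtration of one of its subquotients. Composing the bottom and right $1$-spans therefore yields a square whose top arrow $\cc(L)\to\cc(\partial_0L)$ and right arrow $\cc(L)\to\cc_0(L)\times_{\mathrm{Loc}(\partial_1L)}\cc(\partial_1L)$ are equivalences, i.e.\ up to equivalence the identity square on the $1$CY functor $\cc(\partial_0L)\to\mathrm{Loc}(\partial_0L)$, which is $2$CY by Proposition~\ref{prop:idsquare}; no tensor/pasting construction is needed. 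Your bookkeeping also has minor slips ($\cc_0(L)\simeq\typeacat1^{\,a}\times\typeacat{2}\times\typeacat1^{\,b}$ with $a+b=|\partial_1L|-1$, one more plain factor than you wrote; and the bottom span is a \emph{product} of the Example~\ref{ex:an1cy} span $\typeacat{2}\to\typeacat1^{3}$ with identity spans, not a tensor product in the sense of Proposition~\ref{propHallSpanCY}), but these are repairable; without the cartesianness of the right square and the reduction to Proposition~\ref{prop:idsquare} (or an explicit bimodule computation replacing it), the proof does not close.
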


\begin{proof}
Note that the right square in~\eqref{diag:vertexcats} is cartesian, since an $(n+1)$-step filtration is the same thing as an $n$-step filtration together with a $2$-step filtration of one of its subquotients.
Thus, the relevant square, obtained by composing the bottom and right 1-spans of the original diagram, is
\begin{equation*}
\begin{tikzcd}
    \cc(\partial_0L) \arrow[d] & \cc(L) \arrow[l]\arrow[d] \\
    \mathrm{Loc}(\partial_0L) & \cc(L) \arrow[l]
\end{tikzcd}
\end{equation*}
where both the top and right arrow are isomorphisms and the left and bottom arrows are the same 1CY functor. 
Thus the square is 2CY by Proposition~\ref{prop:idsquare}.
\end{proof}

\subsubsection{Cusps}

Let $L$ be a basic tangle with a right cusp, so $\partial_0L=\{s_1,\ldots,s_n\}$, $s_1<\cdots<s_n$, and $\partial_1L$ is in natural bijection with $\partial_0L\setminus\{s_i,s_{i+1}\}$ for some $1\leq i<n$.
We define $\cc(L)$ to be the full subcategory of $\cc(\partial_0L)$ of those objects, $C$, such that $F_{i+1}C/F_{i-1}C$ is acyclic.
The functor $\cc(L)\to\cc(\partial_0L)$ is then just inclusion, while the functor $\cc(L)\to\cc(\partial_1L)$ forgets $F_iC$ and $F_{i+1}C$.
To define a functor $\cc(L)\to\mathrm{Loc}(L)$ we need to give a quasi isomorphism 
\[
(\Gr_{i+1}C)[\deg(s_{i+1})]\longrightarrow(\Gr_iC)[\deg(s_i)]
\]
where $C\in\cc(L)$, which plays the role of parallel transport along the component of $L$ mapping to the cusp in the front.
In view of $\deg(s_i)=\deg(s_{i+1})+1$, this amounts to giving a degree 1 map $\Gr_{i+1}C\to\Gr_iC$.
Such a map arises as the connecting chain map of the short exact sequence of complexes
\[
0\to F_iC/F_{i-1}C\longrightarrow F_{i+1}C/F_{i-1}C \longrightarrow F_{i+1}C/F_iC\to 0
\]
which is well-defined up to homotopy and is a quasi-isomorphism since the middle term is acyclic.

\begin{lemma}
    The diagram \eqref{diag:tanglecats} assigned to a right cusp admits a natural 2CY structure.
\end{lemma}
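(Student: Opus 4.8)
The plan is to reduce the statement about a right cusp to the already-established vertex case via a gluing argument, just as the cusp tangle itself can be built from a vertex tangle by capping off. First I would observe that a basic right-cusp tangle $L$ factors (up to isotopy rel boundary, and in fact already at the level of generic fronts) as a vertex tangle $V$ followed by a ``trivial cap'' tangle $K$ that removes the resulting pair of strands. Concretely, the edge of $L$ terminating at the cusp is, after passing through the vertex, the diagonal strand of a vertex graph whose third edge is then immediately capped off. Correspondingly, the diagram \eqref{diag:tanglecats} for $L$ should be obtained by gluing the diagram \eqref{diag:vertexcats} for the vertex $V$ with a diagram for the cap $K$, using homotopy pullback of dg-categories along the shared ``middle'' categories $\cc_0(V)$ and $\mathrm{Loc}$. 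The compatibility of the two pieces on this shared boundary is what lets us apply vertical composition of 2CY squares.

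The key steps, in order, are: (1) identify the gluing decomposition of $L$ into $V$ and $K$ at the level of the assigned dg-categories, checking that $\cc(L)$ as defined (the full subcategory of $\cc(\partial_0 L)$ on objects with $F_{i+1}C/F_{i-1}C$ acyclic, with the connecting-map functor to $\mathrm{Loc}(L)$) is indeed the homotopy pullback $\cc(V)\times_{\cc_0(V)}\cc(K)$ over the appropriate legs; the acyclicity condition on $F_{i+1}C/F_{i-1}C$ is exactly the condition cutting out the fiber of the cap, and the connecting chain map of the three-term exact sequence is exactly the parallel-transport datum produced by the pullback; (2) check that the cap tangle $K$ carries a 2CY square of its own — this is essentially the same computation as Lemma~\ref{lem:vertex2cy}, since the cap square is again (a shift/sign-twisted version of) an identity square of a 1CY functor, so Proposition~\ref{prop:idsquare} applies; (3) verify the compatibility of the CY structures on $V$ and $K$ along the shared middle, i.e. that the two induced $1$CY structures on the functor out of $\cc_0(V)$ (one from each side) agree, taking care of the degree shifts $[\deg(s_i)]$, $[\deg(s_{i+1})]$ and the signs $(-1)^{\deg(s)}$ built into the $0$CY structures on the summands of $\mathrm{Loc}$; (4) invoke Proposition~\ref{prop:CYSquareVComp} (vertical composition of $n$CY squares, here $n=2$) to conclude that the composed square — which by step (1) and Lemma~\ref{lem:funToBoundary} has the same boundary categories as the window obtained from \eqref{diag:tanglecats} for $L$ — is 2CY, and hence the window diagram for $L$ admits the desired 2CY structure.

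I expect the main obstacle to be step (3), the sign and shift bookkeeping in matching the $1$CY structures along the glued edge. The definition of the functor \eqref{grset_functor} involves shifts $[\deg(s)]$ on each fiber, and the $0$CY structure on the $s$-summand of $\mathrm{Loc}$ is the scalar $(-1)^{\deg(s)}$; when one strand (the cusp edge) is shifted by $1$ relative to its partner, the sign $(-1)$ is exactly what turns an anti-commuting square into a commuting one, and conversely a sign error here would break the bicartesian square. So I would be careful to track, at the level of the defining short exact sequence $0\to F_iC/F_{i-1}C\to F_{i+1}C/F_{i-1}C\to F_{i+1}C/F_iC\to 0$, how the connecting map interacts with the chosen dualities, and to confirm that the homotopy (which is the zero map in the identity-square case of Proposition~\ref{prop:idsquare}, and which was already noted to vanish for $\typeacat{n}\to\typeacat{1}^{n+1}$ in Example~\ref{ex:an1cy}) remains the zero map after gluing. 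Everything else is a routine application of the pasting lemmas (Lemma~\ref{lem:pasting-law}) and the composition results of Section~\ref{subsec:cy2spans}, and I would present it at that level of detail rather than spelling out every bimodule square.
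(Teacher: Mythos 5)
Your overall strategy (reduce the cusp to the vertex case) is the right instinct, but the specific route has a genuine gap, and it sits exactly where you place the load-bearing step. Your decomposition of the right cusp as a $3$-valent vertex $V$ followed by a ``cap'' $K$ requires a categorical assignment $\cc(K)\to \mathrm{Loc}(K)$, etc., for a strand that terminates in the interior, i.e.\ a $1$-valent vertex. This is not a Legendrian tangle and not the $3$-valent vertex graph for which the paper defines anything, so step (1) has no meaning until you construct that data, and step (2) --- the claim that the cap square is ``a shift/sign-twisted identity square'' so that Proposition~\ref{prop:idsquare} applies --- is not justified and, as far as I can see, not true. The vertex square becomes an identity square precisely because the extra filtration step at the vertex is pure additional data (the right square of \eqref{diag:vertexcats} is cartesian, so nothing is imposed on objects). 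The cap, by contrast, imposes an acyclicity condition on a graded piece \emph{and} deletes a boundary strand, so its square has corners $\cc(\partial_0 K)\leftarrow\cc(K)\rightarrow \mathrm{Loc}(\mathrm{arc})\times\cc(\partial_1 K)$ and is of genuinely ``cusp type''; proving it $2$CY is essentially the original problem, so the reduction via Proposition~\ref{prop:CYSquareVComp} is circular. Your step (3) sign/shift bookkeeping is then a worry about a compatibility that never needs to arise.

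The argument the paper uses is much more direct and avoids all of this: the cusp diagram \eqref{diag:tanglecats} is obtained from the vertex diagram \eqref{diag:vertexcats} by passing to the full subcategories of objects with $F_{i+1}C/F_{i-1}C$ acyclic (note $\cc(L)$ for the cusp is by definition a full subcategory of $\cc(\partial_0 L)=\cc(\text{vertex})$, and the parallel-transport quasi-isomorphism is exactly the connecting map of the short exact sequence, which exists on this subcategory). Since weak right CY structures are given by bicartesian squares of bimodules and $\Ext$-groups are unchanged under restriction to full subcategories, the $2$CY structure of Lemma~\ref{lem:vertex2cy} restricts, and the lemma follows in one step. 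If you want to salvage your gluing approach, you would have to first extend the formalism to $1$-valent vertices and prove their $2$CY property independently --- at which point you have already done the work the restriction argument accomplishes for free.
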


\begin{proof}
The diagram \eqref{diag:tanglecats} for a cusp is obtained from the diagram \eqref{diag:vertexcats} of a 3-valent vertex by passing to full subcategories (of those objects where $F_{i+1}C/F_{i-1}C$ is acyclic), thus inherits the 2CY structure.
\end{proof}

The case of a left cusp is completely analogous and obtained by switching the roles of $\partial_0L$ and $\partial_1L$.

\subsubsection{Crossings}

Let $L$ be a basic tangle with $n$ strands and a single crossing of the $k$-th and $(k+1)$-st strand.
We define $\cc(L)$ to be the dg-category whose objects are acyclic chain complexes $C$, of finite total dimension, together with a \textit{pair} of increasing filtrations
\begin{gather*}
    0=F_0C\subseteq F_1C\subseteq \cdots \subseteq F_{n-1}C\subseteq F_nC=C \\
    0=F_0'C\subseteq F_1'C\subseteq \cdots \subseteq F_{n-1}'C\subseteq F_n'C=C
\end{gather*}
with
\begin{equation}\label{filtMutation}
F_i'C=F_iC\text{ for } i\neq k,\qquad F_kC\cap F_k'C=F_{k-1}C,\qquad F_kC+F_k'C=F_{k+1}C.   
\end{equation}
Morphisms in $\cc(L)$ are those maps which preserve both filtrations.
The functors $\cc(L)\to\cc(\partial_0L)$ and $\cc(L)\to\cc(\partial_1L)$ are given by forgetting the filtrations $F_\bullet'C$ and $F_\bullet C$, respectively.
As a consequence of \eqref{filtMutation} we have canonical isomorphisms
\[
\Gr_kC\to\Gr_{k+1}'C,\qquad \Gr_{k+1}C\to\Gr_k'C
\]
where $\Gr_i'C\coloneqq F_i'C/F_{i-1}'C$, and thus a functor $\cc(L)\to\mathrm{Loc}(L)$.

\begin{lemma}
The diagram \eqref{diag:tanglecats} assigned to a crossing admits a natural 2CY structure.
\end{lemma}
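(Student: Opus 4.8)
The plan is to mimic the proof for the cusp, realizing the crossing's diagram~\eqref{diag:tanglecats} as a ``whiskered'' or glued version of simpler 2CY pieces whose CY structure is already established. Concretely, I would first observe that the data of a pair of filtrations $F_\bullet C, F'_\bullet C$ satisfying~\eqref{filtMutation} can be decomposed as follows: everything outside indices $k-1,k,k+1$ is a common flag, which is just an $A_{n-3}$-type flag on $F_{k-1}C$ together with a flag on $C/F_{k+1}C$; and the interesting part is a pair of $2$-step filtrations $F_kC, F'_kC$ of the subquotient $F_{k+1}C/F_{k-1}C$, both containing $F_{k-1}C/F_{k-1}C = 0$ and both contained in $F_{k+1}C/F_{k-1}C$, subject to the conditions that they intersect in $0$ and span everything. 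In other words, if $V \coloneqq F_{k+1}C/F_{k-1}C$ (which here is a $2$-dimensional ``virtual'' object, a complex with two one-dimensional graded pieces up to the acyclicity bookkeeping), the extra structure on $V$ is a pair of complementary lines $W = F_kC/F_{k-1}C$ and $W' = F'_kC/F_{k-1}C$, i.e. a direct sum decomposition $V \cong W \oplus W'$.

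The key step is therefore to isolate the ``crossing gadget'': a diagram of dg-categories parametrizing a $2$-step object $V$ together with a choice of splitting $V \cong W \oplus W'$, mapping down to $\mathrm{Loc}$ of the two strands on the left (via $\Gr$ for $F_\bullet$, recording $W$ and $V/W$) and on the right (via $\Gr$ for $F'_\bullet$, recording $W'$ and $V/W'$, which are canonically identified with $V/W$ and $W$ respectively by~\eqref{filtMutation}). I claim this gadget square is $2$CY for essentially the same reason as Proposition~\ref{prop:idsquare}: a splitting $V \cong W \oplus W'$ makes the top and the two side functors into equivalences onto the corresponding pieces, so the relevant folded square (composing the bottom and right $1$-spans of the window) has two of its arrows isomorphisms and the remaining two equal to a single $1$CY functor $\typeacat{1} \to \typeacat{1} \times \typeacat{1}$ (up to the appropriate sign/shift conventions as in the graded-finite-set discussion), whence Proposition~\ref{prop:idsquare} applies. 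One must check that the cartesianness in~\eqref{filtMutation} is genuine: the right square of~\eqref{diag:tanglecats} for a crossing is cartesian because giving the flag $F'_\bullet C$ on top of $F_\bullet C$ amounts precisely to giving the single line $F'_kC$ with the stated intersection/span conditions, which is equivalent data to the splitting, and this is a pullback along $\cc(\partial_1 L) \to \cc_0$ of the gadget, paralleling the vertex computation in Lemma~\ref{lem:vertex2cy}.

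Having established the $2$CY structure on the crossing gadget, I would then whisker it by the common flag $1$-span (the $A_{n-3}$-type data on $F_{k-1}C$ and $C/F_{k+1}C$) using Proposition~\ref{prop:CYSquareWhisker}, and also compose with the $1$CY functors coming from the ``passive'' strands, exactly as in Example~\ref{ex:an1cy} and the treatment of $\cc(S)$; the finiteness hypotheses ({\degfin}) hold throughout since all categories in sight are Morita-equivalent to bounded derived categories of $A_\ell$-quivers, which are smooth and proper. The main obstacle I anticipate is purely bookkeeping rather than conceptual: getting the signs and degree shifts right so that the $0$CY structures $(-1)^{\deg(s)}$ on the individual strands of $\mathrm{Loc}(\partial_0 L)$ and $\mathrm{Loc}(\partial_1 L)$ assemble correctly under the identifications $\Gr_k C \cong \Gr'_{k+1}C$ and $\Gr_{k+1}C \cong \Gr'_k C$, so that the two sides of the window carry genuinely compatible $1$CY structures and the connecting maps in the defining bicartesian squares line up. Once the gadget square is correctly normalized, the rest is a routine application of the pasting results (Proposition~\ref{prop:CYSquareWhisker}, Lemma~\ref{lem:FuseTripple}, and Lemma~\ref{lem:funToBoundary}) already proved in Section~\ref{subsec:cy2spans}, in complete analogy with the cusp case.
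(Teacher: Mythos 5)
There is a genuine gap at the central step. Your reduction of the crossing to Proposition~\ref{prop:idsquare} does not work, because the square assigned to a crossing is not an identity square in disguise: the forgetful functors $\cc(L)\to\cc(\partial_0 L)$ and $\cc(L)\to\cc(\partial_1 L)$ (keeping one of the two flags) are not quasi-equivalences. An object of $\cc(L)$ carries genuinely more data than its image in $\cc(\partial_i L)$, namely the second flag $F'_\bullet C$, which need not exist for a given $(C,F_\bullet C)$ and, when the two crossing strands have equal degree, admits several non-isomorphic choices --- this is exactly the switch/return/departure trichotomy of Proposition~\ref{prop:CRrulingsClass}, and it is what makes the crossing interesting in Theorem~\ref{thm:tangleFunctorIso}. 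For the same reason your cartesianness claim fails: since $L$ is a disjoint union of arcs, $\mathrm{Loc}(L)\to\mathrm{Loc}(\partial_1 L)$ is an equivalence, so the right square of \eqref{diag:tanglecats} being cartesian would force $\cc(L)\simeq\cc(\partial_1 L)$, which is false. The cartesianness you are importing from the vertex case (Lemma~\ref{lem:vertex2cy}) uses the category $\cc_0(L')$ in the bottom row, which remembers the $2$-step filtration of the relevant fiber; $\mathrm{Loc}(L)$ remembers no such thing, and the condition \eqref{filtMutation} is precisely the data that gets lost. So the ``crossing gadget is $2$CY by Proposition~\ref{prop:idsquare}'' step has no justification, and the subsequent whiskering cannot repair it.

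What the paper does instead is to manufacture the crossing from two copies of the $3$-valent vertex: let $L'$ be the graph of Figure~\ref{fig:vertextangle} and $L''$ its mirror, each of whose diagrams is $2$CY by Lemma~\ref{lem:vertex2cy} (these \emph{are} identity squares). Gluing the two $2$CY squares (vertical composition, Proposition~\ref{prop:CYSquareVComp}) produces a $2$CY structure on the diagram of $L'L''$, whose middle category $\cc(L')\times_{\cc(\partial_1 L')}\cc(L'')$ consists of acyclic complexes with a pair of flags agreeing except possibly at the $k$-th step, and whose bottom middle is $\cc_0(L')\times_{\mathrm{Loc}(\partial_1 L')}\cc_0(L'')$. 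The crossing categories $\cc(L)$ and $\mathrm{Loc}(L)$ are the full subcategories cut out by the transversality condition \eqref{filtMutation}, and the $2$CY structure is obtained by restriction. Your proposal contains no mechanism playing the role of this composition-then-restriction; if you want to salvage the ``local gadget'' picture, the gadget has to be built as such a composite of two vertex squares rather than as an identity square.
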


\begin{proof}
Let $L'$ be a Legendrian graph as in Figure~\ref{fig:vertextangle} with a single 3-valent vertex, and let $L''$ be the Legendrian graph obtained from $L'$ by reflection on the vertical axis.
To the composed graph $L'L''$ we can assign the diagram
\begin{equation*}
\begin{tikzcd}
    \cc(\partial_0L') \arrow[d] & \cc(L')\times_{\cc(\partial_1 L')}\cc(L'') \arrow[l]\arrow[r]\arrow[d] & \cc(\partial_1L'') \arrow[d] \\
    \mathrm{Loc}(\partial_0L') & \cc_0(L')\times_{\mathrm{Loc}(\partial_1 L')}\cc_0(L'') \arrow[l]\arrow[r] & \mathrm{Loc}(\partial_1L'') 
\end{tikzcd}
\end{equation*}
which has a 2CY structure by Lemma~\ref{lem:vertex2cy} and gluing of 2CY spans.
An object of $\cc(L')\times_{\cc(\partial_1 L')}\cc(L'')$ is and acylic chain complex with a pair of filtrations of the same length which are identical except possibly at the $k$-th step.
Thus, $\cc(L)$ is a full subcategory of this category, and similarly $\mathrm{Loc}(L)$ is a full subcategory of $\cc_0(L')\times_{\mathrm{Loc}(\partial_1 L')}\cc_0(L'')$.
Thus we can give the diagram \eqref{diag:tanglecats} assigned to a crossing the restricted 2CY structure.
\end{proof}

As a counterpart to Proposition~\ref{prop:rulingsClass} we have the following classification result.

\begin{proposition}\label{prop:CRrulingsClass}
Let $L$ be a basic tangle with a crossing, then there is a commutative diagram of sets
\begin{equation}\label{diag:CRulingsClass}
\begin{tikzcd}
\rulings(\partial_0L) \arrow[d,"\theta_{\partial_0L}"]& \rulings(L) \arrow[l,"\partial_0"]\arrow[r,swap,"\partial_1"]\arrow[d,"\cong"] & \rulings(\partial_1L)\arrow[d,"\theta_{\partial_1L}"]  \\
\pi_0(\cc_1(\partial_0L)^\sim)  & \pi_0(\cc_1(L)^\sim) \arrow[l,swap,"\partial_0"] \arrow[r,"\partial_1"] & \pi_0(\cc_1(\partial_1L)^\sim)
\end{tikzcd}
\end{equation}
where all vertical arrows are bijections and the left and right vertical arrows come from Proposition~\ref{prop:rulingsClass}.
\end{proposition}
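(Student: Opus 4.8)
\textbf{Proof plan for Proposition~\ref{prop:CRrulingsClass}.}
The plan is to construct the middle vertical bijection explicitly and then verify that the two squares commute. First I would describe objects of $\cc_1(L)$ for a basic crossing tangle $L$: an acyclic $\Z/2m$-graded complex $C$ of total dimension $n$ together with a pair of filtrations $F_\bullet C$ and $F'_\bullet C$ agreeing in all steps except the $k$-th, subject to~\eqref{filtMutation}, and with all subquotients $\Gr_i C$ (equivalently all $\Gr'_i C$) quasi-isomorphic to $\mathbf k$ in the appropriate degree. The key observation is that each such $C$, up to equivalence, is determined by combinatorial data: a ruling $\partial_0\rho$ of $\partial_0 L$ recording the $F$-filtration structure and a ruling $\partial_1\rho$ of $\partial_1 L$ recording the $F'$-filtration structure, compatible in exactly the way that a ruling $\rho$ of the tangle $L$ prescribes. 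Concretely, for a ruling $\rho$ of $L$ I would define $C_\rho$ to have underlying complex $\bigoplus_{s}(\mathbf k s)[-\deg s]$, the two filtrations induced by the two boundary rulings, and differential built from the pairing data of $\rho$ together with the ``jump'' at the crossing: if the crossing is a switch of $\rho$, the two filtrations genuinely differ at step $k$; if it is not a switch, the local configuration forces $F_k C = F'_k C$ and the two boundary rulings determine each other. This gives the middle map $\rulings(L)\to\pi_0(\cc_1(L)^\sim)$.

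Next I would check this map is a bijection. For injectivity, one recovers $\rho$ from the equivalence class of $C_\rho$: the $F$-filtration together with acyclicity of $C$ forces, by the Barannikov normal form (Proposition~\ref{prop:rulingsClass} applied to $\partial_0 L$), a canonical pairing on $\partial_0 L$, and likewise for $\partial_1 L$; the condition~\eqref{filtMutation} then pins down whether the crossing is a switch. For surjectivity, given any object of $\cc_1(L)$ one applies the classification of objects of $\cc_1(\partial_0 L)$ and $\cc_1(\partial_1 L)$ and notes that the compatibility~\eqref{filtMutation} between the two filtrations is precisely the constraint defining a ruling of $L$ (this is the standard correspondence between rulings and ``normal rulings'' at a crossing, cf.~\cite{barannikov94}); so every object is equivalent to some $C_\rho$. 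It is at this stage that I also check that $\partial_0$ and $\partial_1$ on the bottom row of~\eqref{diag:CRulingsClass} are (up to equivalence) just the functors forgetting one of the two filtrations, so that they send $C_\rho$ to the Barannikov objects $C_{\partial_0\rho}$ and $C_{\partial_1\rho}$, respectively; this is exactly the commutativity of the two squares, given the definitions of $\theta_{\partial_0 L}$ and $\theta_{\partial_1 L}$.

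The main obstacle I anticipate is bookkeeping the grading shifts and signs consistently so that ``$\Gr_i C$ is rank one in degree zero'' translates correctly into the degree condition $\deg(\alpha_i)=\deg(\beta_i)+1$ from Definition~\ref{def:ruling-finite-set}, and, more subtly, handling the crossing itself: one must verify that an object of $\cc_1(L)$ cannot exhibit ``partial'' switching behavior, i.e. that the pair $(F_\bullet C, F'_\bullet C)$ is, up to equivalence, always one of the finitely many combinatorial types enumerated by rulings of $L$. This is really a small linear-algebra lemma about pairs of complete flags on a two-step-graded acyclic complex differing at one step, but it has to be stated carefully; once it is in hand, everything else is routine identification of forgetful functors with boundary restriction of rulings, and the diagram commutes on the nose.
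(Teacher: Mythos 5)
There is a genuine gap, and it sits exactly where you yourself flag the difficulty. First, a concrete error: for a non-switch ruling you propose to build $C_\rho$ with $F_kC=F'_kC$, claiming that "the local configuration forces" the two filtrations to agree at step $k$. This contradicts the defining condition \eqref{filtMutation} of $\cc(L)$ at a crossing: since $\Gr_kC$ and $\Gr_{k+1}C$ have nontrivial cohomology for an object of $\cc_1(L)$, the conditions $F_kC\cap F'_kC=F_{k-1}C$ and $F_kC+F'_kC=F_{k+1}C$ force $F_kC\neq F'_kC$ for \emph{every} object, whether or not the crossing is a switch. So your proposed normal forms for returns/departures are not objects of $\cc_1(L)$ at all, and the assertion that "\eqref{filtMutation} is precisely the constraint defining a ruling of $L$" cannot be right: all three local behaviours at an equal-degree crossing (switch, return, departure) satisfy \eqref{filtMutation}, and what separates them is finer than the flag data alone.

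The missing idea is the one the paper supplies in the equal-degree case $\deg(s_k)=\deg(s_{k+1})$: besides the two lines $L_F=F_kC/F_{k-1}C$ and $L_{F'}=F'_kC/F_{k-1}C$ in the plane $F_{k+1}C/F_{k-1}C$, the differential of the acyclic complex cuts out a third, intrinsically defined line $L_d$ (by explicit kernel/image formulas, read off from the normal form of Proposition~\ref{prop:rulingsClass}), and a boundary object lies over a ruling whose two pairing intervals at the crossing interleave non-normally exactly when its flag line equals $L_d$. Switch, return and departure then correspond to the three possible configurations (all three lines distinct, $L_F=L_d$, $L_{F'}=L_d$), and essential uniqueness of the object over each admissible boundary pair follows from transitivity of $\GL(F_{k+1}C/F_{k-1}C)$ on triples of distinct lines. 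This is precisely the "small linear-algebra lemma" you defer; without it the injectivity/surjectivity step has no content, since it \emph{is} the statement that no other combinatorial types occur. Finally, the paper's proof also splits according to the degree difference $l$ of the two crossing strands: for $|l|\geq 2$ the two flags carry the same information and $\cc_1(L)\cong\cc_1(\partial_iL)$, while for $|l|=1$ the boundary functors are fully faithful with image the objects whose rulings do not pair the crossing strands. Your argument would need these cases as well, since the line-configuration analysis only pertains to $l=0$; the identification of the bottom maps with forgetting a filtration, which you do state, is the routine part.
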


\begin{proof}
As before, $L$ is a basic tangle with $n$ strands and a crossing of the $k$-th and $(k+1)$-st strand.
Let $l\coloneqq\deg(s_k)-\deg(s_{k+1})$ where $s_i$ is the $i$-th point in $\partial_0L$.

If $|l|\geq 2$, then we claim that $\cc_1(L)\cong\cc_1(\partial_0 L)\cong\cc_1(\partial_1 L)$.
To see this, note that if $F_\bullet C$ and $F_\bullet' C$ are complete flags on some acyclic complex $C$ defining an object in $\cc_1(L)$, then those filtrations induce the same complete flags on the various spaces of $i$-chains, $C^i$, of $C$ and are compatible with the same differentials, since they necessarily satisfy $d(F_{k+1}C)\subseteq F_{k-1}C$.
Since also $\rulings(L)\cong\rulings(\partial_0 L)\cong\rulings(\partial_1 L)$, the statement of the proposition follows in this case.

If $|l|=1$, then we claim that $\partial_i:\cc_1(L)\to\cc_1(\partial_i L)$ is fully faithful for $i=0,1$. 
Indeed, both flags $F_\bullet C$ and $F_\bullet' C$ still induce the same flags on the $C^i$'s, but they are compatible with distinct sets of differentials. 
Looking at the normal form from Proposition~\ref{prop:rulingsClass}, we see that $\cc_1(\partial_iL)$ realizes exactly those $\rho\in\rulings(\partial_iL)\cong\pi_0(\cc_1(\partial_iL)^\sim)$ where $k$ and $k+1$ are \textit{not} paired. (Thus, unless the grading group is $\Z/2$, one of the $\cc_1(\partial_i L)$ actually coincides with $\cc_1(L)$.)
This shows $\rulings(L)\cong\pi_0(\cc_1(L)^\sim)$ and this identification fits into a commutative diagram~\eqref{diag:CRulingsClass}.

Finally, we turn to the case $l=0$, which is the most interesting one.
The graded sets $\partial_i L$ are then isomorphic as objects of $\ltanglecat$ and we let $\rulings\coloneqq\rulings(\partial_0L)=\rulings(\partial_1L)$.
Partition $\rulings=\rulings_+\sqcup\rulings_-$ where $\rulings_+$ is the subset of those rulings where, if $p,q$ are such that $s_k$ is paired with $s_p$ and $s_{k+1}$ is paired with $s_q$, then the intervals $[s_k,s_p]$ and $[s_{k+1},s_q]$ are either disjoint or one is contained in the other.
Also write $\tau:\rulings\to\rulings$ for the transposition of $s_k$ and $s_{k+1}$. 
The map $\tau$ induces a bijection between $\rulings_+$ and $\rulings_-$.
The restriction to the boundary $(\partial_0,\partial_1):\rulings(L)\to\rulings^2$ is injective, so we may identify $\rulings(L)$ with its image which is
\[
\rulings(L)=\{(\rho,\rho)\mid\rho\in\rulings_+\}\sqcup\{(\tau(\rho),\rho)\mid\rho\in\rulings_+\}\sqcup\{(\rho,\tau(\rho))\mid\rho\in\rulings_+\}.
\]
Borrowing terminology from~\cite[Definition 2.1]{HR15}, rulings belonging to these three subsets are said to have a \textit{switch}, \textit{return}, or \textit{departure}, respectively, at the crossing.
Also, injectivity of $\rulings(L)\to\rulings^2$ implies that the middle vertical map in~\eqref{diag:CRulingsClass} is unique if it exists.

How can we detect if a given complete flag $F_\bullet C$ on an acyclic complex $C$ belongs to the class $\rulings_+$ or $\rulings_-$?
The plane $F_{k+1}C/F_{k-1}C$ contains a line $L_F\coloneqq F_kC/F_{k-1}C$, but it also contains a line $L_d$ which is the unique line found among the subspaces
\begin{gather*}
    \Ker\left(d:F_{k+1}C/F_{k-1}C\longrightarrow C/(F_rC+d(F_{k-1}C))\right),\qquad r<k-1 \\
    \Imm\left(d:F_sC\cap d^{-1}(F_{k+1}C)\longrightarrow F_{k+1}C/F_{k-1}C\right),\qquad s>k+1.
\end{gather*}
This can be seen by inspecting the normal form of a differential corresponding to a ruling as in Proposition~\ref{prop:rulingsClass}.
Furthermore, one finds that $L_F=L_d$ if and only if the corresponding ruling is in $\rulings_-$.

For an object in $\cc_1(L)$ given by a pair of complete flags $F_\bullet C$ and $F_\bullet' C$ on an acyclic complex $C$ we then have three lines, $L_F$, $L_{F'}$, and $L_d$, in $F_{k+1}C/F_{k-1}C$, and $L_F\neq L_{F'}$ by definition. 
There are thus three possible cases: 1) all three lines are distinct, 2) $L_F=L_d$, 3) $L_{F'}=L_d$, which match up with switches, returns, and departures, respectively.
We can argue as follows: An object $(C,F_\bullet C,F_\bullet'C)$ of $\cc_1(L)$ is determined by the object $(C,F_\bullet C)$ of $\cc_1(\partial_0 L)$ together with the choice of line $L_{F'}$. 
If $L_F=L_d$, then all choices of $L_{F'}$ give isomorphic objects, and if $L_F\neq L_d$ then there are two essentially inequivalent choices: either $L_{F'}=L_d$ or $L_{F'}\neq L_d$.
Here we are using the fact that $GL(F_{k+1}C/F_{k-1}C)$ acts transitively on triples of distinct lines.
\end{proof}

\subsubsection{Generic tangles}
\label{subsec:generic tangles}

For a general tangle, $L$, let
\[
L=L_1\cdots L_n
\]
be the (unique) factorization into basic tangles, $S_i\coloneqq\partial_1(L_i)$ and define $\cc(L)$ as the homotopy limit
\[
\cc(L)\coloneqq \cc(L_1)\times_{\cc(S_1)}\cc(L_2)\times_{\cc(S_2)}\cdots\times_{\cc(S_{n-1})}\cc(L_n).
\]
Since
\[
\mathrm{Loc}(L)\cong\mathrm{Loc}(L_1)\times_{\mathrm{Loc}(S_1)}\mathrm{Loc}(L_2)\times_{\mathrm{Loc}(S_2)}\cdots\times_{\mathrm{Loc}(S_{n-1})}\mathrm{Loc}(L_n).
\]
the universal property supplies a functor $\cc(L)\to\mathrm{Loc}(L)$.
We thus obtain a diagram of the form~\eqref{diag:tanglecats}.
A proof by induction using Proposition~\ref{prop:CYSquareVComp} and Proposition~\ref{prop:CYSquareWhisker} shows that the diagram admits a natural 2CY structure.

Recall that $\mathrm{Loc}_1(L)$ is the full subcategory of $\mathrm{Loc}(L)$ consisting of those complexes of local systems whose fiber over any point has cohomology concentrated in degree 0 and of rank 1, i.e. flat line bundles. 
We denote by $\cc_1(L)$ the full subcategory of $\cc(L)$ of those objects mapping to $\mathrm{Loc}_1(L)$.
Then there is a restricted diagram
\begin{equation}\label{diag:tanglecats1}
\begin{tikzcd}
    \cc_1(\partial_0L) \arrow[d] & \cc_1(L) \arrow[l]\arrow[r]\arrow[d] & \cc_1(\partial_1L) \arrow[d] \\
    \mathrm{Loc}_1(\partial_0L) & \mathrm{Loc}_1(L) \arrow[l]\arrow[r] & \mathrm{Loc}_1(\partial_1L) 
\end{tikzcd}
\end{equation}
with the 2CY structure induced from \eqref{diag:tanglecats} by restriction.
The dg-category $\cc_1(L)$ is called $MC(L)$ in~\cite[Section 7]{NRSSZ} where it shown to be quasi-equivalent to the augmentation category $Aug_+(L)$ of $L$. This is generalized to graphs in~\cite{ABS}.

\subsection{Counting in augmentation categories}
\label{subsec:augcat_count}

Throughout this subsection the base field is a finite field, $\F_q$.
We will define a functor $\mathscr Z:\ltanglecat\to\vectcat(\C)$ by, roughly speaking, counting objects in $\cc_1(L)$.
More precisely, we apply the formalism introduced in Section~\ref{subsec_lin2cat} to the diagram constructed for any Legendrian tangle in Section~\ref{subsec_augcats}.
We then show that $\mathscr Z$ is naturally isomorphic to the functor $\rpolyfun$ counting rulings of tangles after setting $z=q^{\frac{1}{2}}-q^{-\frac{1}{2}}$. 

Let $L$ be a Legendrian tangle, then, as discussed at the end of Section~\ref{subsec:cy2spans}, we can turn its diagram~\eqref{diag:tanglecats1}, first turned into a window diagram by adding a row of three copies of the final dg-category at the top, into a 2-span (lozenge diagram) in several ways.
The images of these 2-spans under $\hdffx$ all yield the same element 
\[
a_L\in\hdfx\left(\cc_1(\partial_0L){\times}_{\mathrm{Loc}_1(\partial_0 L)}\mathrm{Loc}_1(L){\times}_{\mathrm{Loc}_1(\partial_1 L)}\cc_1(\partial_1L)\right)
\]
given by
\begin{align*}
a_L \coloneqq &\hdfx\left(*\leftarrow \cc_1(L)\to \cc_1(\partial_0L){\times}_{\mathrm{Loc}_1(\partial_0 L)}\mathrm{Loc}_1(L){\times}_{\mathrm{Loc}_1(\partial_1 L)}\cc_1(\partial_1L)\right)(1)\\
=&\hdfx\left(*\leftarrow \cc_1(L)\to \left(\cc_1(\partial_0L){\times}_{\mathrm{Loc}_1(\partial_0 L)}\mathrm{Loc}_1(L)\right){\times}_{\mathrm{Loc}_1(\partial_1 L)}\cc_1(\partial_1L)\right)(1)\\
=&\hdfx\left(*\leftarrow \cc_1(L)\to \cc_1(\partial_0L){\times}_{\mathrm{Loc}_1(\partial_0 L)}\left(\mathrm{Loc}_1(L){\times}_{\mathrm{Loc}_1(\partial_1 L)}\cc_1(\partial_1L)\right)\right)(1).
\end{align*}
In the above, we identify iterated pullbacks and limits by the canonical equivalences.

Next, let $p_L$ be the projection functor
\[
p_L:\cc_1(\partial_0L){\times}_{\mathrm{Loc}_1(\partial_0 L)}\mathrm{Loc}_1(L){\times}_{\mathrm{Loc}_1(\partial_1 L)}\cc_1(\partial_1L)\longrightarrow \cc_1(\partial_0L){\times}\cc_1(\partial_1L).
\]
We are now ready to define $\mathscr Z$.

\begin{definition}
Define a functor (of 1-categories) $\mathscr Z:\ltanglecat\to\vectcat(\C)$ as follows.
For a graded finite subset $S\subset \R$, let
\[
\mathscr Z(S)\coloneqq \hdfx(\cc_1(S))={\C}\pi_0(\cc_1(S)^\sim).
\]
For a Legendrian tangle $L$ let
\[
\mathscr Z(L)\coloneqq q^{-\frac{1}{4}|\partial_1L|}\left(\tau_{\leq 1}(p_L)^\sim\right)_!a_L:\hdfx(\cc_1(\partial_1L))\to\hdfx(\cc_1(\partial_0L))
\]
where $\left(\tau_{\leq 1}(p_L)^\sim\right)_!$ is the push-forward as defined in Section~\ref{subsec_htpycard} and we use the isomorphism
\[
\hdfx(\cc_1(\partial_1L){\times}\cc_1(\partial_0L))\cong\Hom\left(\hdfx(\cc_1(\partial_1L)),\hdfx(\cc_1(\partial_0L))\right)
\]
coming from the standard inner product.
\end{definition}

\begin{proposition}\label{prop:Zfunctor}
    $\mathscr Z$ is a functor.
\end{proposition}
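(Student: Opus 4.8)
\textbf{Proof plan for Proposition~\ref{prop:Zfunctor}.}

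The plan is to reduce the functoriality of $\mathscr Z$ to the 2-functoriality of $\hdffx$ established in Proposition~\ref{prop_hdffx2functor}, together with the explicit description of vertical composition in the linearized 2-category $L_2\spancat_2^{n\mathrm{CY}}$. First I would set up the dictionary: to each Legendrian tangle $L$ we have associated in Section~\ref{subsec_augcats} a window diagram which, after adding the top row of copies of the final dg-category $*$, becomes a 2CY 2-span from $\cc_1(\partial_0 L)$ to $\cc_1(\partial_1 L)$ viewed as objects equipped with their canonical 1CY structures (the functors $\cc_1(\partial_i L)\to\mathrm{Loc}_1(\partial_i L)$); here I should note that the apex is $\cc_1(L)$ and that the element $a_L$ is precisely the image of this 2-span under $\hdffx$, landing in the space of 2-morphisms $\hdfx\bigl(\cc_1(\partial_0L){\times}_{\mathrm{Loc}_1(\partial_0L)}\mathrm{Loc}_1(L){\times}_{\mathrm{Loc}_1(\partial_1L)}\cc_1(\partial_1L)\bigr)$. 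The subtlety is that $\mathscr Z(L)$ is not $a_L$ itself but its image under the push-forward $\bigl(\tau_{\leq 1}(p_L)^\sim\bigr)_!$ along the projection $p_L$ forgetting the middle $\mathrm{Loc}_1(L)$-factor, renormalized by $q^{-\frac14|\partial_1L|}$. So the first real step is to show that this push-forward-plus-scalar operation is exactly the passage from ``2-morphisms in $L_2\spancat_2^{n\mathrm{CY}}$ between the chosen boundary 1-spans'' to ``linear maps $\hdfx(\cc_1(\partial_1L))\to\hdfx(\cc_1(\partial_0L))$'', i.e. that $\hdfx$ applied to the composite span $\cc_1(\partial_1 L)\leftarrow(\text{pullback})\to\cc_1(\partial_0 L)$ factors as: take $a_L$, then push forward along $p_L$.

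Next I would verify compatibility with composition. Given tangles $L$ and $R$ with $\partial_1 L=\partial_0 R=S$, concatenation $LR$ has $\cc_1(LR)\cong \cc_1(L)\times_{\cc_1(S)}\cc_1(R)$ by the definition in Section~\ref{subsec:generic tangles}, and the associated 2-span of $LR$ is the vertical composite of those of $L$ and $R$ in $\spancat_2^{n\mathrm{CY}}$ (Proposition~\ref{prop:CYSquareVComp}), after folding the window diagrams appropriately and using Lemma~\ref{lem:funToBoundary} to identify the boundary categories of the various ways of composing edges. Since $\hdffx$ is a 2-functor, $a_{LR}$ is the vertical composite of $a_L$ and $a_R$ in $L_2\spancat_2^{n\mathrm{CY}}$. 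It then remains to check that composing in $L_2\spancat_2^{n\mathrm{CY}}$ and then applying the push-forward-and-scalar operation agrees, up to the multiplicativity of the $q^{-\frac14|\cdot|}$ factor, with composing the two linear maps $\mathscr Z(L)\circ\mathscr Z(R)$. The push-forward part is exactly Proposition~\ref{prop:PushPull} (base change for the relevant cartesian square of spaces), applied to the truncated cores; this is the mechanism by which vertical composition in $L_2\spancat_2$ was defined in Section~\ref{subsec_lin2cat} in the first place. The scalar factors multiply correctly because $|\partial_1(LR)|=|\partial_1 R|$ and we must account for $|\partial_1 L|=|\partial_0 R|=|S|$; tracking where the extra $q^{\pm\frac14|S|}$ enters and checking it is cancelled by a corresponding factor coming from the weighted-counting measure $\mu$ on the intermediate space $\cc_1(S)^\sim$ (via the half-density normalization of Section~\ref{subsec_htpycard}) is the place where the $q^{-\frac14|\partial_1L|}$ normalization in the definition of $\mathscr Z$ is forced. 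Finally I would check that $\mathscr Z$ sends the identity tangle on $S$ (the trivial tangle, a disjoint union of horizontal strands) to the identity of $\hdfx(\cc_1(S))$: here the 2-span is the identity 2-span of the 1CY functor $\cc_1(S)\to\mathrm{Loc}_1(S)$, whose image under $\hdffx$ is the 2-identity by Proposition~\ref{prop:idsquare} and the definition of $\hdffx$, and unwinding the push-forward on the identity gives exactly the identity linear map after the scalar normalization.

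The main obstacle I anticipate is bookkeeping rather than conceptual: precisely matching the normalization constants and the half-density square-root factors $\sqrt{\mu}$ across the composition, so that the naive push-forward (which involves the measure $\mu_{\cc_1(S)^\sim}$ on the glued-over space) combines with the $q^{-\frac14|\partial_1 L|}$ prefactors to give an honest composition of operators on the half-density spaces $\hdfx(\cc_1(\partial_i L))$ with no leftover powers of $q$. Concretely, one has to compute $\mu$ on a core $\cc_1(S)^\sim$ — equivalently $\dim\Ext^{<0}_{\cc_1(S)}(x,x)$ summed appropriately — using that $\cc_1(S)\simeq\typeacat{|S|-1}$ is locally proper and even that its relevant homotopy cardinality contributions are controlled, and see that they contribute precisely $q^{\pm\frac14|S|}$-type factors per glued edge. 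This is the computational heart; everything else is a formal consequence of the 2-functoriality of $\hdffx$ and of the base-change Proposition~\ref{prop:PushPull}, both already available.
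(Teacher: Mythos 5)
Your overall architecture matches the paper's proof: identity via the identity 2-span and Proposition~\ref{prop:idsquare}, composition via the compatibility of $\hdffx$ with vertical composition (Proposition~\ref{prop_hdffx2functor}), a base-change step using Proposition~\ref{prop:PushPull} on truncated cores, and a final bookkeeping of the $q^{-\frac14|\partial_1L|}$ normalization. Two points in your plan, however, are not just bookkeeping and as stated would not go through.

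First, the place you locate the ``computational heart'' is misdirected. You propose to produce the $q^{\pm\frac14|S|}$ factor by computing the weighted counting measure $\mu$ on $\cc_1(S)^\sim$ via $\dim\Ext^{<0}_{\cc_1(S)}(x,x)$. But for $m\geq 1$ the categories are $\Z/2m$-periodic, so $\Ext^{<0}$ does not vanish and $\mu_{\cc_1(S)^\sim}$ is ill-defined --- this is precisely why the $\hdfx$/$\gamma$ formalism exists. In the paper the normalization instead comes out of the Calabi--Yau span itself: one shows $\gamma(\kappa,x)=-\tfrac12|\partial_1L|$ for the span $\kappa$ implementing vertical composition, using Lemma~\ref{lem:gammaTrippleFormula}, where the only surviving term is $-\tfrac12\langle a,a\rangle_0$ with $a$ the rank-one local system on the $|\partial_1L|$ points (the defect terms $\eta,\beta_{ij}$ vanish because $\Ext^{-1}=0$ in $\mathrm{Loc}_1(\partial_1L)$); and in the identity case one shows $\gamma=\langle x,x\rangle_{0,1}=\tfrac n2$ from the long exact sequence of the 2CY structure, since $\Ext^\bullet(c(x),c(x))=\F_q^n$ sits in degree $0$. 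So the scalar is a CY-structure computation, not a measure computation on the augmentation category.

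Second, your base-change step silently assumes that the relevant square of $1$-truncated cores is cartesian. Pullback of spaces does not commute with $\tau_{\leq 1}$ in general (Lemma~\ref{lem:truncPullDefect}); before invoking Proposition~\ref{prop:PushPull} one must check that the defect of the middle square in the comparison diagram vanishes, which the paper does via Lemma~\ref{lem:DefectAdditivity}, again exploiting that $\Ext^{-1}$ vanishes in the local-system categories $\mathrm{Loc}_1(\partial_iL)$. With these two points supplied, your plan coincides with the paper's argument.
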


\begin{proof}
We first check that $\mathscr Z$ preserves identities.
Let $L$ be an identity tangle (no crossings or cusps), then
\[
\cc_1(L)=\cc_1(\partial_0 L)=\cc_1(\partial_1 L),\qquad \mathrm{Loc}_1(L)=\mathrm{Loc}_1(\partial_0 L)=\mathrm{Loc}_1(\partial_1 L).
\]
The composition
\[
\cc_1(L)\longrightarrow\cc_1(L){\times}_{\mathrm{Loc}_1(L)}\cc_1(L)\xrightarrow{p_L} \cc_1(L){\times}\cc_1(L)
\]
is the diagonal functor, $\Delta$, thus 
\[
\hdfx\left(*,\tau_{\leq 1}\Delta^\sim\right)(1)\in\hdfx\left(\cc_1(L){\times}\cc_1(L)\right)
\]
corresponds to the identity map on $\hdfx(\cc_1(L))$.
It remains to show that
\[
\gamma(\cc_1(L)\longrightarrow\cc_1(L){\times}_{\mathrm{Loc}_1(L)}\cc_1(L),x)=\frac{n}{2}
\]
for any $x\in\cc_1(L)$, where $n\coloneqq|\pi_0(L)|$ is the number of strands of $L$.
Because of the vanishing of the CY structure on identity squares (see Proposition~\ref{prop:idsquare}) we have
\[
\gamma(\cc_1(L)\longrightarrow\cc_1(L){\times}_{\mathrm{Loc}_1(L)}\cc_1(L),x)=\langle x,x\rangle_{0,1}
\]
and the long exact sequence of the CY structure on $c:\cc_1(L)\to\mathrm{Loc}_1(L)$ shows that
\begin{equation}\label{Zfunctor_eq1}
2\langle x,x\rangle_{0,1}=\dim\Ext^0(c(x),c(x))=n,
\end{equation}
where we use $\Ext^\bullet(c(x),c(x))=\Ext^0(c(x),c(x))=\F_q^n$.

Next we check compatibility of $\mathscr Z$ with composition.
Let $L$ and $R$ be Legendrian tangles with $\partial_1 L=\partial_0 R$, so that the composed tangle $LR$ is defined.
To improve readability we set
\begin{gather*}
    \cb_0=\cc_1(\partial_0L),\qquad\cb_1=\cc_1(\partial_1L),\qquad\cb_2=\cc_1(\partial_1R), \\
    \cb_{01}=\cc_1(L),\qquad \cb_{12}=\cc_1(R),\qquad \cb_{02}=\cc_1(LR),\\
    \ca_0=\mathrm{Loc}_1(\partial_0L),\qquad\ca_1=\mathrm{Loc}_1(\partial_1L),\qquad \ca_2=\mathrm{Loc}_1(\partial_1R), \\
    \ca_{01}=\mathrm{Loc}_1(L),\qquad\ca_{12}=\mathrm{Loc}_1(R),\qquad\ca_{02}=\mathrm{Loc}_1(LR).
\end{gather*}
Let $\kappa$ be the 2CY 1-span
\[
\begin{tikzcd}[column sep=-2cm]
    & \left(\cb_0{\times}_{\ca_0}\ca_{01}\right){\times}_{\ca_1}\cb_1{\times}_{\ca_1}\left(\ca_{12}{\times}_{\ca_2}\cb_2\right) \arrow[dl]\arrow[dr]  \\
    \left(\cb_0{\times}_{\ca_0}\ca_{01}{\times}_{\ca_1}\cb_1\right)\times\left(\cb_1{\times}_{\ca_1}\ca_{12}{\times}_{\ca_2}\cb_2\right) & & \cb_0{\times}_{\ca_0}\ca_{02}{\times}_{\ca_2}\cb_2
\end{tikzcd}
\]
used in the definition of vertical composition of the 2-spans corresponding to $L$ and $R$.
The compatibility of $\hdffx$ with vertical composition (Proposition~\ref{prop_hdff2functor}) then implies that
\begin{equation*}
\hdfx(\kappa)(a_L\otimes a_R)=a_{LR}.
\end{equation*}
Furthermore, we claim that the square
\begin{equation}\label{ZfuncDiag1}
\begin{tikzcd}
\hdfx(\cb_0{\times}_{\ca_0}\ca_{01}{\times}_{\ca_1}\cb_1)\otimes\hdfx(\cb_1{\times}_{\ca_1}\ca_{12}{\times}_{\ca_2}\cb_2)\arrow[d,"\left(\tau_{\leq 1}(p_L)^\sim\right)_!a_L\otimes \left(\tau_{\leq 1}(p_R)^\sim\right)_!a_R"]\arrow[r,"\hdf(\tau_{\leq 1}\kappa^\sim)"] & \hdfx(\cb_0{\times}_{\ca_0}\ca_{02}{\times}_{\ca_2}\cb_2) \arrow[d,"\left(\tau_{\leq 1}(p_{LR})^\sim\right)_!a_{LR}"] \\
\hdfx(\cb_0{\times}\cb_1)\otimes\hdfx(\cb_1{\times}\cb_2) \arrow[r,swap,"\text{convolution}"] & \hdfx(\cb_0{\times}\cb_2)
\end{tikzcd}    
\end{equation}
commutes.
To see this, consider the coherent diagram
\[
\begin{tikzcd}
    \cb_0{\times}_{\ca_0}\ca_{02}{\times}_{\ca_2}\cb_2 \arrow[r] & \cb_0{\times}\cb_2 \\
    \left(\cb_0{\times}_{\ca_0}\ca_{01}\right){\times}_{\ca_1}\cb_1{\times}_{\ca_1}\left(\ca_{12}{\times}_{\ca_2}\cb_2\right) \arrow[d]\arrow[r]\arrow[u] & \cb_0{\times}\cb_1{\times}\cb_2 \arrow[u]\arrow[d] \\
    \left(\cb_0{\times}_{\ca_0}\ca_{01}{\times}_{\ca_1}\cb_1\right)\times\left(\cb_1{\times}_{\ca_1}\ca_{12}{\times}_{\ca_2}\cb_2\right)\arrow[r]\arrow[d] & \cb_0{\times}\cb_1{\times}\cb_1{\times}\cb_2 \arrow[d] \\
    \ca_{01}{\times}\ca_{12} \arrow[r] & \ca_0{\times}\ca_1{\times}\ca_1{\times}\ca_2
\end{tikzcd}
\]
in which the lower two squares are cartesian.
Since $\Ext^{-1}$ vanishes in $\ca_i$, $i\in\{1,2,3\}$, the defects of the lower square and the square which is the composition of the lower two squares vanish.
By Lemma~\ref{lem:DefectAdditivity} the defect of the middle square also vanishes, thus yields a cartesian square of 1-groupoids after truncation. Proposition~\ref{prop:PushPull} then implies commutativity of \eqref{ZfuncDiag1}.
To complete the proof we need to show that
\begin{equation*}
\hdfx(\kappa)=q^{-\frac{1}{4}|\partial_1 L|}\hdf(\tau_{\leq 1}\kappa^\sim)
\end{equation*}
i.e. that $\gamma(\kappa,x)=-\frac{1}{2}|\partial_1 L|$ for any $x$, but this follows immediately from Lemma~\ref{lem:gammaTrippleFormula}, since $\Ext^{-1}$ vanishes in $\ca_1$, so all defect terms $\eta$, $\beta_{ij}$ in the formula in that lemma vanish.
\end{proof}

\begin{theorem}\label{thm:tangleFunctorIso}
There is a natural isomorphism
\[
\lambda:\mathscr Z\longrightarrow\left(\_\otimes_{\Z[z^{\pm 1}]}\C\right)\circ\rpolyfun
\]
where the homomorphism $\Z[z^{\pm 1}]\to\C$ is given by $z\mapsto q^{\frac{1}{2}}-q^{-\frac{1}{2}}$ and
\[
\lambda_S\coloneqq (q-1)^{-\frac{|S|}{4}}.
\]
More precisely, $\lambda_S$ is $(q-1)^{-\frac{|S|}{4}}$ times the isomorphism $\mathscr  Z(S)\to\rpolyfun(S)\otimes_{\Z[z^{\pm 1}]}\C$ induced by the bijection $\theta_S^{-1}:\pi_0(\cc_1(S)^\sim)\to\rulings(S)$ from Proposition~\ref{prop:rulingsClass}.
\end{theorem}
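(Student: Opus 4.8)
The plan is to exploit the factorization of generic Legendrian tangles into basic tangles. Every morphism of $\ltanglecat$ is a composite of basic tangles (single cusps and single crossings), so, since both $\mathscr Z$ (Proposition~\ref{prop:Zfunctor}) and $\left(\_\otimes_{\Z[z^{\pm 1}]}\C\right)\circ\rpolyfun$ are functors, it is enough to define $\lambda_S\coloneqq(q-1)^{-|S|/4}\,\theta_S^{-1}$ on objects and to verify the single naturality square
\[
\lambda_{\partial_0 L}\circ\mathscr Z(L)=\Big(\rpolyfun(L)\otimes_{\Z[z^{\pm 1}]}\C\Big)\circ\lambda_{\partial_1 L}
\]
for each type of basic tangle $L$; the square for a general tangle then follows by pasting the squares of its basic factors (and is independent of the chosen factorization because $\mathscr Z$ and $\rpolyfun$ are functors). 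Each $\lambda_S$ is a nonzero scalar times the bijection induced by $\theta_S$, hence invertible, so once naturality on basic tangles is established $\lambda$ is automatically a natural isomorphism. The case of an identity tangle is already contained in the computation $\gamma=\tfrac{|\pi_0(L)|}{2}$ in the proof of Proposition~\ref{prop:Zfunctor}.

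\textbf{Unwinding $\mathscr Z$, and the $\gamma$-exponents.}
For a basic tangle one expands $\mathscr Z(L)=q^{-|\partial_1L|/4}\big(\tau_{\le 1}(p_L)^\sim\big)_!\,a_L$ with $a_L=\hdfx\big(*\leftarrow\cc_1(L)\to\cc_1(\partial_0L)\times_{\mathrm{Loc}_1(\partial_0L)}\mathrm{Loc}_1(L)\times_{\mathrm{Loc}_1(\partial_1L)}\cc_1(\partial_1L)\big)(1)$. Using Proposition~\ref{prop:rulingsClass} to identify $\pi_0(\cc_1(S)^\sim)$ with $\rulings(S)$, and Proposition~\ref{prop:CRrulingsClass} at a crossing, the $(\rho_0,\rho_1)$-matrix coefficient of $\mathscr Z(L)$ becomes a count of the objects of $\cc_1(L)$ lying over that pair of boundary rulings, each weighted by $q^{\frac12\gamma}/|\mathrm{Aut}|$ (with the square-root automorphism factors of $\hdfx$ and of the truncated-core pushforward), times $q^{-|\partial_1L|/4}$. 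The exponents $\gamma$ are read off from the long exact sequences of the $2$CY structures built in Section~\ref{subsec_augcats}; here one uses crucially that $\Ext^{<0}$ vanishes in every $\mathrm{Loc}_1(-)$ — so that all the ``defect'' terms $\eta,\beta_{ij}$ of Lemma~\ref{lem:gammaTrippleFormula} drop out and, by Lemma~\ref{lem:DefectAdditivity}, the relevant pullbacks of $1$-truncated cores are honest $1$-groupoid pullbacks to which Proposition~\ref{prop:PushPull} applies — and that $\mathrm{Loc}_1(S)\simeq\typeacat{1}^{\,|S|}$ has elementary $\Ext$-algebras (cf.\ Example~\ref{ex:an1cy}).

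\textbf{Cusps and non-switching crossings; then the crux.}
For a right cusp, $\cc_1(L)\subseteq\cc_1(\partial_0L)$ is the full subcategory on which $F_{i+1}C/F_{i-1}C$ is acyclic, which corresponds exactly to the rulings of $\partial_0L$ pairing the two merging strands; there are no switches and one right cusp, and a direct computation of $q^{\frac12\gamma}/|\mathrm{Aut}|$ against the normalizations $q^{-|\partial_1L|/4}$ and $(q-1)^{\pm1/4}$ reproduces the factor $z^{-1}$ of $\rpolyfun$ (the left cusp is symmetric). For a crossing with grading gap $|l|\ge 2$ one has $\cc_1(L)\simeq\cc_1(\partial_0L)\simeq\cc_1(\partial_1L)$, for $|l|=1$ the two boundary functors are fully faithful, and in both cases $\rulings(L)\to\pi_0(\cc_1(L)^\sim)$ is a bijection with no switches, so the matrix of $\mathscr Z(L)$ is (after normalization) exactly the inclusion/permutation matrix predicted by $\rpolyfun$. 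The essential case is a crossing with $l=0$: following the line-counting in the proof of Proposition~\ref{prop:CRrulingsClass}, over a fixed $(C,F_\bullet C)\in\cc_1(\partial_0L)$ an object of $\cc_1(L)$ is a choice of line $L_{F'}\ne L_F$ in the plane $F_{k+1}C/F_{k-1}C$, and the three strata --- ``switch'' ($L_F,L_{F'},L_d$ pairwise distinct: $q-1$ lines, one automorphism orbit), ``return'' ($L_F=L_d$: $q$ lines, one orbit) and ``departure'' ($L_{F'}=L_d\ne L_F$: one line) --- match the switch/return/departure rulings of $L$ (with $\mathrm{sw}=1,0,0$ and $\mathrm{rc}(L)=0$). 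For each stratum one evaluates the fiber $\pi_0$, the automorphism groups of the $\cc_1(L)$-object relative to those of its two boundary restrictions, and the exponent $\gamma$ of the span $*\leftarrow\cc_1(L)\to B$ from its long exact sequences; the claim reduces to the numerical identity that, once $q^{\frac12\gamma}$, the $|\mathrm{Aut}|^{\pm 1/2}$ factors, the normalization $q^{-|\partial_1L|/4}$ and the per-point $(q-1)^{-1/4}$ of $\lambda$ are folded in, a switch contributes $(q-1)q^{-1/2}=q^{1/2}-q^{-1/2}=z$ while a return and a departure each contribute $1$.

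\textbf{Main obstacle.}
The principal difficulty is exactly this last bookkeeping: one must pin down $\gamma$ \emph{separately} in each of the three $l=0$ strata --- both the pairing $\langle x,x\rangle_{0,1}$ and the rank correction term ($\operatorname{rk}$ of the connecting map $\Ext^0(x,x)^\vee\to\Ext^2(x,x)$) genuinely depend on the stratum --- and then track the various half-integer powers of $q$ arising from $\hdfx$, from the truncated-core pushforward along $p_L$, and from the two normalization constants, so that the three crossing strata assemble into an \emph{honest} natural transformation rather than one correct only up to an overall scalar. The cusp and non-switching-crossing computations are of the same nature but considerably easier. Granting all of these local computations, the pasting argument of the first paragraph propagates naturality to all of $\ltanglecat$, and invertibility of the $\lambda_S$ yields the natural isomorphism asserted in the theorem.
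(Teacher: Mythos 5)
Your strategy coincides with the paper's: reduce to basic tangles (these generate $\ltanglecat$, and both sides are functorial), treat the identity tangle via the $\gamma=\tfrac{n}{2}$ computation, handle crossings with $|l|\ge 2$ and $|l|=1$ as identity/fully-faithful-inclusion squares, and isolate the $l=0$ crossing — stratified into switch/return/departure via the line-counting behind Proposition~\ref{prop:CRrulingsClass} — as the crux, with the cusp treated by a direct weight computation. However, as written the proposal stops exactly where the proof has to happen. The decisive facts are deferred rather than proved: the automorphism ratio $|\aut(\partial_0x)|^{1/2}|\aut(\partial_1x)|^{1/2}/|\aut(x)|$ equal to $q-1$ for a switch and $q^{1/2}$ for a return/departure, the value of $\langle x,x\rangle_{0,1}$, the vanishing (or not) of the rank correction in $\gamma$, and the cusp weight matching $z^{-1}$ are precisely the content of the theorem at a basic tangle, and you grant them ("Granting all of these local computations\dots"). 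So this is an outline of the correct argument with a genuine gap at its central step.

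Moreover, your framing of the "main obstacle" is off in a way that matters: in fact $\gamma$ does \emph{not} depend on the stratum, and realizing this is what makes the computation tractable. One has $\Hom(\partial_ix,\partial_ix)\cong\Hom(x,x)\oplus\mathbf k$ as graded vector spaces with the extra generator $e_i$ in degree $0$; since differentials strictly decrease the filtrations, $e_i$ is never a boundary, so it either contributes an extra cycle in degree $0$ or kills a class in degree $1$, and in every stratum $\langle x,x\rangle_{0,1}-\tfrac{n}{2}=-1$. Likewise the connecting-map rank in $\gamma$ vanishes in every stratum because $\cc_1(L)\to\cc_1(\partial_0L)\times\cc_1(\partial_1L)$ is injective on $\Ext^0$ (it is injective into each factor). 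The only stratum-dependent quantity is the automorphism ratio, computed from how many of the three lines $L_F,L_{F'},L_d$ each of $\aut(x),\aut(\partial_0x),\aut(\partial_1x)$ must fix: quotient $\F_q^\times$ on both sides for a switch, $\F_q$ on one side and trivial on the other for a return/departure. Folding these into the normalizations $q^{-|\partial_1L|/4}$ and $(q-1)^{-|S|/4}$ gives $(q-1)q^{-1/2}=z$ for a switch and $1$ otherwise; the cusp case similarly needs $\aut(z)=\aut(\partial_1z)\times\mathbf k^\times$, $\gamma=\tfrac{|\partial_0L|}{2}$ and $|\partial_0L|=|\partial_1L|+2$ to produce the factor $z^{-1}$. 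Supplying these verifications would close the gap.
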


\begin{proof}
It suffices to check that naturality squares commute for basic tangles, since these generate all morphisms in $\ltanglecat$.

First, let $L$ be a basic tangle with a right cusp.
Since $L$ is a disjoint union of the component of $L$ with the cusp, denoted $\succ$, and $|\partial_1 L|$ other strands, we have $\mathrm{Loc}_1(L)=\mathrm{Loc}_1(\partial_1 L)\times\mathrm{Loc}_1(\succ)$ and hence also $\cc_1(L)=\cc_1(\partial_1 L)\times\cc_1(\succ)$, using the fact that the right square in~\eqref{diag:vertexcats} is cartesian (as was noted in the proof of Lemma~\ref{lem:vertex2cy}).
Note also that $\cc_1(\succ)\cong\mathrm{Loc}_1(\succ)$ has a single object, up to isomorphism, and $\cc_1(L)$ is a full subcategory of $\cc_1(\partial_0 L)$ by definition.
As a consequence we get a commutative diagram in the category of sets
\begin{equation*}
\begin{tikzcd}
\pi_0(\cc_1(\partial_0L)^\sim) \arrow[d,"\cong"] & \pi_0(\cc_1(L)^\sim) \arrow[hookrightarrow,l] \arrow[r,"\cong"]\arrow[d,"\cong"] & \pi_0(\cc_1(\partial_1L)^\sim)\arrow[d,"\cong"] \\
\rulings(\partial_0L) & \rulings(L) \arrow[hookrightarrow,l]\arrow[r,"\cong"] & \rulings(\partial_1L),
\end{tikzcd}
\end{equation*}
where the vertical isomorphisms come from Proposition~\ref{prop:rulingsClass}.
If $\sigma$ is the 1-span $\cc_1(\partial_0 L)\leftarrow \cc_1(L)\to\cc_1(\partial_0 L)$, this implies 
\[
\hdf(\tau_{\leq 0}\sigma^\sim)=z\rpolyfun(L),
\]
which is the map induced by the injection $\rulings(\partial_1L)\to\rulings(\partial_0L)$.

If $z\in\cc_1(L)$, then $\aut(z)=\aut(\partial_0 z)=\aut(\partial_1 z)\times\mathbf k^\times$, thus
\[
\hdf(\tau_{\leq 1}\sigma^\sim)=(q-1)^{-\frac{1}{2}}\hdf(\tau_{\leq 0}\sigma^\sim).
\]
To finish the computation of $\mathscr Z(L)$, we need to compute the $\gamma$-term of the square
\begin{equation*}
\begin{tikzcd}
\cc_1(\partial_0 L) \arrow[d] & \cc_1(L) \arrow[hookrightarrow,l] \arrow[d] \\
\mathrm{Loc}_1(\partial_0 L) & \mathrm{Loc}(L){\times}_{\mathrm{Loc}(\partial_1 L)}\cc_1(\partial_1 L)=\cc_1(L). \arrow[l]
\end{tikzcd}
\end{equation*}
Since this is, after passing to a full subcategory at the upper-left vertex, the same as the identity square on the functor $\cc_1(L)\to\mathrm{Loc}_1(\partial_0 L)$, we conclude that
\[
\gamma(\cc_1(L)\to\cc_1(\partial_0 L){\times}_{\mathrm{Loc}(\partial_0 L)}\mathrm{Loc}(L){\times}_{\mathrm{Loc}(\partial_1 L)}\cc_1(\partial_1 L),z)=\frac{|\partial_0 L|}{2}
\]
as in the proof of Proposition~\ref{prop:Zfunctor}, thus
\[
\mathscr{Z}(L)=q^{\frac{1}{2}}\hdf(\tau_{\leq 1}\sigma^\sim)
\]
using that $|\partial_0 L|=|\partial_1 L|+2$.
Putting all of the above together, we see that the naturality square for $\lambda$ commutes in the case of a right cusp.
The case of a basic tangle with a left cusp is very similar.

Next, we consider the case where $L$ is a basic tangle with $n$ strands and a crossing of the $k$-th and $(k+1)$-st strand.
Let $l\coloneqq\deg(s_k)-\deg(s_{k+1})$ where $s_i$ is the $i$-th point in $\partial_0L$.
We will make use of Proposition~\ref{prop:CRrulingsClass} and ideas and terminology contained in its proof.

If $|l|\geq 2$, then $\cc_1(L)\cong\cc_1(\partial_0 L)\cong\cc_1(\partial_1 L)$ as noted in the proof of Proposition~\ref{prop:CRrulingsClass} and the square assigned to $L$ is essentially an identity square.
Both $\rpolyfun(L)$ and $\mathscr Z(L)$ are the linear map induced by the bijection $\rulings(\partial_1 L)\to\rulings(\partial_0 L)$ coming from the bijection $\partial_0 L\cong \partial_1 L$.

If $|l|=1$, then $\cc_1(L)$ is essentially a full subcategory of $\cc_1(\partial_0 L)$ and $\cc_1(\partial_1 L)$ corresponding to those rulings where $s_k$ and $s_{k+1}$ are not paired (again, see the proof of Proposition~\ref{prop:CRrulingsClass}).
Both $\rpolyfun(L)$ and $\mathscr Z(L)$ are the linear map which is the orthogonal projection to the subspace generated by those rulings.

Finally, suppose that $l=0$, then we will show $\rpolyfun(L)=\mathscr Z(L)$ by showing that
\begin{equation}\label{CRcalc1}
\frac{|\aut(\partial_0x)|^\frac{1}{2}|\aut(\partial_1x)|^\frac{1}{2}}{|\aut(x)|}=\begin{cases} q-1 & \text{switch} \\ q^{\frac{1}{2}} & \text{return/departure}, \end{cases} 
\end{equation}
\begin{equation}\label{CRcalc2}
\langle x,x\rangle_{0,1}-\frac{n}{2}=-1,
\end{equation}
\begin{equation}\label{CRcalc3}
\gamma\left(f:\cc_1(L)\to\cc_1(\partial_0L){\times}_{\mathrm{Loc}_1(L)}\cc_1(\partial_1L),x\right)=\langle x,x\rangle_{0,1},
\end{equation}
since the overall weight factor is then $(q-1)q^{-\frac{1}{2}}=z$ in the case of a switch and $1$ in the case of a return/departure, which agrees with the definition of $\rpolyfun(L)$.

The key to showing the above identities is to note that, as graded vector spaces,
\begin{equation}
\Hom(\partial_0x,\partial_0x)=\Hom(x,x)\oplus\mathbf k,\qquad \Hom(\partial_1x,\partial_1x)=\Hom(x,x)\oplus\mathbf k,
\end{equation}
where $\mathbf k$ is placed in degree 0 and we denote the additional basis element of $\Hom(\partial_ix,\partial_ix)$ by $e_i$.
Since the differentials strictly decrease the filtrations, there can be no boundary involving $e_i$, however we will see that $e_i$ may or may not lead to additional cycles, depending on $x$.

If $x$ has a switch, then $\aut(\partial_ix)/\aut(x)=\F_q^\times$ for $i=0,1$. 
This is because elements in $\aut(\partial_0x)$ (resp. $\aut(\partial_1x)$) are required to fix only the two lines $L_d$ and $L_F$ (resp. $L_{F'}$) in the plane $F_{k+1}C/F_{k-1}C$, while elements of $\aut(x)$ need to fix all three lines $L_d$, $L_F$, and $L_{F'}$.
In the case of a return ($L_F=L_d\neq L_{F'}$) we have instead $\aut(\partial_0x)/\aut(x)=\F_q$, since elements of $\aut(\partial_0x)$ need to fix only $L_F=L_d$, while elements of $\aut(x)$ need to also fix $L_{F'}$. Moreover, $\aut(\partial_1x)=\aut(x)$, since both fix the same pair of distinct lines.
Similarly, with 0 and 1 exchanged, in the case of a departure.
This implies~\eqref{CRcalc1}.

To see~\eqref{CRcalc2}, note that by~\eqref{Zfunctor_eq1}
\[
\langle x,x\rangle_{0,1}-\frac{n}{2}=\langle x,x\rangle_{0,1}-\langle\partial_ix,\partial_ix\rangle_{0,1}
\]
for $i=0,1$, and $e_i$ either leads to additional cycles in degree 0 or additional boundary in degree 1, so in either case the difference is $-1$.

Finally,~\eqref{CRcalc3} means that the rank term from the CY structure vanishes.
This is because $f$ is injective on $\Ext^0$, which in turns follows from injectivity on $\Ext^0$ of the composed functor
\[
\cc_1(L)\longrightarrow\cc_1(\partial_0L){\times}_{\mathrm{Loc}_1(L)}\cc_1(\partial_1L)\longrightarrow\cc_1(\partial_0L){\times}\cc_1(\partial_1L).
\]
Indeed, we already know that $\Ext^0(x,x)\to\Ext^0(\partial_ix,\partial_ix)$ is injective for $i\in\{0,1\}$.
\end{proof}

\begin{figure}[ht]
    \centering
    \begin{tikzpicture}[scale=1.5]
        \begin{scope}[shift={(1,0)}]
        \draw[thick] (-4,0) to[out=0,in=180] (-3,.6) to[out=0,in=180] (-2,0) to[out=180,in=0] (-3,-.6) to[out=180,in=0] (-4,0);
        \draw (-3,.6) node[above] {$-1$};
        \draw (-3,-.6) node[below] {$0$};
        \end{scope}
        
        \draw[thick] (2,.5) to[out=0,in=180] (3,1.1) to[out=0,in=180] (4,.5) to[out=180,in=0] (3,-.1) to[out=180,in=0] (2,.5);
        \draw (3,1.1) node[above] {$-1$};
        \draw (3,-0.1) node[below] {$0$};

        \draw[thick] (2,-.5) to[out=0,in=180] (3,.1) to[out=0,in=180] (4,-.5) to[out=180,in=0] (3,-1.1) to[out=180,in=0] (2,-.5);
        \draw (3,.1) node[above] {$k$};
        \draw (3,-1.1) node[below] {$k+1$};
    \end{tikzpicture}
    \caption{Front projections of a graded Legendrian unknot (left) and a Hopf link (right).}
    \label{fig:knotexamples}
\end{figure}
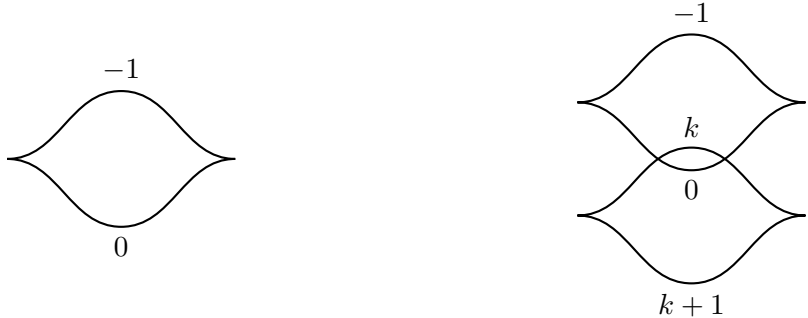

\begin{example}[Unknot]
Consider the representative of the Legendrian unknot, $L$, shown on the left of Figure~\ref{fig:knotexamples}, for arbitrary grading group $\Z/2m$.
There is a unique ruling and the ruling polynomial is $\rpoly_L(z)=z^{-1}$.
The unique object $x$, up to isomorphism, of $\cc_1(L)$ is given by the elementary acyclic filtered complex $\mathbf k\oplus \mathbf k[1]$.
We have 
\[
\dim\Ext^i(x,x)=\begin{cases} 1 & i=0 \mod 2m \\ 0 & \text{else} \end{cases}
\]
so $\gamma(x)=1$ and 
\[
\sum_{x\in\pi_0(\cc_1(L)^\sim)}\frac{q^{\frac{1}{2}\gamma(x)}}{|\mathrm{Aut}(x)|}=\frac{q^{\frac{1}{2}}}{q-1}=\rpoly_L\left(q^{\frac{1}{2}}-q^{-\frac{1}{2}}\right).
\]
\end{example}

\begin{example}[Hopf link]
Consider the representative of the Legendrian Hopf link, $L$, shown on the right of Figure~\ref{fig:knotexamples}, where the grading depends on $k\in\Z/2m$. 
In all cases there is a ruling without switches which contributes a summand $z^{-2}$ to $\rpoly_L(z)$.
If $k=0$, then there is a second ruling where both crossings are switches and which contributes a summand $1$ to $\rpoly_L(z)$.

\underline{Case $k\neq 0\mod 2m$}:
We claim that there is a unique object $x$ in $\cc_1(L)$, up to isomorphism, and $\Ext^\bullet(x,x)$ is 4-dimensional and given by the path algebra of the graded quiver
\[
\begin{tikzcd}
    \bullet \arrow[r,bend left=10,"1+k"] & \bullet \arrow[l,bend left=10,"1-k"]
\end{tikzcd}
\]
with relations that both compositions of the two arrows are zero.
If $k\neq\pm 1$, this gives 
\[
\frac{q^{\frac{1}{2}\gamma(x)}}{|\mathrm{Aut}(x)|}=\frac{q^{\frac{1}{2}(2-0+0)}}{(q-1)^2}=\left(q^{\frac{1}{2}}-q^{-\frac{1}{2}}\right)^{-2}.
\]
If $k=\pm 1$ and $m\neq 1$ this gives
\[
\frac{q^{\frac{1}{2}\gamma(x)}}{|\mathrm{Aut}(x)|}=\frac{q^{\frac{1}{2}(3-0+1)}}{(q-1)^2q}=\left(q^{\frac{1}{2}}-q^{-\frac{1}{2}}\right)^{-2}
\]
where $\mathrm{rk}\left(\Ext^0(x,x)^\vee\to\Ext^2(x,x)\right)=1$ by inspection of the long exact sequence.
Finally, if $k=\pm 1$ and $m=1$ then
\[
\frac{q^{\frac{1}{2}\gamma(x)}}{|\mathrm{Aut}(x)|}=\frac{q^{\frac{1}{2}(4-0+2)}}{(q-1)^2q^2}=\left(q^{\frac{1}{2}}-q^{-\frac{1}{2}}\right)^{-2}.
\]
where $\mathrm{rk}\left(\Ext^0(x,x)^\vee\to\Ext^2(x,x)\right)=2$. 

It remains to prove the claim about the augmentation category, $\cc_1(L)$.
We sketch how this can be done using our definition.
First, note that $\cc_1(L)$ has a unique object, $x$, up to isomorphism, which is the direct sum of two object $x^\sharp$ and $x^\flat$ in $\cc(L)$ corresponding to the upper and lower component of $L$, respectively.
We want to show that
\[
\Ext^\bullet(x^\sharp,x^\flat)=\mathbf k[-(1+k)],\qquad \Ext^\bullet(x^\flat,x^\sharp)=\mathbf k[-(1-k)].
\]
To see this, cut $L$ into two tangles $L_-$ and $L_+$ along the central vertical line intersecting $L$ in the 4-element set $L_0$.
The categories $\cc(L_-)$ and $\cc(L_+)$ are equivalent and by definition full subcategories of the category assigned to a basic tangle with a single crossing.
Denote the images of $x^\sharp$ and $x^\flat$ in $\cc(L_?)$ by $x^\sharp_?$ and $x^\flat_?$, where $?\in\{-,0,+\}$.
Then,
\begin{gather*}
\Ext^\bullet(x^\sharp_-,x^\flat_-)=\Ext^\bullet(x^\sharp_0,x^\flat_0)=\Ext^\bullet(x^\sharp_+,x^\flat_+)=\mathbf k[-(1+k)] \\
\Ext^\bullet(x^\flat_-,x^\sharp_-)=\Ext^\bullet(x^\flat_+,x^\sharp_+)=0,\qquad \Ext^\bullet(x^\flat_0,x^\sharp_0)=\mathbf k[-k].
\end{gather*}
The claim then follows using the long exact sequence of the defining cartesian square of $\cc(L)$ in terms of $\cc(L_\pm)$. 

\underline{Case $k=0\mod 2m$}:
We claim that $\cc_1(L)$ has the following numerics.
\begin{center}
{\renewcommand{\arraystretch}{1.4}\setlength{\tabcolsep}{8pt}
\begin{tabular}{c|cc}
     &  $1$ isoclass & $q$ isolasses \\
     \hline
 $|\aut(x)|$ & $(q-1)^2$ & $q-1$ \\
 $\dim\Ext^0(x,x)$ & $2$ & $1$ \\
 $\dim\Ext^1(x,x)$ & $2$ & $1$ \\
 $\dim\Ext^{\neq 0,1}(x,x)$ & $0$ & $0$ \\
 $\gamma(x)$ & $0$ & $0$
\end{tabular}}    
\end{center}
We compute
\begin{align*}
\sum_{x\in\pi_0(\cc_1(L)^\sim)}\frac{q^{\frac{1}{2}\gamma(x)}}{|\mathrm{Aut}(x)|}&=\frac{1}{(q-1)^2}+\frac{q}{q-1} \\ 
&=\frac{q^2-q+1}{(q-1)^2} \\
&=\frac{q}{(q-1)^2}+1 \\
&=\rpoly_L\left(q^{\frac{1}{2}}-q^{-\frac{1}{2}}\right)
\end{align*}
as expected.
We remark that $q^2-q+1$ is the number of $\F_q$-points of the augmentation variety of $L$ which is the cluster variety of type $A_1$ with one invertible frozen variable 
$\{(x,y)\in\mathbb A^2\mid xy\neq 1\}$ (see e.g. \cite{GaoShenWeng}).

Let us sketch how to verify the claims about the structure of $\cc_1(L)$.
Let $E$ be a 2-dimensional vector space over $\mathbf k$ and $L_i\subset E$, $i=1,2,3,4$ four 1-dimensional subspaces in $E$ such that
\[
L_1\neq L_0,\qquad L_3\neq L_0,\qquad L_1\neq L_2,\qquad L_2\neq L_3.
\]
Consider the acyclic chain complex $L_0\to E\to E/L_0$, with $E$ in degree zero, together with the triple of complete flags
\[
0\subset E/L_0[-1]\subset E/L_0[-1]\oplus L_i\subset E/L_0[-1]\oplus E\subset E/L_0[-1]\oplus E\oplus L_0[1]
\]
for $i=1,2,3$.
This defines an object, $x$, of $\cc_1(L)$, and we obtain all objects, up to isomorphism, in such a way.
If $L_2=L_0$ and $L_1=L_3$, then, as before, $x=x^\sharp\oplus x^\flat$ in $\cc(L)$ and $\Ext^\bullet(x,x)$ can be computed in the same way.
In the remaining cases, we have at least three distinct lines in $E$ and $x$ is thus indecomposable.
There is one isoclass with $L_2=L_0$, $L_1\neq L_3$, and $(q-1)$ isoclasses with $L_2\neq L_0$, since we have three distinct lines $L_0,L_1,L_2$ and a fourth line $L_4\neq L_0,L_2$, i.e. a choice in $\mathbb F_q^\times$. 
For all these $q$ isoclasses one can show that $\Ext^{0,1}(x,x)=\mathbf k$ and $\Ext^{\neq 0,1}(x,x)=0$. We omit the details.
\end{example}

\bibliographystyle{plain} 
\bibliography{refs}

\Addresses

\end{document}